\title{Almost-simple affine difference algebraic groups}
\author{Michael Wibmer}
\address{Michael Wibmer, Institute of Analysis and Number Therory, Graz University of Technology, Kopernikusgasse~24, 8010 Graz, Austria, \url{https://sites.google.com/view/wibmer}}
\email{wibmer@math.tugraz.at}
\thanks{The author was supported by the NSF grants DMS-1760212, DMS-1760413, DMS-1760448 and the Lise Meitner grant M-2582-N32 of the Austrian Science Fund FWF}
\newtheorem{theo}{Theorem}[section]
\newtheorem{lemma}[theo]{Lemma}
\newtheorem{prop}[theo]{Proposition}
\newtheorem{cor}[theo]{Corollary}
\newtheorem{defi}[theo]{Definition}
\newtheorem{rem}[theo]{Remark}
\newtheorem*{theoremA}{Theorem A}
\newtheorem*{theoremB}{Theorem B}
\theoremstyle{definition}
\newtheorem{ex}[theo]{Example}
\newcommand{\f}{\phi}
\newcommand{\ida}{\mathfrak{a}}
\newcommand{\p}{\mathfrak{p}}
\newcommand{\q}{\mathfrak{q}}
\newcommand{\m}{\mathfrak{m}}
\newcommand{\spec}{\operatorname{Spec}}
\newcommand{\G}{\mathcal{G}}
\newcommand{\Gl}{\operatorname{GL}}
\newcommand{\Aut}{\operatorname{Aut}}
\newcommand{\Hom}{\operatorname{Hom}}
\newcommand{\V}{\mathbb{V}}
\newcommand{\id}{\operatorname{id}}
\newcommand{\sdim}{\sigma\text{-}\dim}
\newcommand{\ld}{\operatorname{ld}}
\newcommand{\ord}{\operatorname{ord}}
\newcommand{\X}{\mathcal{X}}
\newcommand{\N}{\mathcal{N}}
\newcommand{\s}{\sigma}
\newcommand{\nn}{\mathbb{N}}
\newcommand{\I}{\mathbb{I}}
\newcommand{\vv}{\mathcal{V}}
\newcommand{\hs}{{{}^\sigma\!}}
\newcommand{\VV}{\mathcal{V}}
\newcommand{\ks}{$k$\=/$\s$}
\newcommand{\Gm}{\mathbb{G}_m}
\newcommand{\hsi}{^{\sigma^i}\!}
\newcommand{\iar}{\hookrightarrow}
\newcommand{\Ga}{\mathbb{G}_a}
\def\H{\mathcal{H}}
\newcommand{\sar}{\twoheadrightarrow}
\newcommand{\red}{{\operatorname{red}}}
\newcommand{\wm}{{\operatorname{wm}}}
\newcommand{\sred}{{\sigma\text{-}\operatorname{red}}}
\newcommand{\per}{{\operatorname{per}}}
\newcommand{\GG}{\mathsf{G}}
\newcommand{\NN}{\mathsf{N}}
\newcommand{\Emb}{\operatorname{Emb}}
\begin{document}

\subjclass[2010]{12H10, 16T05, 14L15, 14L17}

\keywords{Difference algebraic groups, difference algebraic geometry, difference Hopf algebras}

\date{\today}

\begin{abstract}
Affine difference algebraic groups are a generalization of affine algebraic groups obtained by replacing algebraic equations with algebraic difference equations.
We show that the isomorphism theorems from abstract group theory have meaningful analogs for these groups and we establish a Jordan-H\"{o}lder type theorem that allows us to decompose any affine difference algebraic group into almost-simple affine difference algebraic groups. We also characterize almost-simple affine difference algebraic groups via almost-simple affine algebraic groups.
\end{abstract}

\maketitle


\section*{Introduction}

Affine algebraic groups can be described as subgroups of a general linear group defined by polynomials in the matrix entries. In a similar spirit, affine difference algebraic groups can be described as subgroups of a general linear group defined by difference polynomials in the matrix entries, i.e., the defining equations involve a formal symbol $\s$ that has to be interpreted as a ring endomorphism. Many concepts and results from the theory of algebraic groups have meaningful analogs for affine difference algebraic groups, e.g., the \emph{$\s$-dimension} is a measure for the size of an affine difference algebraic group analogous to the dimension of algebraic varieties. For example, the full general linear group $\Gl_n$, considered as a difference algebraic group, has $\s$\=/dimension $n^2$, while the difference algebraic subgroup $G=\{g\in\Gl_n\ |\ \s(g)^Tg=g\s(g)^T=I_n\}$ of $\Gl_n$ has $\s$-dimension zero. 

An affine difference algebraic group is \emph{strongly connected} if it has positive $\s$-dimension and no proper difference algebraic subgroup of the same $\s$\=/dimension. The eponymous protagonists of this article, the \emph{almost-simple affine difference algebraic groups}, are the strongly connected affine difference algebraic groups with the property that every proper normal difference algebraic subgroup has $\s$-dimension zero.
For example, as we show, an almost-simple affine algebraic group, considered as a difference algebraic group, is an almost-simple affine difference algebraic group. 

The main goal of this paper is to elucidate the structure of affine difference algebraic groups ``up to $\s$-dimension zero''. Another crucial notion for this objective, besides the notion of almost-simple affine difference algebraic groups, is the concept of isogeny: Two affine difference algebraic groups $G_1$ and $G_2$ are \emph{isogenous} if there exists an affine difference algebraic group $H$ and surjective morphisms $H\to G_1$ and $H\to G_2$ with kernels of $\s$-dimension zero. Our first main result is a Jordan-H\"{o}lder type theorem for affine difference algebraic groups (Theorem~\ref{theo: Jordan Hoelder}):

\begin{theoremA} \label{theo: Theorem A}
 Let $G$ be a strongly connected affine difference algebraic group. Then there exists a subnormal series $$G=G_0\supseteq G_1\supseteq\cdots\supseteq G_n=1$$
of strongly connected difference algebraic subgroups of $G$ such that $G_i/G_{i+1}$ is almost-simple for $i=0,\ldots,n-1$. If
	$$G=H_0\supseteq H_1\supseteq\cdots\supseteq H_m=1$$
	is another such subnormal series, then $m=n$ and there exists a permutation $\pi$ such that
	$G_{i}/G_{i+1}$ and $H_{\pi(i)}/H_{\pi(i)+1}$ are isogenous for $i=0,\ldots,n-1$.
\end{theoremA}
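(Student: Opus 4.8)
The plan is to mirror the classical Jordan--H\"{o}lder argument, inducting on $\sdim(G)$ throughout and systematically replacing ``isomorphic'' by ``isogenous'' and ``simple'' by ``almost-simple''. The ingredients I rely on are: the isomorphism and correspondence theorems for affine difference algebraic groups (in particular the diamond isomorphism $G_1H_1/H_1\cong G_1/(G_1\cap H_1)$); additivity of the $\sigma$-dimension on short exact sequences, $\sdim(G)=\sdim(G/N)+\sdim(N)$ for $N$ normal; the fact that a quotient of a strongly connected group of positive $\sigma$-dimension is again strongly connected; the existence for every $H$ of a \emph{strong identity component} $H^{sc}$, i.e.\ a characteristic strongly connected difference algebraic subgroup with $\sdim(H^{sc})=\sdim(H)$ that is contained in every difference algebraic subgroup of $H$ of full $\sigma$-dimension; and that isogeny is an equivalence relation. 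Throughout I use that $\sdim$ takes values in $\nn$, so a strongly connected group has no proper subgroup of the same $\sigma$-dimension.

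\textbf{Existence.} I would induct on $\sdim(G)$. Among the proper normal difference algebraic subgroups of $G$ choose one, $N$, of maximal $\sigma$-dimension $d$; strong connectedness of $G$ forces $d<\sdim(G)$. Put $G_1:=N^{sc}$. Then $G_1$ is strongly connected, it is normal in $G$ because it is characteristic in the normal subgroup $N$, and $\sdim(G_1)=d$. By the correspondence theorem a proper normal subgroup of $G/G_1$ has the form $M/G_1$ with $M\trianglelefteq G$ proper, and the maximality of $d$ gives $\sdim(M/G_1)=\sdim(M)-d\le 0$; hence $G/G_1$ is almost-simple. Since $\sdim(G_1)=d<\sdim(G)$, induction provides a series for $G_1$ to which I prepend $G$ (if $d=0$ then $G$ is itself almost-simple and the series is $G\supseteq 1$).

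\textbf{Uniqueness.} Again inducting on $\sdim(G)$, I compare two composition series with top terms $G_1$ and $H_1$; the induction being on $\sdim(G)$ (not on the length) is essential, as it lets me apply the hypothesis to \emph{both} $G_1$ and $H_1$, which have strictly smaller $\sigma$-dimension. If $G_1=H_1$ I invoke the hypothesis on $G_1$ and reattach the common factor $G/G_1$. If $G_1\ne H_1$, consider the normal subgroup $G_1H_1\supseteq G_1,H_1$ of $G$: a short $\sigma$-dimension count shows $G_1H_1=G$, for otherwise $\sdim(G_1H_1)$ equals both $\sdim(G_1)$ and $\sdim(H_1)$, and then the diamond isomorphism forces $G_1=G_1\cap H_1=H_1$. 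With $K:=G_1\cap H_1$ the diamond isomorphisms give genuine isomorphisms $G/G_1\cong H_1/K$ and $G/H_1\cong G_1/K$. Passing to $K^{sc}$ (normal in both $G_1$ and $H_1$, of the same $\sigma$-dimension as $K$), a $\sigma$-dimension count against the almost-simple groups $G_1/K$ and $H_1/K$ shows that $G_1/K^{sc}$ and $H_1/K^{sc}$ are strongly connected with all proper normal subgroups of $\sigma$-dimension zero, hence almost-simple and isogenous to $G/H_1$ and $G/G_1$ respectively. Fixing a composition series of $K^{sc}$ I form the two crossed series $G\supseteq G_1\supseteq K^{sc}\supseteq\cdots$ and $G\supseteq H_1\supseteq K^{sc}\supseteq\cdots$, match each given series to the corresponding crossed one by the inductive hypothesis on $G_1$ and on $H_1$, note that the two crossed series share all factors after the top two (which are merely swapped up to isogeny), and chain the equivalences using transitivity of isogeny. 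This yields $m=n$ and the permutation matching the factors.

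\textbf{Main obstacle.} The genuinely new difficulty, beyond transcribing the classical proof, is the bookkeeping ``up to $\sigma$-dimension zero'': intersections and products of strongly connected subgroups need not be strongly connected, so the relations $G_1H_1=G$ and $G_1/K\cong G/H_1$ are useful only after replacing $K$ by its strong identity component $K^{sc}$. The crux is therefore to show that this replacement preserves almost-simplicity of the relevant factors --- the place where additivity of $\sdim$, the correspondence theorem, and strong connectedness must be combined --- and that the $\sigma$-dimension-zero discrepancies it introduces are precisely what the isogeny relation absorbs. Accordingly, the load-bearing prerequisites are a clean theory of the strong identity component (its existence, characteristic-ness, and $\sigma$-dimension) and the transitivity of isogeny.
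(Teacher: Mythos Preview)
Your existence argument follows the paper's, but there is a genuine gap in the key step: you assert that the strong identity component $H^{sc}$ is a \emph{characteristic} subgroup of $H$, and use this to conclude that $N^{sc}\unlhd G$ (and later $K^{sc}\unlhd G_1,H_1$). This is false in general. The paper gives an explicit counterexample (Example~\ref{ex: Gso not normal}): a $\sigma$-algebraic group whose strong identity component is not even normal. What the paper proves instead (Proposition~\ref{prop: normality for strong comp}) is that if $k$ is algebraically closed and inversive and $G$ is \emph{perfectly $\sigma$-reduced}, then $H^{so}\unlhd G$ for any normal $H\leq G$. This suffices here because a strongly connected group over such $k$ is $\sigma$-integral (Lemma~\ref{lemma: strongly connected is sintegral}), hence perfectly $\sigma$-reduced, and all the ambient groups in which you need normality ($G$, $G_1$, $H_1$) are strongly connected. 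So your conclusions survive, but the justification must be rewritten: normality of $H^{sc}$ is a nontrivial fact requiring the perfectly $\sigma$-reduced hypothesis, not a formal consequence of characteristic-ness.

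Your uniqueness argument is genuinely different from the paper's. The paper proves a Schreier refinement theorem (via the Zassenhaus butterfly lemma) for arbitrary subnormal series of $\sigma$-algebraic groups, and then extracts the Jordan--H\"older statement by counting the factors of positive $\sigma$-dimension in equivalent refinements of the two given series. You instead run the classical direct Jordan--H\"older induction through $G_1H_1$ and $K=G_1\cap H_1$, replacing $K$ by $K^{sc}$ to restore strong connectedness. Both work. The paper's route gives the Schreier theorem as a byproduct and avoids having to verify almost-simplicity of the intermediate factors $G_1/K^{sc}$, $H_1/K^{sc}$; your route is shorter if one does not care about Schreier, but you must handle the edge case $\sdim(K)=0$ (where $K^{sc}$ is undefined and one takes the trivial group) and check that $G_1/K^{sc}$ is strongly connected of positive $\sigma$-dimension before invoking the ``isogenous to almost-simple implies almost-simple'' lemma.
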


We also show that any affine difference algebraic group of positive $\s$-dimension has a strong identity component that is strongly connected, i.e., a (unique) minimal difference algebraic subgroup of the same $\s$\=/dimension. Therefore, the above theorem yields a decomposition result for arbitrary affine difference algebraic groups. 

Theorem A prompts us to determine the structure of the almost-simple affine difference algebraic groups. This is the content of our second main result (Theorem \ref{theo: main almost-simple}):

\begin{theoremB}
	A strongly connected affine difference algebraic group is almost-simple if and only if it is isogenous to an almost-simple affine algebraic group, considered as an affine difference algebraic group.	
\end{theoremB}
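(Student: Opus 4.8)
The plan is to prove the two implications separately, with the forward direction (almost-simple $\Rightarrow$ isogenous to an almost-simple algebraic group) being the substantial one; the reverse direction reduces to showing that almost-simplicity is an isogeny invariant among strongly connected groups, combined with the fact, already recorded in the introduction, that an almost-simple affine algebraic group is almost-simple when regarded as a $\s$-algebraic group.

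For the reverse implication, suppose $G$ is strongly connected and isogenous to an almost-simple affine algebraic group $\mathcal{S}$, so that there are surjections $H\sar G$ and $H\sar\mathcal{S}$ with kernels of $\s$-dimension zero. Since $G$ and $\mathcal{S}$ are both strongly connected, I first reduce to the case of a single surjection: replacing $H$ by its strong identity component $H^\circ$ (which is strongly connected and, having the same $\s$-dimension as $H$, still surjects onto the strongly connected groups $G$ and $\mathcal{S}$ with kernels of $\s$-dimension zero), it suffices to prove that for a surjection $f\colon H\sar L$ of strongly connected groups with $\sdim\ker f=0$, the group $H$ is almost-simple if and only if $L$ is. This is a direct computation using the additivity of the $\s$-dimension along short exact sequences and the correspondence theorem: if $M\trianglelefteq L$ is a proper normal $\s$-subgroup, then $f^{-1}(M)$ is a proper normal $\s$-subgroup of $H$ with $\sdim f^{-1}(M)=\sdim M$, while if $M\trianglelefteq H$ is proper and normal, then $f(M)$ is normal in $L$ with $\sdim f(M)=\sdim M$ and is proper (otherwise $M\cdot\ker f=H$ would force $\sdim M=\sdim H$, contradicting strong connectedness). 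Transporting the defining property of almost-simplicity across $f$ in both directions then yields the claim.

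For the forward implication, fix a faithful embedding $G\iar\Gl_n$ and consider the associated algebraic groups $\G_h$, the Zariski closures of the images of $G$ under $g\mapsto(g,\s(g),\ldots,\s^h(g))$ in $\Gl_n^{h+1}$. These form an inverse system under the coordinate projections, and the kernels $\mathcal{K}_h=\ker(\G_h\to\G_{h-1})$ eventually have dimension equal to $d=\sdim G$ and are identified with one another, up to the $\s$-twist induced by shifting coordinates, so that for $h\gg0$ they are governed by a single algebraic group $\mathcal{S}$ of dimension $d$ equipped with a $\s$-descent datum. The main point is to translate the group theory of $G$ into that of $\mathcal{S}$: normal $\s$-subgroups of $G$ correspond to compatible, shift-stable systems of normal algebraic subgroups of the $\G_h$, and a positive value of $\sdim$ on such a system is detected by a positive-dimensional normal subgroup of the stable kernel $\mathcal{S}$. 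Thus the hypothesis that $G$ has no proper normal $\s$-subgroup of positive $\s$-dimension forces $\mathcal{S}$ to have no proper normal algebraic subgroup of positive dimension, i.e.\ $\mathcal{S}$ is an almost-simple affine algebraic group, while strong connectedness of $G$ guarantees that $\mathcal{S}$ is connected. Finally, the shift datum on $\mathcal{S}$ is used to produce a common cover $H$ realising an isogeny between $G$ and $\mathcal{S}$ regarded as a $\s$-algebraic group.

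The main obstacle is the forward implication, and within it the passage between the difference-algebraic group $G$ and the algebraic stable kernel $\mathcal{S}$. Two points require care. First, one must show that the lattice of normal $\s$-subgroups of $G$ is faithfully reflected by the shift-stable normal subgroups of $\mathcal{S}$, so that almost-simplicity is neither gained nor lost in the limit; here the $\s$-twist is essential, since it is precisely what prevents, say, a permutation of the simple factors of $\mathcal{S}$ from producing spurious normal $\s$-subgroups, and it is also the reason why ``almost-simple'' must be read in the sense of admitting no proper normal algebraic subgroup of positive dimension defined over $k$. Second, one must actually construct the isogeny rather than merely match dimensions and subgroup lattices; the natural candidate for the common cover $H$ is built from a prolongation $\G_h$ equipped with its difference structure, and the verification that the resulting morphisms to $G$ and to $\mathcal{S}$ are surjective with kernels of $\s$-dimension zero is the technical heart of the argument.
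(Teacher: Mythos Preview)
Your treatment of the reverse implication is essentially the paper's: almost-simplicity is an isogeny invariant among strongly connected groups (the paper's Lemma~\ref{lemma: almost-simple and isogeny}), and almost-simple algebraic groups yield almost-simple $\s$-algebraic groups (Proposition~\ref{prop: almost simple algebrais is almost simple salgebraic}). One small remark: by the paper's definition of ``isogenous'' the middle group $H$ is already strongly connected, so replacing it by its strong identity component is unnecessary.

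The forward implication, however, has a genuine gap. You assert that ``a positive value of $\sdim$ on such a system is detected by a positive-dimensional normal subgroup of the stable kernel $\mathcal{S}$'' and then use the contrapositive: a proper normal closed subgroup $\mathcal{M}\leq\mathcal{S}$ with $\dim\mathcal{M}>0$ should produce a proper normal $\s$-closed subgroup of $G$ of positive $\s$-dimension. But you give no construction for this. The difficulty is real: $\mathcal{M}$ is normal in $\mathcal{S}=\ker(\pi_i)$, not in $G[i]$, and there is no evident way to propagate $\mathcal{M}$ to a compatible, shift-stable family of normal subgroups of the $G[i]$. Your final step, building an isogeny from ``a prolongation $\G_h$ equipped with its difference structure'', is likewise left as a slogan; it is not clear what maps to $[\s]_k\mathcal{S}$ at all, let alone surjectively with small kernel.

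The paper's route is genuinely different and avoids both problems. It does \emph{not} analyse a single fixed embedding and its stable kernel. Instead it introduces the class $\Emb(G)$ of morphisms $\f\colon G\to[\s]_k\G$ with $\sdim(\ker\f)=0$, $\f(G)$ Zariski dense in $\G$, and $\dim(\ker\pi_1)=\sdim(G)$, and picks one with $\dim(\G)$ \emph{minimal}. Working with the first-order Zariski closure $\f(G)[1]\leq\G\times{\hs\G}$, the two projection kernels $\G_0\unlhd\G$ and $\G_1\unlhd{\hs\G}$ are normal in the ambient algebraic group (not merely in some stable kernel), and one sets $\mathcal{M}=\G_0\cap\G_1'$ where ${\hs\G_1'}=\G_1$ (using inversivity). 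The minimality of $\dim(\G)$ is then used to rule out $\dim(\mathcal{M})=0$: one constructs a new element of $\Emb(G)$ landing in $\f(G)[1]/(\G_0\times\G_1)\simeq\G/\G_0$, forcing $\dim(\G_0)=0$ and hence $\sdim(G)=0$, a contradiction. Thus $\dim(\mathcal{M})>0$, and $\mathcal{N}=(\mathcal{M}^o)_\red$ is smooth, connected, normal in $\G$, with $\mathcal{N}\times{\hs\mathcal{N}}\subseteq\f(G)[1]$; Lemma~\ref{lemma: fix with finite} then gives $[\s]_k\mathcal{N}\subseteq\f(G)$. Finally, almost-simplicity of $G$ forces $\f^{-1}([\s]_k\mathcal{N})=G$, so $\f\colon G\twoheadrightarrow[\s]_k\mathcal{N}$ is the isogeny, and almost-simplicity of $\mathcal{N}$ follows by pulling back normal subgroups along $\f$. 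The minimality argument is the engine here, and there is no analogue of it in your outline.
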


Difference algebraic groups are the discrete analog of differential algebraic groups and the latter have always played an important role in differential algebra. See, e.g., the textbooks \cite{Kolchin:differentialalgebraicgroups} and \cite{Buium:DifferntialAlgebraicGroupsOfFiniteDimension} on differential algebraic groups. The last couple of years have seen an exciting and fruitful interaction between the theory of differential algebraic groups and the Galois theory of linear differential or difference equations depending on a differential parameter, also known as parameterized Picard-Vessiot theory (\cite{Landesman:GeneralizedDifferentialGaloisTheory}, \cite{CassidySinger:GaloisTheoryofParameterizedDifferentialEquations}, \cite{HardouinSinger:DifferentialGaloisTheoryofLinearDifferenceEquations}). 
In this Galois theory the Galois groups are differential algebraic groups and in this capacity they measure the differential algebraic relations (with respect to an auxiliary derivation) among the solutions of linear differential or difference equations. The structure theory of differential algebraic groups has facilitated the development of very strong hypertranscendence criteria that have been applied to various special functions (\cite{Arreche:AGaloisTheoreticProofOfTheDifferentialTranscendenceOfTheIncompleteGammaFunction}, \cite{DiVizio:ApprocheGaloisienneDeLaTranscendanceDifferentielle}, \cite{HardouinOvchinnikov:CalculatingGaloisGroups} \cite{DreyfusHardouinRoques:HypertranscendenceOfSolutionsOfMahlerFunctions}, \cite{HardouinMinchenkoOvchinnikov:CalculatingDifferentialGaloisGroupsOfParameterizedDifferentialEqautionsWithApplications}, \cite{Hardouin:GaloisianApproachToDifferentialTranscendence}, \cite{ArrecheSinger:GaloisGroupsForIntegrableAndProjectivelyIntegrableLinearDifferenceequations}, \cite{ArrecheDreyfusRoques:DifferentialTranscendenceCriteriaForSecondOrderLinearDifferenceEquationsAndEllipticHypergeometricFunctions}, \cite{AdamczewskiDreyfusHardouin:HypertranscendenceAndLinearDifferenceEquations}) and the development of algorithms for computing these Galois groups (\cite{Dreyfus:ComputingTheGaloisGroupOfSomeParameterizedLinearDifferentialEquationOrderTwo}, \cite{Arreche:ComputingTheDifferentialGaloisGroupOfaParameterizedSecondOrderLinearDifferentialEquation}, \cite{Arreche:OnTheComputationOfTheParameterizedDifferentialGaloisGroupForASecondOrderLinearDifferentialEquationwithDifferentialParameters}, \cite{Arreche:ComputationOfTheDifferenceDifferentialGaloisGroupAndDifferentialRelationsAmongSolutionsForASecondorderLinearDifferenceEquation}, \cite{MinchenkoOvchinnikovSinger:UnipotentDifferentialAlgebraicGroupsAsParameterizedDifferentialGaloisGroups}, \cite{MinchenkoOvchinnikovSinger:ReductiveLinearDifferentialAlgebraicGroupsAndTheGaloisGroups}, \cite{MinchenkoOvchinnikov:CalculatingGaloisGroupsOfThirdOrderLinearDifferentialEquationsWithParameters}).

Similar Galois theories exist for linear differential or difference equations depending on a discrete parameter (\cite{DiVizioHardouinWibmer:DifferenceGaloisTheoryOfLinearDifferentialEquations}, \cite{OvchinnikovWibmer:SGaloisTheoryOfLinearDifferenceEquations}). In these Galois theories the Galois groups are affine difference algebraic groups and they measure the difference algebraic relations among the solutions. While there has been some progress (\cite{DiVizioHardouinWibmer:DifferenceAlgebraicRel}, \cite{DreyfusHardouinRoques:FunctionalRelationsOfSolutionsOfqdifferenceEquations}, \cite{BachmayrWibmer:AlgebraicGroupsAsDifferenceGaloisGroupsOfLinearDifferentialEquations}), the case of discrete parameters is far less developed than the case of differential parameters and the difference analogs of
current results at the interface of differential algebraic groups and parameterized Picard-Vessiot theory, such as \cite{SanchezPillay:DifferentialGaloisCohomologyAndParameterizedPicardVessiotExtensions}, are far beyond reach at the moment. This is mainly due to the fact that the theory of difference algebraic groups is practically non-existent.
In sharp contrast to the situation in differential algebra, difference algebraic groups have long played no role at all in difference algebra. At present, only a few scattered results on difference algebraic groups are available in the literature: Some results relating to cohomology of difference algebraic groups are in \cite{BachmayrWibmer:TorsorsForDifferenceAlgebraicGroups}, \cite{ChalupnikKowalski:DifferenceModulesAndDifferenceCohomology}, \cite{ChalupnikKowalski:DifferenceSheavesAndTorsors}, \cite{Tomasic:AToposTheoreticViewOfDifferenceAlgebra}. Groups definable in ACFA, the model companion of difference fields, played a crucial role in Hrushovski's proof of the Manin-Mumford conjecture in \cite{Hrushovski:TheManinMumfordConjectureAndTheModelTheorOfDifferenceFields}. The relation between affine difference algebraic groups and groups definable in $\operatorname{ACFA}$ is somewhat analogous to the relation between affine group schemes of finite type over a field and groups definable in $\operatorname{ACF}$, the theory of algebraically closed fields.
Further results related to the Manin-Mumford conjecture and groups definable in ACFA are in
\cite{Scanlon:DifferenceAlgebraicSubgroupsOfCommutativeAlgebraicGroups}, \cite{KowalskiPillay:ANoteonGroupsDefinableInDifferenceFields} \cite{Scanlon:APositiveCharacteristicManinMumfordTheorem}, \cite{KowalskiPillay:OnAlgebraicSigmaGroups}, \cite{ChatzidakisHrushovski:OnSubgroupsOfSemiabelianVarietiesDefinedByDifferenceEquations}.

Because of the present infantile state of the theory of difference algebraic groups, the purpose of this article is also to lay the groundwork for a further comprehensive study of affine difference algebraic groups.\footnote{Some further steps in this direction can be found in the author's habilitation thesis \cite{Wibmer:Habil}, which encompasses the first seven sections of this article.} To this end we build on \cite{Wibmer:FinitenessPropertiesOfAffineDifferenceAlgebraicGroups}, where some basic finiteness properties of affine difference algebraic groups have been established and numerical invariants for affine difference algebraic groups, such as the $\s$-dimension $\sdim(G)$ and the limit degree $\ld(G)$, have been introduced. 
On our path to Theorems A and B above we encounter several basic results and constructions that we deem fundamental for the further development of the theory of affine difference algebraic groups:
\begin{itemize}
	\item We introduce four different difference algebraic subgroups of an affine difference algebraic group that are in a certain sense analogous to the maximal reduced subgroup of an affine algebraic group. 
	\item We establish the existence of the quotient $G/N$ of an affine difference algebraic group $G$ by a normal difference algebraic subgroup $N$ and show that it is well-behaved, e.g., $\sdim(G/N)=\sdim(G)-\sdim(N)$ and $\ld(G/N)=\frac{\ld(G)}{\ld(N)}$.
	\item We show that every morphism $G\to H$ of affine difference algebraic groups factors uniquely as a quotient map followed by an embedding.
	\item We establish the analogs of the isomorphism theorems from abstract group theory. This, in particular, includes formulas such as $H/(H\cap N)\simeq HN/N$ or $(G/N)/(H/N)\simeq G/H$ and the correspondence between difference algebraic subgroups of $G/N$ and difference algebraic subgroups of $G$ containing $N$. 
	\item We introduce and study the identity component $G^o$ and the strong identity component $G^{so}$ of an affine difference algebraic group. In particular, we show that $G^o$ is a characteristic subgroup of $G$ (in the sense that it is stable under automorphisms even after base change) and we isolate conditions that guarantee that $G^{so}$ is normal in $G$. 
\end{itemize}


Let us elaborate a little more on the first point in the above list: It is well-recognized (see e.g., \cite{Milne:AlgebraicGroupsTheTheoryOfGroupSchemesOfFiniteTypeOverAField}) that allowing nilpotent elements in the coordinate rings of affine algebraic groups has its benefits. The situation for affine difference algebraic groups is similar. Without allowing ``$\s$-nilpotent'' elements, the Galois correspondences in \cite{DiVizioHardouinWibmer:DifferenceGaloisTheoryOfLinearDifferentialEquations} and \cite{OvchinnikovWibmer:SGaloisTheoryOfLinearDifferenceEquations} would not be complete and the isomorphism theorems for affine difference algebraic groups would not hold. In difference algebraic geometry there is a whole zoo of elements playing a role analogous to nilpotent elements in algebraic geometry. They roughly correspond to the following
assertions valid for elements in a difference field but not generally valid
for elements in a difference ring:
\begin{itemize}
	\item $a^n = 0$ implies $a = 0$.
	\item $\s(a) = 0$ implies $a = 0$.
	\item $ab = 0$ implies $a\s(b) = 0$.
	\item  $a\s(a) = 0$ implies $a = 0$.
\end{itemize}
In this spirit we obtain four difference algebraic subgroups of an affine difference algebraic
group that play a role analogous to the maximal reduced subgroup of an affine algebraic
group.

\medskip

We conclude this introduction with an outline of the article: In Section 1 we go through the details of the definition of affine difference algebraic groups and we recall the necessary results from \cite{Wibmer:FinitenessPropertiesOfAffineDifferenceAlgebraicGroups}. Sections 2 to 6 roughly correspond to the five bullet points in the above list. In Section 7 we establish our Jordan H\"{o}lder type theorem (Theorem A) and in the final section on almost-simple affine difference algebraic groups we prove Theorem B.

\section{Notation and preliminaries}

In this section we introduce some notation that will be used throughout the text. 
We also recall the required constructions and results from \cite{Wibmer:FinitenessPropertiesOfAffineDifferenceAlgebraicGroups}.
The reader familiar with \cite{Wibmer:FinitenessPropertiesOfAffineDifferenceAlgebraicGroups} may safely skip this section.

All rings are assumed to be commutative and unital. The natural numbers $\nn$ include $0$. We begin by recalling the jargon of difference algebra. Standard references for difference algebra are \cite{Cohn:difference} and \cite{Levin:difference}. However, note that in these references the transforming operators are always assumed to be injective. Here we do not make this assumption.

A \emph{difference ring} (or \emph{$\s$-ring} for short) is a ring $R$ together with a ring endomorphism $\s\colon R\to R$. We usually omit $\s$ from the notation and simply refer to $R$ as a $\s$-ring. Moreover, as customary, we use the same symbol $\s$ for various different endomorphisms. A morphism between $\s$-rings $R$ and $S$ is a morphism $\psi\colon R\to S$ of rings such that
$$
\xymatrix{
R \ar^-\psi[r] \ar_\s[d] & S \ar^\s[d]\\
R \ar^-\psi[r] & S 	
}
$$
commutes. A \emph{$\s$-subring} of a $\s$-ring is a subring that is stable under $\s$. A $\s$-ring $R$ is \emph{inversive} if $\s\colon R\to R$ is bijective. A \emph{$\s$-field} is a $\s$-ring whose underlying ring is a field. If $K$ is a $\s$-subring of a $\s$-ring $L$ such that $K$ and $L$ are $\s$-fields, then $L$ is a \emph{$\s$-field extension} of $K$.

A \emph{$\s$-ideal} of a $\s$-ring $R$ is an ideal $\ida$ of $R$ such that $\s(\ida)\subseteq \ida$. In this case $R/\ida$ is naturally a $\s$-ring such that the canonical map $R\to R/\ida$ is a morphism of $\s$-rings. For a subset $F$ of $R$ the smallest $\s$-ideal containing $F$ is denoted by $[F]$. It is called the \emph{$\s$-ideal $\s$-generated by $F$} and agrees with the ideal generated by $\s^i(f)$ ($i\in\nn,\ f\in F$). A $\s$-ideal $\ida$ is \emph{finitely $\s$-generated} if there exists a finite subset $F$ of $\ida$ such that $\ida=[F]$.

Let $k$ be a $\s$-ring. A \emph{\ks-algebra} is a $\s$-ring $R$ together with a morphism $k\to R$ of $\s$-rings. A morphism of \ks-algebras is a morphism of $\s$-rings that is a morphism of $k$-algebras. 
The tensor product $R\otimes_k S$ of two \ks-algebras is a \ks-algebra via $\s(r\otimes s)=\s(r)\otimes \s(s)$. A \emph{\ks-subalgebra} of a \ks-algebra is a $\s$-subring that is a $k$-subalgebra. For a subset $F$ of a \ks-algebra $R$, the smallest \ks-subalgebra of $R$ containing $F$ is denoted by $k\{F\}$. It is called the \emph{\ks-subalgebra $\s$-generated by $F$} and agrees with the $k$-subalgebra of $R$ generated by $\s^i(f)$ ($i\in\nn,\ f\in F)$. A \ks-algebra $R$ is \emph{finitely $\s$-generated} (over $k$) if there exists a finite subset $F$ of $R$ with $R=k\{F\}$. The \emph{$\s$-polynomial ring} in the $\s$-variables $y_1,\ldots,y_n$ over $k$ is 
$$k\{y_1,\ldots,y_n\}=k\left[\s^i(y_j)|\ i\in\nn,\ 1\leq j\leq n\right],$$
where the action of $\s$ on $k\{y_1,\ldots,y_n\}$ extends the action of $\s$ on $k$ and $\s$ acts on the variables $\s^i(y_j)$ as suggested by their names. The order a $\s$-polynomial $f\in k\{y_1,\ldots,y_n\}$ is the largest power of $\s$ that occurs in $f$. For a \ks-algebra $R$, a $\s$-polynomial $f\in k\{y_1,\ldots,y_n\}$ and $x=(x_1,\ldots,x_n)\in R^n$, we denote with $f(x)$ the element of $R$ obtained from $f$ by specializing $\s^i(y_j)$ to $\s^i(x_j)$. For $F\subseteq k\{y_1,\ldots,y_n\}$ the set of $R$-valued solutions of $F$ is
$$\V_R(F)=\{x\in R^n|\ f(x)=0 \ \forall \ f\in F\}.$$ Note that   
$R\rightsquigarrow\V_R(F)$ is naturally a functor from the category of \ks-algebras to the category of sets.

\begin{defi} \label{defi: svariety}
	A \emph{$\s$-variety} over $k$ is a functor from the category of \ks-algebras to the category of sets that is isomorphic to a functor of the form $R\rightsquigarrow \V_R(F)$ for some $n\geq 1$ and $F\subseteq k\{y_1,\ldots,y_n\}$.
\end{defi}
It would be more accurate to add the word ``affine'' into the above definition. We chose not to do so because we have no need to consider non-affine $\s$-varieties in this article and to avoid countless repetitions of the word ``affine''.
A \emph{morphism of $\s$-varieties} over $k$ is a natural transformation of functors.

If $X=\V(F)$ is the $\s$-variety defined by $F\subseteq k\{y_1,\ldots,y_n\}$, i.e., $X(R)=\V_R(F)$ for all \ks-algebras $R$, then $$\I(X)=\left\{f\in k\{y_1,\ldots,y_n\}|\ f(x)=0 \ \forall\ x\in X(R) \ \forall\ \text{\ks-algebras } R\right\}$$
is a $\s$-ideal of $k\{y_1,\ldots,y_n\}$ that agrees with $[F]$ (choose $R=k\{y_1,\ldots,y_n\}/[F]$). The \ks\=/algebra
$k\{X\}=k\{y_1,\ldots,y_n\}/\I(X)$ is called the \emph{coordinate ring} of $X$. For every \ks-algebra $R$ we have a bijection $\Hom(k\{X\},R)\simeq X(R)$ that assigns to a morphism $\psi\colon k\{X\}\to R$ of \ks-algebras the tuple $(\psi(\overline{y_1}),\ldots,\psi(\overline{y_n}))\in R^n$. As these bijections are functorial in $R$, we see that $X$ is represented by $k\{X\}$. It follows that a functor from the category of \ks-algebras to the category of sets is a $\s$-variety if and only if it is representable by a finitely $\s$-generated \ks-algebra. By the Yoneda Lemma the \ks-algebra representing a $\s$-variety $X$ is uniquely determined up to a unique isomorphism. As above, it is called the coordinate ring of $X$ and denoted by $k\{X\}$. 
Moreover, $X\rightsquigarrow k\{X\}$ is an equivalence of categories between the category of $\s$-varieties over $k$ and the category of finitely $\s$-generated \ks-algebras. We will usually identify $X$ with $\Hom(k\{X\},-)$.

For a morphism $\f\colon X\to Y$ of $\s$-variety the corresponding morphism $\f^*\colon k\{Y\}\to k\{X\}$ of \ks-algebras is called the \emph{dual} of $\f$. For $\s$-varieties $X$ and $Y$ the functor $X\times Y$ given by  $R\rightsquigarrow X(R)\times Y(R)$ is a product in the category of $\s$-varieties over $k$. In fact, $k\{X\times Y\}=k\{X\}\otimes_k k\{Y\}$.

Let $X$ be a $\s$-variety. An element $f\in k\{X\}$ defines for every \ks-algebra $R$ a map $f\colon X(R)\to R,\  \psi\mapsto \psi(f)$. For $F\subseteq k\{X\}$ the subfunctor $Y=\V(F)$ of $X$ given by
$Y(R)=\{x\in X(R)|\ f(x)=0 \ \forall f\in F\}$ for all \ks-algebras $R$ is called the \emph{$\s$\=/closed $\s$\=/subvariety} of $X$ defined by $F$. Note that $Y$ is a $\s$-variety with coordinate ring $k\{Y\}=k\{X\}/[F]$.
The map $\ida\mapsto \V(\ida)$ is a bijection between the $\s$-ideals of $k\{X\}$ and the $\s$-closed $\s$-subvarieties of $X$ (\cite[Lemma 1.4]{Wibmer:FinitenessPropertiesOfAffineDifferenceAlgebraicGroups}). The $\s$-ideal corresponding to a $\s$-closed $\s$-subvariety $Y$ of $X$ is denoted by $\I(Y)\subseteq k\{X\}$ and called the \emph{defining ideal} of $Y$. We use the notation ``$Y\subseteq X$'' to indicate that $Y$ is a $\s$-closed $\s$-subvariety of $X$.

The intersection $Y_1\cap Y_2$ of two $\s$-closed $\s$-subvarieties of $X$ is defined by $(Y_1\cap Y_2)(R)=Y_1(R)\cap Y_2(R)$ for any \ks-algebra $R$. It is a $\s$-closed $\s$-subvariety of $X$ corresponding to the sum of $\s$-ideals. 

A morphism $\f\colon X\to Y$ of $\s$-varieties is a \emph{$\s$-closed embedding} if it induces an isomorphism between $X$ and a $\s$-closed $\s$-subvariety of $Y$. This is equivalent to $\f^*\colon k\{Y\}\to k\{X\}$ being surjective (\cite[Lemma 1.6]{Wibmer:FinitenessPropertiesOfAffineDifferenceAlgebraicGroups}). We will usually indicate a $\s$-closed embedding as $X\hookrightarrow Y$.

For a morphism $\f\colon X\to Y$ of $\s$-varieties there exists a unique $\s$-closed $\s$-subvariety $\f(X)$ of $Y$
such that $\f$ factors through the inclusion $\f(X)\subseteq Y$ and for any other $\s$-closed $\s$-subvariety $Z$ of $Y$ such that $\f$ factors through $Z\subseteq Y$, one has $\f(X)\subseteq Z$ (\cite[Lemma 1.5]{Wibmer:FinitenessPropertiesOfAffineDifferenceAlgebraicGroups}). In fact, $\f(X)$ is the $\s$-closed $\s$-subvariety of $Y$ defined by the kernel of $\f^*\colon k\{Y\}\to k\{X\}$.
For a $\s$-closed $\s$-subvariety $V$ of $X$, we define $\f(V)$ as $\f_V(V)$, where $\f_V\colon V\to X\xrightarrow{\f}Y$.
Thus $\f(V)$ is the $\s$-closed $\s$-subvariety of $Y$ defined by the kernel of $k\{Y\}\to k\{X\}\to k\{V\}$.

For a morphism $\f\colon X\to Y$ of $\s$-varieties and a $\s$-closed $\s$-variety $Z$ of $Y$, we can define a subfunctor $\f^{-1}(Z)$ of $X$ by $\f^{-1}(Z)(R)=\f_R^{-1}(Z(R))$ for any \ks-algebra $R$. If $Z=\V(\ida)$,
 then
\begin{align*}
\f^{-1}(Z)(R) & =\{\psi\in\Hom(k\{X\},R)|\ \ida\subseteq\ker(\psi\f^*)\} \\
& = \{\psi\in\Hom(k\{X\},R)|\ \f^*(\ida)\subseteq\ker(\psi)\}=\V(\f^*(\ida))(R).
\end{align*}
Therefore $\f^{-1}(Z)=\V(\f^*(\ida))$ is a $\s$-closed $\s$-subvariety of $X$.

Let $k\to K$ be a morphism of $\s$-rings. For a $\s$-variety $X$ over $k$ the functor $X_{K}$ defined by $X_{K}(R')=X(R')$ for every $K$-$\s$-algebra $R'$, is a $\s$-variety over $K$. Indeed, $K\{X_{K}\}=k\{X\}\otimes _k K$. 

Note that for a $\s$-ring $k$ the map $\s\colon k\to k$ is a morphism of $\s$-rings. For an object $X$ over $k$ (e.g., a $\s$-variety or a \ks-algebra) we denote the new object over $k$ obtained by base change via $\s\colon k\to k$ by ${\hs X}$. A similar notation applies for morphisms and higher powers of $\s$.

\medskip

{\bf From now on and throughout the article we assume that $k$ is a $\s$-field.} All schemes and $\s$-varieties are assumed to be over $k$ unless indicated otherwise.

\begin{defi}
	A \emph{$\s$-algebraic group} $G$ over $k$ is a group object in the category of $\s$-varieties over $k$. 
\end{defi}
In particular, $G(R)$ is a group for every \ks-algebra $R$. A list of examples of $\s$-algebraic groups can be found in \cite[Section 2]{Wibmer:FinitenessPropertiesOfAffineDifferenceAlgebraicGroups}. A $\s$-closed $\s$-subvariety $H$ of a $\s$-algebraic group $G$ is a \emph{$\s$-closed} subgroup if $H(R)$ is a subgroup of $G(R)$ for any \ks-algebra $R$.
In symbols, we express this as $H\leq G$. A $\s$-closed subgroup $N$ of a $\s$-algebraic group $G$ is \emph{normal} if $N(R)$ is a normal subgroup of $G(R)$ for every \ks-algebra $R$. We indicate this as $N\unlhd G$. 

A \emph{morphism $\f\colon G\to H$ of $\s$-algebraic groups} is a morphism of $\s$-varieties such that the map $\f_R\colon G(R)\to H(R)$ is a morphism of groups for every \ks-algebra $R$. A morphism of $\s$\=/algebraic groups is a \emph{$\s$-closed embedding} if it is a $\s$-closed embedding of $\s$-varieties.

A \emph{\ks-Hopf algebra} is a \ks-algebra $R$ equipped with the structure of a Hopf-algebra over $k$ such that the Hopf algebra structure maps, (i.e., the comultiplication $\Delta\colon R\to R\otimes_k R$, the counit $\varepsilon\colon R\to k$ and the antipode $S\colon R\to R$) are morphisms of \ks-algebras. A \emph{\ks-Hopf subalgebra} of a \ks-Hopf algebra is a \ks-subalgebra that is a Hopf subalgebra.

The category of $\s$-algebraic groups over $k$ is anti-equivalent to the category of \ks-Hopf algebras that are finitely $\s$-generated over $k$ (\cite[Rem. 2.3]{Wibmer:FinitenessPropertiesOfAffineDifferenceAlgebraicGroups}). A $\s$-closed $\s$-subvariety $H$ of a $\s$-algebraic group $G$ is a $\s$-closed subgroup if and only if $\I(H)\subseteq k\{G\}$ is a Hopf-ideal (\cite[Lemma 2.4]{Wibmer:FinitenessPropertiesOfAffineDifferenceAlgebraicGroups}).
A  $\s$-ideal that is also a Hopf ideal will be called a \emph{$\s$-Hopf ideal}.

 For a $\s$-algebraic group $G$, we denote the kernel of the counit $k\{G\}\to k$ by $\m_G$. Note that $\m_G$ is the $\s$-ideal of $k\{G\}$ that defines the trivial subgroup $1$ of $G$.

For a \ks-algebra $R$ we denote with $R^\sharp$ the $k$-algebra obtained from $R$ by forgetting $\s$. The functor $R\rightsquigarrow R^\sharp$ from the category of \ks-algebras to the category of $k$-algebras has a left adjoint $A\rightsquigarrow [\s]_k A$ (\cite[Lemma 1.7]{Wibmer:FinitenessPropertiesOfAffineDifferenceAlgebraicGroups}).  Explicitly, for a $k$-algebra $A$ the \ks-algebra $[\s]_k A$ is given as follows: For $i\in\nn$ let ${\hsi A}=A\otimes _k k$ denote the $k$-algebra obtained from $A$ by base change via $\s^i\colon k\to k$ and set $A[i]=A\otimes_k {\hs A}\otimes_k\ldots\otimes_k{\hsi A}$. Then $[\s]_k A$ is the union of the $A[i]$'s. 

\begin{lemma}[{\cite[Lemma 1.7]{Wibmer:FinitenessPropertiesOfAffineDifferenceAlgebraicGroups}}] \label{lemma: unicersal property for skA}
	The inclusion $A=A[0]\hookrightarrow [\s]_kA$ satisfies the following universal property: If $R$ is a \ks-algebra and $A\to R$ a morphism of $k$-algebras, then there exists a unique morphism $[\s]_kA\to R$ of \ks-algebras such that 
	$$
	\xymatrix{
A \ar[rr] \ar[rd] & & [\s]_kA \ar@{..>}[ld] \\
& R & 	
	}
$$
commutes.
\end{lemma}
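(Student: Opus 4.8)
The statement is precisely the universal property that makes $[\s]_k(-)$ left adjoint to the forgetful functor $(-)^\sharp\colon\ksalg\to\kalg$, with $A\hookrightarrow[\s]_kA$ as unit, so the plan is to verify this universal property by hand. First I would recall how $\s$ acts on $[\s]_kA=\bigcup_i A[i]$: it is the \emph{shift} that carries $A[i]$ into $A[i+1]$, inserting a new factor $1$ on the left and moving each factor ${}^{\s^j}\!A$ into ${}^{\s^{j+1}}\!A$ through the canonical base-change map $s_j\colon{}^{\s^j}\!A\to{}^{\s^{j+1}}\!A$, $a\otimes\lambda\mapsto a\otimes\s(\lambda)$. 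In particular the image of $A=A[0]$ $\s$-generates $[\s]_kA$ as a \ks-algebra, so that $A[i]$ is generated as a $k$-algebra by $A,\s(A),\ldots,\s^i(A)$. This observation already yields \emph{uniqueness}: any morphism $g\colon[\s]_kA\to R$ of \ks-algebras extending $f$ must satisfy $g(\s^j(a))=\s^j(f(a))$ for all $a\in A$, and since these elements generate $[\s]_kA$ as a $k$-algebra, $g$ is forced.

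For \emph{existence} I would build $\tilde f$ one tensor factor at a time. For each $j$ the ring map $A\to R$, $a\mapsto\s^j(f(a))$, is $\s^j$-semilinear over $k$ because $\s^j(f(\lambda a))=\s^j(\lambda)\s^j(f(a))$; hence by the universal property of base change along $\s^j\colon k\to k$ it factors through a $k$-algebra homomorphism $f_j\colon{}^{\s^j}\!A\to R$ with $f_j(a\otimes\lambda)=\lambda\s^j(f(a))$. Since $R$ is commutative, tensoring $f_0,\ldots,f_i$ over $k$ gives a $k$-algebra map $\tilde f_i\colon A[i]\to R$, $a_0\otimes\cdots\otimes a_i\mapsto f_0(a_0)\cdots f_i(a_i)$. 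As $f_{i+1}(1)=\s^{i+1}(f(1))=1$, the maps $\tilde f_i$ are compatible with the inclusions $A[i]\hookrightarrow A[i+1]$ and therefore glue to a single $k$-algebra homomorphism $\tilde f\colon[\s]_kA\to R$ with $\tilde f|_A=f$, giving the commuting triangle.

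The one substantive point, and the step I expect to be the main obstacle, is checking that $\tilde f$ is actually a morphism of $\s$-rings, i.e.\ $\tilde f\circ\s=\s\circ\tilde f$. Everything hinges on the single identity $f_{j+1}\circ s_j=\s\circ f_j$ of maps ${}^{\s^j}\!A\to R$, which falls out of the two explicit formulas above: applying both sides to $a\otimes\lambda$ and comparing returns $\s(\lambda)\s^{j+1}(f(a))$ in each case. Granting this, I would apply $\tilde f$ to $\s(a_0\otimes\cdots\otimes a_i)=1\otimes s_0(a_0)\otimes\cdots\otimes s_i(a_i)$, use $f_0(1)=1$ together with $f_{j+1}(s_j(a_j))=\s(f_j(a_j))$ in each remaining factor, and invoke that $\s$ is a ring endomorphism of the commutative ring $R$ to rewrite the resulting product as $\s\big(f_0(a_0)\cdots f_i(a_i)\big)=\s(\tilde f(a_0\otimes\cdots\otimes a_i))$. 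This proves $\s$-equivariance, so $\tilde f$ is the desired morphism of \ks-algebras. The hard part will be purely bookkeeping: one must keep the semilinear twists aligned across the shift maps so that the powers of $\s$ on the coefficients from $k$ and the powers of $\s$ applied to $f$ match up. Once $f_{j+1}\circ s_j=\s\circ f_j$ is in hand, the rest is the formal universal properties of base change and of tensor products of commutative algebras.
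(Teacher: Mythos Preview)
Your argument is correct: the uniqueness follows because $A$ $\s$-generates $[\s]_kA$, and your explicit construction of $\tilde f$ via the factor maps $f_j\colon{}^{\s^j}\!A\to R$, $a\otimes\lambda\mapsto\lambda\,\s^j(f(a))$, is well-defined and yields a \ks-algebra morphism once one checks the key identity $f_{j+1}\circ s_j=\s_R\circ f_j$, which you verify correctly. The only thing to note is that the paper does not actually prove this lemma---it merely quotes it from \cite[Lemma~1.7]{Wibmer:FinitenessPropertiesOfAffineDifferenceAlgebraicGroups}---so there is no in-paper argument to compare against; your direct verification is precisely the kind of proof one would expect to find in the cited reference.
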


Let $\X$ be an affine scheme of finite type over $k$. Then the functor $R\rightsquigarrow ([\s]_k\X)(R)=X(R^\sharp)$ from the category of \ks-algebras to the category of sets is a \ks-variety. Indeed, $[\s]_k\X$ is represented by $[\s]_kk[\X]$, where $k[\X]$ is the coordinate ring of $\X$, (i.e., $\X=\spec(k[\X])$ respectively $\X=\Hom(k[\X],-)$).
To simplify the notation we will sometimes write $k\{\X\}$ instead of $k\{[\s]_k\X\}=[\s]_kk[\X]$.

\medskip

{\bf Notation for algebraic groups:}
For the purposes of this article, an algebraic group (over $k$) is, by definition, an affine group scheme of finite type (over $k$). In particular, in positive characteristic, an algebraic group need not be reduced. 
For an algebraic group $\G$, we denote with $|\G|$ the dimension of $k[\G]$ as a $k$\=/vector space. (If it is not finite this is simply $\infty$ and we employ the usual rules for calculating with this symbol.)
A \emph{closed subgroup} of an algebraic group is, by definition, a closed subgroup scheme. With $\G_\red$ we denote the underlying reduced scheme of an algebraic group $\G$. (If $k$ is perfect, $\G_\red$ is a closed subgroup of $\G$ by \cite[Cor.~1.39]{Milne:AlgebraicGroupsTheTheoryOfGroupSchemesOfFiniteTypeOverAField}.) The identity component of $\G$ is denoted with $\G^o$. A morphism $\pi\colon\G\to \H$ of algebraic groups or affine group schemes is a \emph{quotient map} if the dual map $\pi^*\colon k[\H]\to k[\G]$ is injective (equivalently faithfully flat). This is the appropriate analog of a surjective morphism of smooth algebraic groups over an algebraically closed field (\cite[Prop. 5.47]{Milne:AlgebraicGroupsTheTheoryOfGroupSchemesOfFiniteTypeOverAField}).
The image $\pi(\G)$ of a morphism $\pi\colon\G\to\H$ of algebraic groups or affine group schemes is the scheme-theoretic image (as in \cite[Def. 1.73]{Milne:AlgebraicGroupsTheTheoryOfGroupSchemesOfFiniteTypeOverAField}).

\medskip

Note that if $\G$ is an algebraic group over $k$, then $[\s]_k\G$ is a $\s$-algebraic group over $k$. A $\s$-closed subgroup of $\G$ is, by definition, a $\s$-closed subgroup of $[\s]_k\G$. If there is no danger of confusion we may sometimes write $\G$ instead of $[\s]_k\G$ also in other places.

  When working with examples of $\s$-algebraic groups we sometimes take the liberty to drop the \ks-algebra $R$ in the notation. For example, we may simply write \mbox{$G=\{g\in \Gm |\ \s(g)^5g^3=1\}$}, instead of cumbersomely saying that $G$ is the $\s$-closed subgroup of the multiplicative group $\Gm$ given by $G(R)=\{g\in R^\times|\ \s(g)^5g^3=1\}$ for any \ks-algebra $R$.

\begin{prop}[{\cite[Prop. 2.16]{Wibmer:FinitenessPropertiesOfAffineDifferenceAlgebraicGroups}}] \label{prop: exists sclosed embedding}
	For every $\s$-algebraic group $G$, there exist exists an algebraic group $\G$ and a $\s$-closed embedding $G\hookrightarrow [\s]_k\G$. In particular, every $\s$-algebraic group is isomorphic to a $\s$-closed subgroup of some general linear group.
\end{prop}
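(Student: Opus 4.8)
The plan is to imitate the classical embedding of an affine algebraic group into a general linear group, carried out $\s$-equivariantly by means of the universal property of $[\s]_k$ recorded in Lemma~\ref{lemma: unicersal property for skA}. First I would choose finitely many $\s$-generators $f_1,\ldots,f_m$ of the \ks-Hopf algebra $k\{G\}$, so that $k\{G\}$ is generated as a $k$-algebra by the elements $\s^l(f_j)$ with $l\in\nn$ and $1\le j\le m$. Forgetting $\s$, the comultiplication $\Delta\colon k\{G\}\to k\{G\}\otimes_k k\{G\}$ makes $k\{G\}^\sharp$ a $k$-coalgebra, so by the fundamental theorem of comodules the subcomodule $W$ of $k\{G\}^\sharp$ generated by $f_1,\ldots,f_m$ is a finite-dimensional $k$-subspace containing $f_1,\ldots,f_m$ with $\Delta(W)\subseteq W\otimes_k k\{G\}$. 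Fixing a $k$-basis $w_1,\ldots,w_n$ of $W$, I would write $\Delta(w_j)=\sum_i w_i\otimes b_{ij}$ with $b_{ij}\in k\{G\}$. Coassociativity and the counit axiom then give $\Delta(b_{ij})=\sum_l b_{il}\otimes b_{lj}$ and $\varepsilon(b_{ij})=\delta_{ij}$, and the antipode identity yields $\sum_l b_{il}S(b_{lj})=\delta_{ij}$, so the matrix $(b_{ij})$ is invertible.

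Next I would build the morphism directly on points, which sidesteps any hand-checking of Hopf compatibility. For a \ks-algebra $R$ and $g\in G(R)=\Hom(k\{G\},R)$, the matrix $\beta_R(g):=(g(b_{ij}))_{ij}$ lies in $\Gl_n(R^\sharp)=([\s]_k\Gl_n)(R)$ by the invertibility above, and the comultiplication and counit identities for the $b_{ij}$ show that $\beta_R\colon G(R)\to ([\s]_k\Gl_n)(R)$ is a group homomorphism, natural in $R$. Thus $\beta\colon G\to[\s]_k\Gl_n$ is a morphism of $\s$-algebraic groups, so its dual $\beta^*\colon[\s]_k k[\Gl_n]\to k\{G\}$ is automatically a morphism of \ks-Hopf algebras. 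By construction $\beta^*$ sends the coordinate function $x_{ij}$ to $b_{ij}$, and since it is a morphism of \ks-algebras it is precisely the unique extension, furnished by Lemma~\ref{lemma: unicersal property for skA}, of the $k$-algebra map $x_{ij}\mapsto b_{ij}$.

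It then remains to check that $\beta$ is a $\s$-closed embedding, i.e., that $\beta^*$ is surjective. The image of $\beta^*$ is a \ks-subalgebra of $k\{G\}$, hence stable under $\s$, and it contains every $b_{ij}$. The counit axiom applied to $\Delta(w_j)$ gives $w_j=\sum_i\varepsilon(w_i)\,b_{ij}$, so $W$, and therefore each $\s^l(W)$, lies in the image. Since the $\s^l(f_j)$ with $f_j\in W$ generate $k\{G\}$ as a $k$-algebra, the image is all of $k\{G\}$. Hence $\beta$ is a $\s$-closed embedding $G\hookrightarrow[\s]_k\Gl_n$, which proves the claim with $\G=\Gl_n$ and simultaneously the final assertion about general linear groups.

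The main obstacle is conceptual rather than computational: an arbitrary closed embedding of the underlying (non-finite-type) affine group scheme $\spec(k\{G\}^\sharp)$ into a $\Gl_n$ need not be compatible with $\s$, and one must organize the construction so that it lands in $[\s]_k\Gl_n$ rather than in some larger object that has forgotten $\s$. This is exactly what finite $\s$-generation of $k\{G\}$ buys us: a single finite-dimensional comodule $W$ captures all the $\s$-generators, and its matrix coefficients $b_{ij}$ already suffice, because the image of $\beta^*$ is $\s$-stable and hence recovers every $\s^l(W)$ from $W$ alone. Phrasing $\beta$ on points, so that the Hopf-algebra compatibility of $\beta^*$ is inherited for free from $\beta$ being a group morphism, is the cleanest way I see to avoid verifying the coalgebra axioms for the $\s$-extended map by hand.
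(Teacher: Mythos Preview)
Your proof is correct. The paper itself does not supply a proof of this proposition; it simply cites \cite[Prop.~2.16]{Wibmer:FinitenessPropertiesOfAffineDifferenceAlgebraicGroups}. Your argument---selecting a finite-dimensional subcomodule $W\subseteq k\{G\}^\sharp$ containing a set of $\s$-generators, reading off the matrix coefficients $b_{ij}$ to obtain a morphism $G\to[\s]_k\Gl_n$, and then observing that the image of the dual map is a \ks-subalgebra containing $W$ and hence all of $k\{G\}$---is the standard one and is essentially how the cited reference proceeds. The only minor remark is that your invocation of Lemma~\ref{lemma: unicersal property for skA} is somewhat redundant once you phrase $\beta$ on points: having defined $\beta_R(g)=(g(b_{ij}))\in\Gl_n(R^\sharp)=([\s]_k\Gl_n)(R)$ functorially, the dual $\beta^*$ is already a morphism of \ks-Hopf algebras by Yoneda, with no separate appeal to the universal property needed.
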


For a $\s$-variety $X$ we denote with $X^\sharp$ the (affine) scheme obtained from $X$ by forgetting $\s$, i.e., $X^\sharp=\spec(k\{X\}^\sharp)$ or, equivalently, $X=\Hom(k\{X\}^\sharp,-)$ as a functor from the category of $k$-algebras to the category of sets. For example, for an algebraic group $\G$, we have $([\s]_k\G)^\sharp=\G\times{\hs\G}\times{^{\s^2}\!\G}\times\ldots$. The following lemma is a geometric reformulation for groups of Lemma \ref{lemma: unicersal property for skA}.
\begin{lemma} \label{lemma: universal property with Hopf}
	Let $\G$ be an algebraic group. The projection $([\s]_k\G)^\sharp\to \G$ onto the first factor satisfies the following universal property: If $G$ is a $\s$-algebraic group and $G^\sharp\to\G$ a morphism of group schemes, then there exists a unique morphism $\f\colon G\to [\s]_k \G$ of $\s$-algebraic groups such that
	$$
	\xymatrix{
	([\s]_k\G)^\sharp  \ar[rr] & & \G  \\
	 & G^\sharp \ar[ru] \ar@{..>}^-{\f^\sharp}[lu] &
}
	$$  
	commutes.
\end{lemma}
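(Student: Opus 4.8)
The plan is to dualize the whole statement and reduce it to the algebra-level universal property of Lemma \ref{lemma: unicersal property for skA}. First I note that since $G$ is a $\s$-algebraic group, the Hopf structure maps on $k\{G\}$ are in particular morphisms of $k$-algebras, so $k\{G\}^\sharp$ is a Hopf algebra and $G^\sharp=\spec(k\{G\}^\sharp)$ is genuinely an affine group scheme. Under the anti-equivalences, a morphism of group schemes $G^\sharp\to\G$ is the same datum as a morphism of Hopf algebras $\alpha\colon k[\G]\to k\{G\}^\sharp$, a morphism $\f\colon G\to[\s]_k\G$ of $\s$-algebraic groups is the same as a morphism of \ks-Hopf algebras $\beta=\f^*\colon [\s]_kk[\G]\to k\{G\}$, and the projection $([\s]_k\G)^\sharp\to\G$ is dual to the inclusion $\iota\colon k[\G]=A[0]\hookrightarrow[\s]_kk[\G]$ of Lemma \ref{lemma: unicersal property for skA} (with $A=k[\G]$). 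Chasing these identifications, the commutativity of the triangle in the statement translates into the single equation $\beta\circ\iota=\alpha$, i.e. $\beta|_A=\alpha$.

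With this dictionary in place, I would apply Lemma \ref{lemma: unicersal property for skA} with $A=k[\G]$ and $R=k\{G\}$ to the underlying $k$-algebra morphism of $\alpha$: it produces a unique morphism of \ks-algebras $\beta\colon[\s]_kk[\G]\to k\{G\}$ with $\beta|_A=\alpha$. This at once settles the existence of $\beta$ as a \ks-algebra map, the commutativity of the triangle, and the uniqueness of $\f$ among morphisms of $\s$-algebraic groups, since any competitor dualizes to a \ks-algebra map restricting to $\alpha$ on $A$ and hence equals $\beta$ by the uniqueness clause of Lemma \ref{lemma: unicersal property for skA}.

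The hard part will be upgrading $\beta$ from a morphism of \ks-algebras to a morphism of \ks-Hopf algebras, so that its dual $\f$ is genuinely a morphism of $\s$-algebraic groups. The key auxiliary fact is that $\iota\colon k[\G]\hookrightarrow[\s]_kk[\G]$ is itself a morphism of Hopf algebras; dually this says that the projection $([\s]_k\G)^\sharp\to\G$ is a group homomorphism, which is clear because $([\s]_k\G)^\sharp=\G\times{\hs\G}\times\cdots$ carries the componentwise group structure and projection onto a factor of a direct product is a homomorphism. In particular $\Delta_{[\s]_kk[\G]}\circ\iota=(\iota\otimes\iota)\circ\Delta_{k[\G]}$, and likewise for the counit and the antipode. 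To verify, for instance, that $\beta$ respects comultiplication, I observe that $\Delta_{k\{G\}}\circ\beta$ and $(\beta\otimes\beta)\circ\Delta_{[\s]_kk[\G]}$ are both morphisms of \ks-algebras out of $[\s]_kk[\G]$, so by the uniqueness clause of Lemma \ref{lemma: unicersal property for skA} it suffices to check that they agree after restriction to $A=k[\G]$. There the first equals $\Delta_{k\{G\}}\circ\alpha$ while the second equals $(\beta\otimes\beta)\circ(\iota\otimes\iota)\circ\Delta_{k[\G]}=(\alpha\otimes\alpha)\circ\Delta_{k[\G]}$, and these coincide precisely because $\alpha$ is a morphism of Hopf algebras. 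The identical restrict-to-$A$-and-invoke-uniqueness argument handles the counit and the antipode, whereupon $\beta$ is a morphism of \ks-Hopf algebras and $\f\colon G\to[\s]_k\G$ with $\f^*=\beta$ is the desired morphism of $\s$-algebraic groups.
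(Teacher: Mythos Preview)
Your proof is correct and follows exactly the route the paper takes: dualize to Hopf algebras and invoke the universal property of Lemma~\ref{lemma: unicersal property for skA}. The paper's own proof is a one-line citation of Lemma~\ref{lemma: unicersal property for skA} together with \cite[Lemma~2.15]{Wibmer:FinitenessPropertiesOfAffineDifferenceAlgebraicGroups}, where the Hopf-algebra compatibility you spell out (that the unique \ks-algebra extension $\beta$ automatically respects $\Delta$, $\varepsilon$, $S$ via the restrict-to-$A$-and-use-uniqueness trick) is recorded; you have simply unpacked that external reference.
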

\begin{proof}
	This is clear from Lemma \ref{lemma: unicersal property for skA} and \cite[Lemma 2.15]{Wibmer:FinitenessPropertiesOfAffineDifferenceAlgebraicGroups}, where the statement is formulated in terms of Hopf-algebras.
\end{proof}

\begin{defi}
	Let $\X$ be an affine scheme of finite type over $k$ and let $Y$ be a $\s$-closed $\s$\=/subvariety of $[\s]_k\X$. Then $Y$ is defined by a $\s$-ideal $\I(Y)\subseteq k\{\X\}=\cup_{i\in\nn}k[\X][i]$. For $i\in\nn$ the closed subscheme $Y[i]$ of $\X\times {\hs \X}\times\ldots\times {\hsi \X}$ defined by $\I(Y[i])=\I(Y)\cap k[X][i]$ is called the \emph{$i$-th order Zariski closure} of $Y$ in $\X$. The $\s$-variety $Y$ is \emph{Zariski dense} in $\X$ if $Y[0]=\X$.
\end{defi}

We may sometimes also refer to $H[0]$ as the Zariski closure of $Y$ in $\X$.
Note that for a $\s$-closed subgroup $G$ of an algebraic group $\G$, the $i$-th order Zariski closure $G[i]$ of $G$ in $\G$ is a closed subgroup of $\G\times {\hs \G}\times\ldots\times {\hsi \G}$. Moreover, the projections $$\pi_i\colon G[i]\to G[i-1],\ (g_0,\ldots, g_i)\mapsto (g_0,\ldots,g_{i-1})$$ are quotient maps of algebraic groups .

\begin{theo}[{\cite[Theorem 3.7]{Wibmer:FinitenessPropertiesOfAffineDifferenceAlgebraicGroups}}] \label{theo: sdimension}
	Let $G$ be a $\s$-algebraic group, considered as a $\s$-closed subgroup of some algebraic group $\G$ via a $\s$-closed embedding $G\hookrightarrow [\s]_k\G$. For $i\in \nn$ let $G[i]$ denote the $i$-th order Zariski closure of $G$ in $\G$. Then there exist $d,e\in \nn$ such that
	$$\dim(G[i])=d(i+1)+e  \text{ for all sufficiently large } i\in\nn.$$ The integer $d$ does not depend on the choice of $\G$ and the $\s$-closed embedding $G\hookrightarrow [\s]_k\G$. If $d=0$, the integer $e$ does not depend on the choice of $\G$ and the $\s$-closed embedding $G\hookrightarrow [\s]_k\G$. 
\end{theo}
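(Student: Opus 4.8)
The plan is to control the dimension increments $b_i:=\dim(G[i])-\dim(G[i-1])$ through the kernels of the projection maps and to show that these increments are non-increasing. Since each $\pi_i\colon G[i]\to G[i-1]$ is a quotient map of algebraic groups, its scheme-theoretic kernel is the closed subgroup $N_i=\{g\in{}^{\sigma^i}\G\mid (1,\ldots,1,g)\in G[i]\}$ of ${}^{\sigma^i}\G$, and for quotient maps of algebraic groups one has $\dim(G[i])=\dim(G[i-1])+\dim(N_i)$; thus $b_i=\dim(N_i)$, where $G[-1]$ is a point and $N_0=G[0]$. The crux is the inclusion
$$N_{i+1}\subseteq {}^\sigma N_i \qquad\text{inside } {}^{\sigma^{i+1}}\G.$$
To establish it, I would take $f\in\I(G)\cap k[\G][i]$; since $\I(G)$ is a $\s$-ideal, $\s(f)\in\I(G)$, and $\s$ shifts $k[\G][i]=\bigotimes_{j=0}^{i}{}^{\sigma^j}k[\G]$ into $\bigotimes_{j=1}^{i+1}{}^{\sigma^j}k[\G]\subseteq k[\G][i+1]$, so $\s(f)\in\I(G)\cap k[\G][i+1]$ vanishes on $G[i+1]$. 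As the defining ideal of ${}^\sigma N_i$ in ${}^{\sigma^{i+1}}\G$ is generated by exactly these elements $\s(f)$ (base change along $\s$ carries $f$ to $\s(f)$), every $g$ with $(1,\ldots,1,g)\in G[i+1]$ lies in ${}^\sigma N_i$. Because base change along the field embedding $\s\colon k\to k$ preserves dimension, this yields $\dim(N_{i+1})\le\dim({}^\sigma N_i)=\dim(N_i)$, i.e. $b_{i+1}\le b_i$.

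A non-increasing sequence of non-negative integers is eventually constant, say $b_i=d$ for $i\ge N$; summing the increments gives $\dim(G[i])=d(i+1)+e$ for $i\ge N$ with $e=\dim(G[N])-d(N+1)$. To verify $e\in\nn$, note that the $i=0$ case of the inclusion gives $N_1\subseteq{}^\sigma N_0={}^\sigma G[0]$, whence $d\le b_1=\dim(N_1)\le\dim(G[0])$, while $b_j\ge d$ for all $j$; therefore $e=(\dim(G[0])-d)+\sum_{j=1}^{N}(b_j-d)\ge 0$.

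For the independence statements I would use a comparison trick. Given embeddings into $[\s]_k\G$ and $[\s]_k\G'$, the diagonal map into $[\s]_k(\G\times\G')$ is again a $\s$-closed embedding, and each coordinate projection is compatible with the original embedding; applying the argument to $\G\times\G'\to\G$ and to $\G\times\G'\to\G'$ reduces everything to comparing an embedding $G\hookrightarrow[\s]_k\G''$ with an induced embedding $G\hookrightarrow[\s]_k\G$ through a homomorphism $\G''\to\G$. Writing $A_i,A_i''\subseteq k\{G\}$ for the images of $k[\G][i]$ and $k[\G''][i]$, one has $A_i\subseteq A_i''$, and finite $\s$-generation of $k\{G\}$ produces a constant $c$ with $A_i''\subseteq A_{i+c}$ (the finitely many generators of $A_0''$ lie in some $A_c$, and $\s^j(A_c)\subseteq A_{c+j}$). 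Each inclusion is an injection of finitely generated $k$-algebras, hence corresponds to a dominant morphism of the associated Zariski closures, giving
$$\dim(G[i])\le\dim(G''[i])\le\dim(G[i+c]).$$
Substituting the linear formulas and letting $i\to\infty$ forces the leading coefficients to coincide, so $d$ is independent of the embedding; and when $d=0$ all three terms are eventually the constants $e,e'',e$, so the same sandwich forces $e=e''$.

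I expect the main obstacle to be the monotonicity $\dim(N_{i+1})\le\dim(N_i)$: pinning down the shift of $\s$ and the base-change identification of defining ideals precisely enough that $N_{i+1}\subseteq{}^\sigma N_i$ genuinely follows from $\I(G)$ being a $\s$-ideal is the delicate point, and it relies on the standard but essential fact that quotient maps of algebraic groups are dimension-additive on their kernels. Once the sandwich $\dim(G[i])\le\dim(G''[i])\le\dim(G[i+c])$ is in place, the independence of $d$ (and of $e$ when $d=0$) is comparatively routine.
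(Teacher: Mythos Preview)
The present paper does not give a proof of this statement: Theorem~\ref{theo: sdimension} is quoted from \cite[Theorem~3.7]{Wibmer:FinitenessPropertiesOfAffineDifferenceAlgebraicGroups} as background and is not re-proved here. So there is no in-paper proof to compare against.

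That said, your argument is correct and is essentially the proof given in the cited reference. The key step---the closed embedding $N_{i+1}\hookrightarrow{}^{\sigma}N_i$ and the resulting monotonicity $\dim N_{i+1}\le\dim N_i$---is exactly what the paper restates (without proof) as Proposition~\ref{prop: limit of Gi}. Your justification of the inclusion via $\sigma(\I(G[i]))\subseteq\I(G[i+1])$ is the right one; phrased geometrically, the projection $\sigma_{i+1}\colon G[i+1]\to{}^{\sigma}(G[i])$, $(g_0,\dots,g_{i+1})\mapsto(g_1,\dots,g_{i+1})$, is well-defined precisely because $\I(G)$ is a $\sigma$-ideal, and it carries $\ker\pi_{i+1}$ into $\ker({}^{\sigma}\pi_i)={}^{\sigma}N_i$. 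Your sandwich argument for independence, via the diagonal embedding into $[\sigma]_k(\G\times\G')$ and the filtration $A_i\subseteq A_i''\subseteq A_{i+c}$ of subalgebras of $k\{G\}$, is likewise the standard route; note that these are inclusions of Hopf subalgebras, hence faithfully flat, which is what makes the dimension comparison clean even when $k\{G\}$ is not a domain.
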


The integer $d=\sdim(G)$ from the above theorem is the \emph{$\s$-dimension} of $G$. If $\sdim(G)=0$, the integer $e=\ord(G)$ is the \emph{order} of $G$. If $\sdim(G)>0$, we set $\ord(G)=\infty$.

\begin{ex} \label{ex: sdim=dim}
	For an algebraic group $\G$ one has $\sdim([\s]_k\G)=\dim(\G)$ (\cite[Example~3.10]{Wibmer:FinitenessPropertiesOfAffineDifferenceAlgebraicGroups}).
\end{ex}

\begin{lemma} \label{lemma: sdim and base change}
	Let $G$ be a $\s$-algebraic group and let $K$ be a $\s$-field extension of $k$. Then $\sdim(G_{K})=\sdim(G)$ and $\ord(G_{K})=\ord(G)$.
\end{lemma}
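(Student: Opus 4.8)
The plan is to compute both invariants through the defining Theorem~\ref{theo: sdimension}, using for $G_K$ an embedding obtained by base change from one of $G$, and then to reduce everything to the standard fact that the dimension of an affine scheme of finite type over a field is unchanged under field extension.

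First I would fix, by Proposition~\ref{prop: exists sclosed embedding}, an algebraic group $\G$ over $k$ together with a $\s$-closed embedding $G\hookrightarrow[\s]_k\G$, and let $\G_K$ be the algebraic group over $K$ obtained from $\G$ by base change. Base changing the embedding along $k\to K$ yields a $\s$-closed embedding $G_K\hookrightarrow([\s]_k\G)_K$. The first thing to verify is the identification $([\s]_k\G)_K\cong[\s]_K\G_K$ of $\s$-algebraic groups over $K$; on coordinate rings this is the claim $([\s]_kk[\G])\otimes_kK\cong[\s]_K(k[\G]\otimes_kK)$, which follows from the universal property of Lemma~\ref{lemma: unicersal property for skA}: for every $K$-$\s$-algebra $R$ both sides represent the functor $R\rightsquigarrow\Hom_k(k[\G],R^\sharp)$, so they agree by Yoneda. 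Consequently $G_K$ is a $\s$-closed subgroup of $[\s]_K\G_K$ and its $i$-th order Zariski closures $G_K[i]$ in $\G_K$ are defined.

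The heart of the argument is then to show that these Zariski closures are again base changes, i.e.\ $G_K[i]=(G[i])_K$ for every $i\in\nn$. Recall from the definition that $k[G[i]]$ is the image of $k[\G][i]$ under $k[\G][i]\hookrightarrow k\{\G\}\to k\{G\}$. Since $k\to K$ is faithfully flat, base change commutes with taking images, so the image of $k[\G][i]\otimes_kK$ in $k\{G\}\otimes_kK=K\{G_K\}$ is $k[G[i]]\otimes_kK$. Under the isomorphism of the previous paragraph the subalgebra $k[\G][i]\otimes_kK$ of $K\{G_K\}$ is exactly $K[\G_K][i]$, because the two order filtrations are generated by the coordinates of order at most $i$, which correspond to one another; hence this image equals $K[G_K[i]]$. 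Therefore $K[G_K[i]]=k[G[i]]\otimes_kK$, that is $G_K[i]=(G[i])_K$.

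Finally, since the dimension of an affine scheme of finite type over a field is invariant under arbitrary field extension, $\dim(G_K[i])=\dim((G[i])_K)=\dim(G[i])$ for all $i\in\nn$. The two integer sequences $i\mapsto\dim(G_K[i])$ and $i\mapsto\dim(G[i])$ thus coincide, so by Theorem~\ref{theo: sdimension} they produce the same constants $d$ and $e$; this gives $\sdim(G_K)=\sdim(G)$. When this common value is $0$ the eventual constant values of the two sequences agree, whence $\ord(G_K)=\ord(G)$, and when it is positive both orders are $\infty$ by convention. I expect the only real subtlety to be the bookkeeping in the third step---checking that the order filtration is carried correctly through the base change isomorphism---while the dimension invariance, though it does the essential work, may simply be quoted as standard.
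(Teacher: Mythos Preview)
Your proposal is correct and follows essentially the same approach as the paper's proof, which simply asserts that ``the formation of Zariski closures is compatible with base change'' and then invokes the invariance of dimension under field extension. You have merely unpacked the first assertion in detail---verifying $([\s]_k\G)_K\cong[\s]_K\G_K$ via the universal property and then $G_K[i]=(G[i])_K$ via faithful flatness of $k\to K$---where the paper takes it for granted.
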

\begin{proof}
	Since the formation of Zariski closures is compatible with base change, the claim follows from the fact that the dimension of a finitely generated $k$-algebra is invariant under base change.
\end{proof}

\begin{lemma} \label{lemma: sdim and ord for product}
	Let $G$ and $H$ be $\s$-algebraic groups. Then $G\times H$ is a $\s$-algebraic group with
	$\sdim(G\times H)=\sdim(G)+\sdim(H)$ and
	$\ord(G\times H)=\ord(G)+\ord(H).$
\end{lemma}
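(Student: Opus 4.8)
The plan is to reduce both formulas to the behaviour of Zariski closures under products and then invoke Theorem~\ref{theo: sdimension}.

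First I would dispatch the statement that $G\times H$ is again a $\s$-algebraic group: the product of two $\s$-varieties is a $\s$-variety with $k\{G\times H\}=k\{G\}\otimes_k k\{H\}$, and componentwise multiplication makes it a group object. For the numerical invariants I would fix $\s$-closed embeddings $G\iar[\s]_k\G$ and $H\iar[\s]_k\H$ into algebraic groups (Proposition~\ref{prop: exists sclosed embedding}). Since base change commutes with tensor products, ${\hsi{(\G\times\H)}}={\hsi\G}\times{\hsi\H}$, whence $k[\G\times\H][i]=k[\G][i]\otimes_k k[\H][i]$ for all $i$ and, taking the union over $i$, $[\s]_k(\G\times\H)=[\s]_k\G\times[\s]_k\H$. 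Thus $G\times H$ sits as a $\s$-closed subgroup of $[\s]_k(\G\times\H)$, and its invariants can be read off the Zariski closures $(G\times H)[i]$ inside $(\G\times\H)\times{\hs{(\G\times\H)}}\times\cdots\times{\hsi{(\G\times\H)}}$.

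The heart of the argument is the identity $(G\times H)[i]=G[i]\times H[i]$ for every $i$. To establish it I would use that $(G\times H)[i]$ is defined by the kernel of the composite $k[\G\times\H][i]\hookrightarrow k\{\G\times\H\}\to k\{G\times H\}$, which under the above identifications is the tensor product over $k$ of the composites $k[\G][i]\to k\{G\}$ and $k[\H][i]\to k\{H\}$. Over a field, the kernel of a tensor product of two $k$-linear surjections $\alpha\colon V\to V'$ and $\beta\colon W\to W'$ is $\ker(\alpha)\otimes_k W+V\otimes_k\ker(\beta)$; applying this with $\ker(k[\G][i]\to k\{G\})=\I(G)\cap k[\G][i]=\I(G[i])$ and likewise for $H$ shows that $(G\times H)[i]$ and $G[i]\times H[i]$ have the same defining ideal. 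I expect this computation of the kernel of a tensor product of surjections — which is exactly where the field hypothesis on $k$ enters, via flatness — to be the only genuine obstacle; everything else is bookkeeping.

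Granting the identity, the conclusion is routine: dimensions of products of finite-type $k$-schemes add, so $\dim((G\times H)[i])=\dim(G[i])+\dim(H[i])$. Writing $\dim(G[i])=\sdim(G)(i+1)+e_G$ and $\dim(H[i])=\sdim(H)(i+1)+e_H$ for all large $i$ (Theorem~\ref{theo: sdimension}), the sum equals $(\sdim(G)+\sdim(H))(i+1)+(e_G+e_H)$, so $\sdim(G\times H)=\sdim(G)+\sdim(H)$ by the same theorem. When this is $0$ both summands vanish and $\ord(G\times H)=e_G+e_H=\ord(G)+\ord(H)$; when it is positive, at least one $\sdim$ is positive and both sides equal $\infty$, so the order formula holds under the stated conventions for $\infty$.
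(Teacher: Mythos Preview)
Your proof is correct and follows essentially the same route as the paper: embed $G$ and $H$ as $\s$-closed subgroups of algebraic groups $\G$ and $\H$, observe that $G\times H$ is a $\s$-closed subgroup of $\G\times\H$, and reduce to the additivity of dimension for products of algebraic groups. The paper's proof is a single sentence that leaves the identity $(G\times H)[i]=G[i]\times H[i]$ implicit, whereas you spell out the kernel-of-tensor-product computation; this extra detail is sound and harmless.
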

\begin{proof}
	Let $\G$ and $\H$ be algebraic groups containing $G$ and $H$ respectively as $\s$-closed subgroups. Then $G\times H$ is a $\s$-closed subgroup of $\G\times \H$ and the claim reduces to the similar formula for algebraic groups.
\end{proof}

\begin{prop} \label{prop: limit of Gi}
	Let $G$ be a $\s$-algebraic group, considered as a $\s$-closed subgroup of some algebraic group $\G$ via a $\s$-closed embedding $G\hookrightarrow [\s]_k\G$. For $i\in \nn$ let $G[i]$ denote the $i$-th order Zariski closure of $G$ in $\G$. Set $\G_0=G[0]$ and for $i\geq 1$ let $\G_i$ denote the kernel of $\pi_i\colon G[i]\to G[i-1]$.
	Then, for every $i\geq 1$, there is a closed embedding $\G_i\to{\hs(\G_{i-1})}$, which, for sufficiently large $i$, is an isomorphism. Moreover:
	\begin{enumerate}
		\item The sequence $(\dim(\G_i))_{i\in\nn}$ is non-increasing and stabilizes with value $\sdim(G)$.	
		\item The sequence $(|\G_i|)_{i\in\nn}$ is non-increasing and therefore eventually constant. The eventual value $\ell=\lim_{i\to\infty}|\G_i|$ does not depend on the choice of $\G$ and the $\s$-closed embedding $G\hookrightarrow [\s]_k\G$.
	\end{enumerate}
\end{prop}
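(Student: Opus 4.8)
The plan is to build the embedding $\G_i\hookrightarrow\hs(\G_{i-1})$ by hand and then read off everything else from it. Write $k\{\G\}=\bigcup_i k[\G][i]$ with $k[\G][i]=k[\G]\otimes_k\hs k[\G]\otimes_k\cdots\otimes_k{}^{\sigma^i}\!k[\G]$, and regard $G[i]$ as a closed subgroup of $\G\times\hs\G\times\cdots\times{}^{\sigma^i}\!\G$. First I would introduce the projection $p$ that forgets the zeroth factor, that is, the morphism dual to the inclusion of the subalgebra $\hs(k[\G][i-1])=\hs k[\G]\otimes_k\cdots\otimes_k{}^{\sigma^i}\!k[\G]$ (the factors in positions $1,\dots,i$) into $k[\G][i]$, and show $p(G[i])\subseteq\hs(G[i-1])$. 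Indeed, the defining ideal of the scheme-theoretic image $p(G[i])$ inside $\hs(k[\G][i-1])$ is $\I(G[i])\cap\hs(k[\G][i-1])=\I(G)\cap\hs(k[\G][i-1])$, whereas $\hs(G[i-1])$ is cut out by the ideal generated by $\s(\I(G[i-1]))$; since $\I(G)$ is a $\s$-ideal, $\s(\I(G[i-1]))=\s(\I(G)\cap k[\G][i-1])\subseteq\s(\I(G))\subseteq\I(G)$, so the ideal of $\hs(G[i-1])$ is contained in that of $p(G[i])$ and the inclusion of schemes follows. Restricting $p$ to $\G_i=\ker(\pi_i)$: an element of $\G_i$ is trivial in positions $0,\dots,i-1$, so its image is trivial in positions $1,\dots,i-1$ and lies in $\hs(G[i-1])$, hence in $\hs(\G_{i-1})$ (base change commutes with kernels). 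As $p$ only forgets a coordinate that is already trivial on $\G_i$, the induced map $\G_i\to\hs(\G_{i-1})$ is injective with closed image, i.e.\ a closed embedding.

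From the embedding the monotonicity is immediate. A closed embedding of algebraic groups dualizes to a surjection $k[\hs\G_{i-1}]=\hs k[\G_{i-1}]\twoheadrightarrow k[\G_i]$, giving $|\G_i|\le|\hs\G_{i-1}|=|\G_{i-1}|$ and $\dim\G_i\le\dim\hs\G_{i-1}=\dim\G_{i-1}$, the equalities because twisting by $\s$ preserves both the Krull dimension and the $k$\=/vector\=/space dimension of a finitely generated $k$\=/algebra. Thus both sequences are non-increasing; a non-increasing sequence of non-negative integers stabilizes, and $(|\G_i|)$, non-increasing in $\nn\cup\{\infty\}$ with all terms $\ge1$, is likewise eventually constant. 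For the value of the dimension limit I would use the short exact sequence $1\to\G_i\to G[i]\xrightarrow{\pi_i}G[i-1]\to 1$: as $\pi_i$ is a quotient map, $\dim G[i]=\dim\G_i+\dim G[i-1]$, and Theorem~\ref{theo: sdimension} gives $\dim G[i]=d(i+1)+e$ for large $i$ with $d=\sdim(G)$, so $\dim\G_i=d=\sdim(G)$ eventually. This establishes (i) and the ``non-increasing, eventually constant'' half of (ii).

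The hard part is that $\G_i\to\hs(\G_{i-1})$ is an \emph{isomorphism} for large $i$. When $\sdim(G)=0$ this is clean: for large $i$ we have $\dim\G_i=0$, so $\G_i$ is finite and $|\G_i|$ equals its eventual value $\ell$; a closed embedding $\G_i\hookrightarrow\hs(\G_{i-1})$ of finite group schemes with $|\G_i|=\ell=|\hs\G_{i-1}|$ must be an isomorphism, since the dual surjection is then a surjection of $k$\=/vector\=/spaces of equal finite dimension. When $\sdim(G)>0$ the orders are infinite and a finer argument is required, and this is where I expect the real difficulty. The useful observation is that $\G_i$ and $\hs(\G_{i-1})$ are closed subschemes of the \emph{same} ambient group ${}^{\sigma^i}\!\G$ with $\G_i\subseteq\hs(\G_{i-1})$; fixing a closed immersion $\G\hookrightarrow\A^M$ and twisting (affine space is twist\=/invariant and $\s$ preserves the degree filtration), both sit in $\A^M$ and for every $n$ one gets $h_{\G_i}(n)\le h_{\hs\G_{i-1}}(n)=h_{\G_{i-1}}(n)$, so each Hilbert value $i\mapsto h_{\G_i}(n)$ is non-increasing and stabilizes. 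The crux is to promote this pointwise stabilization to a simultaneous one, which forces $k[\hs\G_{i-1}]\xrightarrow{\ \sim\ }k[\G_i]$ and hence the isomorphism. Concretely this means bounding the regularity of the $\G_i$ uniformly --- the Hilbert polynomials are pointwise dominated and of fixed degree $d$, hence stabilize, while the finitely many sub\=/regularity values must be controlled uniformly --- equivalently, showing that a certain $\s$\=/stable ascending chain of kernel ideals stabilizes. This is the group\=/scheme avatar of Cohn's stabilization of limit degrees for finitely $\s$\=/generated extensions, and I regard it as the main obstacle.

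It remains to see that $\ell$ does not depend on $\G$ or the embedding. If $\sdim(G)>0$ then $\dim\G_i\ge\sdim(G)>0$ for all $i$, so $|\G_i|=\infty$ and $\ell=\infty$ regardless of the choices. If $\sdim(G)=0$, I would compare two embeddings $G\hookrightarrow[\s]_k\G$ and $G\hookrightarrow[\s]_k\G'$ through the diagonal $G\hookrightarrow[\s]_k(\G\times\G')$. The two projections restrict to the identity on $G$ and, compatibly with the maps $\pi_i$, induce quotient maps between the corresponding $i$\=/th order Zariski closures; on the (eventually finite) kernels this yields comparison morphisms relating the three limiting orders $\ell,\ell',\ell''$. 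The cleanest way to conclude equality is to identify $\ell$ with an intrinsic invariant of $G$, namely the limit degree of the $\s$\=/field extension of $k$ attached to a generic point of an irreducible component of $G$, which by Cohn's theory is independent of the embedding; the comparison through $\G\times\G'$ then confirms $\ell=\ell''=\ell'$. The subtle point to verify here is that the induced maps on kernels become isomorphisms simultaneously, for which the intrinsic interpretation is the natural tool.
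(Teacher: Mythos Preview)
The paper does not actually prove this proposition; it simply cites \cite[Prop.~3.1, Cor.~3.16, Prop.~5.1]{Wibmer:FinitenessPropertiesOfAffineDifferenceAlgebraicGroups}. So what you are really attempting is to reconstruct those proofs. Your construction of the closed embedding $\G_i\hookrightarrow\hs(\G_{i-1})$ via the projection forgetting the zeroth factor is correct, and the monotonicity of $(\dim\G_i)$ and $(|\G_i|)$, together with the identification of the dimension limit via Theorem~\ref{theo: sdimension}, is fine.

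The genuine gap is in the isomorphism statement when $\sdim(G)>0$. Your proposed route through Hilbert functions and uniform regularity bounds is both incomplete and unnecessarily hard: you correctly sense that a finiteness/stabilization result is needed, but you do not invoke the one that is already available. Theorem~\ref{theo: first finiteness} (also quoted from \cite{Wibmer:FinitenessPropertiesOfAffineDifferenceAlgebraicGroups}) gives an $m$ such that $\I(G)[i]=(\I(G)[i-1],\s(\I(G)[i-1]))$ for all $i>m$. Geometrically this says
\[
G[i]=(G[i-1]\times{}^{\sigma^i}\!\G)\cap(\G\times\hs(G[i-1]))\quad\text{inside }\G\times\hs\G\times\cdots\times{}^{\sigma^i}\!\G.
\]
Intersecting with $\{1\}\times\cdots\times\{1\}\times{}^{\sigma^i}\!\G$ you find that $(1,\ldots,1,g_i)\in G[i]$ if and only if $(1,\ldots,1,g_i)\in\hs(G[i-1])$, i.e.\ if and only if it lies in $\ker(\hs\pi_{i-1})=\hs(\G_{i-1})$. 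Thus your closed embedding $\G_i\hookrightarrow\hs(\G_{i-1})$ is an equality for $i>m$, with no case distinction on $\sdim(G)$ and no Hilbert-function analysis. This is the ``certain $\s$-stable ascending chain stabilizes'' statement you allude to; it is precisely Theorem~\ref{theo: first finiteness}, and it is the intended tool.

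For the independence of $\ell$ from the embedding, your diagonal comparison through $\G\times\G'$ is the right idea, but the sketch is too loose. Once you know the isomorphism $\G_i\simeq\hs(\G_{i-1})$ holds for large $i$ (via the argument above), the diagonal embedding gives, for large $i$, short exact sequences relating $\G_i''$ to $\G_i$ and $\G_i'$ from which $|\G_i''|=|\G_i|\cdot|\text{something finite and eventually constant}|$, and symmetrically for $\G'$; equating yields $\ell=\ell'$. The appeal to Cohn's limit degree for $\s$-field extensions is a legitimate alternative, but it only directly handles the $\s$-integral case and needs additional work otherwise; the diagonal argument is cleaner and self-contained once the eventual isomorphism is in hand.
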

\begin{proof}
	The first statement is \cite[Prop. 3.1]{Wibmer:FinitenessPropertiesOfAffineDifferenceAlgebraicGroups}. Point (i) is \cite[Cor. 3.16]{Wibmer:FinitenessPropertiesOfAffineDifferenceAlgebraicGroups}. Point (ii) is \cite[Prop. 5.1]{Wibmer:FinitenessPropertiesOfAffineDifferenceAlgebraicGroups}.
\end{proof}

The value $\ell=\ld(G)$ from the above proposition is the \emph{limit degree} of $G$. Note that $\ld(G)$ is finite if and only if $\sdim(G)=0$ (otherwise $\ld(G)=\infty$). 

The $\s$-closed subgroups of a $\s$-algebraic group satisfy a dimension theorem:
\begin{theo}[{\cite[Theorem 4.6]{Wibmer:FinitenessPropertiesOfAffineDifferenceAlgebraicGroups}}] \label{theo: dimension theore}
	Let $H_1$ and $H_2$ be $\s$-closed subgroups of a $\s$-algebraic group $G$. Then
	$$\sdim(H_1\cap H_2)\geq \sdim(H_1)+\sdim(H_2)-\sdim(G).$$
	
\end{theo}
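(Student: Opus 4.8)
The plan is to reduce the statement to the classical dimension theorem for closed subgroups of an algebraic group, exploiting the homogeneity of the fibres of the multiplication map. Conceptually, I would consider the morphism $m\colon H_1\times H_2\to G$ of $\s$-varieties given by $(h_1,h_2)\mapsto h_1h_2^{-1}$. Its fibre over the identity is the diagonally embedded copy of $H_1\cap H_2$, and every nonempty fibre is a coset of this diagonal; hence all fibres have the same $\s$-dimension $\sdim(H_1\cap H_2)$. Writing $P\subseteq G$ for the image of $m$, so that $\sdim(P)\leq\sdim(G)$, and using $\sdim(H_1\times H_2)=\sdim(H_1)+\sdim(H_2)$ (Lemma~\ref{lemma: sdim and ord for product}), a homogeneous $\s$-fibre-dimension formula $\sdim(H_1\times H_2)=\sdim(P)+\sdim(H_1\cap H_2)$ would immediately give $\sdim(H_1)+\sdim(H_2)\leq\sdim(G)+\sdim(H_1\cap H_2)$, which is the assertion. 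The whole content is therefore this fibre-dimension formula.

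To realize this with the tools at hand, I would pass to Zariski closures. Fixing a $\s$-closed embedding $G\hookrightarrow[\s]_k\G$ and setting $\G_i=\G\times\hs\G\times\cdots\times{}^{\sigma^i}\!\G$, I form the $i$-th order Zariski closures $G[i],H_1[i],H_2[i]\subseteq\G_i$; since $H_1,H_2\leq G$, these satisfy $H_1[i],H_2[i]\leq G[i]$. Let $N_i^{G}=\ker(G[i]\to G[i-1])$ and $N_i^{H_j}=\ker(H_j[i]\to H_j[i-1])$ denote the kernels of the projections, so that $N_i^{H_j}\leq N_i^{G}$. By Proposition~\ref{prop: limit of Gi}, for all sufficiently large $i$ one has $\dim(N_i^{G})=\sdim(G)$ and $\dim(N_i^{H_j})=\sdim(H_j)$. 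Applying the classical dimension theorem to the closed subgroups $N_i^{H_1},N_i^{H_2}$ of the algebraic group $N_i^{G}$ yields, for large $i$,
$$\dim\big(N_i^{H_1}\cap N_i^{H_2}\big)\geq\dim(N_i^{H_1})+\dim(N_i^{H_2})-\dim(N_i^{G})=\sdim(H_1)+\sdim(H_2)-\sdim(G).$$
This is exactly the desired bound, but attached to the \emph{intersection of the Zariski closures} rather than to the Zariski closure of the intersection $H_1\cap H_2$.

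The main obstacle is to bridge this gap, and I expect it to be the crux of the proof. The kernel of $(H_1\cap H_2)[i]\to(H_1\cap H_2)[i-1]$, whose eventual dimension computes $\sdim(H_1\cap H_2)$ by Proposition~\ref{prop: limit of Gi}, is contained in $N_i^{H_1}\cap N_i^{H_2}$, but a priori it could be strictly smaller: a relation in $\I(H_1)+\I(H_2)$ of order at most $i$ need not be expressible using only the order-$\leq i$ generators, so intersecting closures can overshoot the closure of the intersection. I would try to close this by showing that in the stable range the containment is an equality, using that the identifications $N_i^{H_j}\xrightarrow{\sim}\hs(N_{i-1}^{H_j})$ and $N_i^{G}\xrightarrow{\sim}\hs(N_{i-1}^{G})$ from Proposition~\ref{prop: limit of Gi} are compatible with the inclusions $N_i^{H_j}\leq N_i^{G}$, so that the intersections stabilize as well and no new order-$i$ relation can shrink the kernel once $i$ is large.

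Alternatively, and perhaps more transparently, I would sidestep the gap by working with generic points. Base-changing to a sufficiently large inversive, algebraically closed $\s$-field is harmless since $\sdim$ is unchanged (Lemma~\ref{lemma: sdim and base change}). Choosing independent generic points $h_1\in H_1$ and $h_2\in H_2$ that together realize $\sdim(H_1)+\sdim(H_2)$, the coset structure of the fibre of $m$ gives $\strdeg(k\langle h_1,h_2\rangle/k\langle h_1h_2^{-1}\rangle)\leq\sdim(H_1\cap H_2)$, while $\strdeg(k\langle h_1h_2^{-1}\rangle/k)\leq\sdim(G)$. Additivity of $\s$-transcendence degree in towers then yields $\sdim(H_1)+\sdim(H_2)\leq\sdim(H_1\cap H_2)+\sdim(G)$. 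In either form, establishing the homogeneity of the fibres is the decisive step; the remainder is bookkeeping.
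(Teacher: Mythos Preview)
This theorem is not proved in the present paper; it is quoted without proof from \cite[Theorem~4.6]{Wibmer:FinitenessPropertiesOfAffineDifferenceAlgebraicGroups}. There is therefore no proof here to compare against, and I can only assess your proposal on its own terms.

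The obstacle you isolate is genuine, but your proposed fix---that in the stable range the containment $N_i^{H_1\cap H_2}\subseteq N_i^{H_1}\cap N_i^{H_2}$ becomes an equality---is false. Take $G=\Ga^2$ in characteristic $\neq 2$, with $H_1=\{(a,b)\mid \s(a)=b\}$ and $H_2=\{(a,b)\mid \s(a)=-b\}$. For every $i\geq 1$ one finds $N_i^{H_1}=N_i^{H_2}=\{(0,\ldots,0,b_i)\}$, so $N_i^{H_1}\cap N_i^{H_2}$ has dimension $1$; but $H_1\cap H_2=\{(a,0)\mid \s(a)=0\}$ and $N_i^{H_1\cap H_2}=0$ for all $i\geq 1$. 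All the sequences have stabilized, the embeddings into $N_i^{G}$ are compatible exactly as you say, and yet the gap persists forever. So stabilization of the $N_i^{H_j}$ via Proposition~\ref{prop: limit of Gi} does not force the two kernels to agree, and the inequality $\dim(N_i^{H_1}\cap N_i^{H_2})\geq\sdim(H_1)+\sdim(H_2)-\sdim(G)$ does not by itself bound $\sdim(H_1\cap H_2)$ from below. Your generic-point alternative presupposes an identification of $\sdim$ with $\strdeg$ of a generic point and the existence of such points, neither of which is set up in this paper; and the $\s$-fibre-dimension formula you invoke for the multiplication map is itself a statement of roughly the same strength as the theorem (when one of $H_1,H_2$ normalizes the other it follows from Theorem~\ref{theo: isom2} together with Corollary~\ref{cor: sdim and ord for quotients}, but the general case is not covered by those tools). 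In short, you have correctly located the difficulty, but none of the three sketches closes it.
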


The following finiteness theorem is the combination of Theorem 4.1 and Corollaries 4.2 and 4.3 in \cite{Wibmer:FinitenessPropertiesOfAffineDifferenceAlgebraicGroups}.

\begin{theo}  \label{theo: first finiteness} Every descending chain of $\s$-closed subgroups of a $\s$-algebraic group is finite. In fact, if $G$ is a $\s$-closed subgroup of a $\s$-algebraic group $H$, then $\I(G)\subseteq k\{H\}$ is finitely $\s$-generated. Moreover, if $H=[\s]_k\G$ for some algebraic group $\G$ and $G[i]$ denotes the $i$-th order Zariski closure of $G$ in $\G$, then there exists an $m\in\nn$ such that $$\I(G)[i]=\big(\I(G)[i-1], \s(\I(G)[i-1])\big)$$
	for all $i>m$.
\end{theo}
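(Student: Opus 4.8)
The plan is to establish the three assertions in the reverse of the order stated: the structural description of $\I(G)[i]$ is the substantive point, and both finite $\s$-generation and the descending chain condition will follow from it. For the structural statement we are already in the setting $H=[\s]_k\G$, so write $\X=\G$ and $k[\X][i]=k[\G]\otimes_k{}^\s\! k[\G]\otimes_k\cdots\otimes_k{}^{\s^i}\! k[\G]$, let $G[i]\subseteq P:=\G\times\hs\G\times\cdots\times{}^{\s^i}\!\G$ be the $i$-th order Zariski closure, and recall from Proposition~\ref{prop: limit of Gi} that $\G_0=G[0]$ and $\G_i=\ker(\pi_i\colon G[i]\to G[i-1])$. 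Introduce the closed subgroup $\wtilde{G[i]}$ of $P$ defined by the Hopf ideal $\idb_i:=\big(\I(G)[i-1],\,\s(\I(G)[i-1])\big)$. Because $\I(G)$ is a $\s$-ideal we have $\idb_i\subseteq\I(G)[i]$, that is $G[i]\subseteq\wtilde{G[i]}$, and the claim is that this is an equality for $i$ large.

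Under the projection $q\colon P\to\G\times\cdots\times{}^{\s^{i-1}}\!\G$ dropping the last factor one identifies $\V(\I(G)[i-1])=G[i-1]\times{}^{\s^i}\!\G$ and $\V(\s(\I(G)[i-1]))=\G\times\hs(G[i-1])$, so $\wtilde{G[i]}=(G[i-1]\times{}^{\s^i}\!\G)\cap(\G\times\hs(G[i-1]))$. I would then compute the restriction of $q$ to the two groups. The inclusion $\s(\I(G)[i-2])\subseteq\I(G)[i-1]$ gives $G[i-1]\subseteq\G\times\hs(G[i-2])$, and combined with surjectivity of $\hs\pi_{i-1}$ this shows that the scheme-theoretic image of both $G[i]$ and $\wtilde{G[i]}$ under $q$ is $G[i-1]$. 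Their kernels are $\G_i$ and $\hs(\G_{i-1})$ respectively, and the inclusion $G[i]\subseteq\wtilde{G[i]}$ is a morphism of extensions of $G[i-1]$ that induces on kernels precisely the closed embedding $\G_i\hookrightarrow\hs(\G_{i-1})$ of Proposition~\ref{prop: limit of Gi}.

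The main obstacle is to promote this to an equality of group schemes once that proposition guarantees, for $i$ large, that $\G_i\to\hs(\G_{i-1})$ is an isomorphism. Since $\pi_i$ and $\wtilde{G[i]}\to G[i-1]$ are quotient maps, $k[G[i]]$ and $k[\wtilde{G[i]}]$ are faithfully flat $k[G[i-1]]$-algebras, exhibiting $G[i]$ and $\wtilde{G[i]}$ as a $\G_i$-torsor and an $\hs(\G_{i-1})$-torsor over $G[i-1]$; fppf-locally on $G[i-1]$ the surjection $k[\wtilde{G[i]}]\twoheadrightarrow k[G[i]]$ becomes $\id\otimes\big(k[\hs(\G_{i-1})]\to k[\G_i]\big)$. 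As the latter is an isomorphism for large $i$ and the property of being an isomorphism descends along a faithfully flat cover, the surjection is an isomorphism, which is exactly $\I(G)[i]=\idb_i$ for all $i>m$. Obtaining this scheme-theoretic equality, rather than a mere equality on $R$-points, is the delicate step because the groups need not be reduced; the torsor/descent argument (equivalently, the five lemma for the two short exact sequences of affine group schemes) is what makes the kernel isomorphism of Proposition~\ref{prop: limit of Gi} decisive.

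Finally the remaining assertions follow formally. Iterating the recursion yields $\I(G)[i]=\big(\I(G)[m],\s(\I(G)[m]),\ldots,\s^{i-m}(\I(G)[m])\big)$ for $i\geq m$; since $\I(G)[m]$ is a finitely generated ideal of the Noetherian ring $k[\X][m]$, any finite generating set $F$ of it satisfies $\I(G)=\bigcup_i\I(G)[i]=[F]$, so $\I(G)$ is finitely $\s$-generated, and for a general $H$ one transfers this along a $\s$-closed embedding $H\hookrightarrow[\s]_k\G$ (Proposition~\ref{prop: exists sclosed embedding}), the image of a finite $\s$-generating set still $\s$-generating $\I(G)\subseteq k\{H\}$. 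For the descending chain condition, a descending chain $G=G^{(0)}\supseteq G^{(1)}\supseteq\cdots$ of $\s$-closed subgroups corresponds to an ascending chain of $\s$-Hopf ideals $\I(G^{(0)})\subseteq\I(G^{(1)})\subseteq\cdots$ whose union $\ida_\infty$ is again a $\s$-Hopf ideal, hence defines the $\s$-closed subgroup $\bigcap_j G^{(j)}$ of $H$. By the finite $\s$-generation just proved, $\ida_\infty=[f_1,\ldots,f_r]$ for finitely many $f_s$; choosing $j_0$ with $f_1,\ldots,f_r\in\I(G^{(j_0)})$ forces $\I(G^{(j_0)})=\ida_\infty$, so the chain stabilizes.
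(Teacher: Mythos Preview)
Your proposal is correct. The paper does not prove this theorem here but simply records it as a combination of results from \cite{Wibmer:FinitenessPropertiesOfAffineDifferenceAlgebraicGroups} (Theorem~4.1 and Corollaries~4.2, 4.3), so there is no in-text proof to compare against; your argument, deducing the recursion for $\I(G)[i]$ from the eventual isomorphism $\G_i\simeq{\hs(\G_{i-1})}$ of Proposition~\ref{prop: limit of Gi} via a short-exact-sequence/five-lemma argument, is precisely the natural route and matches the logical order of that source (the stabilisation result is Proposition~3.1 there, preceding the finiteness theorem). One small simplification: to see that $\wtilde{G[i]}\to G[i-1]$ is a quotient map you do not need the detour through $G[i-1]\subseteq\G\times{\hs(G[i-2])}$; since $G[i]\subseteq\wtilde{G[i]}$ and $\pi_i\colon G[i]\to G[i-1]$ is already a quotient map, the injectivity of $k[G[i-1]]\to k[\wtilde{G[i]}]$ is immediate.
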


There is also second finiteness theorem:
\begin{theo}[{\cite[Theorem 4.5]{Wibmer:FinitenessPropertiesOfAffineDifferenceAlgebraicGroups}}] \label{theo: second finiteness}
	Let $R$ be a \ks-Hopf algebra that is finitely $\s$-generated over $k$ and let $S$ be a \ks-Hopf subalgebra of $R$. Then $S$ is finitely $\s$-generated over $k$.
\end{theo}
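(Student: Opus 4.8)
The plan is to reduce the statement to the classical fact that a Hopf subalgebra of a finitely $\s$-generated commutative Hopf algebra is itself finitely generated, apply this order by order along the Zariski-closure filtration, and then upgrade finite generation at each order to finite $\s$-generation by a stabilization argument modelled on Proposition~\ref{prop: limit of Gi}.

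First I would set up a filtration of $R$ by ordinary (non-difference) Hopf algebras. Writing $G=\Hom(R,-)$ and invoking Proposition~\ref{prop: exists sclosed embedding}, I may view $G$ as a $\s$-closed subgroup of $[\s]_k\G$ for some algebraic group $\G$, and I put $R_i=k[G[i]]$ for the coordinate ring of the $i$-th order Zariski closure. Each $R_i$ is a finitely generated $k$-algebra and a Hopf subalgebra of $R$, since $G[i]$ is a closed subgroup of $\G\times\hs\G\times\cdots\times{}^{\s^i}\!\G$; moreover $R=\bigcup_i R_i$, $\s(R_i)\subseteq R_{i+1}$, and dualizing the last assertion of Theorem~\ref{theo: first finiteness} there is an $m$ with $R_{i+1}=R_i\cdot\s(R_i)$ for all $i>m$. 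Now set $S_i:=S\cap R_i$. Because all the Hopf algebra structure maps of $R$ are morphisms of $k$-$\s$-algebras, $\s\colon R\to R$ is in particular a homomorphism of $k$-Hopf algebras, and using the subspace identity $(S\otimes_k S)\cap(R_i\otimes_k R_i)=(S\cap R_i)\otimes_k(S\cap R_i)$ inside $R\otimes_k R$ one checks $\Delta(S_i)\subseteq S_i\otimes_k S_i$, so that $S_i$ is a Hopf subalgebra of the finitely generated Hopf algebra $R_i$; by the classical theorem it is a finitely generated $k$-algebra. Clearly $S=\bigcup_i S_i$, $\s(S_i)\subseteq S_{i+1}$, and $S_i\cdot\s(S_i)\subseteq S_{i+1}$.

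It now suffices to prove a stabilization statement: there is an $N$ with $S_{i+1}=S_i\cdot\s(S_i)$ for all $i\geq N$. Indeed this forces $S=k\{S_N\}$, and as $S_N$ is a finitely generated $k$-algebra, $S$ is finitely $\s$-generated. To prove it I would imitate Proposition~\ref{prop: limit of Gi}: writing $H_i=\spec(S_i)$, the map $H_{i+1}\to H_i$ is a quotient map of algebraic groups with kernel $\H_i:=\ker(H_i\to H_{i-1})$, and the maps $\s\colon S_{i-1}\to S_i$ induce morphisms $\H_{i+1}\to\hs\H_i$. The point is to show that these are isomorphisms for large $i$: an isomorphism $\H_{i+1}\xrightarrow{\sim}\hs\H_i$ identifies $H_{i+1}$ with the fibre product $H_i\times_{\hs H_{i-1}}\hs H_i$, which translates precisely into $S_{i+1}=S_i\cdot\s(S_i)$. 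As in Proposition~\ref{prop: limit of Gi}, isomorphy should be extracted by showing that the integer sequences $\dim(\H_i)$ and $|\H_i|$ are non-increasing, hence eventually constant, at which stage the (closed) embedding $\H_{i+1}\hookrightarrow\hs\H_i$ must be an isomorphism.

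The main obstacle is exactly this last step: establishing that $\H_{i+1}\to\hs\H_i$ is a closed embedding and controlling $\dim(\H_i)$ and $|\H_i|$, that is, transporting the monotonicity of Proposition~\ref{prop: limit of Gi} from $G$ to the Hopf-subalgebra filtration of $S$ by comparing it with that of $R$. What makes this delicate is that I cannot shortcut the argument through quotient theory for $\s$-algebraic groups — for instance by arguing that the descending chain of $\s$-closed subgroups $\ker\big(G\to\Hom(k\{S_N\},-)\big)$ stabilizes by the descending chain condition of Theorem~\ref{theo: first finiteness} and that its kernel determines $k\{S_N\}$ — since the existence of quotients $G/N$ is only established later and in fact depends on the present theorem. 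Thus the descending chain condition is the conceptual reason the filtration must terminate, but converting this into a genuine proof of $S_{i+1}=S_i\cdot\s(S_i)$ at the level of the graded pieces $\H_i$ is the crux of the argument.
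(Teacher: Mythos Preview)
The paper does not contain a proof of this statement: Theorem~\ref{theo: second finiteness} is quoted verbatim from \cite[Theorem~4.5]{Wibmer:FinitenessPropertiesOfAffineDifferenceAlgebraicGroups} in the preliminaries section and is used as a black box (notably in the proof of Theorem~\ref{theo: existence of quotient}). There is therefore nothing in the present paper to compare your argument against; any comparison would have to be made with the cited article.

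On the substance of your proposal: the overall architecture is sound and is indeed the natural approach---filter $R$ by the $R_i=k[G[i]]$, intersect with $S$, use the classical finiteness theorem for Hopf subalgebras at each level, and then prove a stabilization $S_{i+1}=S_i\cdot\s(S_i)$ for large $i$. However, as you yourself acknowledge, the proposal is not a proof: the stabilization step is the entire content of the theorem, and you have only sketched the shape it should take without establishing it. In particular, you have not shown that $\H_{i+1}\to{\hs\H_i}$ is a closed embedding for the $S$-filtration (for the $R$-filtration this comes from Proposition~\ref{prop: limit of Gi}, but there the key input is that $R_{i+1}=R_i\cdot\s(R_i)$ for large $i$, which is precisely what you are trying to prove for $S$). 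You are also right that you cannot shortcut via the descending chain condition on $\s$-closed subgroups combined with Takeuchi's correspondence, since the construction of quotients in Theorem~\ref{theo: existence of quotient} invokes the present theorem. So the proposal correctly identifies both the strategy and the genuine difficulty, but stops short of resolving it.
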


\begin{rem}
	To specify the structure of a \ks-algebra on a given $k$-algebra $R$, is equivalent to specifying a morphism ${\hs R}\to R$ of $k$-algebras. Moreover, to specify a morphism $R\to S$ of \ks-algebras is equivalent to specifying a morphism $\psi\colon R\to S$ of $k$-algebras such that
	\begin{equation} \label{eq: algebraic sgroups}
	\xymatrix{
	{\hs R} \ar[r] \ar_{\hs\psi}[d] & R \ar^\psi[d] \\
	{\hs S} \ar[r] & S	
	}
	\end{equation}
	commutes (cf. the proof of \cite[Prop. 5.9]{Wibmer:FinitenessPropertiesOfAffineDifferenceAlgebraicGroups}). Similarly, to specify the structure of a \ks-Hopf algebra on a given $k$-Hopf algebra $R$ is equivalent to specifying a morphism ${\hs R}\to R$ of $k$-Hopf algebras and to specify a morphism $R\to S$ of \ks-Hopf algebras is equivalent to specifying a morphism $\psi\colon R\to S$ of $k$-Hopf algebras such that (\ref{eq: algebraic sgroups}) commutes. By dualizing one obtains the category of (affine) difference group schemes over $k$ (\cite{ChalupnikKowalski:DifferenceModulesAndDifferenceCohomology}, \cite{ChalupnikKowalski:DifferenceSheavesAndTorsors}): An (affine) difference group scheme $G$ over $k$ is an affine group scheme over $k$ together with a morphism $\s_G\colon G\to{\hs G}$ of group schemes over $k$. A morphism between difference group schemes over $k$ is a morphism $\f\colon G\to H$ of group schemes such that
	$$
	\xymatrix{
	G \ar^{\s_G}[r] \ar_{\f}[d] & {\hs G} \ar^{{\hs\f}}[d]  \\
	H \ar^{\s_H}[r] & {\hs H}	
	}
	$$ 
	commutes. The category of difference algebraic groups is equivalent to the full subcategory of the category of difference group schemes consisting of those difference group schemes whose coordinate ring is finitely $\s$-generated over $k$. Thus the relation between difference group schemes and difference algebraic groups is similar to the relation between (affine) group schemes and (affine) group schemes of finite type (i.e., (affine) algebraic groups).

	Some constructions, results and proofs of this article (e.g., the identity component and the isomorphism theorems)  could also be performed in the larger category of difference group schemes. On the other hand, the concepts of $\s$-dimension, strong identity component and almost-simplicity only apply to difference algebraic groups. In particular, for our main results, the ``of finite $\s$\=/type'' assumption is indispensable.
	
	Occasionally, a certain construction or proof might in fact be swifter in the category of difference group schemes, since there one can apply results from the theory of group schemes directly. On the other hand, we think it is useful to make available the difference analogs of proof techniques of the theory of algebraic groups and so we prefer to stick with our formalism for difference algebraic groups throughout. 
	\end{rem}

%
%
%
%
%
%

\section{Subgroups defined by ideal closures} \label{sec: Subgroups defined by ideal closures}

If $\G$ is an algebraic group over a perfect field, then $\G_\red$, the associated reduced scheme, is a closed subgroup of $\G$ (\cite[Cor. 1.39]{Milne:AlgebraicGroupsTheTheoryOfGroupSchemesOfFiniteTypeOverAField}). In difference algebra, there are several closure operations one can define on difference ideals that are in some way similar to taking the radical of an ideal. Therefore, as we detail in this section, one obtains several $\s$-closed subgroups of a $\s$-algebraic group that are in some way analogous to $\G_\red$. 

 The results of this section are relevant for the proof of Theorem A from the introduction because they enable us to show that strongly connected $\s$-algebraic groups have certain desirable properties (see e.g., Lemma \ref{lemma: strongly connected is sintegral}), which in turn is needed for establishing the existence part of Theorem A.

Let us recall the relevant properties of $\s$-ideals (cf. \cite[Section 2.3]{Levin:difference}.)
\begin{defi}
	Let $R$ be a $\s$-ring and $\ida\subseteq R$ a $\s$-ideal. Then $\ida$ is called
	\begin{itemize}
		\item \emph{reflexive} if $\s^{-1}(\ida)=\ida$, i.e., $\s(f)\in\ida $ implies $f\in\ida$,
		\item \emph{mixed} if $fg\in\ida$ implies $f\s(g)\in\ida$,
		\item \emph{perfect} if $\s^{\alpha_1}(f)\cdots\s^{\alpha_n}(f)\in\ida$ implies $f\in\ida$ for $\alpha_1,\ldots,\alpha_n\in\nn$,
		\item \emph{$\s$-prime} if it is reflexive and $\ida$ is a prime ideal.
	\end{itemize}
\end{defi}
Among properties of a $\s$-ideal one has the following implications:
$$
\xymatrix{
& \text{prime} \ar@{=>}[r] \ar@{=>}[rd] & \text{mixed} \\	
\text{$\s$-prime} \ar@{=>}[ru] \ar@{=>}[r]& \text{perfect} \ar@{=>}[r] \ar@{=>}[ru] \ar@{=>}[rd]& \text{radical} \\
& & \text{reflexive}	
}
$$

\begin{defi}
A $\s$-ring whose zero ideal is reflexive / mixed / perfect / $\s$-prime is called \emph{$\s$-reduced} / \emph{well-mixed} \nolinebreak[4] / \emph{perfectly $\s$-reduced} / a \emph{$\s$-domain}.
\end{defi}

Let $\ida$ be a $\s$-ideal of a $\s$-ring $R$. Since the intersection of reflexive / radical mixed \nolinebreak[4] / perfect $\s$-ideals is a reflexive / radical mixed / perfect $\s$-ideal there exists a smallest reflexive / radical mixed / perfect $\s$-ideal of $R$ containing $\ida$. It is called the
\emph{reflexive closure} $\ida^*$/ the \emph{radical mixed closure} $\{\ida\}_\wm$/ the \emph{perfect closure} $\{\ida\}$ of $\ida$. A direct computation shows that
$$\ida^*=\{f\in R \ |\ \exists \ n\in\nn: \ \s^n(f)\in\ida\}.$$
The radical mixed closure and the perfect closure of $\ida$ do not have such a simple elementwise description. Cf. \cite[Section 2.3, p. 121ff]{Levin:difference} and \cite[Lemma 3.1]{Levin:OnTheAscendingChainCondition}.

\begin{defi}
		A $\s$-variety is \emph{reduced} / \emph{$\s$-reduced} / \emph{reduced well-mixed} \nolinebreak[4] / \emph{perfectly $\s$\=/reduced} if its coordinate ring has this property. It is \emph{integral} / \emph{$\s$-integral} if its coordinate ring is an integral domain / $\s$\=/domain. For a $\s$-variety $X$ there exists a unique largest $\s$-closed $\s$\=/subvariety
	$$X_\red\ \ / \ X_{\sred}\ / \ X_\wm\ / \ X_\per$$
	of $X$ that is reduced / $\s$-reduced / reduced well-mixed / perfectly $\s$-reduced. Its defining ideal is the
	radical / reflexive closure / radical mixed closure / perfect closure of the zero ideal of $k\{X\}$.
\end{defi}

We have the following inclusions of $\s$-closed $\s$-subvarieties of $X$:

$$
\xymatrix{
	& X  & \\
	X_\red \ar@{-}[ur] & & X_\sred \ar@{-}[ul] \\
	X_\wm \ar@{-}[u]& & \\
	& X_\per \ar@{-}[ul] \ar@{-}[uur] &
}
$$

The importance of perfectly $\s$-reduced $\s$-varieties stems from the fact that they correspond to the classical difference varieties as studied in \cite{Cohn:difference} and \cite{Levin:difference}, where one is only looking for solutions of difference polynomials in $\s$-field extensions of $k$. Mixed $\s$-ideals play a crucial role in the theory of difference schemes as developed by E. Hrushovski in \cite{Hrushovski:elementarytheoryoffrobenius}. Note that for an arbitrary non-empty $\s$-variety $X_\per$ and $X_\wm$ might be empty. Take for example $k\{X\}=k\times k$ with $\s((a,b))=(\s(b),\s(a))$. This pathology does not occur for $\s$-algebraic groups because the kernel $\m_G\subseteq k\{G\}$ of the counit $k\{G\}\to k$ is a $\s$-prime $\s$-ideal.

\begin{ex} \label{ex: smooth connected implies sintegral}
	If $\G$ is a smooth, connected algebraic group, then $[\s]_k\G$ is $\s$-integral and therefore also perfectly $\s$-reduced. However, for a smooth algebraic group $\G$, the $\s$-ring $k\{\G\}$ need not be well-mixed, in particular, $[\s]_k\G$ need not be perfectly $\s$-reduced. 
\end{ex}
\begin{proof}
	For $i\in\nn$ the algebraic groups $\G[i]=\G\times{\hs\G}\times\ldots\times{\hsi\G}$ are smooth and connected. Thus $k[\G[i]]$ is an integral domain and so $k\{\G\}=\bigcup_{i\in\nn}k[\G[i]]$ is also an integral domain. One can check directly from the definition that $\s\colon [\s]_k A\to[\s]_k A$ is injective for any $k$-algebra $A$. From a more geometric perspective, the projection maps $\s_i\colon \G\times\ldots\times{\hsi\G}\to {\hs\G}\times\ldots\times{\hsi\G},\ (g_0,\ldots,g_i)\mapsto (g_1,\ldots,g_i)$ are dominant, so the dual maps are injective.
	
	If $\G$ is not connected, then $[\s]_k$ need not be perfectly $\s$-reduced. For example, consider $\G=\mu_2$, i.e., $\G(A)=\{g\in A^\times|\ g^2=1\}$ for any $k$-algebra $A$. We have $k[\G]=k[y]/(y^2-1)$ and $k\{\G\}=k\{y\}/[y^2-1]$. So $(y-1)(y+1)=0\in k\{\G\}$, however, $(y-1)\s(y+1)=(y-1)(\s(y)+1)$ is not zero in $k\{\G\}$. So $k\{\G\}$ is not well-mixed.
\end{proof}

The following lemma will be needed in Section \ref{sec: components}. It illustrates the general principle that when dealing with perfectly $\s$-reduced $\s$-varieties one can usually restrict to points in $\s$-fields.
 
\begin{lemma} \label{lemma: morphism with perfectly sreduced}
	Let $\f\colon X\to Y$ be a morphism of $\s$-varieties and let $Z\subseteq Y$ be a $\s$-closed $\s$\=/subvariety. Assume that $X$ is perfectly $\s$-reduced. If $\f_K(X(K))\subseteq Z(K)$ for every $\s$-field extension $K$ of $k$, then $\f(X)\subseteq Z$, i.e., $\f$ factors through $Z\hookrightarrow Y$.
\end{lemma}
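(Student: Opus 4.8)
The plan is to dualize and then invoke the characteristic feature of perfectly $\s$-reduced $\s$-rings: that they are detected by their points in $\s$-field extensions. Write $\ida=\I(Z)\subseteq k\{Y\}$ and let $\f^*\colon k\{Y\}\to k\{X\}$ be the dual of $\f$. As recalled above, $\f(X)$ is the $\s$-closed $\s$-subvariety of $Y$ defined by $\ker(\f^*)$, so $\f(X)\subseteq Z$ (equivalently, that $\f$ factors through $Z\hookrightarrow Y$) amounts to $\ida\subseteq\ker(\f^*)$, i.e.\ to $\f^*(f)=0$ in $k\{X\}$ for every $f\in\ida$. Hence it suffices to prove the following criterion and to verify its hypothesis for the elements $g=\f^*(f)$: if $g\in k\{X\}$ is annihilated by every morphism $\psi\colon k\{X\}\to K$ of \ks-algebras into a $\s$-field extension $K$ of $k$, then $g=0$.

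Verifying the hypothesis is a direct translation. A point $x\in X(K)$ is the same as a morphism $\psi_x\colon k\{X\}\to K$ of \ks-algebras, and $\f_K(x)\in Y(K)$ corresponds to $\psi_x\circ\f^*$. The condition $\f_K(x)\in Z(K)$ says precisely that $\ida\subseteq\ker(\psi_x\circ\f^*)$, i.e.\ $\psi_x(\f^*(f))=0$ for all $f\in\ida$. Since by assumption this holds for every $\s$-field extension $K$ and every $x\in X(K)$, each $g=\f^*(f)$ is indeed annihilated by all such $\psi$.

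The heart of the matter is the criterion itself. The kernel of any $\psi\colon k\{X\}\to K$ as above is a $\s$-prime ideal: it is prime because $K$ is a domain, and reflexive because $\s$ is injective on the $\s$-field $K$. Conversely, every $\s$-prime ideal $\p\subseteq k\{X\}$ occurs as such a kernel, since $k\{X\}/\p$ is a $\s$-domain, on which $\s$ is injective and hence extends to the quotient field $L=\quot(k\{X\}/\p)$; then $L$ is a $\s$-field extension of $k$ and $k\{X\}\to L$ has kernel $\p$. Therefore the set of elements of $k\{X\}$ annihilated by all such $\psi$ is exactly the intersection of the $\s$-prime ideals of $k\{X\}$. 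By the difference-algebraic analogue of the statement that the nilradical is the intersection of all prime ideals, this intersection coincides with the perfect closure of $(0)$ (see \cite[Section 2.3]{Levin:difference}); as $k\{X\}$ is perfectly $\s$-reduced, this closure is $(0)$, and so $g=0$.

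The step I expect to be the main obstacle is this last identification of the intersection of all $\s$-prime ideals with the perfect closure of $(0)$. The only direction needed is that an element $g\neq 0$ is avoided by some $\s$-prime ideal. For this one uses that, $(0)$ being perfect, no power-product $\s^{\alpha_1}(g)\cdots\s^{\alpha_n}(g)$ is zero; a $\s$-ideal chosen maximal among those disjoint from the set of all such power-products is then $\s$-prime and excludes $g$. Everything else---representability and the translation between points and morphisms---is routine.
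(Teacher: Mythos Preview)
Your proof is correct and follows essentially the same route as the paper's: reduce to showing $\f^*(f)=0$ for $f\in\I(Z)$, use that the perfect zero ideal of $k\{X\}$ is the intersection of all $\s$-prime $\s$-ideals, and for each such $\p$ pass to the $\s$-field $K=\quot(k\{X\}/\p)$ to conclude $\f^*(f)\in\p$ from the hypothesis. The paper simply cites Cohn for the fact that a perfect $\s$-ideal is an intersection of $\s$-prime $\s$-ideals, whereas you additionally sketch the Zorn-type argument for that direction; otherwise the arguments are the same.
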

\begin{proof}
	We have to show that $\I(Z)\subseteq k\{Y\}$ lies in the kernel of $\f^*\colon k\{Y\}\to k\{X\}$. So let $f\in\I(Z)$. We have to show that $\f^*(f)=0$. Since the zero ideal of $k\{X\}$ is perfect, it is the intersection of $\s$-prime $\s$-ideals (\cite[Chapter 3, p. 88]{Cohn:difference}). Therefore, it suffices to show that $\f^*(f)$ lies in every $\s$-prime $\s$-ideal of $k\{X\}$. Let $\p\subseteq k\{X\}$ be a $\s$-prime $\s$-ideal. Then the field of fractions $K$ of $k\{X\}/\p$ naturally is a $\s$-field extension of $k$ and the canonical map $x\colon k\{X\}\to K$ is a morphism of $k$-$\s$-algebras. By assumption, $\f_K(x)\in Z(K)$, i.e., $\I(Z)$ lies in the kernel of $x\circ\f^*$. So $\f^*(f)\in\p$.
\end{proof}

\begin{rem} \label{rem: no field points}
	For a $\s$-algebraic group $G$ the following statements are equivalent:
	\begin{enumerate}
		\item $G(K)=1$ for every $\s$-field extension $K$ of $k$.
		\item $G_\per=1$.
		\item The ideal $\m_G$ is the only $\s$-prime $\s$-ideal of $k\{G\}$.
	\end{enumerate}
\end{rem}
\begin{proof} If $g\in G(K)=\Hom(k\{G\}, K)$, then the kernel of $g$ is a $\s$-prime $\s$-ideal of $k\{G\}$. Conversely, if $\p$ is a $\s$-prime $\s$-ideal of $k\{G\}$, then the field of fractions $K$ of $k\{G\}/\p$ is naturally a $\s$-field and the canonical map $g\colon k\{G\}\to K$ belongs to $G(K)$. Therefore (i) and (iii) are equivalent. The equivalence with (ii) follows from the fact that a perfect $\s$-ideal is the intersection of $\s$-prime $\s$-ideals (\cite[Chapter 3, p. 88]{Cohn:difference}).
\end{proof}

An example of a $\s$-algebraic group satisfying the above three equivalent conditions is the $\s$-closed subgroup $G$ of $\Gl_n$ given by $G(R)=\{g\in\Gl_n(R)|\ \s^d(g)=I_n\}$ for any \ks-algebra $R$. (Here $d\geq 1$ is a fixed integer, $\s$ is applied to $g$ entry-wise and $I_n$ is the $n\times n$-identity matrix.) Another such example, would be $G=[\s]_k\mu_p$ over a $\s$-field of characteristic $p>0$. Here $\mu_p$ is the algebraic group of $p$-th roots of unity, i.e., $\mu_p(A)=\{g\in A^\times|\ g^p=1\}$ for any $k$-algebra $A$.

%
%

To show that for a $\s$-algebraic group $G$ the $\s$-closed $\s$-subvarieties $G_\red\ \ / \ G_{\sred}\ / \ G_\wm\ / \ G_\per$ are $\s$-closed subgroups, we need to know that the corresponding properties are preserved under tensor products:

\begin{lemma} \label{lemma: tensor stays}
	Let $R$ and $S$ be $k$-$\s$-algebras.
	\begin{enumerate}
		\item If $k$ is perfect and $R$ and $S$ are reduced, then $R\otimes_k S$ is reduced.
		\item If $k$ is inversive and $R$ and $S$ are $\s$-reduced, then $R\otimes_k S$ is $\s$-reduced.
		\item If $k$ is algebraically closed and $R$ and $S$ are well-mixed and reduced, then $R\otimes_k S$ is well-mixed and reduced.
		\item If $k$ is inversive and algebraically closed and $R$ and $S$ are perfectly $\s$-reduced, then $R\otimes_k S$ is perfectly $\s$-reduced.
	\end{enumerate}
\end{lemma}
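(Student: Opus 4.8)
For part (i) there is nothing difference-algebraic to do: the assertion only concerns the underlying $k$-algebras. The plan is to invoke the standard commutative-algebra facts that over a perfect field a reduced algebra is geometrically reduced, and that the tensor product of a geometrically reduced algebra with a reduced algebra is reduced. Concretely, one embeds $S$ into the product of the residue fields at its minimal primes, tensors with $R$ (which preserves injectivity since $k$ is a field), and uses that each $R\otimes_k(\text{field})$ is reduced by geometric reducedness of $R$. For part (ii) I would pass to the $k$-linear description of $\s$ recorded in the Remark after Theorem~\ref{theo: second finiteness}: the endomorphism $\s_R$ corresponds to a $k$-algebra map $\alpha_R\colon \hs R\to R$ with $\s_R=\alpha_R\circ c_R$, where $c_R\colon R\to\hs R$, $r\mapsto r\otimes 1$. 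When $k$ is inversive, $\s\colon k\to k$ is an isomorphism and $c_R$ is a bijection (inverse $r\otimes\lambda\mapsto\s^{-1}(\lambda)r$), so $R$ is $\s$-reduced, i.e.\ $\s_R$ is injective, exactly when $\alpha_R$ is injective. Since base change commutes with tensor products, $\hs(R\otimes_k S)=\hs R\otimes_k\hs S$ and $\alpha_{R\otimes_k S}=\alpha_R\otimes_k\alpha_S$; as $k$ is a field, the tensor product of the injections $\alpha_R,\alpha_S$ is again injective, whence $\s_{R\otimes_k S}$ is injective.

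Part (iii) is the crux. The first step is a characterization, valid for any reduced $\s$-ring $T$: $T$ is well-mixed if and only if every minimal prime ideal of $T$ is a $\s$-ideal. One direction is immediate: if all minimal primes are $\s$-stable and $ab=0$, then for each minimal prime $\p$ either $a\in\p$, or $b\in\p$ and hence $\s(b)\in\p$, so $a\s(b)$ lies in every minimal prime and therefore $a\s(b)=0$. For the converse, given $b\in\p$ with $\p$ minimal, reducedness makes $T_\p$ a field, so $sb=0$ for some $s\notin\p$; well-mixedness gives $s\s(b)=0$, and since $s\notin\p$ this forces $\s(b)\in\p$. Granting this, I would apply it to $R$ and $S$ (reduced and well-mixed by hypothesis) to see that all their minimal primes are $\s$-ideals, and then analyze $T=R\otimes_k S$, which is reduced by part (i). Let $\pP$ be a minimal prime of $T$. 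Because $k$ is a field, $R\to T$ and $S\to T$ are faithfully flat, so going-down forces $\p=\pP\cap R$ and $\q=\pP\cap S$ to be minimal, hence $\s$-ideals. Here the algebraically closed hypothesis enters decisively: over an algebraically closed field the fraction field of any $k$-domain is a regular extension, so $R/\p\otimes_k S/\q$ is again a domain. Thus $\pP_0:=\ker\bigl(T\to R/\p\otimes_k S/\q\bigr)$ is prime and contained in $\pP$, and minimality gives $\pP=\pP_0$; since $\p,\q$ are $\s$-ideals this map is a morphism of $\s$-rings, so $\pP=\pP_0$ is a $\s$-ideal. Applying the characterization once more shows $T$ is well-mixed.

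Part (iv) then follows formally. As $k$ is inversive and algebraically closed and $R,S$ are perfectly $\s$-reduced — hence reduced, well-mixed and $\s$-reduced, because a perfect $\s$-ideal is radical, mixed and reflexive — parts (i), (ii) and (iii) show that $R\otimes_k S$ is reduced, $\s$-reduced and well-mixed simultaneously. It remains to prove the elementary implication that a radical, reflexive and mixed $\s$-ideal is perfect: if $\s^{a_1}(f)\cdots\s^{a_n}(f)=0$, then repeated use of the mixed property raises all exponents to a common large value $N$, giving $\s^N(f)^n=0$; radicality yields $\s^N(f)=0$, and reflexivity applied $N$ times gives $f=0$. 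The main obstacle throughout is part (iii): elementary manipulation of the relation $ab=0$ cannot on its own produce $a\s(b)=0$, so genuine structural input is needed, and this is exactly supplied by the minimal-prime characterization together with the fact that over an algebraically closed field the tensor product of two domains is a domain — the one place where the hypothesis on $k$ is indispensable.
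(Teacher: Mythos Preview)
Your proof is correct. Parts (i) and (ii) match the paper's treatment (the paper simply cites Bourbaki and \cite{TomasicWibmer:Babbit} respectively; your explicit arguments are fine).

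For parts (iii) and (iv) you take a genuinely different route from the paper. For (iii), the paper invokes Hrushovski's lemma that in a reduced well-mixed $\s$-ring the zero ideal is an intersection of prime $\s$-ideals, and then observes that $\p\otimes S+R\otimes\q$ is a prime $\s$-ideal of $R\otimes_k S$ whenever $\p,\q$ are prime $\s$-ideals (using that a tensor product of domains over an algebraically closed field is a domain). You instead prove the sharper self-contained characterization ``reduced $T$ is well-mixed $\Leftrightarrow$ every minimal prime of $T$ is a $\s$-ideal'' and then show directly, via going-down and the same domain-tensor-domain fact, that every minimal prime of $R\otimes_k S$ has the form $\p\otimes S+R\otimes\q$. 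For (iv), the paper repeats the pattern of (iii), using Cohn's theorem that perfect ideals are intersections of $\s$-primes together with (ii). You instead deduce (iv) from (i)+(ii)+(iii) via the elementary implication ``radical + reflexive + mixed $\Rightarrow$ perfect'', which you prove by the exponent-raising argument. Your approach has the advantage of being entirely self-contained (no external citations for (iii) and (iv)), and the minimal-prime characterization is a reusable lemma; the paper's approach is more uniform, treating (iii) and (iv) by the same ``intersection of prime-type ideals'' template.
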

\begin{proof}
	Point (i) is well-known. See e.g., \cite[Theorem 3, Chapter V, \S 15.5, A.V.125]{Bourbaki:Algebra2}. Note that (i) is a special case of (ii) as we may take $\s$ as the Frobenius endomorphism. Point (ii) follows from \cite[Prop. 1.2]{TomasicWibmer:Babbit}.
	
	
	For (iii), note that the zero ideal of a reduced well-mixed $\s$-ring is the intersection of prime $\s$-ideals (\cite[Lemma 2.10]{Hrushovski:elementarytheoryoffrobenius}). If $\p$ is a prime $\s$-ideal of $R$ and $\q$ a prime $\s$-ideal of $S$, then $\p\otimes S+R\otimes \q$ is a prime $\s$-ideal of $R\otimes_k S$ since
	$$(R\otimes_k S)/(\p\otimes S+R\otimes \q)=R/\p\otimes_k S/\q$$
	and the latter is an integral domain, as the tensor product of integral domains over an algebraically closed field is again an integral domain (\cite[Corollary 3, Chapter V, \S 17.5, A.V.143]{Bourbaki:Algebra2}). We see that the zero ideal of $R\otimes_k S$ is the intersection of prime $\s$-ideals of the form $\p\otimes S+R\otimes \q$. This shows that $R\otimes_k S$ is well-mixed and reduced.
	
	To prove (iv) we can proceed as in (iii) by noting that a $\s$-ideal is perfect if and only if it is the intersection of $\s$-prime $\s$-ideals and that the tensor product of $\s$-domains over an inversive algebraically closed $\s$-field is again a $\s$-domain by (ii).
\end{proof}

There are counterexamples showing that the conditions on $k$ in Lemma \ref{lemma: tensor stays} cannot be relaxed. For example, take $k=\mathbb{R}$ with $\s$ the identity map, $R=\mathbb{C}$ with the identity map and $S=\mathbb{C}$ with $\s$ complex conjugation. Then $R$ and $S$ are perfectly $\s$-reduced (hence well-mixed) but $R\otimes_k S$ is not well-mixed (hence not perfectly $\s$-reduced).

If $X$ and $Y$ are $\s$-varieties, then $(X\times Y)_\per$ is contained in $X_\per\times Y_\per\subseteq X\times Y$ but this inclusion might be proper since $X_\per\times Y_\per$ need not be perfectly $\s$-reduced. The situation is similar in the other cases. However, this issue can be circumvented by adding extra assumptions on the base $\s$-field.

\begin{cor} \label{cor: reduced products}
	Let $X$ and $Y$ be $\s$-varieties.
	\begin{enumerate}
		\item If $k$ is perfect, then $(X\times Y)_\red\simeq X_\red\times Y_\red$.
		\item If $k$ is inversive, then $(X\times Y)_\sred\simeq X_\sred\times Y_\sred$.
		\item If $k$ is algebraically closed, then $(X\times Y)_\wm\simeq X_\wm\times Y_\wm$.
		\item If $k$ is inversive and algebraically closed, then $(X\times Y)_\per\simeq X_\per\times Y_\per$.
	\end{enumerate}
\end{cor}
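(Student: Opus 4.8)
The plan is to treat all four statements simultaneously. Write $P$ for any one of the four properties (reduced, $\s$-reduced, reduced well-mixed, perfectly $\s$-reduced) and $X_P$ for the corresponding largest $\s$-closed $\s$-subvariety, so that $X_P$ is one of $X_\red,X_\sred,X_\wm,X_\per$ and $k\{X_P\}$ has property $P$ by definition. Set $A=k\{X\}$ and $B=k\{Y\}$, so that $k\{X\times Y\}=A\otimes_k B$, the defining ideal of $(X\times Y)_P$ is the $P$-closure of the zero ideal of $A\otimes_k B$, and $X_P\times Y_P$ has defining ideal $\I(X_P)\otimes_k B+A\otimes_k\I(Y_P)$. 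I would establish the two inclusions $X_P\times Y_P\subseteq(X\times Y)_P$ and $(X\times Y)_P\subseteq X_P\times Y_P$ separately.

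For the inclusion $X_P\times Y_P\subseteq(X\times Y)_P$, I would invoke Lemma \ref{lemma: tensor stays}. The coordinate ring of $X_P\times Y_P$ is $k\{X_P\}\otimes_k k\{Y_P\}$, and both factors have property $P$. Under the hypothesis on $k$ imposed in the respective case (perfect / inversive / algebraically closed / inversive and algebraically closed), parts (i)--(iv) of Lemma \ref{lemma: tensor stays} show that this tensor product again has property $P$; hence $X_P\times Y_P$ is a $\s$-closed $\s$-subvariety of $X\times Y$ with property $P$. Since $(X\times Y)_P$ is by definition the \emph{largest} such subvariety, this gives $X_P\times Y_P\subseteq(X\times Y)_P$. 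This is the only step where the hypotheses on $k$ are used.

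For the reverse inclusion $(X\times Y)_P\subseteq X_P\times Y_P$, which I expect to hold with no hypothesis on $k$, I would argue via the projections $p\colon X\times Y\to X$ and $q\colon X\times Y\to Y$. Since $\f^{-1}(Z)=\V(\f^*(\ida))$ for $Z=\V(\ida)$, a direct computation of defining ideals yields $p^{-1}(X_P)\cap q^{-1}(Y_P)=X_P\times Y_P$, so it suffices to show that the restriction of $p$ to $(X\times Y)_P$ factors through $X_P\hookrightarrow X$, and symmetrically for $q$. Let $C=k\{(X\times Y)_P\}$, which has property $P$. The kernel of the $\s$-ring morphism $A\xrightarrow{p^*}A\otimes_k B\to C$ is a $P$-ideal of $A$: for each of the four properties the relevant defining condition (``$a^n=0\Rightarrow a=0$'', ``$\s(a)=0\Rightarrow a=0$'', ``$ab=0\Rightarrow a\s(b)=0$'', ``$\s^{\alpha_1}(a)\cdots\s^{\alpha_n}(a)=0\Rightarrow a=0$'') is inherited by $\s$-subrings and hence pulls back along any $\s$-ring morphism into a ring with property $P$. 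This kernel is therefore a $P$-ideal containing the zero ideal, so it contains the $P$-closure $\I(X_P)$ of the zero ideal of $A$; equivalently, the restriction of $p$ factors through $X_P$. Combining the two projections gives $(X\times Y)_P\subseteq p^{-1}(X_P)\cap q^{-1}(Y_P)=X_P\times Y_P$.

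The genuine content is entirely carried by Lemma \ref{lemma: tensor stays}; the remaining steps are formal. The main obstacle is conceptual rather than technical: recognizing that the hypotheses on $k$ are needed only for the ``a product of $P$-objects is again $P$'' direction, whereas the containment $(X\times Y)_P\subseteq X_P\times Y_P$ (already noted for $P=\per$ in the discussion preceding the corollary) is automatic, resting only on the two elementary facts that kernels of $\s$-morphisms into $P$-rings are $P$-ideals and that $P$-closures are the smallest $P$-ideals containing a given ideal, both immediate from the definitions recalled before the lemma.
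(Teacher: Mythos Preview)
Your proof is correct and follows essentially the same approach as the paper. The paper's argument is more terse: it writes the desired isomorphism as $k\{X\}/\{0\}\otimes_k k\{Y\}/\{0\}\to (k\{X\}\otimes_k k\{Y\})/\{0\}$, invokes Lemma~\ref{lemma: tensor stays} to see the left-hand side has property $P$, and concludes the equality of ideals directly; the ``easy'' inclusion $(X\times Y)_P\subseteq X_P\times Y_P$ is treated as already known from the discussion preceding the statement. Your version makes both inclusions explicit and frames the second one geometrically via the projections, but the content is the same.
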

\begin{proof}
	The proof is similar in all four cases.	Exemplarily, let us proof (iv). In terms of \ks\=/algebras, we have to show that the canonical map
	$$k\{X\}/\{0\}\otimes_k k\{Y\}/\{0\}\to (k\{X\}\otimes_k k\{Y\})/\{0\}$$
	is an isomorphism. (Note that here $\{0\}$ denotes the perfect closure of the zero ideal and not the set containing $0$.) As the left hand side is perfectly $\s$-reduced by Lemma \ref{lemma: tensor stays}, we see that $\{0\}=\{0\}\otimes k\{Y\}+k\{X\}\otimes \{0\}$.
\end{proof}

If $\psi\colon R\to S$ is a morphism of $\s$-rings, one can check directly that $\psi^{-1}(\ida)$ is a radical / reflexive / radical mixed / perfect $\s$-ideal if $\ida$ has the corresponding property. This shows that $\psi$ maps the radical / reflexive closure / radical mixed closure / perfect closure of the zero ideal of $R$ into the radical / reflexive closure / radical mixed closure / perfect closure of the zero ideal of $S$.
Therefore, a morphism of $\s$-varieties $X\to Y$ induces a morphism
$$X_\red\to Y_\red \ / \ X_{\sred}\to  Y_{\sred} \ / \ X_\wm\to Y_{\wm} \ / \ X_\per\to Y_\per.$$

\begin{cor} \label{cor: reduced ssubgroups}
	Let $G$ be a $\s$-algebraic group.
	\begin{enumerate}
		\item If $k$ is perfect, then $G_\red$ is a $\s$-closed subgroup of $G$.
		\item If $k$ is inversive, then $G_\sred$ is a $\s$-closed subgroup of $G$.
		\item If $k$ is algebraically closed, then $G_\wm$ is a $\s$-closed subgroup of $G$.
		\item If $k$ is inversive and algebraically closed, then $G_\per$ is a $\s$-closed subgroup of $G$.
	\end{enumerate}
\end{cor}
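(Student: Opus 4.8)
The four cases are parallel, so I would treat them uniformly. Fix one of the four closure operations, write $G_\star$ for the corresponding $\s$-closed $\s$-subvariety ($\star\in\{\red,\sred,\wm,\per\}$), and assume the hypothesis on $k$ attached to it in the statement. Since $G_\star$ is a $\s$-closed $\s$-subvariety of $G$ by construction, \cite[Lemma 2.4]{Wibmer:FinitenessPropertiesOfAffineDifferenceAlgebraicGroups} tells us that $G_\star$ is a $\s$-closed subgroup if and only if $J:=\I(G_\star)\subseteq A$, where $A=k\{G\}$, is a Hopf ideal; as $J$ is already a $\s$-ideal, it will then automatically be a $\s$-Hopf ideal. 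Thus I must verify the three conditions $S(J)\subseteq J$, $\varepsilon(J)=0$ and $\Delta(J)\subseteq J\otimes A+A\otimes J$ for the antipode $S$, the counit $\varepsilon\colon A\to k$ and the comultiplication $\Delta\colon A\to A\otimes_k A$.

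Two of these are immediate from the functoriality of the closure operation recorded just before the corollary, namely that a morphism of $\s$-rings carries the $\star$-closure of the zero ideal of the source into the $\star$-closure of the zero ideal of the target. Applied to the $\s$-algebra endomorphism $S$ of $A$, this gives $S(J)\subseteq J$. Applied to $\varepsilon\colon A\to k$, it gives $\varepsilon(J)\subseteq(\text{$\star$-closure of }0\text{ in }k)$; but $k$ is a $\s$-field, so its zero ideal is already reflexive, radical, mixed and perfect, whence this closure is $0$ and $\varepsilon(J)=0$.

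The remaining condition on $\Delta$ is the crux, and is exactly where the hypothesis on $k$ is consumed. Functoriality applied to $\Delta\colon A\to A\otimes_k A$ shows that $\Delta(J)$ lies in the $\star$-closure of the zero ideal of $A\otimes_k A$. The point is then to identify this closure: by Corollary \ref{cor: reduced products} applied with $X=Y=G$ (legitimate precisely because $k$ is perfect / inversive / algebraically closed / both, according to the case), the $\star$-closure of $0$ in $A\otimes_k A$ equals $J\otimes A+A\otimes J$. This is the ideal-theoretic content of the isomorphism $(G\times G)_\star\simeq G_\star\times G_\star$ and is displayed explicitly in the proof of that corollary. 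Hence $\Delta(J)\subseteq J\otimes A+A\otimes J$, so $J$ is a Hopf ideal and $G_\star$ is a $\s$-closed subgroup of $G$.

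I expect the genuine obstacle to be concentrated entirely in this last step, i.e.\ in the passage from $\Delta(J)\subseteq(\text{closure of }0\text{ in }A\otimes_k A)$ to $\Delta(J)\subseteq J\otimes A+A\otimes J$: the naive hope that the closure of $0$ in a tensor product is the ``sum of the closures'' fails in general, as the counterexample after Lemma \ref{lemma: tensor stays} shows, and it is only the extra hypotheses on $k$ --- routed through Lemma \ref{lemma: tensor stays} and Corollary \ref{cor: reduced products} --- that rescue it. A fully equivalent alternative would bypass Hopf ideals and instead check directly that $G_\star(R)$ is a subgroup of $G(R)$ for every \ks-algebra $R$, using that functoriality and Corollary \ref{cor: reduced products} produce induced morphisms $G_\star\times G_\star\simeq(G\times G)_\star\to G_\star$ (multiplication) and $G_\star\to G_\star$ (inversion), together with $\ast_\star=\ast$ for the unit; but this route relies on the same input at the same point.
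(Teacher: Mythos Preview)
Your proof is correct and is precisely the Hopf-algebraic dual of the paper's argument: the paper works geometrically, using functoriality to obtain $(G\times G)_\star\to G_\star$ and then invoking Corollary~\ref{cor: reduced products} to identify $(G\times G)_\star$ with $G_\star\times G_\star$, which is exactly the alternative you describe in your last paragraph. Both routes rest on the same two ingredients---functoriality of the closure operation and Corollary~\ref{cor: reduced products}---so they are essentially the same proof in different language.
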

\begin{proof}
	Again, let us restrict to (iv). The other cases are similar. The multiplication morphism $G\times G\to G$ induces a morphism
	$(G\times G)_\per \to G_\per$. But by Corollary \ref{cor: reduced products}, the $\s$-closed $\s$\=/subvariety $(G\times G)_\per$ of $G\times G$ can be identified with $G_\per\times G_\per\subseteq G\times G$. Therefore, the multiplication maps $G_\per\times G_\per$ into $G_\per$.
	As the inversion \mbox{$G\to G,\ g\mapsto g^{-1}$} also passes to $G_\per$, we see that $G_\per$ is a subgroup of $G$.
\end{proof}
In the following example all the groups $G$, $G_\red$,  $G_\sred$, $G_\wm$ and $G_\per$ are different.

\begin{ex}
	Consider the $\s$-closed subgroup of $\Gm^2$ given as $$G=\left\{(g,h)\in \Gm^2|\  \s^5(g)^2=1,\ h^3=1,\ \s(h)=h^2\right\}$$
	over $k=\overline{\mathbb{F}_2}$, the algebraic closure of the field $\mathbb{F}_2$ with two elements considered as $\s$-field with $\s\colon k\to k$ the identity map. As $\s^5(g)^2-1=(\s^5(g)-1)^2$ over a field of characteristic $2$ we see that 
	$$G_\red=\left\{(g,h)\in \Gm^2|\  \s^5(g)=1,\ h^3=1,\ \s(h)=h^2\right\}.$$
	We have 
	$$G_\sred=\left\{(g,h)\in \Gm^2|\  g^2=1,\ h^3=1,\ \s(h)=h^2\right\}$$
	and we claim that
	$$G_\wm=\left\{(g,h)\in \Gm^2|\  \s^5(g)=1,\ h=1\right\}.$$
	To see the latter, note that if $R$ is a \ks-algebra that is an integral domain and $h\in R^\times$ satisfies $h^3=1$ and $\s(h)=h^2$, then necessarily $h=1$. (This is because the equation $y^3=1$ has only $3$ solutions in an integral domain and they all lie inside $k$. Moreover, $\s$ fixes $k$, so $h=\s(h)=h^2$ and so $h=1$.) The claim then follows from the fact that the radical well-mixed closure of a $\s$-ideal is the intersection of the prime $\s$-ideals it contains (\cite[Lemma 2.10]{Hrushovski:elementarytheoryoffrobenius}).
	Finally, $G_\per=1$, for example, using Remark \ref{rem: no field points}. 

\end{ex}

\begin{ex}
	Let $k$ be a $\s$-field, $\mathsf{G}$ a finite group and $\s\colon \mathsf{G}\to \mathsf{G}$ a group endomorphism. In \cite[Example 2.14]{Wibmer:FinitenessPropertiesOfAffineDifferenceAlgebraicGroups} it is explained how one can associate a $\s$-algebraic group $G$ to this data. There is a one-to-one correspondence between the $\s$-closed subgroups of $G$ and the subgroups of $\GG$ stable under $\s$.

	As $k\{G\}=k^\GG$ is reduced, $G$ is reduced. The $\s$-algebraic group $G$ is $\s$-reduced if and only if $\s\colon\mathsf{G}\to\mathsf{G}$ is an automorphism. Moreover, $G$ is reduced well-mixed if and only if it is perfectly $\s$-reduced if and only if $\s\colon\GG\to\GG$ is the identity map. 
	
	In general, $G_\sred$ corresponds to the $\s$-stable subgroup $\bigcap_{n\in\nn}\s^n(\GG)$ of $\GG$ and $G_\per=G_\wm$ corresponds to the subgroup $\{g\in\GG|\ \s(g)=g\}$ of $\GG$. This follows from the fact that the prime ideals in $k\{G\}=k^\GG$ are in bijection with the elements in $\GG$ and a prime ideal is a $\s$-ideal if only if it is a $\s$-prime $\s$-ideal if and only if the corresponding element of $\GG$ is fixed by $\s$.
\end{ex}


The following example shows that $G_\sred$ need to be a subgroup if $k$ is not inversive.

\begin{ex} \label{ex: Gsred not subgroup}
	Let $k$ be a $\s$-field of characteristic zero which is not inversive. So there exists $\lambda\in k$ with $\lambda\notin \s(k)$. Let $G$ be the $\s$-closed subgroup of the additive group $\Ga$ given by
	$$G(R)=\{g\in R|\ \s^2(g)+\lambda\s(g)=0\}$$
	for any $k$-$\s$-algebra $R$. We will first show that $G$ has no proper, non-trivial $\s$-closed subgroup other than the one defined by the equation $\s(g)=0$. Suppose that $H$ is a proper, non-trivial $\s$-closed subgroup of $G$. By Corollary A.3 in \cite{DiVizioHardouinWibmer:DifferenceAlgebraicRel} every $\s$-closed subgroup of $\Ga$ is of the form $\V(f)$, where $f\in k\{y\}$ is the unique monic linear homogeneous difference polynomial of minimal order in $\I(H)\subseteq k\{\Ga\}=k\{y\}$. As $H$ is non-trivial and properly contained in $G$, $f$ must have order one, i.e., $f=\s(y)+\mu y$ for some $\mu\in k$. But then $\s^2(h)+\s(\mu)\s(h)=0$ and therefore $(\lambda-\s(\mu))\s(h)=0$ for all $h\in H(R)$ for any $k$-$\s$-algebra $R$.
	Because $\lambda\notin\s(k)$ this shows that $\s(h)=0$ for all $h\in H(R)$. Therefore $f=\s(y)$.
	
	Suppose $G_{\sred}$ is a subroup of $G$. By the above, then either $G_{\sred}=G,\ G_{\sred}=1$ or $G_{\sred}=H$, where $H$ is defined by the equation $\s(y)=0$.
	
	Because $\s^n(y)$ does not lie in $[\s^2(y)+\lambda y]$ for $n\in\nn$, the cases $G_{\sred}=1$ and $G_{\sred}=H$ can be excluded. To arrive at a contradiction, it therefore suffices to find a non-zero element in the reflexive closure of the zero ideal of $k\{G\}$.

	Assume that $\lambda^2\in\s(k)$. (For example, we can choose
	$k=\mathbb{C}(\sqrt{x},\sqrt{x+1},\ldots)$ with action of $\s$ determined by $\s(x)=x+1$ and $\lambda=\sqrt{x}$.)
	We have $k\{G\}=k[y,\s(y)]$ and if we choose $\eta\in k$ such that $\s(\eta)=\lambda^2$, then $\s(y)^2-\eta y^2$ lies in the reflexive closure of the zero ideal of $k\{G\}$.
\end{ex}

%
%
%

The following example shows that the $\s$-closed subgroups constructed in Corollary~\ref{cor: reduced ssubgroups} are in general not normal.

\begin{ex} \label{ex: Gsred not normal}
	Let $N$ be the $\s$-closed subgroup of $\Ga$ given by \mbox{$N(R)=\{g\in R|\ \s(g)=0\}$} for any $k$-$\s$-algebra $R$. The $\s$-algebraic group $H=\Gm$ acts on $N$ by group automorphisms
	$$H(R)\times N(R)\to N(R),\ (h,n)\mapsto hn.$$
	So we can form the semidirect product $G=N\rtimes H$ which is the $\s$-variety $N\times H$ with group multiplication given by
	$$(n_1,h_1)\cdot (n_2,h_2)=(n_1+h_1n_2,h_1h_2).$$
	Then $k\{G\}=k\{N\}\otimes_k k\{H\}=k[x]\otimes_k k\{y,y^{-1}\}$ with $\s(x)=0$. The reflexive closure of the zero ideal of $k\{G\}$ is the ideal of $k\{G\}$ generated by $x$. Therefore $G_\sred=H\leq G$. For $h\in H(R)$ and $n\in N(R)$ we have
	$$(n,1)(0,h)(n,1)^{-1}=(n-hn,h),$$ which shows that $G_\sred$ is not normal in $G$.
\end{ex}

In Lemma \ref{lemma: sdim of sssubgroups} we will show that $\sdim(G_\red)$, $\sdim(G_\sred)$, $\sdim(G_\wm)$ and $\sdim(G_\per)$ are all equal to $\sdim(G)$.
The following example shows that the order of $G_\sred$ might be strictly smaller than the order of $G$.

\begin{ex}
	Let $G$ be the $\s$-closed subgroup of $\Ga$ given by $$G(R)=\{g\in R|\ \s^n(g)=0\}$$ for any $k$-$\s$-algebra $R$.
	Then $G$ has order $n$ and $G_\sred$ is the trivial group, which has order $0$.
\end{ex}


\begin{rem} \label{rem: perfectly sreduced and variety}
	Our notion of $\s$-variety (Definition \ref{defi: svariety}) is the difference analog of an affine scheme of finite type over a field in usual algebraic geometry. The affine schemes of finite type over a field that can be recovered from their field-valued points are exactly the reduced ones. The $\s$-varieties that can be recovered from their points in $\s$-fields are exactly the perfectly $\s$\=/reduced ones. Thus, one can argue that perfectly $\s$-reduced $\s$-varieties are the difference analog of reduced affine schemes of finite type over a field. 
	Varieties are commonly assumed to be geometrically reduced, i.e., a reduced affine scheme of finite type over a field is an affine variety if its base change to the algebraic closure is reduced. Therefore, one might argue that the difference analog of an affine variety is a perfectly $\s$-reduced $\s$-variety that remains perfectly $\s$-reduced after base change to an inversive algebraically closed $\s$-field. Or, if we do not mind restrictions on the base field, an affine variety is an affine reduced scheme of finite type over an algebraically closed field and the difference analog of this is a perfectly $\s$-reduced $\s$-variety over an inversive algebraically closed $\s$-field.
\end{rem}

\section{Quotients} \label{sec: Quotients}

In this section we establish the existence of the quotient $G/N$ of a $\s$-algebraic group $G$ modulo a normal $\s$-closed subgroup $N$. Key ingredients for the proof are a result from M. Takeuchi about Hopf-algebras, which is more or less equivalent to the existence of quotients of affine groups schemes (not necessarily of finite type) and our finiteness theorem for \ks\=/Hopf subalgebras  (Theorem \ref{theo: second finiteness}). We also show how to compute $\sdim(G/N)$, $\ord(G/N)$ and $\ld(G/N)$ from the corresponding values for $G$ and $N$.

We do not address the question of the existence of the quotient $G/H$, where $H$ is an arbitrary $\s$-closed subgroup. In this article, we have no need for this more general construction. Moreover, for (affine) algebraic groups, the quotient $G/H$ is in general not affine (but rather quasi-projective). So, addressing this more general question would necessitate the introduction of a more complicated setup for difference algebraic geometry that goes beyond our affine treatment. 

Let $G$ be a $\s$-algebraic group. Recall that a $\s$-closed subgroup $N$ of $G$ is \emph{normal} if $N(R)$ is a normal subgroup of $G(R)$ for any $k$-$\s$-algebra $R$. We write $N\unlhd G$ to express that $N$ is a normal $\s$-closed subgroup of $G$.

If $\f\colon G\to H$ is a morphism of $\s$-algebraic groups, we define the \emph{kernel} $\ker(\f)$ of $\f$ to be the subfunctor of $G$ given by $R\rightsquigarrow\ker(\f_R)$. Then $\ker(\f)$ is a normal $\s$-closed subgroup of $G$. Indeed $\ker(\f)=\f^{-1}(1)$, where $1\leq H$ is the trivial $\s$-closed subgroup of $H$ defined by the kernel $\m_H$ of the counit $k\{H\}\to k$.
Explicitly, we have $\I(\ker(\f))=(\f^*(\m_{H}))\subseteq k\{G\}.$

 The quotient $G/N$ is defined by the following universal property.

\begin{defi}
	Let $G$ be a $\s$-algebraic group and $N\unlhd G$ a normal $\s$-closed subgroup. A morphism of $\s$-algebraic groups $\pi\colon G\to G/N$ such that $N\subseteq\ker(\pi)$ is a \emph{quotient of $G$ mod $N$} if it universal among such maps, i.e., for every morphism of $\s$-algebraic groups $\f\colon G\to H$ with $N\subseteq\ker(\f)$ there exists a unique morphism of $\s$-algebraic groups $\f'\colon G/N\to H$ such that
	\[\xymatrix{
		G \ar^-{\pi}[rr] \ar_-{\f}[rd] & & G/N \ar@{..>}^-{\f'}[ld] \\
		& H &
	}
	\]
	commutes.
\end{defi}

Of course, if a quotient of $G$ mod $N$ exists, it is unique up to a unique isomorphism. We will therefore usually speak of \emph{the} quotient of $G$ mod $N$. Note that for a quotient $\pi\colon G\to G/N$ of $G$ mod $N$ it is a priori not clear that $\ker(\pi)=N$. Allowing ourselves a little abuse of notation we will sometimes also refer to the $\s$-algebraic group $G/N$ as ``the quotient''.

For affine group schemes over a field (not necessarily of finite type), the fundamental theorem on quotients can be formulated in a purely Hopf algebraic manner (\cite{Takeuchi:ACorrespondenceBetweenHopfidealsAndSubHopfalgebras}). Recall that a Hopf ideal $\ida$ in a Hopf algebra $A$ over $k$ is \emph{normal} if, using Sweedler notation,
$$\sum f_{(1)}S(f_{(3)})\otimes f_{(2)}\in A\otimes_k\ida$$ for any $f\in\ida$, where $S$ is the antipode of $A$. Normal Hopf ideals in $A$ correspond to normal closed subgroup schemes (\cite[Lemma 5.1]{Takeuchi:ACorrespondenceBetweenHopfidealsAndSubHopfalgebras}). Similarly, if $G$ is a $\s$-algebraic group, then normal $\s$-Hopf ideals in $k\{G\}$ correspond to normal $\s$-closed subgroups of $G$ (cf. \cite[Lemma~2.4]{Wibmer:FinitenessPropertiesOfAffineDifferenceAlgebraicGroups}).
For a Hopf algebra $A$ over $k$ we denote the kernel of the counit $\varepsilon\colon A\to k$ by $\m_A$.

\begin{theo}[M. Takeuchi] \label{theo: takeuchi}
	Let $A$ be a Hopf algebra over $k$ and $\ida\subseteq A$ a normal Hopf ideal. Then $A(\ida)=\{f\in A|\ \Delta(f)-f\otimes 1\in A\otimes_k\ida\}$ is a Hopf subalgebra of $A$ with $(\m_{A(\ida)})=\ida$. Indeed, $A(\ida)$ is the unique Hopf subalgebra with this property and the largest Hopf subalgebra with the property that $(\m_{A(\ida)})\subseteq\ida$.
\end{theo}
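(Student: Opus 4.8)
The plan is to realize $A(\ida)$ as an algebra of coinvariants for the quotient Hopf algebra $C=A/\ida$, to read off the subalgebra and one-sided coideal structure for free, to use normality precisely to upgrade ``one-sided'' to ``two-sided'', and finally to import the faithful flatness of Hopf algebras over Hopf subalgebras (valid since $k$ is a field, and this is exactly the content attributed to Takeuchi) for the two ``surjectivity'' statements.

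\textbf{Coinvariants and one-sided structure.} Since $\ida$ is a Hopf ideal, $C=A/\ida$ is a quotient Hopf algebra and $\pi\colon A\to C$ a surjective Hopf map. Make $A$ a right $C$-comodule via $\rho=(\id\otimes\pi)\circ\Delta$. As $k$ is a field, $A$ is free over $k$, so $A\otimes_k-$ is exact and $A\otimes_k\ida=\ker(\id\otimes\pi)$; hence $\Delta(f)-f\otimes 1\in A\otimes_k\ida$ if and only if $\rho(f)=f\otimes\pi(1)$. Thus $A(\ida)=A^{\operatorname{co}C}$, the right coinvariants, and symmetrically we write ${}^{\operatorname{co}C}A$ for the left coinvariants of $\lambda=(\pi\otimes\id)\Delta$. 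Because $A\otimes_k\ida$ is an ideal of $A\otimes_k A$ and $\Delta$ is multiplicative, a one-line computation with $\Delta(fg)=\Delta(f)\Delta(g)$ shows $A(\ida)$ is a unital $k$-subalgebra. Coassociativity gives $(\Delta\otimes\id)\rho=(\id\otimes\rho)\Delta$; evaluated on $b\in A^{\operatorname{co}C}$ this reads
\[
\textstyle\sum b_{(1)}\otimes\rho(b_{(2)})=\Delta(b)\otimes\pi(1),
\]
and exactness of $A\otimes_k-$ lets us cancel the first tensor leg to conclude $\Delta(b)\in A\otimes_k A(\ida)$. So $A(\ida)$ is a left coideal subalgebra, with no use of normality yet.

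\textbf{The crux: normality yields a two-sided coideal.} The normality hypothesis says exactly that the adjoint coaction $f\mapsto\sum f_{(1)}S(f_{(3)})\otimes f_{(2)}$ maps $\ida$ into $A\otimes_k\ida$; dually this is conjugation-invariance of the subgroup cut out by $\ida$, and at the group level a right-$N$-invariant function is left-$N$-invariant because $ng=g\,(g^{-1}ng)$ with $g^{-1}ng\in N$. Algebraically I would use this relation, together with $S(\ida)\subseteq\ida$, to rewrite $\lambda(b)=\sum\pi(b_{(1)})\otimes b_{(2)}$ for $b\in A^{\operatorname{co}C}$ and collapse it to $\pi(1)\otimes b$, proving $A^{\operatorname{co}C}={}^{\operatorname{co}C}A$. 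The left-handed analog of the previous step shows ${}^{\operatorname{co}C}A$ is a right coideal subalgebra, so the common algebra $A(\ida)$ satisfies $\Delta(A(\ida))\subseteq (A\otimes A(\ida))\cap(A(\ida)\otimes A)=A(\ida)\otimes A(\ida)$, the last equality again because $k$ is a field. Applying $S\otimes S$ and the flip to $\Delta(b)-1\otimes b\in\ida\otimes A$ and using $\Delta\circ S=\tau\circ(S\otimes S)\circ\Delta$ together with $S(\ida)\subseteq\ida$ gives $\Delta(S(b))-S(b)\otimes 1\in A\otimes\ida$, i.e. $S(A(\ida))\subseteq A(\ida)$; with $\varepsilon(A(\ida))\subseteq k$ automatic, $A(\ida)$ is a Hopf subalgebra. \emph{This coalgebra-stability is the main obstacle}: it is the only place normality is indispensable, and the bookkeeping with the antipode and iterated comultiplication is the delicate part.

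\textbf{Easy inclusion and maximality.} Applying $\varepsilon\otimes\id$ to $\Delta(b)-b\otimes 1\in A\otimes\ida$ yields $b-\varepsilon(b)1\in\ida$, so $\m_{A(\ida)}\subseteq\ida$ and hence $(\m_{A(\ida)})\subseteq\ida$. For the ``largest'' assertion, let $B$ be any Hopf subalgebra with $(\m_B)\subseteq\ida$; then $\m_B\subseteq\ida$, and for $b\in B$ the identity
\[
\Delta(b)-b\otimes 1=\textstyle\sum b_{(1)}\otimes\bigl(b_{(2)}-\varepsilon(b_{(2)})1\bigr)\in B\otimes\m_B\subseteq A\otimes\ida
\]
shows $b\in A(\ida)$, so $B\subseteq A(\ida)$. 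This step is elementary and uses neither normality nor flatness.

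\textbf{Deep inclusion and uniqueness.} It remains to prove $\ida\subseteq(\m_{A(\ida)})=A\cdot\m_{A(\ida)}$ and that $A(\ida)$ is the unique Hopf subalgebra with $(\m_{A(\ida)})=\ida$. Here I would invoke the faithful flatness of $A$ over the Hopf subalgebra $A(\ida)$ (a theorem over a field), which identifies $C$ with $A\otimes_{A(\ida)}k=A/(A\cdot\m_{A(\ida)})$ and thereby forces $\ida=(\m_{A(\ida)})$; the same faithful flatness makes $B\mapsto(\m_B)$ injective on Hopf subalgebras, so any Hopf subalgebra $B$ with $(\m_B)=\ida$ (which satisfies $B\subseteq A(\ida)$ by the previous paragraph) must in fact equal $A(\ida)$. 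These two points carry the genuine depth of the statement and are exactly what is imported from Takeuchi's work.
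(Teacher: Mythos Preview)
The paper does not actually prove this theorem: its ``proof'' consists entirely of three citations to Takeuchi's original paper (Lemmas 4.4 and 4.7 and Theorem 4.3 of \cite{Takeuchi:ACorrespondenceBetweenHopfidealsAndSubHopfalgebras}). Your sketch is a correct outline of essentially the argument that lies behind those citations, organized around coinvariants and faithful flatness, so there is no real difference in approach to report.

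One small comment: the place you are vaguest is exactly the place you flag as the crux, namely showing $A^{\operatorname{co}C}={}^{\operatorname{co}C}A$ from normality; your heuristic and the surrounding steps are fine, but the actual Sweedler computation is not written out. Your final paragraph is accurate in identifying that the equality $\ida=(\m_{A(\ida)})$ and the uniqueness statement ultimately rest on faithful flatness of $A$ over any Hopf subalgebra (valid over a field), which is indeed one of Takeuchi's theorems and is also what the paper itself invokes (via \cite[Chapter~14]{Waterhouse:IntroductiontoAffineGroupSchemes}) in the applications that follow.
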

\begin{proof}
	By \cite[Lemma 4.4]{Takeuchi:ACorrespondenceBetweenHopfidealsAndSubHopfalgebras} $A(\ida)$ is a Hopf subalgebra. By \cite[Lemma 4.7]{Takeuchi:ACorrespondenceBetweenHopfidealsAndSubHopfalgebras} it is the largest Hopf subalgebra with  $(\m_{A(\ida)})\subseteq\ida$. Finally, by \cite[Theorem 4.3]{Takeuchi:ACorrespondenceBetweenHopfidealsAndSubHopfalgebras} it is the unique Hopf subalgebra with $(\m_{A(\ida)})=\ida$.
\end{proof}

The existence of the quotient of $G$ mod $N$ can be reduced to Theorem \ref{theo: takeuchi}.
A similar approach was taken in \cite[Section A.9]{DiVizioHardouinWibmer:DifferenceAlgebraicRel}. While the result in \cite{DiVizioHardouinWibmer:DifferenceAlgebraicRel} is formulated in a more general setup (there the $k$-$\s$-Hopf algebras need not be finitely $\s$-generated over $k$) the result we prove here is stronger. Indeed, with the aid of Theorem \ref{theo: second finiteness} we show that $G/N$ is $\s$-algebraic, i.e., $k\{G/N\}$ is finitely $\s$-generated over $k$. This question remained open in \cite{DiVizioHardouinWibmer:DifferenceAlgebraicRel}.

\begin{theo} \label{theo: existence of quotient}
	Let $G$ be a $\s$-algebraic group and $N\unlhd G$ a $\s$-closed subgroup. Then the quotient of $G$ mod $N$ exists. Moreover, a morphism of $\s$-algebraic groups $\pi\colon G\to G/N$ is the quotient of $G$ mod $N$ if and only if $\ker(\pi)=N$ and $\pi^*\colon k\{G/N\}\to k\{G\}$ is injective.
\end{theo}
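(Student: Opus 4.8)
The plan is to translate the group-theoretic universal property into the Hopf-algebraic language and invoke Takeuchi's theorem (Theorem~\ref{theo: takeuchi}), with the finiteness of $k\{G/N\}$ supplied by Theorem~\ref{theo: second finiteness}. Let $A=k\{G\}$ and let $\ida=\I(N)\subseteq A$ be the normal $\s$-Hopf ideal corresponding to the normal $\s$-closed subgroup $N$. First I would set $B=A(\ida)=\{f\in A\mid \Delta(f)-f\otimes 1\in A\otimes_k\ida\}$ as in Takeuchi's theorem. Since $\Delta$ and $\varepsilon$ are morphisms of \ks-algebras and $\ida$ is a $\s$-ideal, the defining condition $\Delta(f)-f\otimes 1\in A\otimes_k\ida$ is preserved by $\s$ (because $\s(f\otimes g)=\s(f)\otimes\s(g)$ and $\s(\ida)\subseteq\ida$), so $B$ is in fact a \ks-Hopf \emph{sub}algebra of $A$, not merely a Hopf subalgebra. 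By Theorem~\ref{theo: second finiteness}, $B$ is finitely $\s$-generated over $k$, hence $B=k\{G/N\}$ for a $\s$-algebraic group which I define to be $G/N$, and the inclusion $B\hookrightarrow A$ is the dual $\pi^*$ of the desired morphism $\pi\colon G\to G/N$.

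Next I would verify the universal property. Takeuchi's theorem gives $(\m_B)=\ida$, and since $\I(\ker\pi)=(\pi^*(\m_{G/N}))=(\m_B)=\ida=\I(N)$, we get $\ker(\pi)=N$; in particular $N\subseteq\ker(\pi)$, so $\pi$ is a candidate quotient. For universality, let $\f\colon G\to H$ be a morphism of $\s$-algebraic groups with $N\subseteq\ker(\f)$. Dually this means $\f^*\colon k\{H\}\to A$ has image inside a \ks-Hopf subalgebra $C$ of $A$ whose kernel-of-counit ideal $(\m_C)$ is contained in $\ida$; concretely, $N\subseteq\ker(\f)$ translates to $\f^*(\m_H)\subseteq \I(\ker\f)\cap\text{(counit ideal)}$ landing in $\ida$, so the image $\f^*(k\{H\})$ is a \ks-Hopf subalgebra with $(\m)\subseteq\ida$. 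By the maximality clause of Theorem~\ref{theo: takeuchi}, $B=A(\ida)$ is the largest Hopf subalgebra with this property, so $\f^*(k\{H\})\subseteq B$. Thus $\f^*$ factors uniquely through $B\hookrightarrow A$, yielding a unique morphism of \ks-Hopf algebras $k\{H\}\to B$, which dualizes to the unique $\f'\colon G/N\to H$ with $\f=\f'\circ\pi$. This establishes existence of the quotient.

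For the final ``if and only if'' characterization, the forward direction is covered by the construction above: the quotient $\pi$ we built satisfies $\ker(\pi)=N$ and has injective dual $\pi^*$ (it is the inclusion $B\hookrightarrow A$). For the converse, suppose $\pi\colon G\to G/N$ is a morphism with $\ker(\pi)=N$ and $\pi^*$ injective. Identifying $k\{G/N\}$ with its image $\pi^*(k\{G/N\})$, this is a \ks-Hopf subalgebra $B'$ of $A$ with $(\m_{B'})=\I(\ker\pi)=\I(N)=\ida$. By the \emph{uniqueness} clause of Theorem~\ref{theo: takeuchi}, $A(\ida)$ is the unique Hopf subalgebra with $(\m)=\ida$, so $B'=A(\ida)=B$, and the map $\pi$ coincides with the quotient constructed above; hence it satisfies the universal property.

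The main obstacle I anticipate is the bookkeeping needed to pass cleanly between the three equivalent formulations---normal $\s$-closed subgroups, normal $\s$-Hopf ideals, and the condition $N\subseteq\ker(\f)$ at the level of dual maps---and in particular verifying that Takeuchi's construction $A(\ida)$ is genuinely $\s$-stable so that his Hopf-theoretic statement upgrades to a \ks-Hopf statement. Once that $\s$-stability is checked, Theorem~\ref{theo: second finiteness} does the essential work of guaranteeing finite $\s$-generation (the genuinely new content beyond the purely formal category-theoretic manipulations), and the rest is a translation of Takeuchi's uniqueness and maximality assertions into the language of $\s$-algebraic groups.
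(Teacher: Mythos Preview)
Your proposal is correct and follows essentially the same route as the paper: define $k\{G/N\}=A(\ida)$ via Takeuchi, check $\s$-stability, invoke Theorem~\ref{theo: second finiteness} for finite $\s$-generation, derive the universal property from the maximality clause of Theorem~\ref{theo: takeuchi}, and obtain the ``if and only if'' characterization from its uniqueness clause. The only cosmetic wobble is the phrasing around ``$\f^*(\m_H)\subseteq \I(\ker\f)\cap\text{(counit ideal)}$'', which should simply read $(\f^*(\m_H))=\I(\ker\f)\subseteq\I(N)=\ida$; otherwise your argument matches the paper's proof.
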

\begin{proof}
	By Theorem \ref{theo: takeuchi}
	$$k\{G\}(\I(N))=\{f\in k\{G\}|\ \Delta(f)-f\otimes 1\in k\{G\}\otimes_k\I(N)\}$$ is a Hopf subalgebra of $k\{G\}$. Clearly it also is a $k$-$\s$-Hopf subalgebra. From Theorem \ref{theo: second finiteness} we know that $k\{G\}(\I(N))$ is finitely $\s$-generated over $k$. So we can define $G/N$ as the $\s$\=/algebraic group represented by $k\{G\}(\I(N))$, i.e., $k\{G/N\}=k\{G\}(\I(N))$. Let \mbox{$\pi\colon G\to G/N$} be the morphism of $\s$-algebraic groups corresponding to the inclusion $k\{G/N\}\subseteq k\{G\}$ of $k$-$\s$-Hopf algebras.
	
	Let $\f\colon G\to H$ be a morphism of $\s$-algebraic groups such that $N\subseteq\ker(\f)$. As $\ker(\f)=\V(\f^*(\m_{H}))$, the Hopf algebraic meaning of $N\subseteq\ker(\f)$ is $\f^*(\m_{H})\subseteq \I(N)$. To show that $\pi$ has the required universal property, it suffices to show that $\f^*(k\{H\})\subseteq k\{G/N\}$. We know from Theorem \ref{theo: takeuchi} that $k\{G/N\}$ is the largest Hopf subalgebra of $k\{G\}$ such that $\m_{k\{G/N\}}\subseteq\I(N)$. As
	$\m_{\f^*(k\{H\})}=\f^*(\m_{H})\subseteq\I(N)$, we find $\f^*(k\{H\})\subseteq k\{G/N\}$.
	
	Clearly $\pi^*$ is injective. Moreover, $\ker(\pi)=\V(\pi^*(\m_{G/N}))=\V(\I(N))=N$ by Theorem~\ref{theo: takeuchi}.
	
	If $\pi\colon G\to G/N$ is a morphism of $\s$-algebraic groups such that $N=\ker(\pi)$ and $\pi^*\colon k\{G/N\}\to k\{G\}$ is injective, then
	$\pi^*(k\{G/N\})$ is a Hopf subalgebra of $k\{G\}$ such that $(\m_{\pi^*(k\{G/N\})})=\I(N)$. Therefore $\pi^*(k\{G/N\})=k\{G\}(\I(N))$ by Theorem \ref{theo: takeuchi}.
\end{proof}

\begin{cor} \label{cor: quotient embedding}
	Let $\f\colon G\to H$ be a morphism of $\s$-algebraic groups. Then the induced morphism $G/\ker(\f)\to H$ is a $\s$-closed embedding.
\end{cor}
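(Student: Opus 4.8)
The plan is to factor $\f$ through its image and recognize that this factorization is exactly the quotient of $G$ by $\ker(\f)$. Write $N=\ker(\f)$ and let $\pi\colon G\to G/N$ be the quotient map. Since $N\subseteq\ker(\f)$, the universal property of $G/N$ furnishes a unique morphism $\f'\colon G/N\to H$ with $\f=\f'\circ\pi$, and this $\f'$ is the morphism in the statement. By \cite[Lemma 1.6]{Wibmer:FinitenessPropertiesOfAffineDifferenceAlgebraicGroups}, to prove that $\f'$ is a $\s$-closed embedding it suffices to show that $(\f')^*\colon k\{H\}\to k\{G/N\}$ is surjective; equivalently, that $G/N$ maps isomorphically onto a $\s$-closed subgroup of $H$.

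First I would pass to the image $\f(G)\subseteq H$. Since $\f^*\colon k\{H\}\to k\{G\}$ is a morphism of \ks-Hopf algebras, its kernel is a $\s$-Hopf ideal, so $\f(G)=\V(\ker(\f^*))$ is a $\s$-closed subgroup of $H$, and $\f$ corestricts to a morphism $\bar\f\colon G\to\f(G)$ of $\s$-algebraic groups. Its dual $\bar\f^*\colon k\{\f(G)\}=k\{H\}/\ker(\f^*)\to k\{G\}$ is injective by construction of the image. Moreover, because the inclusion $\f(G)\hookrightarrow H$ is injective on $R$-valued points for every \ks-algebra $R$, the morphisms $\bar\f$ and $\f$ have the same kernel, namely $N$.

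The key step is then to invoke the second half of Theorem~\ref{theo: existence of quotient}: a morphism $G\to G/N$ is the quotient of $G$ mod $N$ if and only if its kernel is $N$ and its dual is injective. The morphism $\bar\f\colon G\to\f(G)$ satisfies both conditions, so $\bar\f$ is a quotient of $G$ mod $N$. By the uniqueness of quotients up to a unique isomorphism, there is an isomorphism $G/N\xrightarrow{\sim}\f(G)$ compatible with $\pi$ and $\bar\f$; composing it with the $\s$-closed embedding $\f(G)\hookrightarrow H$ recovers $\f'$. Hence $\f'$ is the composition of an isomorphism with a $\s$-closed embedding, and is therefore itself a $\s$-closed embedding.

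The main obstacle lies in the input rather than the assembly: everything hinges on the nontrivial characterization of quotients in Theorem~\ref{theo: existence of quotient}, which ultimately rests on Takeuchi's theorem together with the finiteness result Theorem~\ref{theo: second finiteness}. The two supporting points—that $\ker(\f^*)$ is genuinely a $\s$-Hopf ideal, so that $\f(G)$ is a $\s$-closed subgroup and $\bar\f$ is a morphism of $\s$-algebraic groups, and that $\bar\f$ inherits the kernel $N$ from $\f$—are routine, but they deserve careful verification since they are precisely what licenses the application of that characterization.
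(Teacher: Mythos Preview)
Your argument is correct and essentially the same as the paper's. Both routes identify $k\{G/\ker(\f)\}$ with the image $\f^*(k\{H\})\subseteq k\{G\}$ via Takeuchi's uniqueness: the paper applies Theorem~\ref{theo: takeuchi} directly to the Hopf subalgebra $\f^*(k\{H\})$, while you invoke the equivalent characterization packaged in the second half of Theorem~\ref{theo: existence of quotient} to recognize $\bar\f\colon G\to\f(G)$ as the quotient map; the underlying reasoning is identical.
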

\begin{proof}
	The Hopf subalgebra $\f^*(k\{H\})\subseteq k\{G\}$ satisfies $(\m_{\f^*(k\{H\})})=(\f^*(\m_{H}))=\I(\ker(\f))$.
	Therefore $\f^*(k\{H\})=k\{G\}(\I(\ker(\f)))=k\{G/\ker(\f)\}$ by Theorem \ref{theo: takeuchi}. Consequently the map $k\{H\}\to k\{G/\ker(\f)\}$ is surjective and
	$G/\ker(\f)\to H$ is a $\s$-closed embedding.
\end{proof}

Theorem \ref{theo: existence of quotient} yields a rather practical method for determining the quotient: Given a normal $\s$-closed subgroup $N$ of a $\s$-algebraic group $G$, to determine $G/N$ it suffices to find a morphism $\f\colon G\to H$ with $N=\ker(\f)$ and $\f^*\colon k\{H\}\to k\{G\}$ injective. Let us illustrate this idea with a few examples.

\begin{ex} \label{ex: quotient for sg2}
	Let $G$ be the $\s$-closed subgroup of $\Gm$ given by
	$$G(R)=\{g\in R^\times|\ \s(g)^2=1\}\leq\Gm(R)$$
	and let $N$ be the normal $\s$-closed subgroup of $G$ given by
	$$N(R)=\{g\in R^\times|\ \s(g)=1\}$$
	for any \ks-algebra $R$. We would like to determine the quotient $G/N$. Let $H$ be the $\s$-algebraic group given by
	$H(R)=\{g\in R^\times|\ g^2=1\}$ and let $\f\colon G\to H$ be the morphism given by
	$$\f_R\colon G(R)\to H(R),\ g\mapsto \s(g).$$
	Then $\f$ has kernel $N$ and the dual map $\f^*\colon k\{H\}\to k\{G\}$ is injective. Thus it follows from Theorem \ref{theo: existence of quotient} that $\f$ is the quotient of $G$ mod $N$, i.e., $G/N=H$.
\end{ex}

\begin{ex} \label{ex: quotients for Ga}
	Let $N$ be the $\s$-closed subgroup of the additive group $G=\Ga$ defined by a linear difference equation $\s^n(y)+\lambda_{n-1}\s^{n-1}(y)+\cdots+\lambda_0y=0$. The morphism
	$$\f\colon\Ga\to \Ga,\ g\mapsto \s^n(g)+\lambda_{n-1}\s^{n-1}(g)+\cdots+\lambda_0g$$
	has kernel $N$ and the dual map $\f^*\colon k\{y\}\to k\{y\},\ y\mapsto \s^n(y)+\lambda_{n-1}\s^{n-1}(y)+\cdots+\lambda_0y$ is injective. Therefore, $\f$ is the quotient of $G$ mod $N$, i.e., $G/N=\Ga$.
\end{ex}

\begin{ex} \label{ex: quotients and algebraic groups}
	If $\G$ is an algebraic group with a normal closed subgroup $\N$, then $[\s]_k\N$ is a normal $\s$-closed subgroup of $[\s]_k\G$ and $[\s]_k\G/[\s]_k\N=[\s]_k(\G/\N)$. To verify this, note that the morphism $[\s]_k\pi\colon [\s]_k\G\to [\s]_k(\G/\N)$ induced by $\pi\colon\G\to \G/\N$ has kernel $[\s]_k\N$. Moreover, as $\pi^*\colon k[\G/\N]\to k[\G]$ is injective, it follows that also $[\s]_k(\pi^*)\colon k\{\G/\N\}\to k\{\G\}$ is injective.
\end{ex}

\begin{ex} \label{ex: quotient for finite}
	In \cite[Example 2.14]{Wibmer:FinitenessPropertiesOfAffineDifferenceAlgebraicGroups} it is explained how one can associate a $\s$-algebraic group $G=G(\GG,\s)$ to a finite group $\GG$ equipped with an endomorphism $\s\colon\GG\to \GG$: For a \ks-algebra $R$, the group $G(R)$ consists of all locally constant maps $g\colon \spec(R)\to \GG$ such that
		$$
	\xymatrix{
		\spec(R) \ar_\Sigma[d] \ar^-g[r] & \mathsf{G} \ar^\sigma[d] \\
		\spec(R) \ar^-g[r] & \mathsf{G}		
	}
	$$
	commutes, where $\Sigma(\p)=\s^{-1}(\p)$ (for $\s\colon R\to R$).
	If $\NN$ is a normal subgroup of $\GG$ such that $\s(\NN)\subseteq\NN$, then $N=G(\NN,\s)$ is a normal $\s$-closed subgroup of $G$. As $\s(\NN)\subseteq\NN$ we have an induced endomorphism $\s\colon \GG/\NN\to \GG/\NN$
	and composing $g\colon\spec(R)\to \GG$ with $\GG\to \GG/\NN$ yields a morphism $\pi\colon G\to G(\GG/\NN,\s)$ of $\s$-algebraic groups with kernel $N$. The dual map $\pi^*\colon k^{\GG/\NN}\to k^\GG$ is injective. Thus $\pi$ is the quotient of $G$ mod $N$. In other words, $G(\GG,\s)/G(\NN,\s)=G(\GG/\NN,\s)$. 
\end{ex}

As one may expect, the formation of quotients is compatible with base change:

\begin{lemma} \label{lemma: quotients compatible with base extension}
	Let $N\unlhd G$ be $\s$-algebraic groups and $K$ a $\s$-field extension of $k$. Then $(G/N)_K=G_K/N_K$.
\end{lemma}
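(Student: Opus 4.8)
The plan is to reduce everything to the characterization of quotients in Theorem~\ref{theo: existence of quotient}: a morphism $\pi\colon G\to G/N$ is the quotient of $G$ mod $N$ if and only if $\ker(\pi)=N$ and $\pi^*\colon k\{G/N\}\to k\{G\}$ is injective. Accordingly, I would take the quotient map $\pi\colon G\to G/N$ over $k$, form its base change $\pi_K\colon G_K\to (G/N)_K$, note that $N_K\unlhd G_K$, and verify that $\pi_K$ satisfies these two conditions relative to $N_K$. Uniqueness of quotients then identifies $(G/N)_K$ with $G_K/N_K$.

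The injectivity of $\pi_K^*$ is immediate. Under the identifications $K\{(G/N)_K\}=k\{G/N\}\otimes_k K$ and $K\{G_K\}=k\{G\}\otimes_k K$ recorded in Section~1, we have $\pi_K^*=\pi^*\otimes\id_K$. Since $k$ is a field, $K$ is flat over $k$, so tensoring the injective map $\pi^*$ with $K$ preserves injectivity.

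For the kernel I would compute defining ideals. Flat base change commutes with the counit, giving $\m_{(G/N)_K}=\m_{G/N}\otimes_k K$, and it commutes with the formation of the defining ideal of a $\s$-closed subgroup, giving $\I(N_K)=\I(N)\otimes_k K$ inside $k\{G\}\otimes_k K$. By the general formula $\I(\ker(\f))=(\f^*(\m_H))$, the ideal $\I(\ker(\pi_K))$ is generated by $\pi_K^*(\m_{(G/N)_K})$, i.e.\ by the elements $\pi^*(m)\otimes 1$ with $m\in\m_{G/N}$. Since for an ideal $\idb\subseteq k\{G\}$ the ideal of $k\{G\}\otimes_k K$ generated by $\idb\otimes 1$ is $\idb\otimes_k K$, we get $\I(\ker(\pi_K))=\I(\ker(\pi))\otimes_k K=\I(N)\otimes_k K=\I(N_K)$, hence $\ker(\pi_K)=N_K$.

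The work here is purely bookkeeping: the identity $K\{X_K\}=k\{X\}\otimes_k K$ is already available, while $\m_{(G/N)_K}=\m_{G/N}\otimes_k K$ and $\I(N_K)=\I(N)\otimes_k K$ each need only a one-line flatness argument, so no genuine obstacle arises---everything rests on $K$ being flat over the field $k$. An equally short alternative would be to show directly that Takeuchi's construction commutes with flat base change: writing $A(\ida)$ from Theorem~\ref{theo: takeuchi} as the kernel of the $k$-linear map $A\to A\otimes_k(A/\ida),\ f\mapsto \Delta(f)-f\otimes 1$ and tensoring the resulting left-exact sequence with $K$ over $k$ yields $A(\ida)\otimes_k K=A_K(\ida_K)$, which is exactly $k\{G/N\}\otimes_k K=K\{G_K/N_K\}$.
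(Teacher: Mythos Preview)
Your proof is correct and follows exactly the same approach as the paper: apply the characterization of quotients from Theorem~\ref{theo: existence of quotient} to the base-changed morphism, noting that $\pi_K^*=\pi^*\otimes\id_K$ is injective by flatness and that $\ker(\pi_K)=N_K$. The paper's proof is simply a two-sentence compression of what you wrote (it declares the kernel computation ``clear'' rather than spelling out the ideal bookkeeping), and it does not mention your alternative via Takeuchi's construction, which is a nice bonus observation.
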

\begin{proof}
	It is clear from Theorem \ref{theo: existence of quotient} that the kernel of the morphism $G_K\to (G/N)_K$ obtained from $G\to G/N$ by base change is $N_K$. So by Theorem \ref{theo: existence of quotient} again, it suffices to note that the dual map $k\{G/N\}\otimes_k K\to k\{G\}\otimes_k K$ is injective.
\end{proof}

Now that the existence of the quotient $G/N$ is established, we can start to study its properties. To see how the numerical invariants $\s$-dimension, order and limit degree behave with respect to quotients, we first need to understand how quotients intertwine with Zariski closures.

\begin{lemma} \label{lemma: Zariski closure and normal subgroup}
	Let $\G$ be an algebraic group and let $N\leq G\leq \G$ be $\s$-closed subgroups. For $i\geq 0$ let $G[i]$ and $N[i]$ denote the $i$-th order Zariski closure of $G$ and $N$ in $\G$ respectively. Then $N$ is normal in $G$ if and only if $N[i]$ is normal in $G[i]$ for every $i\geq 0$.
\end{lemma}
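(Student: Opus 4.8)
The plan is to reduce normality to a single condition on the conjugation morphism and then to track that condition through the formation of Zariski closures. For group functors, a $\s$-closed subgroup $N$ of $G$ is normal exactly when the conjugation morphism $c\colon G\times N\to G$, $(g,n)\mapsto gng^{-1}$, factors through $N\hookrightarrow G$. Dually, writing $c^*\colon k\{G\}\to k\{G\}\otimes_k k\{N\}$ for its comorphism and $\rho\colon k\{G\}\to k\{N\}$ for the restriction (so that $\I(N)=\ker(\rho)$), this says $\ker(\rho)\subseteq\ker(c^*)$. The identical description applies to the algebraic groups: with $c_i\colon G[i]\times N[i]\to G[i]$ the conjugation morphism in $\G[i]=\G\times\hs\G\times\cdots\times\hsi\G$, comorphism $c_i^*\colon k[G[i]]\to k[G[i]]\otimes_k k[N[i]]$, and restriction $\rho_i\colon k[G[i]]\to k[N[i]]$, normality of $N[i]$ in $G[i]$ means $\ker(\rho_i)\subseteq\ker(c_i^*)$.

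The heart of the argument is to relate $c^*$ to the $c_i^*$ via the filtration $k\{G\}=\bigcup_i k[G[i]]$. First I would record three facts from the preliminaries: each $k[G[i]]$ is a Hopf subalgebra of $k\{G\}$ (the inclusions $k[G[i-1]]\hookrightarrow k[G[i]]$ are dual to the quotient maps $\pi_i$ and hence are Hopf algebra maps), the union of the $k[G[i]]$ is all of $k\{G\}$, and the restriction $\rho$ carries $k[G[i]]$ onto $k[N[i]]$ with $\ker(\rho_i)=k[G[i]]\cap\ker(\rho)$. Granting these, the Sweedler expression $c^*(f)=\sum f_{(1)}S(f_{(3)})\otimes\rho(f_{(2)})$ for conjugation shows at once that $c^*$ maps $k[G[i]]$ into $k[G[i]]\otimes_k k[N[i]]$ and that its restriction there is $c_i^*$. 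Thus there is a commutative square whose horizontal arrows are $c_i^*$ and $c^*$ and whose vertical arrows are the inclusions $k[G[i]]\hookrightarrow k\{G\}$ and $k[G[i]]\otimes_k k[N[i]]\hookrightarrow k\{G\}\otimes_k k\{N\}$. The latter is injective because $k$ is a field, so chasing the square gives $\ker(c_i^*)=k[G[i]]\cap\ker(c^*)$.

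The conclusion is then purely formal. Normality of $N[i]$ in $G[i]$ reads $k[G[i]]\cap\ker(\rho)\subseteq k[G[i]]\cap\ker(c^*)$. Since $k\{G\}=\bigcup_i k[G[i]]$, the inclusion $\ker(\rho)\subseteq\ker(c^*)$ holds if and only if $k[G[i]]\cap\ker(\rho)\subseteq\ker(c^*)$ for every $i$, and intersecting with $k[G[i]]$ shows this is the same as $k[G[i]]\cap\ker(\rho)\subseteq k[G[i]]\cap\ker(c^*)$ for every $i$. Chaining the equivalences yields $N\unlhd G$ if and only if $\ker(\rho)\subseteq\ker(c^*)$ if and only if $N[i]\unlhd G[i]$ for all $i$, as desired.

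The main obstacle is the genuinely non-formal step in the second paragraph: verifying that the conjugation comorphism is compatible with the filtration, i.e.\ that $c^*$ preserves the Hopf subalgebras $k[G[i]]$ and restricts to $c_i^*$. This hinges on the Hopf structure of $k\{G\}=[\s]_k k[\G]/\I(G)$ being the levelwise one, so that $\Delta$ and $S$ respect each $k[G[i]]$; the rest is bookkeeping with Sweedler notation and is harmless over a field. I would take particular care to confirm that $c_i^*$ really is the conjugation comorphism of the algebraic group $G[i]$, equivalently that $G[i]$ and $N[i]$ are the expected closed subgroups of $\G[i]$ with $N[i]\leq G[i]$ --- but this is already contained in the preliminaries (Proposition~\ref{prop: limit of Gi} and the surrounding discussion of the $\pi_i$ as quotient maps).
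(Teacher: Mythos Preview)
Your proof is correct and follows essentially the same route as the paper's: both reduce normality to the Sweedler-type condition $\sum f_{(1)}S(f_{(3)})\otimes f_{(2)}\in k\{G\}\otimes_k\I(N)$ (the paper phrases this as ``$\I(N)$ is a normal Hopf ideal'', you phrase it as ``$\ker(\rho)\subseteq\ker(c^*)$''), and then observe that this condition can be checked levelwise because $k\{G\}=\bigcup_i k[G[i]]$ with each $k[G[i]]$ a Hopf subalgebra. The paper compresses all of this into a single sentence; your version spells out the compatibility of $c^*$ with the filtration, which is exactly the content the paper leaves implicit.
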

\begin{proof}
	As $k\{G\}=\cup_{i\geq 0}k[G[i]]$ is the union of the Hopf subalgebras $k[G[i]]$, we see that $\I(N)$ is a normal Hopf ideal of $k\{G\}$ if and only if $\I(N)\cap k[G[i]]$ is a normal Hopf ideal of $k[G[i]]$ for every $i\geq 0$.
\end{proof}

\begin{prop} \label{prop: Zariskiclosures and quotients}
	Let $\G$ be an algebraic group and $N\unlhd G\leq\G$ $\s$-closed subgroups. For $i\geq 0$ let $G[i]$ and $N[i]$ denote the $i$-th order Zariski closure of $G$ and $N$ in $\G$ respectively. Then there exists an integer $m\geq 0$ such that $G/N$ is a $\s$-closed subgroup of $G[m]/N[m]$ and for $i\geq 0$ the $i$-th order Zariski closure of $G/N$ in $G[m]/N[m]$ is the quotient of $G[i+m]$ mod $N[i+m]$, i.e.,
	$$(G/N)[i]=G[m+i]/N[m+i].$$
\end{prop}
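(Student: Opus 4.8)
The plan is to realise both $k\{G/N\}$ and the coordinate rings of the algebraic quotients $G[i]/N[i]$ as pieces of one exhaustive filtration, and then to pin down the Zariski closures of $G/N$ by comparing this filtration with the (automatic) one for $G$. Throughout write $B=k\{G\}=\bigcup_i B_i$ with $B_i=k[G[i]]$, put $\mathfrak n=\I(N)\subseteq B$ and $\mathfrak n_i=\mathfrak n\cap B_i=\I(N[i])$. By Lemma~\ref{lemma: Zariski closure and normal subgroup} we have $N[i]\unlhd G[i]$, so $\mathfrak n_i$ is a normal Hopf ideal of $B_i$ and $\mathfrak n$ is a normal Hopf ideal of $B$.

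First I would prove the key algebraic identity
$$k\{G/N\}\cap B_i=k[G[i]/N[i]],$$
where $k\{G/N\}=B(\mathfrak n)$ by Theorem~\ref{theo: existence of quotient} and $k[G[i]/N[i]]=B_i(\mathfrak n_i)$. Using Takeuchi's description $B(\mathfrak n)=\{f\ |\ \Delta(f)-f\otimes 1\in B\otimes_k\mathfrak n\}$ from Theorem~\ref{theo: takeuchi} (and the analogous one for $B_i(\mathfrak n_i)$), the inclusion ``$\supseteq$'' is clear; for ``$\subseteq$'' one takes $f\in B_i\cap B(\mathfrak n)$, observes $\Delta(f)-f\otimes 1\in(B_i\otimes_k B_i)\cap(B\otimes_k\mathfrak n)$, and uses the flatness identity $(B_i\otimes_k B_i)\cap(B\otimes_k\mathfrak n)=B_i\otimes_k\mathfrak n_i$, valid because $k$ is a field and $\mathfrak n_i=\ker(B_i\to B/\mathfrak n)$. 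Hence $C:=k\{G/N\}=\bigcup_i C_i$ with $C_i:=k[G[i]/N[i]]$ and $\s(C_i)\subseteq C_{i+1}$.

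Since $C$ is finitely $\s$-generated (Theorem~\ref{theo: existence of quotient}), there is an $m$ with $C=k\{C_m\}$; the inclusion $C_m\hookrightarrow C$ of Hopf algebras then induces, via Lemmas~\ref{lemma: unicersal property for skA} and~\ref{lemma: universal property with Hopf}, a surjection $k\{G[m]/N[m]\}\twoheadrightarrow C$, i.e.\ a $\s$-closed embedding $G/N\hookrightarrow[\s]_k(G[m]/N[m])$, which is the first assertion. By definition the $i$-th order Zariski closure of $G/N$ in $\mathcal K:=G[m]/N[m]$ has coordinate ring $D_i$, the image of $k[\mathcal K][i]$ in $C$, which one computes to be $k[C_m,\s(C_m),\ldots,\s^i(C_m)]$; as $\s^j(C_m)\subseteq C_{m+j}\subseteq C_{m+i}$ this gives $D_i\subseteq C_{m+i}$, and the remaining task is the reverse inclusion, equivalently $(G/N)[i]=G[m+i]/N[m+i]$.

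For this I would introduce the morphism of algebraic groups $\Phi_i\colon G[m+i]\to\mathcal K[i]=\prod_{j=0}^i{}^{\s^j}\!\mathcal K$ obtained by composing the ``windows'' homomorphism $G[m+i]\to\prod_{j=0}^i{}^{\s^j}\!G[m]$, $(g_0,\ldots,g_{m+i})\mapsto\big((g_j,\ldots,g_{j+m})\big)_j$, with $\prod_j{}^{\s^j}\!q_m$, where $q_m\colon G[m]\twoheadrightarrow\mathcal K$ is the quotient map. A dual computation (keeping track of the twists ${}^{\s^j}$) identifies the scheme-theoretic image of $\Phi_i$ with $D_i$, hence with $(G/N)[i]$, while its kernel is cut out in $B_{m+i}$ by the ideal generated by $\s^0(\mathfrak n_m),\ldots,\s^i(\mathfrak n_m)$. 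By the homomorphism theorem for affine algebraic groups $G[m+i]/\ker(\Phi_i)\cong(G/N)[i]$, so everything reduces to the identity
$$\mathfrak n_{m+i}=\big(\s^0(\mathfrak n_m),\ldots,\s^i(\mathfrak n_m)\big)\subseteq B_{m+i},$$
i.e.\ to $\ker(\Phi_i)=N[m+i]$. This is the main obstacle, and it is precisely where $m$ must be chosen large and where Theorem~\ref{theo: first finiteness} enters: applied to $N\hookrightarrow[\s]_k\G$ it yields an $m_N$ with $\I(N)[\ell]=(\I(N)[\ell-1],\s(\I(N)[\ell-1]))$ for $\ell>m_N$, and reducing modulo $\I(G)[\ell]$ turns this into $\mathfrak n_\ell=(\mathfrak n_{\ell-1},\s(\mathfrak n_{\ell-1}))$ in $B_\ell$ for $\ell>m_N$; taking $m\geq m_N$ (and as large as needed for $C=k\{C_m\}$), an induction on $i$ gives the displayed identity. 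I expect the twist-bookkeeping in the dual computation for $\Phi_i$ to be the only other fiddly point, the genuinely non-formal ingredient being the stabilisation of the Zariski closures of $N$ furnished by Theorem~\ref{theo: first finiteness}.
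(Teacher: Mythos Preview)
Your proposal is correct and follows essentially the same route as the paper. You both identify $k\{G/N\}$ with $\bigcup_i k[G[i]/N[i]]$ via Takeuchi's description, and then reduce the Zariski-closure statement to the ideal identity $\mathfrak n_{m+i}=(\mathfrak n_m,\s(\mathfrak n_m),\ldots,\s^i(\mathfrak n_m))$ in $B_{m+i}$, which is exactly the content of Theorem~\ref{theo: first finiteness} applied to $N$.

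The only difference is packaging. The paper carries out the comparison inductively: it shows that for $j\geq m$ the map $\psi_j\colon C_j\otimes_k{}^{\s}C_j\to C_{j+1}$ is surjective by exhibiting $G[j+1]\to (G[j]/N[j])\times{}^{\s}(G[j]/N[j])$ with kernel $N[j+1]$, and then iterates. You instead treat all $i$ windows at once via your morphism $\Phi_i\colon G[m+i]\to\prod_{j=0}^i{}^{\s^j}(G[m]/N[m])$ and compute its kernel directly. These are equivalent arguments; your version is arguably a bit more transparent since it identifies $(G/N)[i]$ as $\operatorname{im}(\Phi_i)$ in one step. One small redundancy: you do not need to enlarge $m$ separately to ensure $C=k\{C_m\}$, since the stabilisation of $\mathfrak n_\ell$ alone already forces $C_{m+i}=k[C_m,\ldots,\s^i(C_m)]$ and hence $C=k\{C_m\}$.
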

\begin{proof}
	By Theorems \ref{theo: takeuchi} and \ref{theo: existence of quotient} we have
	\begin{align*}
	k\{G/N\} & =\left\{f\in k\{G\}|\ \Delta(f)-f\otimes 1\in k\{G\}\otimes_k\I(N)\right\}\\
	& =\bigcup_{i\geq 0}\{f\in k[G[i]]|\ \Delta(f)-f\otimes 1\in k[G[i]]\otimes_k\I(N[i])\}\\
	& =\bigcup_{i\geq 0} k[G[i]/N[i]].
	\end{align*}
	Moreover, $$k[G[i]/N[i]]\subseteq k[G[i+1]/N[i+1]] \text{ and } \s(k[G[i]/N[i]])\subseteq k[G[i+1]/N[i+1]].$$
	By Theorem \ref{theo: first finiteness}, there exists an integer $m\geq 0$ such that $\I(N[j+1])=\left(\I(N[j]),\s(\I(N[j]))\right)$, i.e.,
	$N[j+1]=(N[j]\times{{}^{\s^j}\!\G})\cap(\G\times{\hs(N[j])})$ for $j\geq m$. 
	We claim that
	\begin{equation}\label{eqn: good quotients}
	k\left[k[G[m]/N[m]],\ldots,\s^i(k[G[m]/N[m]])\right]=k[G[m+i]/N[m+i]] \quad \text{ for } i\geq 0.
	\end{equation}
	The inclusion ``$\subseteq$'' is obvious. To prove the inclusion ``$\supseteq$'' it suffices to show that
	$$\psi_j\colon k[G[j]/N[j]]\otimes_k{\hs(k[G[j]/N[j])}\longrightarrow k[G[j+1]/N[j+1]],\ f_1\otimes(\lambda\otimes f_2)\mapsto f_1\lambda\s(f_2)$$
	is surjective for $j\geq m$. With $$\pi_{j+1}\colon G[j+1]\to G[j],\ (g_0,\ldots,g_{j+1})\mapsto (g_0,\ldots,g_j)$$ and $$\s_{j+1}\colon G[j+1]\to {\hs (G[j])},\ (g_0,\ldots,g_{j+1})\mapsto (g_1,\ldots,g_{j+1}),$$  the morphisms
	$$G[j+1]\xrightarrow{\pi_{j+1}}G[j]\to G[j]/N[j] \text{ and } G[j+1]\xrightarrow{\s_{j+1}}{\hs(G[j])}\to {\hs(G[j]/N[j])}$$ combine to a morphism
	$$G[j+1]\xrightarrow{}(G[j]/N[j])\times {\hs(G[j]/N[j])}$$
	of algebraic groups with kernel $(N[j]\times{{}^{\s^{j}}\!\G})\cap(\G\times{\hs(N[j])})=N[j+1]$.
	Therefore $$G[j+1]/N[j+1]\longrightarrow(G[j]/N[j])\times {\hs(G[j]/N[j])}$$ is a closed embedding and so the dual map is surjective, but the dual map is precisely $\psi_j$. We have thus proved (\ref{eqn: good quotients}). It follows from (\ref{eqn: good quotients}) that $k\{G[m]/N[m]\}\to k\{G/N\}$ is surjective, i.e., $G/N$ is a $\s$-closed subgroup of $G[m]/N[m]$. As the ring to the left hand side of (\ref{eqn: good quotients}) is the coordinate ring of the $i$-th order Zariski closure of $G/N$ in $G[m]/N[m]$, we obtain the required equality of the Zariski closures.
\end{proof}

The following example shows that in general one cannot take $m=0$ in Proposition~\ref{prop: Zariskiclosures and quotients}.
\begin{ex}
	Let $G=\G=\Ga$ and $N\unlhd G$ the $\s$-closed subgroup given by
	$N(R)=\{g\in R|\ \s(g)=0\}$ for any $k$-$\s$-algebra $R$. Then $N[0]=G[0]=\Ga$ and $G[0]/N[0]$ is the trivial group. Therefore $G/N$ cannot be a $\s$-closed subgroup of $G[0]/N[0]$.
\end{ex}

\begin{cor} \label{cor: sdim and ord for quotients}
	Let $G$ be a $\s$-algebraic group and $N\unlhd G$ a normal $\s$-closed subgroup. Then
	\begin{equation} \label{eqn: sdim} \sdim(G)=\sdim(N)+\sdim(G/N)\end{equation} and \begin{equation} \label{eqn: ord} \ord(G)=\ord(N)+\ord(G/N).\end{equation}
\end{cor}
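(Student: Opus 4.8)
The plan is to reduce the whole statement to dimension bookkeeping inside algebraic groups, leaning on Proposition~\ref{prop: Zariskiclosures and quotients} and Theorem~\ref{theo: sdimension}. First I would fix an algebraic group $\G$ and a $\s$-closed embedding $G\hookrightarrow[\s]_k\G$, so that $N\leq G$ become $\s$-closed subgroups of $\G$ with $i$-th order Zariski closures $N[i]\leq G[i]$. By Proposition~\ref{prop: Zariskiclosures and quotients} there is an integer $m\geq 0$ such that $G/N$ is a $\s$-closed subgroup of the \emph{algebraic} group $G[m]/N[m]$, and its $i$-th order Zariski closure there is $(G/N)[i]=G[m+i]/N[m+i]$. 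Since dimension is additive along quotient maps of algebraic groups, $\dim(G[m+i]/N[m+i])=\dim(G[m+i])-\dim(N[m+i])$ for every $i$.

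Next I would invoke Theorem~\ref{theo: sdimension}, applied to $G$ and $N$ inside $\G$ and to $G/N$ inside $G[m]/N[m]$; this is legitimate precisely because that theorem guarantees $\sdim$ (and, when it vanishes, $\ord$) are independent of the chosen embedding. Writing $d_G=\sdim(G)$, $d_N=\sdim(N)$, $d_{G/N}=\sdim(G/N)$ and the associated constants $e_G,e_N,e_{G/N}$, for all sufficiently large $i$ one has $\dim(G[i])=d_G(i+1)+e_G$, $\dim(N[i])=d_N(i+1)+e_N$ and $\dim((G/N)[i])=d_{G/N}(i+1)+e_{G/N}$. Substituting the quotient dimension formula and using $(m+i+1)=(i+1)+m$ gives, for large $i$,
\[
d_{G/N}(i+1)+e_{G/N}=(d_G-d_N)(i+1)+\big((d_G-d_N)m+e_G-e_N\big).
\]
Comparing the coefficients of $i$ yields $d_{G/N}=d_G-d_N$, which is exactly (\ref{eqn: sdim}).

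For the order statement I would split into two cases. If $\sdim(G)>0$, then (\ref{eqn: sdim}) forces $\sdim(N)>0$ or $\sdim(G/N)>0$, so under the convention $\ord=\infty$ in positive $\s$-dimension both sides of (\ref{eqn: ord}) equal $\infty$. If $\sdim(G)=0$, then $\sdim(N)=\sdim(G/N)=0$ as both are nonnegative and sum to zero; by Theorem~\ref{theo: sdimension} the quantities $\dim(G[i])$, $\dim(N[i])$ and $\dim((G/N)[i])$ then stabilize to $\ord(G)$, $\ord(N)$ and $\ord(G/N)$ respectively. Passing to large $i$ in $\dim((G/N)[i])=\dim(G[m+i])-\dim(N[m+i])$ gives $\ord(G/N)=\ord(G)-\ord(N)$, i.e.\ (\ref{eqn: ord}) (equivalently, in this case the constant term of the displayed identity reduces to $e_{G/N}=e_G-e_N$).

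The substantive content is entirely upstream: the identification $(G/N)[i]=G[m+i]/N[m+i]$ from Proposition~\ref{prop: Zariskiclosures and quotients}, together with the embedding-independence of $\sdim$ and $\ord$ from Theorem~\ref{theo: sdimension} that licenses computing these invariants of $G/N$ inside the concrete algebraic group $G[m]/N[m]$. Granting those, the remaining argument is routine manipulation of the linear growth of $\dim(G[i])$, and I do not anticipate a genuine obstacle; the only point requiring care is the convention-based handling of $\ord=\infty$ when $\sdim>0$.
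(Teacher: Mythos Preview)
Your proof is correct and follows essentially the same route as the paper: embed $G$ into an algebraic group, use Proposition~\ref{prop: Zariskiclosures and quotients} to identify $(G/N)[i]$ with $G[m+i]/N[m+i]$, apply the dimension formula for algebraic-group quotients, and then compare the linear growth functions from Theorem~\ref{theo: sdimension}. The case split for the order statement and the handling of the $\infty$ convention also match the paper's argument.
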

\begin{proof}
	We may assume that $G$ is a $\s$-closed subgroup of some algebraic group $\G$ (Proposition~\ref{prop: exists sclosed embedding}).
	For $i\geq 0$ let $G[i]$ and $N[i]$ denote the $i$-th order Zariski closure of $G$ and $N$ in $\G$ respectively. By Theorem \ref{theo: sdimension} there exist $e_G,e_N\geq 0$ such that $\dim(G[i])=\sdim(G)(i+1)+e_G$ and $\dim(N[i])=\sdim(N)(i+1)+e_N$ for all sufficiently large $i\in\nn$.
	Let $m\geq 0$ be as in Proposition~\ref{prop: Zariskiclosures and quotients} and for $i\geq 0$ let $(G/N)[i]$ denote the $i$-th order Zariski closure of $G/N$ in $G[m]/N[m]$. By Theorem \ref{theo: sdimension} there exist $e_{G/N}\geq 0$ such that $\dim((G/N)[i])=\sdim(G/N)(i+1)+e_{G/N}$. For all sufficiently large $i\in\nn$ we have
	\begin{align*}
	\sdim & (G/N)(i+1)+e_{G/N} =\dim((G/N)[i])=\dim(G[m+i]/N[m+i])= \\
	&=\dim(G[m+i])-\dim(N[m+i])=\\
	&=\sdim(G)(m+i+1)+e_G-\sdim(N)(m+i+1)-e_N=\\
	&=(\sdim(G)-\sdim(N))(i+1)+ (\sdim(G)-\sdim(N))m +e_G-e_N.
	\end{align*}
	This proves (\ref{eqn: sdim}). As $\ord(G)<\infty$ if and only if $\sdim(G)=0$, it follows from (\ref{eqn: sdim}) that (\ref{eqn: ord}) is
	valid if $\sdim(G)>0$. We can therefore assume that $\sdim(G)=0$, and consequently $\sdim(N)=\sdim(G/N)=0$ as well. But then the above formula reduces to
	$\ord(G/N)=e_{G/N}=e_G-e_N=\ord(G)-\ord(N)$.
\end{proof}

Next we will show how to compute $\ld(G/N)$ from $\ld(N)$ and $\ld(G)$. For clarity of the exposition, we single out a lemma on algebraic groups.

\begin{lemma} \label{lemma: kernel for algebraic groups}
	Let $\N_1\unlhd \G_1$ and $\N_2\unlhd \G_2$ be algebraic groups and let $\f\colon \G_2\twoheadrightarrow \G_1$ be a quotient map with kernel $\G$. Assume that the restriction of $\f$ to $\N_2$ has kernel $\N$ and image $\N_1$. Then the kernel of the induced map $\G_2/\N_2\to \G_1/\N_1$ is isomorphic to $\G/\N$.
\end{lemma}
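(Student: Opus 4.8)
The plan is to reduce the assertion to the elementary group-theoretic identity $\ker(\overline{\f})=\ker(\f)\N_2/\N_2\cong\ker(\f)/(\ker(\f)\cap\N_2)$ and to transport this computation to algebraic groups by invoking the isomorphism theorems and the correspondence between closed subgroups under a quotient map, all available for affine algebraic groups over a field in \cite{Milne:AlgebraicGroupsTheTheoryOfGroupSchemesOfFiniteTypeOverAField}. First I would recall that the induced map $\overline{\f}\colon\G_2/\N_2\to\G_1/\N_1$ exists precisely because $\f(\N_2)=\N_1\subseteq\N_1$, so that the composite $\G_2\xrightarrow{\f}\G_1\to\G_1/\N_1$ kills $\N_2$. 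Writing $q\colon\G_2\to\G_2/\N_2$ for the quotient map, this composite equals $\overline{\f}\circ q$, and its kernel is $\f^{-1}(\N_1)$. Since $q$ is faithfully flat with kernel $\N_2\subseteq\f^{-1}(\N_1)$, the correspondence theorem identifies $\ker(\overline{\f})=q(\f^{-1}(\N_1))=\f^{-1}(\N_1)/\N_2$.

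The crux is then to identify $\f^{-1}(\N_1)$ with the product $\G\N_2$. As $\G=\ker(\f)$ is normal in $\G_2$, the product $\G\N_2$ is a closed subgroup of $\G_2$; it contains $\G$, as does $\f^{-1}(\N_1)$. Moreover both have image $\N_1$ under $\f$: for $\f^{-1}(\N_1)$ this is surjectivity of $\f$, and for $\G\N_2$ it follows from $\f(\G)=1$ together with $\f(\N_2)=\N_1$. Because $\f$ is a quotient map, the assignment $H\mapsto\f(H)$ is a bijection between closed subgroups of $\G_2$ containing $\ker(\f)$ and closed subgroups of $\G_1$, with inverse $\N\mapsto\f^{-1}(\N)$; applying injectivity of this bijection to $\G\N_2$ and $\f^{-1}(\N_1)$ yields $\f^{-1}(\N_1)=\G\N_2$.

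Finally I would apply the second (Noether) isomorphism theorem for algebraic groups to the normal subgroup $\N_2$ and the subgroup $\G$ of $\G_2$, obtaining $\G\N_2/\N_2\cong\G/(\G\cap\N_2)$. Since $\G\cap\N_2=\ker(\f|_{\N_2})=\N$ by hypothesis, stringing the three steps together gives $\ker(\overline{\f})=\f^{-1}(\N_1)/\N_2=\G\N_2/\N_2\cong\G/\N$, as required. I expect the main obstacle to be the scheme-theoretic identity $\f^{-1}(\N_1)=\G\N_2$: in the abstract setting this is the routine fact that the preimage of the image of $\N_2$ is $\ker(\f)\,\N_2$, but for possibly non-reduced group schemes over an arbitrary field one must argue with scheme-theoretic images, faithful flatness, and the subgroup correspondence rather than with elements, and care is needed to see that $\f(\G\N_2)=\N_1$ holds as closed subschemes. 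Everything else is a formal consequence of the isomorphism theorems once this identity is secured.
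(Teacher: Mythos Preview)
Your proposal is correct and follows essentially the same approach as the paper: both arguments reduce to the Noether isomorphism theorems for algebraic groups (as in \cite[Chapter~5]{Milne:AlgebraicGroupsTheTheoryOfGroupSchemesOfFiniteTypeOverAField}) and pivot on the identity $\ker(\overline{\f})=\G\N_2/\N_2\cong\G/(\G\cap\N_2)=\G/\N$. The paper organizes the computation slightly differently---it first identifies $\G_1/\N_1$ with $\G_2/\N_2\G$ via the third isomorphism theorem and then reads off the kernel---whereas you compute $\ker(\overline{\f})=\f^{-1}(\N_1)/\N_2$ and identify $\f^{-1}(\N_1)=\G\N_2$ via the subgroup correspondence; these are two presentations of the same argument.
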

\begin{proof}
	Since $\f$ is a quotient map we may identify $\G_1$ with $\G_2/\G$. Note that the (Noether) isomorphism theorems also hold for algebraic groups. (See e.g., \cite[Chapter 5]{Milne:AlgebraicGroupsTheTheoryOfGroupSchemesOfFiniteTypeOverAField}). We have $\N_1=\N_2/\N=\N_2/\G\cap \N_2=\N_2\G/\G$ and so $\G_1/\N_1=(\G_2/\G)/(\N_2\G/\G)=\G_2/\N_2\G$. This shows that the kernel of
	$\G_2/\N_2\to \G_1/\N_1=\G_2/\N_2\G$ equals $\N_2\G/\N_2=\G/\N_2\cap\G=\G/\N$.
\end{proof}

\begin{cor} \label{cor: multiplicativity of limit degree}
	Let $G$ be a $\s$-algebraic group and $N\unlhd G$ a normal $\s$-closed subgroup.
	Then
	$$\ld(G)=\ld(G/N)\cdot\ld(N).$$
\end{cor}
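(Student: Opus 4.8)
The plan is to reduce the statement, via the description of the Zariski closures of a quotient in Proposition~\ref{prop: Zariskiclosures and quotients}, to the multiplicativity of the order in short exact sequences of finite algebraic groups. By Proposition~\ref{prop: exists sclosed embedding} I may fix a $\s$-closed embedding $G\hookrightarrow[\s]_k\G$ into an algebraic group $\G$ and regard $N\leq G\leq\G$; write $G[i]$ and $N[i]$ for the $i$-th order Zariski closures in $\G$. For $i\geq 1$ set $A_i=\ker(G[i]\to G[i-1])$ and $B_i=\ker(N[i]\to N[i-1])$, the maps being the projections $\pi_i$. By Proposition~\ref{prop: limit of Gi} these are exactly the groups computing the limit degrees, so $\ld(G)=\lim_i|A_i|$ and $\ld(N)=\lim_i|B_i|$. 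Moreover $N[i]\unlhd G[i]$ for every $i$ by Lemma~\ref{lemma: Zariski closure and normal subgroup}, so the algebraic quotients $G[i]/N[i]$ are defined.

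Next I would invoke Proposition~\ref{prop: Zariskiclosures and quotients} to obtain an integer $m\geq 0$ such that $G/N$ is a $\s$-closed subgroup of $G[m]/N[m]$ with $(G/N)[i]=G[m+i]/N[m+i]$ for all $i\geq 0$. Since the limit degree is independent of the chosen embedding (Proposition~\ref{prop: limit of Gi}(ii)), I may compute $\ld(G/N)$ with respect to this embedding: setting $C_i=\ker\big((G/N)[i]\to(G/N)[i-1]\big)$, one has $\ld(G/N)=\lim_i|C_i|$.

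The heart of the argument is the identification of $C_i$. The transition map $(G/N)[i]\to(G/N)[i-1]$ is the map $G[m+i]/N[m+i]\to G[m+i-1]/N[m+i-1]$ induced by the projection $\pi_{m+i}\colon G[m+i]\to G[m+i-1]$. This projection is a quotient map with kernel $A_{m+i}$, and its restriction to $N[m+i]$ is the projection $N[m+i]\to N[m+i-1]$, which is again a quotient map, hence has image $N[m+i-1]$ and kernel $B_{m+i}$. Lemma~\ref{lemma: kernel for algebraic groups} therefore applies and yields $C_i\cong A_{m+i}/B_{m+i}$. I expect this to be the main obstacle: one must verify that Lemma~\ref{lemma: kernel for algebraic groups} is applicable, i.e., that restricting $\pi_{m+i}$ to $N[m+i]$ indeed gives a quotient map onto all of $N[m+i-1]$ with the expected kernel.

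Finally, I would use that the order is multiplicative in short exact sequences: for a quotient map $\H\to\H/\mathcal{K}$ of finite algebraic groups the coordinate ring $k[\H]$ is free over $k[\H/\mathcal{K}]$ of rank $|\mathcal{K}|$, so $|\H|=|\mathcal{K}|\cdot|\H/\mathcal{K}|$, and this identity persists with the convention $|\cdot|=\infty$ in the positive-dimensional case. Applied to $B_{m+i}\unlhd A_{m+i}$ with $A_{m+i}/B_{m+i}\cong C_i$ it gives $|A_{m+i}|=|B_{m+i}|\cdot|C_i|$ for every $i\geq 1$. All three sequences are eventually constant by Proposition~\ref{prop: limit of Gi}(ii), so letting $i\to\infty$ (the shift by $m$ not affecting the eventual value of $|B_{m+i}|$) yields $\ld(G)=\ld(N)\cdot\ld(G/N)$. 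The case $\sdim(G)>0$, where some of these values are $\infty$, is covered by the same computation together with Corollary~\ref{cor: sdim and ord for quotients}, which guarantees that the right-hand side is infinite exactly when the left-hand side is.
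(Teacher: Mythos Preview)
Your proposal is correct and follows essentially the same route as the paper: both arguments reduce to Proposition~\ref{prop: Zariskiclosures and quotients}, apply Lemma~\ref{lemma: kernel for algebraic groups} to identify the kernels of the transition maps $(G/N)[i]\to(G/N)[i-1]$ with $A_{m+i}/B_{m+i}$, and then use multiplicativity of the order for (finite) algebraic groups. The ``obstacle'' you flag is harmless: the restriction of $\pi_{m+i}\colon G[m+i]\to G[m+i-1]$ to $N[m+i]$ is literally the projection $N[m+i]\to N[m+i-1]$, which is a quotient map (as recorded in the preliminaries), so Lemma~\ref{lemma: kernel for algebraic groups} applies directly; the paper simply disposes of the case $\sdim(G)>0$ at the outset rather than at the end.
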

\begin{proof}
	The limit degree of a $\s$-algebraic groups is finite if and only if the $\s$-dimension is zero.
	So by Corollary \ref{cor: sdim and ord for quotients} the claim is valid if $\sdim(G)>0$. We may thus assume that $\sdim(G)=0$ and therefore $\ld(G),\ \ld(G/N)$ and $\ld(N)$ are all finite. Let $m\geq 0$ be as in Proposition \ref{prop: Zariskiclosures and quotients}. For $i\geq1$ we have commutative diagrams
	$$
	\xymatrix{
		(G/N)[i] \ar@{->>}^{\pi_i}[r] \ar^-\simeq[d]& (G/N)[i-1] \ar^-\simeq[d] \\
		G[m+i]/N[m+i] \ar@{->>}^-{\f_i}[r] & G[m+i-1]/N[m+i-1]
	}
	$$
	where $\f_i$ is induced from the projection $G[m+i]\twoheadrightarrow G[m+i-1]$. For all sufficiently large $i\in\nn$ we have $\ld(G/N)=|\ker(\pi_i)|=|\ker(\f_i)|$.
	Let $\G_{m+i}$ and $\N_{m+i}$ be the kernels of $$G[m+i]\twoheadrightarrow G[m+i-1] \quad \text{ and } \quad N[m+i]\twoheadrightarrow N[m+i-1]$$ respectively.
	It follows from Lemma \ref{lemma: kernel for algebraic groups} that $\ker(\f_i)=\G_{m+i}/\N_{m+i}$. Therefore
	$$\ld(G/N)=|\G_{m+i}/\N_{m+i}|=|\G_{m+i}|/|\N_{m+i}|=\ld(G)/\ld(N).$$
\end{proof}

\section{Morphisms of difference algebraic groups}
\label{sec: Injective and surjective morphisms}

In this section we characterize the morphisms of $\s$-algebraic groups that play a role similar to injective and surjective morphisms in the category of (abstract) groups. These are the $\s$-closed embeddings and the quotient maps. We also show that any morphism of $\s$-algebraic groups factors uniquely as a quotient map followed by a $\s$-closed embedding. Analogous results for algebraic groups can be found in \cite[Chapter 5]{Milne:AlgebraicGroupsTheTheoryOfGroupSchemesOfFiniteTypeOverAField}.

\begin{prop} \label{prop: injective morphism}
	Let $\f\colon G\to H$ be a morphism of $\s$-algebraic groups. Then the following statements are equivalent:
	\begin{enumerate}
		\item The kernel of $\f$ is trivial.
		\item The map $\f_R\colon G(R)\to H(R)$ is injective for every $k$-$\s$-algebra $R$.
		\item The morphism $\f\colon G\to H$ is a $\s$-closed embedding.
		\item The dual map $\f^*\colon k\{H\}\to k\{G\}$ is surjective.
		\item The morphism $\f\colon G\to H$ is a monomorphism in the category of $\s$-algebraic groups, i.e., for every pair $\f_1,\f_2\colon H'\to G$ of morphisms of $\s$-algebraic groups with $\f\f_1=\f\f_2$ we have $\f_1=\f_2$.
	\end{enumerate}
\end{prop}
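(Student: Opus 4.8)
The plan is to prove the proposition by establishing the cyclic chain of implications (i) $\Rightarrow$ (iii) $\Rightarrow$ (ii) $\Rightarrow$ (v) $\Rightarrow$ (i), after first disposing of the equivalence (iii) $\Leftrightarrow$ (iv). The latter is immediate from the definitions: a morphism of $\s$-algebraic groups is a $\s$-closed embedding exactly when it is a $\s$-closed embedding of the underlying $\s$-varieties, and by \cite[Lemma 1.6]{Wibmer:FinitenessPropertiesOfAffineDifferenceAlgebraicGroups} the latter is equivalent to the surjectivity of $\f^*\colon k\{H\}\to k\{G\}$.

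Three of the four remaining implications are formal. For (iii) $\Rightarrow$ (ii) I would use that $\f_R\colon G(R)\to H(R)$ is, under the identifications $G(R)=\Hom(k\{G\},R)$ and $H(R)=\Hom(k\{H\},R)$, given by precomposition with $\f^*$; since precomposition with a surjective map is injective, each $\f_R$ is injective. For (ii) $\Rightarrow$ (v), given morphisms $\f_1,\f_2\colon H'\to G$ with $\f\f_1=\f\f_2$, evaluating at an arbitrary \ks-algebra $R$ and cancelling the injective map $\f_R$ yields $(\f_1)_R=(\f_2)_R$, whence $\f_1=\f_2$. For (v) $\Rightarrow$ (i), I would set $N=\ker(\f)$ and compare the inclusion $\iota\colon N\hookrightarrow G$ with the trivial morphism $e\colon N\to G$ factoring through $1\hookrightarrow G$: since $N(R)=\ker(\f_R)$, both $\f\iota$ and $\f e$ are the trivial morphism $N\to H$, so the monomorphism property gives $\iota=e$, forcing $N(R)=1$ for every $R$ and hence $\ker(\f)=1$.

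The implication carrying the genuine content is (i) $\Rightarrow$ (iii), and I expect this to be the main obstacle, since (in contrast to abstract groups) triviality of the kernel does not self-evidently yield surjectivity of $\f^*$. My approach is to route through the canonical factorization of $\f$ over its quotient. By Corollary~\ref{cor: quotient embedding} the induced map $G/\ker(\f)\to H$ is always a $\s$-closed embedding, so it suffices to identify $G/\ker(\f)$ with $G$ when $\ker(\f)=1$. To this end I would observe that $\id_G\colon G\to G$ has trivial kernel and bijective (hence injective) dual, so by the characterization of quotients in Theorem~\ref{theo: existence of quotient} it is the quotient of $G$ modulo the trivial subgroup; that is, $G/1=G$ with quotient map $\id_G$. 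Consequently, when $\ker(\f)=1$ the factorization $\f=\f'\circ\pi$ defining the quotient has $\pi=\id_G$, so $\f=\f'$ is precisely the $\s$-closed embedding provided by Corollary~\ref{cor: quotient embedding}. The entire weight of the argument thus rests on the existence-of-quotients theorem and the underlying Takeuchi correspondence (Theorems~\ref{theo: existence of quotient} and~\ref{theo: takeuchi}); everything else is formal.
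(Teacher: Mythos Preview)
Your proof is correct, and it takes a genuinely different route from the paper's.

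The paper argues the nontrivial implication as $(\mathrm{v})\Rightarrow(\mathrm{iv})$ directly: it forms the fiber product $H'=G\times_H G$ with its two projections $\f_1,\f_2\colon H'\to G$, observes that $\f\f_1=\f\f_2$ so that $(\mathrm{v})$ forces $\f_1=\f_2$, and translates this into the equality of the two maps $f\mapsto f\otimes 1$ and $f\mapsto 1\otimes f$ from $k\{G\}$ to $k\{G\}\otimes_{\f^*(k\{H\})}k\{G\}$. Since $k\{G\}$ is faithfully flat over the Hopf subalgebra $\f^*(k\{H\})$, the descent criterion then yields $\f^*(k\{H\})=k\{G\}$. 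You instead locate the content in $(\mathrm{i})\Rightarrow(\mathrm{iii})$, invoking Corollary~\ref{cor: quotient embedding} (which already encapsulates Takeuchi's correspondence) together with the identification $G/1=G$. Both arguments ultimately rest on the same commutative-algebra input---faithful flatness of a Hopf algebra over a Hopf subalgebra---but yours packages it inside the quotient machinery already established in Section~\ref{sec: Quotients}, while the paper unpacks it by hand. Your approach is more economical given what precedes the proposition; the paper's is more self-contained and avoids appealing to the quotient construction for what is, after all, a statement purely about embeddings.
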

\begin{proof}
	Clearly (i)$\Leftrightarrow$(ii), (iii)$\Leftrightarrow$(iv), (iii)$\Rightarrow$(ii) and (ii)$\Rightarrow$(v). So it suffices to show that (v) implies (iv). Define $H'=G\times_H G$ by
	$$(G\times_H G)(R)=\{(g_1,g_2)\in G(R)\times G(R)|\ \f(g_1)=\f(g_2)\}$$
	for any $k$-$\s$-algebra $R$. This is a $\s$-closed subgroup of $G\times G$. Indeed, $G\times_H G$ is represented by $k\{G\}\otimes_{k\{H\}}k\{G\}$.
	Let $\f_1$ and $\f_2$ denote the projections onto the first and second coordinate respectively. We have $\f\f_1=\f\f_2$ and so by (iv) we must have $\f_1=\f_2$. This implies that the maps $f\mapsto f\otimes 1$ and $f\mapsto 1\otimes f$ from $k\{G\}\to k\{G\}\otimes_{k\{H\}} k\{G\}$ are equal. As $\f^*(k\{H\})$ is a Hopf subalgebra of $k\{G\}$ we know that $k\{G\}$ is faithfully flat over $\f^*(k\{H\})$ (\cite[Chapter 14]{Waterhouse:IntroductiontoAffineGroupSchemes}). Therefore $f\otimes 1=1\otimes f$ in $k\{G\}\otimes_{k\{H\}} k\{G\}= k\{G\}\otimes_{\f^*(k\{H\})} k\{G\}$ if and only if $f\in \f^*(k\{H\})$ by \cite[Section 13.1, p. 104]{Waterhouse:IntroductiontoAffineGroupSchemes}. Summarily, we find that $\f^*\colon k\{H\}\to k\{G\}$ is surjective.
\end{proof}

We may sometimes write $\f\colon G\hookrightarrow H$ to express that a morphism $\f\colon G\to H$ satisfies the equivalent conditions of Proposition \ref{prop: injective morphism}.

\begin{ex}
	The morphism $\f\colon\Gm\to \Gm$ given by $\f_R(g)=\s(g)$ for any $k$-$\s$-algebra $R$ and $g\in R^\times$ is not a $\s$-closed embedding even though $\f_R$ is injective for every $\s$-field extension $R$ of $k$.
\end{ex}

\begin{ex}
	The morphism $\f\colon \Gm\to\Gm^2$ given by
	$$\f_R\colon \Gm(R)\to\Gm^2(R),\ g\mapsto (g\s(g),\s(g))$$ is a $\s$-closed embedding. 
\end{ex}

\begin{ex}
	If $\G\to\H$ is a closed embedding of algebraic groups, then $[\s]_k\G\to[\s]_k\H$ is a $\s$-closed embedding of $\s$-algebraic groups.
\end{ex}

We next consider morphisms of $\s$-algebraic groups that are analogous to surjective morphisms of (abstract) groups. Note that for a normal $\s$-closed subgroup $N$ of a $\s$-algebraic group $G$, the quotient $\pi\colon G\to G/N$ of $G$ mod $N$ need not be surjective in the blunt sense that $\pi_R\colon G(R)\to (G/N)(R)$ is surjective for every \ks-algebra $R$. Let us illustrate this with an example.

\begin{ex} \label{ex: s not surjective}
	Consider the $\s$-closed subgroup $N=\{g\in \Ga|\ \s(g)=0\}$ of $G=\Ga$. Then $\pi\colon \Ga\to \Ga,\ g\mapsto \s(g)$ is the quotient of $G$ mod $N$ (Example \ref{ex: quotients for Ga}). So $\pi_R\colon (R,+)\to (R,+),\ g\mapsto \s(g)$ is surjective if and only if $\s\colon R\to R$ is surjective (which, depending on $R$, may or may not be the case).
\end{ex}

While the maps $\pi_R\colon G(R)\to (G/N)(R)$ are not surjective on the nose, these maps are in some sense, to be made precise in the following definition, close to being surjective.

\begin{defi}
Let $\psi\colon R\to S$ be a morphism of $k$-$\s$-algebras. Then $\psi$ is \emph{faithfully flat} if the underlying morphism
$\psi^\sharp\colon R^\sharp\to S^\sharp$ of $k$-algebras is faithfully flat. In this case, we also call $S$ is a faithfully flat $R$-$\s$-algebra.

Let $F$ be a functor from the category of \ks-algebras to the category of sets. A subfunctor $D$ of $F$ is \emph{fat} if for every \ks-algebra $R$ and every $g\in F(R)$ there exists a faithfully flat $R$-$\s$-algebra $S$ such that the image of $g$ in $F(S)$ belongs to $D(S)$.
\end{defi}

As we will see in the next section, fat subfunctors are a useful tool for proving the isomorphism theorems for $\s$-algebraic groups.

\begin{ex}
	We continue Example \ref{ex: s not surjective}. While an individual $\pi_R\colon (R,+)\to (R,+),\ g\mapsto \s(g)$ need not be surjective, these maps are close to being surjective in the sense that for every $h\in R$ there exists a faithfully flat $R$-$\s$-algebra $S$ and $g\in S$ such that $\pi_S(g)=h$. For example, we can take $S=R[x]$, a univariate polynomial ring over $R$ with $\s\colon S\to S$ determined by $\s(x)=g$.
\end{ex}

Let $G$ be a $\s$-algebraic group and $N$ a normal $\s$-closed subgroup. By Theorem \ref{theo: existence of quotient} the kernel of $G\to G/N$ equals $N$. We can therefore identify the functor $R\rightsquigarrow G(R)/N(R)$ with a subfunctor of $G/N$. The following lemma provides a useful replacement of the missing surjectivity of the maps $G(R)\to (G/N)(R)$.

%
%
%
%

\begin{lemma} \label{lemma: sheaf surjectivity of quotient}
	Let $G$ be a $\s$-algebraic group and $N\unlhd G$ a $\s$-closed subgroup. Let $R$ be a \ks\=/algebra and $\overline{g}\in (G/N)(R)$. Then there exists a faithfully flat morphism $R\to S$ of $k$-$\s$-algebras and $g\in G(S)$ such that $G(S)\to (G/N)(S)$ maps $g$ to the image of $\overline{g}$ in $(G/N)(S)$.
	
	In other words, the subfunctor $R\rightsquigarrow G(R)/N(R)$ of $G/N$ is a fat subfunctor.
	\end{lemma}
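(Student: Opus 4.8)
The plan is to realize the lift through a single faithfully flat base change dictated by the Hopf-algebraic picture. Write $A=k\{G\}$ and $B=k\{G/N\}$; by Theorem \ref{theo: existence of quotient} the dual $\pi^*\colon B\to A$ of the quotient map is injective, so I view $B$ as a $k$-$\s$-Hopf subalgebra of $A$. The given point is a morphism $\overline{g}\colon B\to R$ of $k$-$\s$-algebras. Regarding $R$ as a $B$-algebra via $\overline{g}$ and $A$ as a $B$-algebra via $\pi^*$, I would take
$$S=A\otimes_B R,$$
the tautological object over which the point of $G/N$ acquires a preimage in $G$.

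The decisive input is that $A$ is faithfully flat over the Hopf subalgebra $B$, as already invoked in the proof of Proposition \ref{prop: injective morphism} (\cite[Chapter 14]{Waterhouse:IntroductiontoAffineGroupSchemes}). Faithful flatness is preserved under base change, so $R\to S$, $r\mapsto 1\otimes r$, is faithfully flat on the underlying $k$-algebras, which is exactly the requirement that $S$ be a faithfully flat $R$-$\s$-algebra in the sense of the definition preceding the lemma. To make sense of this I first equip $S$ with a $\s$-structure: since both $\pi^*$ and $\overline{g}$ are $\s$-equivariant, the rule $\s(a\otimes r)=\s(a)\otimes\s(r)$ respects the balancing relation and hence is well defined, turning $S$ into a $k$-$\s$-algebra for which $R\to S$ and $g\colon A\to S$, $a\mapsto a\otimes 1$, are morphisms of $k$-$\s$-algebras. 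In particular $g\in G(S)=\Hom(k\{G\},S)$.

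It then remains to confirm that $g$ lies over $\overline{g}$. The image of $g$ under $\pi$ is $\pi_S(g)=g\circ\pi^*\in(G/N)(S)$, while the image of $\overline{g}$ in $(G/N)(S)$ is $\overline{g}$ composed with $R\to S$. For $b\in B$ the balancing relation in $A\otimes_B R$ gives
$$g(\pi^*(b))=\pi^*(b)\otimes 1=1\otimes\overline{g}(b),$$
and the right-hand side is $\overline{g}(b)$ transported into $S$; hence the two maps $B\to S$ coincide, which is precisely the claim. Since $\ker(\pi)=N$ (Theorem \ref{theo: existence of quotient}) identifies the subfunctor $R\rightsquigarrow G(R)/N(R)$ with the image of $\pi_R$, and $\pi_S(g)$ lies in this image, the asserted fatness of $R\rightsquigarrow G(R)/N(R)$ follows at once.

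I do not expect a genuine obstacle here: once the faithful flatness of the Hopf subalgebra inclusion is granted, the relative tensor product forces the construction and the verification is formal. The only two points demanding attention are checking that the $\s$-structure descends to $A\otimes_B R$---which is exactly where the $\s$-equivariance of $\pi^*$ and $\overline{g}$ enters---and reading off from the balancing relation that the constructed lift $g$ sits over the prescribed point $\overline{g}$ and not merely over some arbitrary point of $(G/N)(S)$.
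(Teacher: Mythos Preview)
Your proof is correct and follows essentially the same approach as the paper: form $S=k\{G\}\otimes_{k\{G/N\}}R$, invoke faithful flatness of $k\{G\}$ over its Hopf subalgebra $k\{G/N\}$ and its stability under base change, and take $g$ to be the canonical map $f\mapsto f\otimes 1$. You are slightly more explicit than the paper in checking that the $\s$-structure descends to the tensor product, but the construction and verification are identical.
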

\begin{proof}
	We may use $\overline{g}\in (G/N)(R)=\Hom(k\{G/N\},R)$ to form $S=k\{G\}\otimes_{k\{G/N\}} R$. Since $k\{G\}$ is faithfully flat over $k\{G/N\}$ (\cite[Chapter 14]{Waterhouse:IntroductiontoAffineGroupSchemes}), it follows that \mbox{$R\to S,\ r\mapsto 1\otimes r$} is faithfully flat (\cite[Section 13.3, p. 105]{Waterhouse:IntroductiontoAffineGroupSchemes}). Let us set \mbox{$g\colon k\{G\}\to S,\ f\mapsto f\otimes 1$}. Then
	the maps $k\{G/N\}\xrightarrow{\overline{g}} R\to S$ and $k\{G/N\}\to k\{G\}\xrightarrow{g} S$ are equal. So $g\in G(S)$ has the required property.
\end{proof}

\begin{rem}
	It is possible to understand the quotient $G/N$ as a sheafification of the functor $R\rightsquigarrow G(R)/N(R)$. This is carried out in full detail in \cite[Section 5.1]{Wibmer:Habil}.
\end{rem}

The following proposition characterizes morphisms of $\s$-algebraic groups that are analogous to surjective morphisms of (abstract) groups.

\begin{prop} \label{prop: surjective morphism}
	Let $\f\colon G\to H$ be a morphism of $\s$-algebraic groups. The following statements are equivalent:
	\begin{enumerate}
		\item $\f(G)=H$.
		\item The morphism $\f$ is a quotient, i.e., there exists a normal $\s$-closed subgroup $N$ of $G$ such $\f$ is the quotient of $G$ mod $N$.
		\item The dual map $\f^*\colon k\{H\}\to k\{G\}$ is injective.
		\item For every $k$-$\s$-algebra $R$ and every $h\in H(R)$, there exists a faithfully flat $R$-$\s$-algebra $S$ and $g\in G(S)$ such that the image of $h$ in $H(S)$ equals $\f(g)$, i.e., the subfunctor $R\rightsquigarrow\f_R(G(R))$ of $H$ is fat.
	\end{enumerate}
	\begin{proof}
		As $\f(G)$ is the $\s$-closed $\s$-subvariety of $H$ defined by $\ker(\f^*)$, we see that (i) and (iii) are equivalent. It is clear from Theorem \ref{theo: existence of quotient} that (iii) and (ii) are equivalent. Moreover, (ii) implies (iv) by Lemma \ref{lemma: sheaf surjectivity of quotient}. It thus suffices to show that (iv) implies (iii). Take $R=k\{H\}$ and
		$h=\id_{k\{H\}}\in H(R)=\Hom(k\{H\},k\{H\})$. By (iv) there exists a faithfully flat morphism $\psi\colon k\{H\}\to S$ of $k$-$\s$-algebras and an element $g\in G(S)=\Hom(k\{G\},S)$ such that the image of $h$ in $H(S)=\Hom(k\{H\},S)$ equals $\f(g)=g\f^*$. This means that $\psi=g\f^*$. As any faithfully flat morphism of rings is injective, $\psi$ is injective. Therefore $\f^*$ is injective as well.
	\end{proof}
\end{prop}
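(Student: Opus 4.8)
The plan is to establish all four equivalences by proving (i)$\Leftrightarrow$(iii), (ii)$\Leftrightarrow$(iii), (ii)$\Rightarrow$(iv) and (iv)$\Rightarrow$(iii); together these force (i)--(iv) to coincide. The first three implications should be quick consequences of constructions already in hand, whereas (iv)$\Rightarrow$(iii) is the substantive point and is where I expect the real work to lie.

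For (i)$\Leftrightarrow$(iii) I would recall that $\f(G)$ is, by definition, the $\s$-closed $\s$-subvariety of $H$ cut out by $\ker(\f^*)\subseteq k\{H\}$; hence $\f(G)=H$ exactly when $\ker(\f^*)=0$, i.e.\ when $\f^*$ is injective. For (ii)$\Leftrightarrow$(iii) I would appeal to the criterion in Theorem~\ref{theo: existence of quotient}: if $\f$ is a quotient of $G$ modulo some normal $N$, that theorem gives injectivity of $\f^*$ at once; conversely, if $\f^*$ is injective, applying the ``if and only if'' part of the theorem with $N=\ker(\f)$ exhibits $\f$ itself as the quotient of $G$ mod $\ker(\f)$. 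For (ii)$\Rightarrow$(iv), once $\f$ is realized as the quotient $G\to G/N$ with $\ker(\f)=N$, the image subfunctor $R\rightsquigarrow\f_R(G(R))$ is identified with $R\rightsquigarrow G(R)/N(R)$, which is fat by Lemma~\ref{lemma: sheaf surjectivity of quotient}.

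The crux is (iv)$\Rightarrow$(iii), and the idea is to test fatness against the universal point. I would take $R=k\{H\}$ and the tautological element $h=\id_{k\{H\}}\in H(R)=\Hom(k\{H\},k\{H\})$. Fatness then produces a faithfully flat morphism $\psi\colon k\{H\}\to S$ of $k$-$\s$-algebras together with $g\in G(S)=\Hom(k\{G\},S)$ whose image under $\f$ agrees with the image of $h$ in $H(S)$. Unwinding the Yoneda bookkeeping, the image of $h=\id$ in $H(S)$ is simply $\psi$, while $\f(g)=g\circ\f^*$, so the agreement condition reads $\psi=g\circ\f^*$. Since a faithfully flat ring homomorphism is injective, $\psi$ is injective, and therefore so is $\f^*$. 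The only delicate part is the bookkeeping itself: one must keep straight that ``the image of $h$ in $H(S)$'' is computed by post-composing with $\psi$ while $\f(g)$ is computed by pre-composing with $\f^*$. Once these identifications are correct, the injectivity of $\f^*$ drops out immediately.
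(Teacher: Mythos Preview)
Your proof is correct and follows essentially the same route as the paper: the same chain of implications $(\text{i})\Leftrightarrow(\text{iii})$, $(\text{ii})\Leftrightarrow(\text{iii})$ via Theorem~\ref{theo: existence of quotient}, $(\text{ii})\Rightarrow(\text{iv})$ via Lemma~\ref{lemma: sheaf surjectivity of quotient}, and $(\text{iv})\Rightarrow(\text{iii})$ by testing fatness on the universal point $\id_{k\{H\}}$ and using injectivity of faithfully flat maps. Your unwinding of the Yoneda bookkeeping in the last step is exactly what the paper does, only spelled out a bit more explicitly.
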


\begin{defi}
	A morphism of $\s$-algebraic groups satisfying the equivalent properties of Proposition \ref{prop: surjective morphism} is a \emph{quotient map}. 
\end{defi}
We write $\f\colon G\twoheadrightarrow H$ to indicate that $\f$ is a quotient map.

\begin{ex}
If $\G\to \H$ is a quotient map of algebraic groups, then $[\s]_k\G\to[\s]_k\H$ is a quotient map of $\s$-algebraic groups (as is best seen using point (iii) of Proposition \ref{prop: surjective morphism}).
\end{ex}	

Further examples of quotient maps are in Examples \ref{ex: quotient for sg2}, \ref{ex: quotients for Ga} and \ref{ex: quotient for finite}.

%

\begin{cor} \label{cor: surjective and injective implies isom}
	A morphism of $\s$-algebraic groups that is a $\s$-closed embedding and a quotient map is an isomorphism.
\end{cor}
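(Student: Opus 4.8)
The plan is to reduce the statement entirely to the two characterizations of $\sigma$-closed embeddings and quotient maps in terms of the dual map that we already have at our disposal, so that no new geometric input is needed. Let $\f\colon G\to H$ be the given morphism. Because $\f$ is a $\sigma$-closed embedding, the equivalence of (iii) and (iv) in Proposition \ref{prop: injective morphism} tells us that the dual map $\f^*\colon k\{H\}\to k\{G\}$ is surjective. Because $\f$ is also a quotient map, the equivalence of (ii) and (iii) in Proposition \ref{prop: surjective morphism} tells us that $\f^*$ is injective. Hence $\f^*$ is a bijective morphism of $k$-$\sigma$-Hopf algebras.

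It then remains to upgrade ``bijective morphism'' to ``isomorphism''. First I would observe that the set-theoretic inverse $(\f^*)^{-1}\colon k\{G\}\to k\{H\}$ of the bijective ring homomorphism $\f^*$ is again a $k$-algebra homomorphism, and that it commutes with $\sigma$: applying $(\f^*)^{-1}$ on both sides of the relation $\f^*\circ\sigma=\sigma\circ\f^*$ yields $(\f^*)^{-1}\circ\sigma=\sigma\circ(\f^*)^{-1}$. Since the inverse of a bijective Hopf algebra homomorphism is automatically a Hopf algebra homomorphism, it follows that $\f^*$ is an isomorphism in the category of finitely $\sigma$-generated $k$-$\sigma$-Hopf algebras. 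Finally, because the functor $X\rightsquigarrow k\{X\}$ is an anti-equivalence between the category of $\sigma$-algebraic groups and the category of finitely $\sigma$-generated $k$-$\sigma$-Hopf algebras, the morphism $\f$ corresponding to the isomorphism $\f^*$ is itself an isomorphism of $\sigma$-algebraic groups.

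I do not anticipate any genuine obstacle here: the entire content of the corollary is already packaged in the two preceding propositions, and the only mildly delicate point---that a bijective morphism of $k$-$\sigma$-Hopf algebras admits a $\sigma$-equivariant inverse---is immediate from the commutativity of $\f^*$ with $\sigma$.
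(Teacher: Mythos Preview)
Your proof is correct and follows essentially the same approach as the paper: both arguments invoke Propositions~\ref{prop: injective morphism} and~\ref{prop: surjective morphism} to conclude that the dual map $\f^*$ is bijective, and then pass through the anti-equivalence with $k$-$\sigma$-Hopf algebras. You spell out in more detail why a bijective morphism of $k$-$\sigma$-Hopf algebras is an isomorphism, but this is exactly what the paper's one-line proof leaves implicit.
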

\begin{proof}
	By Propositions \ref{prop: injective morphism} and \ref{prop: surjective morphism}, such a morphism corresponds to a surjective and injective morphism on the coordinate rings.
\end{proof}

%

\begin{cor} \label{cor: factorisation of morphism}
	Every morphism of $\s$-algebraic groups factors uniquely as a quotient map followed by a $\s$-closed embedding.
\end{cor}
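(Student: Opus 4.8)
The plan is to build the factorization from the kernel and then to pin down uniqueness via the universal property of the quotient. For existence, given a morphism $\f\colon G\to H$, I would set $N=\ker(\f)$, which is a normal $\s$-closed subgroup of $G$, and take the quotient map $\pi\colon G\to G/N$ supplied by Theorem~\ref{theo: existence of quotient}. Being the quotient of $G$ mod $N$, the map $\pi$ is a quotient map in the sense of Proposition~\ref{prop: surjective morphism} (property~(ii)). By Corollary~\ref{cor: quotient embedding} the induced morphism $\bar\f\colon G/N\to H$ is a $\s$-closed embedding, and by construction $\f=\bar\f\circ\pi$. This already exhibits $\f$ as a quotient map followed by a $\s$-closed embedding.

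For uniqueness, suppose $\f=\iota_1\circ\pi_1=\iota_2\circ\pi_2$ are two such factorizations, with $\pi_j\colon G\twoheadrightarrow Q_j$ quotient maps and $\iota_j\colon Q_j\hookrightarrow H$ $\s$-closed embeddings. The first thing I would record is that $\ker(\pi_j)=\ker(\f)$: since $\iota_j$ has trivial kernel (Proposition~\ref{prop: injective morphism}), an element of $G(R)$ lies in $\ker(\pi_j)$ exactly when it lies in $\ker(\f)$. Next, because $\pi_j$ is a quotient map, Proposition~\ref{prop: surjective morphism}\,(ii) combined with Theorem~\ref{theo: existence of quotient} forces $\pi_j$ to be the quotient of $G$ mod $\ker(\pi_j)=\ker(\f)$. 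Thus $\pi_1$ and $\pi_2$ are both quotients of $G$ modulo the same normal subgroup $N=\ker(\f)$, and the universal property of the quotient yields a unique isomorphism $\theta\colon Q_1\xrightarrow{\sim}Q_2$ with $\theta\circ\pi_1=\pi_2$.

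It then remains to verify compatibility with the embeddings, namely $\iota_2\circ\theta=\iota_1$, and here I would use that a quotient map is an epimorphism: its dual is injective by Proposition~\ref{prop: surjective morphism}\,(iii), so precomposition with $\pi_1$ is injective on morphisms out of $Q_1$. From $\iota_2\circ\theta\circ\pi_1=\iota_2\circ\pi_2=\f=\iota_1\circ\pi_1$ I can therefore cancel $\pi_1$ to obtain $\iota_2\circ\theta=\iota_1$, while uniqueness of $\theta$ is already built into the universal property. I do not expect a genuine obstacle: the substantive inputs, namely the existence of quotients and the embedding property of $G/\ker(\f)\to H$, are already in hand, so the corollary is a formal consequence. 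The only care required is the bookkeeping that identifies each intermediate group with the quotient of $G$ mod $\ker(\f)$, together with the monomorphism/epimorphism cancellations (Propositions~\ref{prop: injective morphism} and~\ref{prop: surjective morphism}) that determine the comparison isomorphism.
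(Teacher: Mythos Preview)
Your argument is correct. Both existence and uniqueness go through exactly as you describe; the only minor simplification is that in the last step you need not invoke the epimorphism property separately, since the universal property of $\pi_1$ already gives uniqueness of the arrow $Q_1\to H$ extending $\f$, hence $\iota_2\circ\theta=\iota_1$ directly.

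The paper's proof is organized a little differently: rather than forming $G/\ker(\f)$ and citing Corollary~\ref{cor: quotient embedding}, it works on the dual side from the start, taking the intermediate object to be the $\s$-algebraic group represented by the image Hopf subalgebra $\f^*(k\{H\})\subseteq k\{G\}$ (finitely $\s$-generated by Theorem~\ref{theo: second finiteness}). Existence and uniqueness then drop out of the trivial fact that a ring map factors uniquely as a surjection onto its image followed by the inclusion. Your categorical route and the paper's Hopf-algebraic route yield the same intermediate group (indeed $k\{G/\ker(\f)\}=\f^*(k\{H\})$ by the proof of Corollary~\ref{cor: quotient embedding}); the paper's version is shorter, while yours makes the role of the universal property of the quotient more explicit.
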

\begin{proof}
	Let $\f\colon G\to H$ be a morphism of $\s$-algebraic groups. The uniqueness in the statement of the corollary means that if $G\twoheadrightarrow H_1\hookrightarrow H$ and $G\twoheadrightarrow H_2\hookrightarrow H$ are two factorizations of $\f$, then there exists an isomorphism $H_1\to H_2$ of $\s$-algebraic groups making
	$$
	\xymatrix{
		G \ar@{=}[d] \ar@{->>}[r] & H_1 \ar^{\simeq}[d] \ar@{^(->}[r] & H \ar@{=}[d] \\
		G \ar@{->>}[r] & H_2 \ar@{^(->}[r] & H
	}
	$$
	commutative. The $k$-$\s$-Hopf subalgebra $\f^*(k\{H\})$ of $k\{G\}$ is finitely \mbox{$\s$-generated} over $k$. So we can define $H_1$ as the $\s$-algebraic group represented by $\f^*(k\{H\})$. The claim of the corollary then follows immediately by dualizing.
\end{proof}
Note that $H_1$ has two interpretations, either as $\f(G)$ or as $G/\ker(\f)$. See Theorem~\ref{theo: isom1} below.

\begin{ex}
	Let $\f\colon\Ga\to \Ga^2$ be the morphism given by
	$$\f_R\colon\Ga(R)\to\Ga^2(R),\ g\mapsto (\s(g),\s^2(g))$$
	for any \ks-algebra $R$. Let us determine the factorization of $\f$ according to Corollary~\ref{cor: factorisation of morphism}. Let $H$ be the $\s$-closed subgroup of $\Ga^2$ given by $$H(R)=\{(g_1,g_2)\in R^2|\ \s(g_1)=g_2\}$$ for any \ks-algebra $R$. Then $H$ is isomorphic to $\Ga$ (via $(g_1,g_2)\mapsto g_1$) and $\f$ maps into $H$. The dual map of $\f\colon \Ga\to H\simeq\Ga$ is given by $k\{y\}\to k\{y\},\ y\mapsto\s(y)$, which is injective. So $\f\colon \Ga\to H$ is a quotient map and $\f\colon \Ga\sar H\iar\Ga^2$ is the searched for factorization of $\f$.
\end{ex}

\section{The isomorphism theorems}
\label{sec: The isomorphism theorems}

In this section we establish the difference analogs of the isomorphism theorems for (abstract) groups. 
These three theorem sometimes also go under the names, homomorphism theorem, isomorphism theorem and correspondence theorem. In any case, these are essential for the proof our Jordan-H\"{o}lder type theorem. Our approach largely follows \cite[Chapter 5]{Milne:AlgebraicGroupsTheTheoryOfGroupSchemesOfFiniteTypeOverAField}.

\begin{lemma} \label{lemma: f(G) subgroup}
	Let $\f\colon G\to H$ be a morphism of $\s$-algebraic groups and let $G_1$ be a $\s$-closed subgroup of $G$. Then $\f(G_1)$ is a $\s$-closed subgroup of $H$.
\end{lemma}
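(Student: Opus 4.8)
The plan is to reduce the statement to the factorization of the restricted morphism, which has already been established in Corollary \ref{cor: factorisation of morphism}. Since $G_1$ is a $\s$-closed subgroup of $G$, the composition $\f_{G_1}\colon G_1\hookrightarrow G\xrightarrow{\f} H$ is a morphism of $\s$-algebraic groups: on each \ks-algebra $R$ it is the restriction of the group homomorphism $\f_R$ to the subgroup $G_1(R)\leq G(R)$, hence itself a group homomorphism. By the definition of the image recalled in Section 1, $\f(G_1)=\f_{G_1}(G_1)$ is the $\s$-closed $\s$-subvariety of $H$ defined by the kernel of $\f_{G_1}^*\colon k\{H\}\to k\{G\}\to k\{G_1\}$, so that $k\{\f(G_1)\}=k\{H\}/\ker(\f_{G_1}^*)\cong\f_{G_1}^*(k\{H\})$.

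Next I would apply Corollary \ref{cor: factorisation of morphism} to $\f_{G_1}$: it factors as a quotient map $G_1\twoheadrightarrow H_1$ followed by a $\s$-closed embedding $H_1\hookrightarrow H$, where $H_1$ is the $\s$-algebraic group represented by the $k$-$\s$-Hopf subalgebra $\f_{G_1}^*(k\{H\})$ of $k\{G_1\}$, as in the proof of that corollary. The $\s$-closed embedding identifies $H_1$ with a $\s$-closed subgroup of $H$ whose defining ideal in $k\{H\}$ is exactly $\ker(\f_{G_1}^*)$. Comparing the two descriptions of the coordinate ring as a quotient of $k\{H\}$, this $\s$-closed subgroup coincides with $\f(G_1)$; this is precisely the remark following Corollary \ref{cor: factorisation of morphism} that $H_1$ may be interpreted as the image. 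Hence $\f(G_1)$ is a $\s$-closed subgroup of $H$.

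There is essentially no obstacle here once the factorization corollary is in hand; the only point requiring care is the identification of the scheme-theoretic image $\f(G_1)$ with the embedded factor $H_1$, which is immediate from comparing the two quotient descriptions of the coordinate ring.

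As an alternative, one can argue directly on Hopf ideals, avoiding the factorization corollary: the dual map $\f_{G_1}^*\colon k\{H\}\to k\{G_1\}$ is a morphism of $k$-$\s$-Hopf algebras, and the kernel of a morphism of Hopf algebras over a field is a Hopf ideal. The only mild subtlety is the flatness fact that over a field $\ker(\psi\otimes\psi)=\ker(\psi)\otimes k\{H\}+k\{H\}\otimes\ker(\psi)$, which gives $\Delta\big(\ker(\f_{G_1}^*)\big)\subseteq\ker(\f_{G_1}^*)\otimes k\{H\}+k\{H\}\otimes\ker(\f_{G_1}^*)$, while compatibility with the counit and the antipode is immediate; moreover $\ker(\f_{G_1}^*)$ is a $\s$-ideal since $\f_{G_1}^*$ commutes with $\s$. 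Thus $\ker(\f_{G_1}^*)=\I(\f(G_1))$ is a $\s$-Hopf ideal, and $\f(G_1)$ is a $\s$-closed subgroup of $H$ by \cite[Lemma 2.4]{Wibmer:FinitenessPropertiesOfAffineDifferenceAlgebraicGroups}.
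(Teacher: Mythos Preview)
Your proposal is correct. Your alternative approach---showing that $\ker(\f_{G_1}^*)$ is a $\s$-Hopf ideal because $\f_{G_1}^*$ is a morphism of $k$-$\s$-Hopf algebras---is exactly the paper's proof. Your first approach via Corollary~\ref{cor: factorisation of morphism} is also valid and not circular (that corollary is proved earlier and does not rely on the present lemma), but it is a slight detour: the factorization corollary already builds in the fact that the image of a Hopf algebra morphism is a Hopf subalgebra, which is essentially the same content as the direct argument.
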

\begin{proof}
	The $\s$-closed $\s$-subvariety $\f(G_1)$ of $H$ is defined by the kernel $\ida$ of $k\{H\}\to k\{G\}\to k\{G_1\}$. Since this is a morphism of \ks-Hopf algebras, it follows that $\ida$ is a $\s$-Hopf ideal. 
%
%
	 So $\f(G)$ is a $\s$-closed subgroup of $H$.
\end{proof}

The following theorem is the difference analog of the first isomorphism theorem for (abstract) groups.
\begin{theo} \label{theo: isom1}
	Let $\f\colon G\to H$ be a morphism of $\s$-algebraic groups. Then $\f(G)$ is a $\s$-closed subgroup of $H$ and the induced morphism $G/\ker(\f)\to\f(G)$ is an isomorphism.
\end{theo}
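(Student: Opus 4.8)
The first assertion, that $\f(G)$ is a $\s$-closed subgroup of $H$, is precisely Lemma~\ref{lemma: f(G) subgroup}, so the plan is to spend all the effort on the induced morphism. I would write $\iota\colon\f(G)\iar H$ for the inclusion and factor $\f$ as $\f=\iota\psi$, where $\psi\colon G\to\f(G)$ is $\f$ with its codomain cut down to $\f(G)$. Since $\iota$ is a $\s$-closed embedding, hence a monomorphism (Proposition~\ref{prop: injective morphism}), one has $\ker(\psi)=\ker(\f)$. As $\ker(\f)\subseteq\ker(\psi)$, the universal property of the quotient (from the definition preceding Theorem~\ref{theo: existence of quotient}) furnishes a unique morphism $\overline{\psi}\colon G/\ker(\f)\to\f(G)$ with $\overline{\psi}\pi=\psi$, where $\pi\colon G\to G/\ker(\f)$ is the canonical quotient map. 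The goal is then to show that this $\overline{\psi}$ is an isomorphism, and the strategy is to verify that it is simultaneously a $\s$-closed embedding and a quotient map, so that Corollary~\ref{cor: surjective and injective implies isom} applies.

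To see that $\overline{\psi}$ is a $\s$-closed embedding, I would invoke Corollary~\ref{cor: quotient embedding}, which says that the induced morphism $G/\ker(\f)\to H$---that is, the composite $\iota\overline{\psi}$---is a $\s$-closed embedding. Dualizing, the composition $k\{H\}\xrightarrow{\iota^*}k\{\f(G)\}\xrightarrow{\overline{\psi}^*}k\{G/\ker(\f)\}$ is surjective, and since a composite is surjective only if its second factor is, $\overline{\psi}^*$ is surjective. By Proposition~\ref{prop: injective morphism} this means $\overline{\psi}$ is a $\s$-closed embedding.

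For the quotient-map property I would compute the image of $\overline{\psi}$ and apply Proposition~\ref{prop: surjective morphism}. By the definition of the image of a morphism, $\psi(G)=\f(G)$. Because $\psi=\overline{\psi}\pi$ factors through the $\s$-closed subgroup $\overline{\psi}(G/\ker(\f))\iar\f(G)$, we get $\f(G)=\psi(G)\subseteq\overline{\psi}(G/\ker(\f))\subseteq\f(G)$, forcing $\overline{\psi}(G/\ker(\f))=\f(G)$. Thus $\overline{\psi}$ satisfies condition (i) of Proposition~\ref{prop: surjective morphism} and is a quotient map. Being both a $\s$-closed embedding and a quotient map, $\overline{\psi}$ is an isomorphism by Corollary~\ref{cor: surjective and injective implies isom}, which is exactly the claim.

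The proof is essentially a bookkeeping assembly of the results already established in the previous two sections, so there is no genuinely hard analytic step; the only point demanding a little care is the image computation $\overline{\psi}(G/\ker(\f))=\f(G)$, where one must use surjectivity of $\pi$ to transport $\psi(G)=\f(G)$ through the factorization rather than merely obtaining an inclusion. I would make sure to keep the distinction between ``embedding into $H$'' (given directly by Corollary~\ref{cor: quotient embedding}) and ``embedding into $\f(G)$'' (obtained by peeling off $\iota^*$ on coordinate rings), since conflating them is the natural place for a gap to slip in.
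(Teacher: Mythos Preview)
Your proof is correct and follows essentially the same route as the paper's. The only cosmetic difference is that the paper applies Corollary~\ref{cor: quotient embedding} directly to the morphism $G\to\f(G)$ (whose kernel is already $\ker(\f)$), obtaining that $G/\ker(\f)\to\f(G)$ is a $\s$-closed embedding in one step, whereas you apply it to $G\to H$ and then peel off $\iota^*$; both are fine. For surjectivity the paper invokes the minimality of $\f(G)$ exactly as you do, concluding that the embedded copy of $G/\ker(\f)$ inside $\f(G)$ must equal $\f(G)$, which is the same image computation you carry out with $\overline{\psi}$.
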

\begin{proof}
	We already observed in Lemma \ref{lemma: f(G) subgroup} that $\f(G)$ is a $\s$-closed subgroup. Since $\ker(\f)$ is the kernel of $G\to\f(G)$, the induced morphism $G/\ker(\f)\to\f(G)$ is a \mbox{$\s$-closed} embedding  by Corollary \ref{cor: quotient embedding} and so we can identify $G/\ker(\f)$ with a $\s$-closed $\s$-subvariety of $\f(G)$. Since $\f$ factors through $G/\ker(\f)$ it follows from the definition of $\f(G)$ that $G/\ker(\f)=\f(G)$.
\end{proof}

For the proof of the second and third isomorphism theorem we need a little preparation.
The fact that for a quotient map $\pi\colon G\to G/N$ of $\s$-algebraic groups, the maps $\pi_R\colon G(R)\to (G/N)(R)$ need not be surjective, makes it difficult to transfer proofs in the category of (abstract) groups, that would usually be carried out by a diagram chase, to proofs in the category of $\s$-algebraic groups. The following lemmas are useful for overcoming this difficulty. See, for example, the proof of Lemma \ref{lemma: Dedekind law}.

Let $X$ be a $\s$-variety. If $R\to S$ is an injective morphism of $k$-$\s$-algebras (e.g., $S$ is a faithfully flat $R$-$\s$-algebra), then
$$X(R)=\Hom(k\{X\},R)\to\Hom(k\{X\},S)=X(S)$$ is injective. To simplify the notation, we will, in the sequel, often identify $X(R)$ with its image in $X(S)$.

\begin{lemma} \label{lemma: faithfullyflat intersection}
	Let $Y$ be a $\s$-closed $\s$-subvariety of a $\s$-variety $X$ and let $R\to S$ be an injective morphism of $k$-$\s$-algebras (e.g., $R\to S$ is faithfully flat). Then
	$$
	Y(R)=X(R)\cap Y(S),
	$$
	where, using the above described identification, the intersection is understood to take place in $X(S)$.
\end{lemma}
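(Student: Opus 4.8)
The plan is to prove both inclusions $Y(R) \subseteq X(R) \cap Y(S)$ and $X(R) \cap Y(S) \subseteq Y(R)$, where the inclusion $Y(R) \subseteq X(R)$ comes from the fact that $Y$ is a $\s$-closed $\s$-subvariety of $X$, hence $Y(R) \subseteq X(R)$ functorially. The first inclusion is essentially formal: if $g \in Y(R)$, then its image in $Y(S)$ witnesses membership in $Y(S)$, and its image in $X(S)$ (via $Y(R) \subseteq X(R) \hookrightarrow X(S)$) agrees, so $g \in X(R) \cap Y(S)$. The substance of the lemma is the reverse inclusion, which says that a point of $X$ with coordinates in $R$ that happens to satisfy the defining equations of $Y$ after enlarging the scalars to $S$ already satisfies them over $R$.

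First I would translate everything into the language of coordinate rings. A point $g \in X(R)$ is a morphism $g \colon k\{X\} \to R$ of \ks-algebras. The subvariety $Y$ is defined by its $\s$-ideal $\I(Y) \subseteq k\{X\}$, and the point $g$ lies in $Y(R)$ if and only if $\I(Y) \subseteq \ker(g)$, i.e. $g$ factors through $k\{X\}/\I(Y) = k\{Y\}$. Denoting the structure map $\psi \colon R \to S$, the image of $g$ in $X(S)$ is $\psi \circ g$, and $g$ lies in $Y(S)$ (after the identification) precisely when $\psi \circ g$ factors through $k\{Y\}$, i.e. $\I(Y) \subseteq \ker(\psi \circ g)$. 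So the reverse inclusion reduces to the implication: if $\I(Y) \subseteq \ker(\psi \circ g)$, then $\I(Y) \subseteq \ker(g)$.

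The key step is that $\psi$ is injective, which forces $\ker(\psi \circ g) = \ker(g)$. Indeed, for $f \in k\{X\}$ we have $(\psi \circ g)(f) = \psi(g(f))$, and since $\psi$ is injective, $\psi(g(f)) = 0$ if and only if $g(f) = 0$; hence $\ker(\psi \circ g) = g^{-1}(\psi^{-1}(0)) = g^{-1}(0) = \ker(g)$. Therefore $\I(Y) \subseteq \ker(\psi \circ g)$ is equivalent to $\I(Y) \subseteq \ker(g)$, giving $g \in Y(R)$ as desired. The parenthetical remark that a faithfully flat morphism of $k$-$\s$-algebras is injective has already been used in the proof of Proposition~\ref{prop: surjective morphism} (any faithfully flat morphism of rings is injective), so the faithfully flat case is covered by the injective case.

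I do not expect any genuine obstacle here; the only point requiring a moment's care is the identification convention. I would state explicitly at the outset that the intersection $X(R) \cap Y(S)$ is taken inside $X(S)$ via the injections $X(R) \hookrightarrow X(S)$ and $Y(S) \hookrightarrow X(S)$, so that ``$g \in X(R) \cap Y(S)$'' means that the image $\psi \circ g$ of some $g \in X(R)$ happens to lie in the image of $Y(S) \hookrightarrow X(S)$. Once this is unwound, the argument is the two-line kernel computation above. The proof is short enough that I would simply write it out directly rather than isolate further sublemmas.
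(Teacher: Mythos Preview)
Your proof is correct and follows essentially the same approach as the paper: both reduce the nontrivial inclusion to the observation that a morphism $k\{X\}\to S$ factoring through both the surjection $k\{X\}\twoheadrightarrow k\{Y\}$ and the injection $R\hookrightarrow S$ must already factor through $k\{Y\}\to R$, which is exactly your kernel computation $\ker(\psi\circ g)=\ker(g)$.
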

\begin{proof}
	The inclusion ``$\subseteq$'' is obvious. To prove ``$\supseteq$'' it suffices to note that for
	a morphism $k\{X\}\to S$ with factorizations $k\{X\}\twoheadrightarrow k\{Y\}\to S$ and $k\{X\}\to R\hookrightarrow S$, one has an arrow $k\{Y\}\to R$ such that
	$$\xymatrix{ & k\{Y\} \ar@{..>}[dd] \ar[rd]  & \\
		k\{X\} \ar@{->>}[ur] \ar[rd] & & S \\
		& R \ar@{^(->}[ur] &
	}
	$$
	commutes.
\end{proof}

The following lemma provides an explicit description of the points of $\f(G)$.

\begin{lemma} \label{lemma: f(G) faithfully flat}
	Let $\f\colon G\to H$ be a morphism of $\s$-algebraic groups and $G_1\leq G$ a $\s$-closed subgroup. Let $R$ be a $k$-$\s$-algebra. Then
	$\f(G_1)(R)$ equals the set of all $h\in H(R)$ such that there exists a faithfully flat $R$-$\s$-algebra $S$ and $g_1\in G_1(S)$ with $\f(g_1)=h$.
\end{lemma}
\begin{proof}
	The induced morphism $G_1\to\f(G_1)$ is a quotient map. So it follows from Proposition~\ref{prop: surjective morphism}~(iv) that for $h\in\f(G_1)(R)$ there exists a faithfully flat $R$-$\s$-algebra $S$ and $g_1\in G_1(S)$ with $\f(g_1)=h$.
	
	Conversely, if $h=\f(g_1)$, then $h\in\f(G_1(S))\subseteq\f(G_1)(S)$ and we can deduce from
	$\f(G_1)(S)\cap H(R)=\f(G_1)(R)$ (Lemma \ref{lemma: faithfullyflat intersection}) that $h\in\f(G_1)(R)$.
\end{proof}

The following lemma provides one of the reasons why fat subfunctors are useful for us. 

\begin{lemma}
	Let $X$ and $Y$ be $\s$-varieties and let $D$ be a fat subfunctor of $X$. Then any morphism $D\to Y$ (of functors) extends uniquely to a morphism $X\to Y$.
\end{lemma}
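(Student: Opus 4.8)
The plan is to construct the extension \emph{pointwise} and to justify it by faithfully flat descent, exploiting that $D$ is fat together with the fact, recorded above, that $X(R)\to X(S)$ is injective whenever $R\to S$ is an injective morphism of \ks-algebras. Write $\alpha\colon D\to Y$ for the given morphism. The heart of the argument is the following descent claim: given a \ks-algebra $R$ and $g\in X(R)$, pick (using fatness) a faithfully flat $R$-$\s$-algebra $S$ with $g_S\in D(S)$, where $g_S$ denotes the image of $g$ in $X(S)$; then $\alpha(g_S)\in Y(S)$ descends to a unique point $\beta(g)\in Y(R)$. To prove this, consider the two canonical maps $p_1,p_2\colon S\to S\otimes_R S$. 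Since $p_1$ and $p_2$ restrict to the same structure map $R\to S\otimes_R S$, the elements $X(p_1)(g_S)$ and $X(p_2)(g_S)$ both coincide with the image of $g$ in $X(S\otimes_R S)$, and both lie in $D(S\otimes_R S)$ because $D$ is a subfunctor. Applying the naturality of $\alpha$ yields $p_1\circ\alpha(g_S)=p_2\circ\alpha(g_S)$ as maps $k\{Y\}\to S\otimes_R S$. As $R\to S$ is faithfully flat, the sequence $R\to S\rightrightarrows S\otimes_R S$ is an equalizer (faithfully flat descent, cf. \cite[Chapter 14]{Waterhouse:IntroductiontoAffineGroupSchemes}), so $\alpha(g_S)$ factors through a unique $k$-algebra map $\beta(g)\colon k\{Y\}\to R$; its $\s$-equivariance follows from the $\s$-equivariance of $\alpha(g_S)$ and the injectivity of $R\to S$.

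Next I would check that $\beta(g)$ does not depend on the choice of $S$: for a second such extension $S'$, pass to $S\otimes_R S'$, which is faithfully flat over $R$; the two candidate points have the same image $\alpha(g_{S\otimes_R S'})$ in $Y(S\otimes_R S')$ by naturality, and since $Y(R)\to Y(S\otimes_R S')$ is injective they agree. The same style of argument shows that $g\mapsto\beta(g)$ is natural in $R$: for $\phi\colon R\to R'$ one base-changes a chosen $S$ to $S\otimes_R R'$ and compares images in $Y(S\otimes_R R')$, using injectivity of $Y(R')\to Y(S\otimes_R R')$. Thus $\beta\colon X\to Y$ is a morphism of $\s$-varieties.

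It remains to see that $\beta$ extends $\alpha$ and is the only such extension. For the first point, if $g\in D(R)$ one may take $S=R$ in the construction, whence $\alpha(g)$ already lies in $Y(R)$ and $\beta(g)=\alpha(g)$. For uniqueness, any extension $\beta'$ satisfies $\beta'(g)\mapsto \beta'(g_S)=\alpha(g_S)$ in $Y(S)$ for $g_S\in D(S)$, and injectivity of $Y(R)\to Y(S)$ forces $\beta'(g)=\beta(g)$.

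The \textbf{main obstacle} is the descent step in the first paragraph: everything hinges on the equalizer exactness of $R\to S\rightrightarrows S\otimes_R S$ for a faithfully flat $R\to S$, combined with the subfunctor property of $D$ and the naturality of $\alpha$; once this is in place, the rest is bookkeeping with the injection $X(R)\hookrightarrow X(S)$. A minor point to keep careful track of is that descent is naturally carried out at the level of underlying $k$-algebras (via $(-)^\sharp$), after which $\s$-equivariance of the descended maps is recovered from the injectivity of the faithfully flat maps.
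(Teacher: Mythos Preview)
Your proof is correct and follows essentially the same route as the paper: both arguments hinge on the equalizer exactness of $R\to S\rightrightarrows S\otimes_R S$ for faithfully flat $R\to S$, use it to descend $\alpha(g_S)$ to $Y(R)$, and then verify independence of the choice of $S$ by passing to a common faithfully flat extension (the paper uses $S_1\otimes_R S_2$ exactly as you do). Your write-up is in fact slightly more careful than the paper's in that you explicitly address the $\s$-equivariance of the descended map and the naturality of $\beta$ in $R$, points the paper leaves implicit.
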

\begin{proof}
	The key property of faithfully flat algebras we need is the following: If $S$ is a faithfully flat $R$-algebra then the sequence $R\to S\rightrightarrows S\otimes_R S$ is exact, i.e, if $s\in S$ is such that $s\otimes 1=1\otimes s\in S\otimes_R S$, then there exists a unique $r\in R$ mapping to $s$ under $R\to S$. See e.g., \cite[Chapter I, \S 1, Lemma 2.7]{DemazureGabriel:GroupesAlgebriques} (with $M=C$). It follows that for any \ks-algebra $R$ and faithfully flat $R$-$\s$-algebra $S$, the sequence $X(R)\to X(S)\rightrightarrows X(S\otimes_R S)$ is exact and similarly for $Y$ in place of $X$.
	
	Let us first show the uniqueness of an extension $\widetilde{\f}\colon X\to Y$ of $\f\colon D\to Y$. Let $R$ be a \ks\=/algebra and $x\in X(R)$. Since $D\subseteq X$ is fat, there exists a faithfully flat $R$-$\s$-algebra $S$ such that the image $d$ of $x$ in $X(S)$ lies in $D(S)$.
	The commutative diagram
	$$
	\xymatrix{
	X(R) \ar[r] & X(S) \ar@<2pt>[r]
	\ar@<-2pt>[r] & X(S\otimes_R S) \\
	D(R) \ar[r] \ar[d] \ar@{^(->}[u] & D(S) \ar[d] \ar@{^(->}[u] \ar@<2pt>[r]
	\ar@<-2pt>[r] & D(S\otimes_ R S) \ar[d] \ar@{^(->}[u] \\
	 Y(R) \ar[r] & Y(S) \ar@<2pt>[r]
	 \ar@<-2pt>[r] & Y(S\otimes_R S)
}
$$
with exact top and bottom rows, shows that $\widetilde{\f}_R(x)$ is the unique element of $Y(R)$ that maps to $\f_S(d)\in Y(S)$. 

To establish the existence, we need to show that $\widetilde{\f}$, when constructed as above, is well-defined, i.e., if $S_1$ and $S_2$ are faithfully flat $R$-$\s$-algebras and $d_1\in D(S_1)$ and $d_2\in D(S_2)$ are the image of $x\in X(R)$ in $X(S_1)$ and $X(S_2)$ respectively, then $\f_{S_1}(d_1)\in Y(S_1)$ and $\f_{S_2}(d_2)\in Y(S_2)$ are both the image of the same $y\in Y(R)$. 

The images of $d_1$ and $d_2$ in $D(S_1\otimes_R S_2)$ agree, since they both are the image of $x\in X(R)$. Let us denote this common image by $d\in D(S_1\otimes_R S_2)$. Then $\f_{S_1}(d_1)\in Y(S_1)$ and $\f_{S_2}(d_2)\in Y(S_2)$ have the same image, namely $\f_{S_1\otimes_R S_2}(d)$, in $Y(S_1\otimes_R S_2)$.
Thus, identifying elements along the inclusions 
$$
\xymatrix{
& Y(S_1\otimes_R S_2) & \\
Y(S_1) \ar@{^(->}[ru] & & Y(S_2)  \ar@{_(->}[lu] \\
& Y(R)  \ar@{^(->}[ru]  \ar@{_(->}[lu]&  
}
$$
we see that $\f_{S_1}(d_1)$, $\f_{S_2}(d_2)$ and $\f_{S_1\otimes_R S_2}(d)$ all agree with the same element $y$ of $Y(R)$. 

So we have for every \ks-algebra $R$ a map $\widetilde{\f}_R\colon X(R)\to Y(R)$ that extends $\f_R\colon D(R)\to Y(R)$. These $\widetilde{\f}_R$ form a morphism $\widetilde{\f}\colon X\to Y$ of functors extending $\f\colon D\to Y$.	
\end{proof}

As an immediate consequence of this lemma we obtain:
\begin{cor} \label{cor: extends from fat}
	Let $D$ and $D'$ be fat subfunctors of the $\s$-varieties $X$ and $X'$ respectively. Then any isomorphism $D\to D'$ uniquely extends to a morphism $X\to X'$ and this morphism is an isomorphism. \qed
\end{cor}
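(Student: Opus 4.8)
The plan is to apply the previous lemma twice, once to the given isomorphism and once to its inverse, and then to deduce that the two resulting extensions are mutually inverse from the uniqueness clause of that lemma. Write $\alpha\colon D\to D'$ for the given isomorphism and $\beta\colon D'\to D$ for its inverse. Composing $\alpha$ with the inclusion $D'\hookrightarrow X'$ gives a morphism $D\to X'$ of functors; since $D$ is fat in $X$ and $X'$ is a $\s$-variety, the previous lemma produces a unique extension $\widetilde\alpha\colon X\to X'$. Symmetrically, composing $\beta$ with $D\hookrightarrow X$ and using that $D'$ is fat in $X'$ yields a unique extension $\widetilde\beta\colon X'\to X$. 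The asserted uniqueness of the extension of $\alpha$ is then immediate from the uniqueness in the previous lemma.

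It remains to check that $\widetilde\alpha$ is an isomorphism, and the natural candidate for its inverse is $\widetilde\beta$. First I would consider $\widetilde\beta\circ\widetilde\alpha\colon X\to X$ and compare it with $\id_X$. Both are morphisms $X\to X$, and both restrict on $D$ to the inclusion $D\hookrightarrow X$: for $\id_X$ this is clear, while for $\widetilde\beta\circ\widetilde\alpha$ one uses that $\widetilde\alpha$ restricted to $D$ is $\alpha$ (with image inside $D'$) and that $\widetilde\beta$ restricted to $D'$ is $\beta$, so that $\widetilde\beta\circ\widetilde\alpha|_D=\beta\circ\alpha=\id_D$, which is precisely the inclusion $D\hookrightarrow X$. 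Since $D$ is fat in $X$ and the target $X$ is a $\s$-variety, the uniqueness clause of the previous lemma forces $\widetilde\beta\circ\widetilde\alpha=\id_X$. Running the same argument with the roles of $(X,D,\alpha)$ and $(X',D',\beta)$ interchanged gives $\widetilde\alpha\circ\widetilde\beta=\id_{X'}$. Hence $\widetilde\alpha$ is an isomorphism with inverse $\widetilde\beta$.

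The only genuinely delicate point, and the step I would be most careful about, is the bookkeeping with the identifications $D\subseteq X$ and $D'\subseteq X'$: one must verify that the restriction of $\widetilde\beta\circ\widetilde\alpha$ to $D$ really equals the inclusion morphism $D\hookrightarrow X$ as a morphism of functors, so that the uniqueness clause genuinely applies to the pair consisting of this inclusion together with its two competing extensions $\widetilde\beta\circ\widetilde\alpha$ and $\id_X$, rather than merely to a set-theoretic coincidence on points. This is exactly where the compatibilities $\alpha(D)\subseteq D'$ and $\widetilde\alpha|_D=\alpha$ are used; once they are in place, the rest is a formal consequence of the previous lemma, which is why the corollary can reasonably be labelled immediate.
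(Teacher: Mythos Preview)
Your argument is correct and is exactly the intended one: the paper marks the corollary with \qed\ because it follows immediately from the preceding lemma by extending both the isomorphism and its inverse and invoking uniqueness to identify the two composites with the identities. Your careful verification that $\widetilde\beta\circ\widetilde\alpha|_D$ really is the inclusion $D\hookrightarrow X$ is precisely the bookkeeping that justifies the \qed.
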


The next lemma is the $\s$-analog of the basic fact that surjective morphism of (abstract) groups preserve normal subgroups.

\begin{lemma} \label{lemma: normal stays normal under surjective map}
	Let $\f\colon G\to H$ be a quotient map of $\s$-algebraic groups. If $N$ is a normal $\s$-closed subgroup of $G$, then $\f(N)$ is a normal $\s$-closed subgroup of $H$.
\end{lemma}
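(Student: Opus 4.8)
The plan is to verify normality functorially, reducing to the faithfully flat descriptions of $\f(N)$ and of the points of a quotient map. First, note that $\f(N)$ is a $\s$-closed subgroup of $H$ by Lemma \ref{lemma: f(G) subgroup}, so it only remains to show that $\f(N)(R)$ is a normal subgroup of $H(R)$ for every $k$-$\s$-algebra $R$. I would fix such an $R$, an element $h\in H(R)$ and an element $x\in\f(N)(R)$; the goal is then to show $hxh^{-1}\in\f(N)(R)$.

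The difficulty is precisely that $\f_R\colon G(R)\to H(R)$ need not be surjective, so one cannot simply lift $h$ to an element of $G(R)$. Instead I would lift after passing to faithfully flat $\s$-extensions. By Lemma \ref{lemma: f(G) faithfully flat} there is a faithfully flat $R$-$\s$-algebra $S_1$ and an element $n\in N(S_1)$ whose image under $\f$ in $H(S_1)$ equals the image of $x$. By Proposition \ref{prop: surjective morphism}~(iv), applied to the quotient map $\f$, there is a faithfully flat $R$-$\s$-algebra $S_2$ and an element $g\in G(S_2)$ whose image under $\f$ in $H(S_2)$ equals the image of $h$. Setting $S=S_1\otimes_R S_2$, which is again a faithfully flat $R$-$\s$-algebra, I can regard both $n\in N(S)$ and $g\in G(S)$ simultaneously, with $\f(n)$ and $\f(g)$ equal to the images of $x$ and $h$ in $H(S)$ respectively.

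The computation is then immediate: since $N$ is normal in $G$, the element $gng^{-1}$ lies in $N(S)$, and
$$\f(gng^{-1})=\f(g)\f(n)\f(g)^{-1}=hxh^{-1}$$
in $H(S)$, where $h$ and $x$ denote their images in $H(S)$. Thus the image of $hxh^{-1}$ in $H(S)$ lies in $\f(N(S))\subseteq\f(N)(S)$. Applying Lemma \ref{lemma: f(G) faithfully flat} once more---now with the faithfully flat $R$-$\s$-algebra $S$ and the element $gng^{-1}\in N(S)$---yields $hxh^{-1}\in\f(N)(R)$, as desired. (Equivalently, one may invoke Lemma \ref{lemma: faithfullyflat intersection} in the form $\f(N)(R)=H(R)\cap\f(N)(S)$.) Since $R$, $h$ and $x$ were arbitrary, $\f(N)$ is normal in $H$.

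I expect the only real subtlety to be the bookkeeping of base changes: combining the two separate faithfully flat algebras $S_1$ and $S_2$ into the single algebra $S=S_1\otimes_R S_2$ over which both $n$ and a lift $g$ of $h$ are simultaneously available, together with keeping track of the identifications of $x$ and $h$ with their images. Everything else is a one-line group-theoretic manipulation combined with the already-established closure property and the faithfully flat descent provided by Lemmas \ref{lemma: faithfullyflat intersection} and \ref{lemma: f(G) faithfully flat}.
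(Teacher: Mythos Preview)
Your proof is correct and follows essentially the same approach as the paper's own proof: lift the element of $\f(N)(R)$ and the element of $H(R)$ to $N$ and $G$ over faithfully flat extensions, tensor the two extensions together, use normality of $N$ in $G$ there, and descend back via Lemma~\ref{lemma: faithfullyflat intersection} (or equivalently Lemma~\ref{lemma: f(G) faithfully flat}). The only differences are cosmetic choices of notation and which of the equivalent lemmas is cited at each step.
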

\begin{proof}
	We already know from Lemma \ref{lemma: f(G) subgroup} that $\f(N)$ is a $\s$-closed subgroup of $H$.
	Let $R$ be a $k$-$\s$-algebra, $h\in\f(N)(R)$ and $h_1\in H(R)$. We have to show that $h_1hh_1^{-1}\in\f(N)(R)$. By Proposition \ref{prop: surjective morphism}, there exists a faithfully flat $R$-$\s$-algebra $S$ and \mbox{$g\in N(S)$} with $\f(g)=h$. Similarly, there exists a faithfully flat $R$-$\s$-algebra $S_1$ and $g_1\in G(S_1)$ with $\f(g_1)=h_1$. Then $S'=S\otimes_R S_1$ is a faithfully flat $R$-$\s$-algebra (\cite[Section 13.3, p. 106]{Waterhouse:IntroductiontoAffineGroupSchemes}) and we can consider $G(S)$ and $G(S_1)$ as subgroups of $G(S')$. Since $N(S')\unlhd G(S')$ we see that $g_1gg_1^{-1}\in N(S')$. Therefore
	$$h_1hh_1^{-1}=\f(g_1gg_1^{-1})\in\f(N(S'))\subseteq\f(N)(S').$$ As $\f(N)(S')\cap H(R)=\f(N)(R)$ by Lemma \ref{lemma: faithfullyflat intersection}, this shows that $h_1hh_1^{-1}\in\f(N)(R)$.
\end{proof}

Let $N$ and $H$ be $\s$-closed subgroups of a $\s$-algebraic group $G$ such that $H$ normalizes $N$, i.e., $H(R)$ normalizes $N(R)$ for any $k$-$\s$-algebra $R$. Then we can form the semidirect product $N\rtimes H$: This is a $\s$-algebraic group with underlying $\s$-variety $N\times H$ and multiplication given by
$((n_1,h_1),(n_2,h_2))\mapsto(n_1 h_1n_2h_1^{-1}, h_1 h_2)$
for any $k$-$\s$-algebra $R$ and $n_1,n_2\in N(R)$, $h_1,h_2\in H(R)$.
The map
$$m\colon N\rtimes H\to G,\ (n,h)\mapsto nh$$
for any $k$-$\s$-algebra $R$ and $n\in N(R)$, $h\in H(R)$ is a morphism of $\s$-algebraic groups.
We define
$$HN:=NH:=m(N\rtimes H).$$
Then $HN$ is a $\s$-closed subgroup of $G$ (Lemma \ref{lemma: f(G) subgroup}). In fact, $HN$ is the smallest $\s$-closed subgroup of $G$ that contains $N$ and $H$.
Since $N\rtimes H\to HN$ is a quotient map, it follows from Proposition \ref{prop: surjective morphism} that the functor $R\rightsquigarrow N(R)H(R)=H(R)N(R)$ is a fat subfuntor of $HN$. 
Moreover, by Lemma~\ref{lemma: f(G) faithfully flat} we have
$$(HN)(R)=\{g\in G(R)|\ \exists \text{ faithfully flat $R$-$\s$-algebra $S$ such that } g\in N(S)H(S)=H(S)N(S)\}$$
for any $k$-$\s$-algebra $R$. Since $N$ is normal in $N\rtimes H$ we know from Lemma \ref{lemma: normal stays normal under surjective map} that $N=m(N)$ is normal in $HN$.

The following theorem is the analog of the second isomorphism theorem for groups.

\begin{theo} \label{theo: isom2}
	Let $H$ and $N$ be $\s$-closed subgroups of a $\s$-algebraic group $G$ such that $H$ normalizes $N$. Then the canonical morphism
	$$H/(H\cap N)\to HN/N$$
	is an isomorphism.
\end{theo}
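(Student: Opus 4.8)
The plan is to realize the stated map as the morphism induced by the first isomorphism theorem (Theorem \ref{theo: isom1}) and then to verify surjectivity via the fat-subfunctor criterion of Proposition \ref{prop: surjective morphism}. First I would consider the composite morphism
$$\varphi\colon H\hookrightarrow HN\xrightarrow{\pi} HN/N,$$
where $\pi$ is the quotient map (which exists because $N\unlhd HN$ was established just before the theorem). Since $\ker(\pi)=N$ by Theorem \ref{theo: existence of quotient}, the kernel of $\varphi$ is $H\cap N$; in particular $H\cap N$ is normal in $H$, so $H/(H\cap N)$ is a legitimate quotient and $\varphi$ factors through a canonical morphism $\overline{\varphi}\colon H/(H\cap N)\to HN/N$. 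By Theorem \ref{theo: isom1}, $\varphi$ induces an isomorphism $H/(H\cap N)\xrightarrow{\ \sim\ }\varphi(H)$ onto the $\s$-closed subgroup $\varphi(H)$ of $HN/N$. Thus the theorem reduces to the single claim that $\varphi(H)=HN/N$, equivalently that $\varphi$ is a quotient map.

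To prove $\varphi(H)=HN/N$ I would verify condition (iv) of Proposition \ref{prop: surjective morphism} for $\varphi$. Fix a \ks-algebra $R$ and $\overline{g}\in(HN/N)(R)$. By Lemma \ref{lemma: sheaf surjectivity of quotient} applied to $\pi$, there is a faithfully flat $R$-$\s$-algebra $S$ and $g\in (HN)(S)$ whose image under $\pi_S$ is the image of $\overline{g}$ in $(HN/N)(S)$. Using the explicit description of the points of $HN$ recorded just before the theorem (Lemma \ref{lemma: f(G) faithfully flat}), there is a further faithfully flat $S$-$\s$-algebra $S'$ together with $h\in H(S')$ and $n\in N(S')$ such that $g=hn$ in $G(S')$. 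Since $n\in N(S')=\ker(\pi)(S')$, we obtain $\pi_{S'}(g)=\pi_{S'}(h)=\varphi_{S'}(h)$, while $\pi_{S'}(g)$ is the image of $\overline{g}$ in $(HN/N)(S')$. As the composite $R\to S\to S'$ is again faithfully flat, this produces, for the given $\overline{g}$, a faithfully flat $R$-$\s$-algebra $S'$ and an element $h\in H(S')$ with $\varphi_{S'}(h)$ equal to the image of $\overline{g}$. This is precisely condition (iv), so $\varphi$ is a quotient map and $\varphi(H)=HN/N$.

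Combining the two steps, the canonical morphism $\overline{\varphi}\colon H/(H\cap N)\to HN/N$ is the isomorphism $H/(H\cap N)\xrightarrow{\sim}\varphi(H)=HN/N$ furnished by Theorem \ref{theo: isom1}, which proves the claim. The main obstacle is the surjectivity step: because the maps $\pi_R$ need not be surjective on the nose, one cannot lift $\overline{g}$ directly into $H(R)$, and the argument must instead pass through \emph{two} successive faithfully flat extensions—one to lift $\overline{g}$ into $HN$, and a second to factor the resulting point as a product of elements of $H$ and $N$—all while keeping track that the composite extension remains faithfully flat. Every other step is a formal consequence of the first isomorphism theorem and the kernel computation $\ker(\varphi)=H\cap N$.
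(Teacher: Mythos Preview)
Your proof is correct. The kernel computation, the application of Theorem~\ref{theo: isom1}, and the two-step faithfully flat lifting to verify condition~(iv) of Proposition~\ref{prop: surjective morphism} are all sound; in particular the composite $R\to S\to S'$ is faithfully flat, and the identity $\pi_{S'}(hn)=\pi_{S'}(h)$ follows from $\ker(\pi)=N$.

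Your route differs from the paper's. The paper does not invoke Theorem~\ref{theo: isom1} at all; instead it argues symmetrically: it exhibits $R\mapsto H(R)/(H(R)\cap N(R))$ as a fat subfunctor of $H/(H\cap N)$ and $R\mapsto H(R)N(R)/N(R)$ as a fat subfunctor of $HN/N$, observes that the canonical map restricts to an isomorphism between these fat subfunctors (this is literally the abstract-group second isomorphism theorem applied pointwise), and then appeals to Corollary~\ref{cor: extends from fat} to conclude. Your approach trades that corollary for the first isomorphism theorem plus a direct surjectivity check; it is arguably more elementary in that it avoids the ``extend from fat subfunctors'' principle, and it makes the asymmetry (injectivity via $\ker(\varphi)=H\cap N$, surjectivity via lifting) explicit. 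The paper's approach, on the other hand, packages the same two faithfully flat liftings into the single statement that $R\mapsto H(R)N(R)/N(R)$ is fat in $HN/N$, and its use of Corollary~\ref{cor: extends from fat} is the same template reused in the proof of Theorem~\ref{theo: isom3}.
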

\begin{proof}
	By Lemma \ref{lemma: sheaf surjectivity of quotient} the functor $R\rightsquigarrow H(R)/H(R)\cap N(R)$ is a fat subfunctor of $H/(H\cap N)$. Similarly, since $R\rightsquigarrow (HN)(R)/N(R)$ is a fat subfunctor of $HN/N$ and $R\rightsquigarrow H(R)N(R)$ is a fat subfunctor of $HN$, it follows that $R\rightsquigarrow  H(R)N(R)/N(R)$ is a fat subfunctor of $HN/N$.

	For every $k$-$\s$-algebra $R$ we have an isomorphism
	\begin{equation} \label{eq: isom on fat}
	H(R)/(H(R)\cap N(R))\to H(R)N(R)/N(R).
	\end{equation}
	 So the canonical morphism $H/(H\cap N)\to HN/N$ restricts to an isomorphism between the fat subfunctors on the left and right hand side of (\ref{eq: isom on fat}). Corollary \ref{cor: extends from fat} implies that the canonical morphism must be an isomorphism itself.
\end{proof}


The following theorem is the $\s$-analog of the third isomorphism theorem for (abstract) groups.

\begin{theo} \label{theo: isom3}
	Let $G$ be a $\s$-algebraic group, $N\unlhd G$ a normal $\s$-closed subgroup and $\pi\colon G\to G/N$ the quotient. The map $H\mapsto \pi(H)=H/N$ defines a bijection between the $\s$-closed subgroups $H$ of $G$ containing $N$ and the $\s$-closed subgroups $H'$ of $G/N$. The inverse map is $H'\mapsto\pi^{-1}(H')$. A $\s$-closed subgroup $H$ of $G$ containing $N$ is normal in $G$ if and only if $H/N$ is normal in $G/N$, in which case the canonical morphism
	$$G/H\to (G/N)/(H/N)$$
	is an isomorphism.
\end{theo}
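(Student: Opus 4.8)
The plan is to establish the three assertions in turn, relying throughout on the faithfully-flat descent encoded in Lemmas \ref{lemma: faithfullyflat intersection} and \ref{lemma: f(G) faithfully flat}; since the maps $\pi_R$ need not be surjective, the classical set-theoretic arguments for abstract groups cannot be carried out pointwise and must be routed through a faithfully flat extension. First I would set up the two maps and check they land where claimed. For $N\subseteq H\leq G$ the image $\pi(H)$ is a $\s$-closed subgroup of $G/N$ by Lemma \ref{lemma: f(G) subgroup}, and since $\ker(\pi|_H)=H\cap N=N$, Theorem \ref{theo: isom1} identifies it with $H/N$, justifying the notation. For a $\s$-closed subgroup $H'$ of $G/N$, the preimage $\pi^{-1}(H')$ is a $\s$-closed $\s$-subvariety of $G$ (as recalled in Section~\ref{sec: Subgroups defined by ideal closures}'s preliminaries); it is a subgroup because $\pi^{-1}(H')(R)=\pi_R^{-1}(H'(R))$ is a subgroup of $G(R)$ for every $R$, and it contains $N=\pi^{-1}(1)$.

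The core is to show the two maps are mutually inverse. To prove $\pi(\pi^{-1}(H'))=H'$, the inclusion ``$\subseteq$'' is immediate; for ``$\supseteq$'' let $h'\in H'(R)$. As $\pi$ is a quotient map, Proposition \ref{prop: surjective morphism}~(iv) furnishes a faithfully flat $R$-$\s$-algebra $S$ and $g\in G(S)$ with $\pi(g)=h'$, whence $g\in\pi^{-1}(H')(S)$ and, by the description in Lemma \ref{lemma: f(G) faithfully flat}, $h'\in\pi(\pi^{-1}(H'))(R)$. To prove $\pi^{-1}(\pi(H))=H$, again only ``$\subseteq$'' needs work: for $g\in\pi^{-1}(\pi(H))(R)$ we have $\pi(g)\in\pi(H)(R)$, so Lemma \ref{lemma: f(G) faithfully flat} yields a faithfully flat $S$ and $h\in H(S)$ with $\pi(h)=\pi(g)$; then $gh^{-1}\in\ker(\pi)(S)=N(S)\subseteq H(S)$, so $g\in H(S)$, and finally $g\in G(R)\cap H(S)=H(R)$ by Lemma \ref{lemma: faithfullyflat intersection}. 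This establishes the claimed bijection with inverse $H'\mapsto\pi^{-1}(H')$.

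For the normality equivalence: if $H\unlhd G$, then $\pi(H)=H/N$ is normal in $G/N$ by Lemma \ref{lemma: normal stays normal under surjective map}. Conversely, if $H/N\unlhd G/N$, then $H=\pi^{-1}(H/N)$ by the correspondence, and for each $R$ the group $H(R)=\pi_R^{-1}((H/N)(R))$ is the preimage of a normal subgroup under the homomorphism $\pi_R$, hence normal in $G(R)$; thus $H\unlhd G$. To obtain the isomorphism I would consider the composite $\psi\colon G\xrightarrow{\pi}G/N\xrightarrow{p}(G/N)/(H/N)$, where $p$ is the quotient by $H/N$. Its dual $\psi^*=\pi^*\circ p^*$ is a composite of injective maps, so $\psi$ is a quotient map by Proposition \ref{prop: surjective morphism}~(iii), while $\ker(\psi)=\pi^{-1}(\ker(p))=\pi^{-1}(H/N)=H$ by the correspondence. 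By the characterization in Theorem \ref{theo: existence of quotient}, $\psi$ is therefore the quotient of $G$ mod $H$, so the canonical morphism $G/H\to(G/N)/(H/N)$ induced by the universal property of $G/H$ is an isomorphism.

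The main obstacle is precisely the non-surjectivity of the maps $\pi_R$: each step of the correspondence that would be a one-line diagram chase for abstract groups must instead be carried out over a faithfully flat $\s$-algebra $S$ and then descended back to $R$ via Lemma \ref{lemma: faithfullyflat intersection}, so the real care lies in tracking which identities are valid over $S$ versus over $R$. Once the bijection is in place, the normality statement and the isomorphism $G/H\simeq(G/N)/(H/N)$ follow formally from the quotient machinery already developed.
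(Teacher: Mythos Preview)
Your argument for the correspondence and for the normality equivalence is essentially the paper's own: the same faithfully-flat lifting via Proposition~\ref{prop: surjective morphism}~(iv) and Lemma~\ref{lemma: f(G) faithfully flat}, followed by descent through Lemma~\ref{lemma: faithfullyflat intersection}. (One cosmetic slip: the fact that $\pi^{-1}(H')$ is a $\s$-closed $\s$-subvariety is recorded in the preliminaries of Section~1, not Section~\ref{sec: Subgroups defined by ideal closures}.)

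Where you genuinely diverge is in the proof that $G/H\to (G/N)/(H/N)$ is an isomorphism. The paper argues via fat subfunctors: it exhibits $R\rightsquigarrow (G(R)/N(R))/(H(R)/N(R))$ as a fat subfunctor of $(G/N)/(H/N)$ and $R\rightsquigarrow G(R)/H(R)$ as a fat subfunctor of $G/H$, observes that the canonical morphism restricts to an isomorphism between them, and invokes Corollary~\ref{cor: extends from fat}. You instead route everything through the characterization of quotients in Theorem~\ref{theo: existence of quotient}: the composite $\psi=p\circ\pi$ has injective dual (as a composite of injective duals) and kernel $\pi^{-1}(H/N)=H$ by the correspondence just established, so $\psi$ \emph{is} the quotient of $G$ mod $H$, and the induced map is the desired isomorphism. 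Your route is shorter and avoids the fat-subfunctor machinery entirely; the paper's route, by contrast, illustrates the same technique used for Theorem~\ref{theo: isom2}, where no such shortcut is available. Both are correct.
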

\begin{proof}
	Theorem \ref{theo: isom1} applied to $H\to G/N$ shows that $H/N=\pi(H)$.
	Let us show that $\pi^{-1}(\pi(H))=H$ for every $\s$-closed subgroup $H$ of $G$ containing $N$.
	Let $R$ be a $k$-$\s$-algebra and $g\in \pi^{-1}(\pi(H))(R)$, i.e., $\pi(g)\in\pi(H)(R)$. By Lemma \ref{lemma: f(G) faithfully flat} there exists a faithfully flat $R$\=/$\s$\=/algebra $S$ and $h\in H(S)$ with $\pi(h)=\pi(g)\in (G/N)(S)$. As $\ker(\pi)=N$ by Theorem \ref{theo: existence of quotient}, this implies that $gh^{-1}\in N(S)\leq H(S)$. Therefore $g\in H(S)$ and $g\in H(S)\cap G(R)=H(R)$ by Lemma \ref{lemma: faithfullyflat intersection}. Thus $\pi^{-1}(\pi(H))\subseteq H$. The reverse inclusion is obvious.
	
	Let us next show that $\pi(\pi^{-1}(H'))=H'$ for a $\s$-closed subgroup $H'$ of $G/N$. As $\pi$ maps $\pi^{-1}(H')$ into $H'$, it is clear form the definition of $\pi(\pi^{-1}(H'))$ that $\pi(\pi^{-1}(H'))\subseteq H'$.
	
	Conversely, let $R$ be a $k$-$\s$-algebra and $h'\in H'(R)$. There exists a faithfully flat $R$-$\s$-algebra $S$ and $g\in G(S)$ such that $\pi(g)=h'$.
	So $g\in\pi^{-1}(H')(S)$ and $h'=\pi(g)\in\pi(\pi^{-1}(H')(S))\subseteq\pi(\pi^{-1}(H'))(S)$. Thus
	$h'\in \pi(\pi^{-1}(H'))(S)\cap H'(R)=\pi(\pi^{-1}(H'))(R)$. Hence $\pi(\pi^{-1}(H'))=H'$.
	
	If $H$ is normal in $G$, then $\pi(H)$ is normal in $G/N$ by Lemma \ref{lemma: normal stays normal under surjective map}. Clearly $\pi^{-1}(H')$ is normal if $H'$ is normal.

Note that $R\rightsquigarrow (G/N)(R)/(H/N)(R)$ is a fat subfunctor of $(G/N)/(H/N)$ by Lemma~\ref{lemma: sheaf surjectivity of quotient}. Moreover, for any \ks-algebra $R$, the map $G(R)/N(R)\to (G/N)(R)/(H/N)(R)$ has kernel $H(R)/N(R)$ (as $\pi^{-1}(\pi(H))=H$). So, using Lemma \ref{lemma: sheaf surjectivity of quotient} again, we see that the functor $R\rightsquigarrow(G(R)/N(R))/(H(R)/N(R))$ is a fat subfunctor of $R\rightsquigarrow (G/N)(R)/(H/N)(R)$. It follows that $R\rightsquigarrow (G(R)/N(R))/(H(R)/N(R))$ is a fat subfunctor of $(G/N)/(H/N)$.
	For every $k$-$\s$-algebra $R$ we have an isomorphism
	$$G(R)/H(R)\to (G(R)/N(R))/(H(R)/N(R)).$$
	In other words, the canonical morphism $G/H\to (G/N)/(H/N)$ restricts to an isomorphism between the fat subfunctors
	$R\rightsquigarrow G(R)/H(R)$ and $R\rightsquigarrow (G(R)/N(R))/(H(R)/N(R))$ of $G/N$ and $(G/N)/(H/N)$ respectively. By Corollary \ref{cor: extends from fat} the canonical morphism must be an isomorphism itself.
\end{proof}

\section{Components} \label{sec: components}

In this section we study the \emph{identity component} $G^o$ and the \emph{strong identity component} $G^{so}$ of a $\s$-algebraic group $G$. The identity component $G^o$ is the analog of the usual identity component $\G^o$ of an algebraic group $\G$. Indeed, the underlying group scheme $(G^o)^\sharp$ of the identity component $G^o$ of $G$ is the identity component $(G^\sharp)^o$ of the underlying group scheme $G^\sharp$ of $G$.

Recall that the strong identity component $\G^{so}$ of an algebraic group $\G$ can be defined as the smallest closed subgroup with the same dimension as $\G$ (see \cite[Def. 6.9 and Prop.~6.10]{Milne:AlgebraicGroupsTheTheoryOfGroupSchemesOfFiniteTypeOverAField}). The strong identity component $G^{so}$ of a $\s$-algebraic group $G$ is defined similarly: it is the smallest $\s$-closed subgroup with the same $\s$-dimension as $G$. The strong identity component and the related notion of being strongly connected are fundamental for establishing our Jordan-H\"{o}lder type theorem for $\s$-algebraic groups.

\subsection{The identity component}

Rather than defining the identity component $G^o$ of a $\s$\=/algebraic group directly, it turns out to be more convenient to first define the quotient $G/G^o$ through a universal property. Our approach is analogous to the approach taken in \cite[Chapter 6]{Waterhouse:IntroductiontoAffineGroupSchemes}. We begin by recalling some definitions and results from \cite[Section 6]{Wibmer:FinitenessPropertiesOfAffineDifferenceAlgebraicGroups}.

\begin{defi}
	A finitely $\s$-generated \ks-algebra $R$ is \emph{$\s$-\'{e}tale} (over $k$) if $R$ is integral over $k$ and separable as a $k$-algebra. A $\s$-algebraic group $G$ is $\s$-\'{e}tale if $k\{G\}$ is a $\s$-\'{e}tale \ks-algebra.
\end{defi}

Recall that a $k$-algebra $A$ is \'{e}tale if $A\otimes_k\overline{k}$ is isomorphic (as a $\overline{k}$-algebra) to a finite direct product of copies of $\overline{k}$, where $\overline{k}$ denotes the algebraic closure of $k$.
Other equivalent ways to express that a finitely $\s$-generated \ks-algebra $R$ is $\s$-\'{e}tale are:
\begin{itemize}
	\item Every $r\in R$ satisfies a separable polynomial over $k$.
	\item The $k$-algebra $R$ is a union of \'{e}tale $k$-algebras.
\end{itemize}

\begin{ex}
	If $\G$ is an \'{e}tale algebraic group, then $[\s]_k\G$ is a $\s$-\'{e}tale $\s$-algebraic group. The $\s$-algebraic group from \cite[Example 2.14]{Wibmer:FinitenessPropertiesOfAffineDifferenceAlgebraicGroups} is also $\s$-\'{e}tale.
\end{ex}

For a $k$-algebra $A$, we let $\pi_0(A)$ denote the union of all \'{e}tale $k$-subalgebras of $A$.
That is, $\pi_0(A)$ consists of all elements of $A$ that annul a separable polynomial over $k$. Then $\pi_0(A)$ is a $k$-subalgebra of $A$. (Cf. Section~6.7 in \cite{Waterhouse:IntroductiontoAffineGroupSchemes}.) Clearly, a $\s$-algebraic group $G$ is $\s$-\'{e}tale if and only if $\pi_0(k\{G\})=k\{G\}$.
\begin{lemma} \label{lemma: pi0 for tensor}
	Let $A$ and $B$ be $k$-algebras. Then $\pi_0(A\otimes_k B)=\pi_0(A)\otimes_k \pi_0(B)$.
\end{lemma}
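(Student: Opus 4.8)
The plan is to prove the two inclusions separately: the inclusion $\pi_0(A)\otimes_k\pi_0(B)\subseteq\pi_0(A\otimes_k B)$ is formal, while the reverse inclusion requires a reduction to an algebraically closed base field. For the easy inclusion, recall that $\pi_0(A)$ and $\pi_0(B)$ are unions of \'{e}tale $k$-subalgebras, hence so is $\pi_0(A)\otimes_k\pi_0(B)$, because a tensor product of \'{e}tale $k$-algebras over a field is again \'{e}tale. Using that $-\otimes_k-$ is exact over the field $k$, the inclusions $\pi_0(A)\hookrightarrow A$ and $\pi_0(B)\hookrightarrow B$ yield an embedding $\pi_0(A)\otimes_k\pi_0(B)\hookrightarrow A\otimes_k B$, and since every element of an \'{e}tale $k$-algebra annuls a separable polynomial over $k$, every element of $\pi_0(A)\otimes_k\pi_0(B)$ lies in $\pi_0(A\otimes_k B)$.

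For the reverse inclusion I would first reduce to the case that $A$ and $B$ are finitely generated over $k$. Given $x\in\pi_0(A\otimes_k B)$, the element $x$ lies in a finite \'{e}tale $k$-subalgebra $C$ of $A\otimes_k B$; choosing a $k$-basis of $C$ and writing each basis vector as a finite sum of elementary tensors produces finitely generated $k$-subalgebras $A_0\subseteq A$ and $B_0\subseteq B$ with $C\subseteq A_0\otimes_k B_0$ (again by exactness of the tensor product). Since $\pi_0(A_0)\subseteq\pi_0(A)$ and $\pi_0(B_0)\subseteq\pi_0(B)$ compatibly inside $A\otimes_k B$, and since the minimal polynomial of $x$ is the same in $A_0\otimes_k B_0$ as in $A\otimes_k B$, it suffices to prove the statement for $A_0,B_0$; so I may assume $A,B$ finitely generated. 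In this case $\pi_0(A)$, $\pi_0(B)$ and $\pi_0(A\otimes_k B)$ are all finite \'{e}tale over $k$, because $\spec$ of a finitely generated $k$-algebra has only finitely many connected components, each with a finite field of constants. Writing $P=\pi_0(A)\otimes_k\pi_0(B)\subseteq\pi_0(A\otimes_k B)=:C'$, both finite \'{e}tale, the goal becomes the equality $P=C'$.

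To prove $P=C'$ I would pass to the algebraic closure $\overline{k}$ and use faithfully flat descent: as $P\subseteq C'$ are $k$-subspaces of $A\otimes_k B$, it is enough to check $P\otimes_k\overline{k}=C'\otimes_k\overline{k}$ inside $(A\otimes_k B)\otimes_k\overline{k}$. Here I would invoke the standard compatibility of $\pi_0$ with base change to $\overline{k}$ for finitely generated algebras, namely $\pi_0(A\otimes_k\overline{k})=\pi_0(A)\otimes_k\overline{k}$ and likewise for $B$ and for $A\otimes_k B$. Combining these with the identification $(A\otimes_k B)\otimes_k\overline{k}=(A\otimes_k\overline{k})\otimes_{\overline{k}}(B\otimes_k\overline{k})$ reduces everything to the statement of the lemma over the algebraically closed field $\overline{k}$.

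The main obstacle is thus the case $k=\overline{k}$. Over an algebraically closed field a finitely generated algebra is \'{e}tale precisely when it is a finite product of copies of $\overline{k}$, and one checks that $\pi_0$ of an arbitrary algebra is then exactly the subalgebra generated by its idempotents (every element with squarefree minimal polynomial being a $\overline{k}$-combination of orthogonal idempotents, via the Chinese remainder theorem). As idempotents are unaffected by nilpotents, I may replace $A$ and $B$ by their reductions, and the desired equality $\pi_0(A\otimes_{\overline{k}}B)=\pi_0(A)\otimes_{\overline{k}}\pi_0(B)$ becomes the assertion that the connected components of $\spec(A)\times_{\overline{k}}\spec(B)$ are the products of the connected components of the two factors. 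This in turn follows from the geometric fact that a product of two connected finite-type schemes over an algebraically closed field is connected, which one proves by covering $Y(\overline{k})$ by the fibres $X\times\{y\}$ (each connected and isomorphic to $X$) and showing that the resulting partition of $Y$ into the loci where $X\times\{y\}$ lands in a prescribed component is by open and closed subsets. I expect this connectedness-of-products argument, together with the base-change compatibility of $\pi_0$, to be the only non-formal ingredients; both are classical and could alternatively be quoted from \cite[\S 6.7]{Waterhouse:IntroductiontoAffineGroupSchemes}.
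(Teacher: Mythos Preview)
Your proposal is correct and follows essentially the same route as the paper: reduce to the case where $A$ and $B$ are finitely generated over $k$, and then invoke the result in \cite[\S 6.7]{Waterhouse:IntroductiontoAffineGroupSchemes}. The paper's proof is in fact just those two sentences, whereas you additionally sketch the content of Waterhouse's argument (base change to $\overline{k}$ and the connectedness-of-products statement); this extra detail is fine and correct, but not a genuinely different approach.
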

\begin{proof}
	We may assume that $A$ and $B$ are finitely generated as $k$-algebras. In this case the statement is proved in \cite[Section 6.7, p. 50]{Waterhouse:IntroductiontoAffineGroupSchemes}.
\end{proof}

If $R$ is a \ks-algebra, one can show that $\pi_0(R)$ is a \ks-subalgebra. Moreover, for a $\s$\=/algebraic group $G$, one can use Lemma \ref{lemma: pi0 for tensor}, to show that $\pi_0(k\{G\})$ is a \ks-Hopf subalgebra of $k\{G\}$, which, by Theorem \ref{theo: second finiteness}, is finitely $\s$-generated and therefore represents a $\s$\=/\'{e}tale $\s$\=/algebraic group $\pi_0(G)$. The quotient map $G\to \pi_0(G)$ corresponding to the inclusion $k\{\pi_0(G)\}=\pi_0(k\{G\})\subseteq k\{G\}$ of \ks-Hopf algebras satisfies a universal property detailed in the following proposition. See \cite[Prop. 6.13]{Wibmer:FinitenessPropertiesOfAffineDifferenceAlgebraicGroups}.

\begin{prop} \label{prop: pi0}
	Let $G$ be a $\s$-algebraic group. There exists a $\s$-\'{e}tale $\s$-algebraic group $\pi_0(G)$ and a morphism $G\to \pi_0(G)$ of $\s$-algebraic groups satisfying the following universal property: If $G\to H$ is a morphism of $\s$-algebraic groups with $H$ $\s$-\'{e}tale, then there exists a unique morphism $\pi_0(G)\to H$ such that
	\[\xymatrix{
		G \ar[rr] \ar[rd] & & \pi_0(G) \ar@{..>}[ld] \\
		& H &
	}
	\]
	commutes. 
\end{prop}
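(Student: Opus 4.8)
The plan is to take the construction already assembled in the paragraph preceding the statement essentially as given and to concentrate on verifying the universal property. Concretely, I would set $k\{\pi_0(G)\}=\pi_0(k\{G\})$. Applying Lemma~\ref{lemma: pi0 for tensor} to the comultiplication $\Delta\colon k\{G\}\to k\{G\}\otimes_k k\{G\}$ shows that $\Delta$ carries $\pi_0(k\{G\})$ into $\pi_0(k\{G\})\otimes_k\pi_0(k\{G\})$, and a short check (using that $\s\colon k\to k$, being a field endomorphism, is injective and hence sends separable polynomials to separable polynomials) shows that $\pi_0(k\{G\})$ is stable under $\s$, the counit and the antipode. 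Thus $\pi_0(k\{G\})$ is a $k$-$\s$-Hopf subalgebra of $k\{G\}$, which is finitely $\s$-generated over $k$ by Theorem~\ref{theo: second finiteness}. I then define $\pi_0(G)$ to be the $\s$-algebraic group it represents and let $G\to\pi_0(G)$ be the morphism dual to the inclusion $\pi_0(k\{G\})\hookrightarrow k\{G\}$.

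Next I would record that $\pi_0(G)$ is $\s$-\'{e}tale. Since every element of $\pi_0(k\{G\})$ annuls a separable polynomial over $k$ by the very definition of $\pi_0$, the operator $\pi_0$ is idempotent, i.e.\ $\pi_0(\pi_0(k\{G\}))=\pi_0(k\{G\})$. In view of the criterion recalled above, that a $\s$-algebraic group is $\s$-\'{e}tale precisely when its coordinate ring equals its own $\pi_0$, this is exactly the assertion that $\pi_0(G)$ is $\s$-\'{e}tale.

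The heart of the matter is the universal property. Given a morphism $\f\colon G\to H$ of $\s$-algebraic groups with $H$ $\s$-\'{e}tale, I would pass to the dual $k$-$\s$-algebra map $\f^*\colon k\{H\}\to k\{G\}$. Because $H$ is $\s$-\'{e}tale we have $k\{H\}=\pi_0(k\{H\})$, so every $f\in k\{H\}$ satisfies a separable polynomial over $k$; applying the $k$-algebra homomorphism $\f^*$ shows that $\f^*(f)$ satisfies the same separable polynomial, whence $\f^*(f)\in\pi_0(k\{G\})$. Therefore $\f^*$ factors through the inclusion $\pi_0(k\{G\})=k\{\pi_0(G)\}\hookrightarrow k\{G\}$ as a morphism of $k$-$\s$-Hopf algebras, and dualizing this factorization yields the required morphism $\pi_0(G)\to H$ making the triangle commute. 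Uniqueness is immediate: the dual of $G\to\pi_0(G)$ is the inclusion $\pi_0(k\{G\})\hookrightarrow k\{G\}$, which is injective, so any two morphisms $\pi_0(G)\to H$ agreeing after precomposition with $G\to\pi_0(G)$ have equal duals and hence coincide.

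The step I expect to be the real obstacle is the factorization $\f^*(k\{H\})\subseteq\pi_0(k\{G\})$, which is exactly where the elementwise description of $\pi_0$ as the locus of elements killed by a separable polynomial is indispensable and where the hypothesis that $H$ be $\s$-\'{e}tale is used in full. Everything else is bookkeeping with the Hopf structure together with the already-quoted finiteness (Theorem~\ref{theo: second finiteness}) and tensor-compatibility (Lemma~\ref{lemma: pi0 for tensor}) results.
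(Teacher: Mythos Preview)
Your proposal is correct and follows exactly the approach the paper sketches in the paragraph preceding the proposition (the paper itself defers the detailed proof to \cite[Prop.~6.13]{Wibmer:FinitenessPropertiesOfAffineDifferenceAlgebraicGroups}). You have simply filled in the verification of the universal property that the paper leaves to the reference, and your argument---that $\f^*$ carries elements satisfying separable polynomials to elements satisfying the same separable polynomials, hence $\f^*(k\{H\})\subseteq\pi_0(k\{G\})$---is the natural and correct way to do this.
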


Of course $\pi_0(G)$ is uniquely characterized by the above universal property. 

\begin{defi}
	Let $G$ be a $\s$-algebraic group. The $\s$-\'{e}tale $\s$-algebraic group $\pi_0(G)$ defined by the universal property in Proposition \ref{prop: pi0} is the \emph{group of connected components} of $G$. The kernel $G^o$ of $G\twoheadrightarrow\pi_0(G)$ is the \emph{identity component} of $G$.
\end{defi}
So $G/G^o=\pi_0(G)$. For an ideal $\ida$ of a ring $R$ we denote with $\vv(\ida)$ the closed subset of $\spec(R)$ consisting of all prime ideals of $R$ that contain $\ida$. The following lemma combines Lemmas 6.7 and 6.15 from \cite{Wibmer:FinitenessPropertiesOfAffineDifferenceAlgebraicGroups}.

\begin{lemma} \label{lemma: summarize connected}
	Let $G$ be a $\s$-algebraic group. Then:
	\begin{enumerate}
		\item The connected components and the irreducible components of $\spec(k\{G\})$ coincide. 
		\item For a prime ideal $\p$ of $k\{G\}$, the connected component of $\spec(k\{G\})$ containing $\p$ equals $\vv(\ida)$, where $\ida$ is the ideal generated by all idempotent elements of $k\{G\}$ contained in $\p$.
		\item The connected components of $\spec(k\{G\})$ are in bijection with the connected components of $\spec(k\{\pi_0(G)\})$.
		\item Every connected component of $\spec(k\{\pi_0(G)\})$ consists of a single point.
	\end{enumerate}
\end{lemma}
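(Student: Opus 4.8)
The plan is to regard all four assertions as statements about the Boolean algebra of idempotents of $k\{G\}$; only (i) genuinely uses the group structure, so I would first dispatch (iv), (ii) and (iii) as formal facts about spectra of rings and \'{e}tale algebras, and only then reduce (i) to the finite-type groups $G[i]$ via $k\{G\}=\bigcup_i k[G[i]]$. (This is essentially what is carried out in \cite[Lemmas 6.7 and 6.15]{Wibmer:FinitenessPropertiesOfAffineDifferenceAlgebraicGroups}.)

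For (ii) I would use the general principle that in the spectrum of a ring the connected component of a point equals its quasi-component, the intersection of all open-and-closed neighbourhoods. The clopen subsets of $\spec(k\{G\})$ containing $\p$ are exactly the sets $\vv(e)$ with $e$ an idempotent lying in $\p$, and the intersection of all of them is $\vv(\ida)$, with $\ida$ as in the statement; that this quasi-component is in fact connected — hence a genuine connected component — is Pierce's theorem that the fibres of the map $\spec(k\{G\})\to\spec(\mathrm{Idem}(k\{G\}))$ to the Pierce spectrum are connected. For (iv) I would note that $k\{\pi_0(G)\}=\pi_0(k\{G\})$ is a filtered union of \'{e}tale $k$-algebras, so $\spec(k\{\pi_0(G)\})$ is a cofiltered limit of finite discrete spaces; it is therefore profinite and totally disconnected, and its connected components are single points.

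For (iii) the decisive observation is that every idempotent $e\in k\{G\}$ annihilates the separable polynomial $T^2-T$ and therefore already lies in $\pi_0(k\{G\})=k\{\pi_0(G)\}$, while conversely the inclusion $k\{\pi_0(G)\}\hookrightarrow k\{G\}$ visibly preserves idempotents. Hence the two Boolean algebras of idempotents coincide, and since by (ii) the connected components of a spectrum are cut out entirely by its idempotents, the morphism $\spec(k\{G\})\to\spec(k\{\pi_0(G)\})$ induced by the inclusion restricts to a bijection on connected components.

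The real work is in (i), and here I would argue that every connected component $C$ of $\spec(k\{G\})$ is irreducible; this gives (i) at once, since a component that is irreducible is simultaneously a maximal connected and a maximal irreducible subset. Because idempotency is a finitary condition, $\mathrm{Idem}(k\{G\})=\varinjlim_i\mathrm{Idem}(k[G[i]])$, and therefore $C=\varprojlim_i C_i$, where $C_i$ is the connected component of $\spec(k[G[i]])$ through which $C$ passes. Each $G[i]$ is an algebraic group, so the finite-type theory tells us that its connected components coincide with its irreducible components (a connected group scheme of finite type over a field is irreducible, and the remaining components are its translates); in particular every $C_i$ is irreducible, with generic point $\eta_i$. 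The transition map $C_{i+1}\to C_i$ is the restriction to clopen subsets of the faithfully flat quotient map $G[i+1]\twoheadrightarrow G[i]$, hence is flat and, being of finite type, open; its image is thus a nonempty open and therefore dense subset of the irreducible space $C_i$, which forces $\eta_{i+1}\mapsto\eta_i$. The generic points then assemble into a single prime $\p=\bigcup_i\eta_i$ of $k\{G\}$ satisfying $\vv(\p)=C$ and $\p\subseteq\q$ for every $\q\in C$, so that $C=\overline{\{\p\}}$ is irreducible. The main obstacle is exactly this limit step: one must use faithful flatness to ensure that the finite-level components $C_{i+1}$ dominate the $C_i$, so that their generic points are compatible, and one must import from the theory of algebraic groups the fact that the components of each $G[i]$ are already irreducible.
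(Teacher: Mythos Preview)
Your proposal is correct and, as you yourself note, reconstructs the content of \cite[Lemmas~6.7 and~6.15]{Wibmer:FinitenessPropertiesOfAffineDifferenceAlgebraicGroups}; the paper gives no independent proof here but simply invokes that reference, so your argument is exactly what is needed to fill in the citation. The one place where a reader might want an extra word is in (i), where the compatibility $\eta_{i+1}\mapsto\eta_i$ is most cleanly seen not from openness alone but from the group-theoretic fact that the faithfully flat map $G[i+1]\twoheadrightarrow G[i]$ carries $G[i+1]^o$ onto $G[i]^o$ and hence each connected component onto a connected component; your flatness/openness phrasing gets there too, but the group translate picture makes the surjectivity (and hence dominance) immediate.
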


The following lemma characterizes connected $\s$-algebraic groups.

\begin{lemma} \label{lemma: characterize connected}
	The following four conditions on a $\s$-algebraic group $G$ are equivalent:
	\begin{enumerate}
		\item $G^o=G$.
		\item $\pi_0(G)=1$.
		\item  $\spec(k\{G\})$ is connected.
		\item The nilradical of $k\{G\}$ is a prime ideal.
	\end{enumerate}
\end{lemma}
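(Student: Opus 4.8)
The plan is to establish the four conditions by proving the equivalences (i)$\Leftrightarrow$(ii), (iii)$\Leftrightarrow$(iv), and (ii)$\Leftrightarrow$(iii) separately, relying throughout on the identity $G/G^o=\pi_0(G)$ and on Lemma~\ref{lemma: summarize connected}.

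First I would handle (i)$\Leftrightarrow$(ii). By definition $G^o=\ker(G\twoheadrightarrow \pi_0(G))$ and $G/G^o=\pi_0(G)$. Since $G\to\pi_0(G)$ is a quotient map, Theorem~\ref{theo: isom1} gives $G/G^o\simeq\pi_0(G)$. Thus $G^o=G$ forces $\pi_0(G)\simeq G/G^o=1$, and conversely $\pi_0(G)=1$ means the kernel $G^o$ of the trivial map $G\to 1$ is all of $G$. This step is immediate.

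Next, (iii)$\Leftrightarrow$(iv). Here I would invoke Lemma~\ref{lemma: summarize connected}~(i), which says the connected components and the irreducible components of $\spec(k\{G\})$ coincide. Consequently $\spec(k\{G\})$ has a single connected component if and only if it has a single irreducible component, i.e.\ if and only if it is irreducible; and for any ring the spectrum is irreducible exactly when the nilradical is prime. Chaining these gives (iii)$\Leftrightarrow$(iv) with no difference-algebraic input beyond the quoted lemma.

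The substantive step is (ii)$\Leftrightarrow$(iii). For (ii)$\Rightarrow$(iii): if $\pi_0(G)=1$ then $k\{\pi_0(G)\}=k$, so $\spec(k\{\pi_0(G)\})$ is a single point and in particular connected; by Lemma~\ref{lemma: summarize connected}~(iii) the connected components of $\spec(k\{G\})$ are in bijection with those of $\spec(k\{\pi_0(G)\})$, so $\spec(k\{G\})$ is connected too. The converse (iii)$\Rightarrow$(ii) is where I expect the main obstacle. Assuming $\spec(k\{G\})$ connected, Lemma~\ref{lemma: summarize connected}~(iii) yields that $\spec(k\{\pi_0(G)\})$ has a single connected component, and Lemma~\ref{lemma: summarize connected}~(iv) says that component is a single point; hence $k\{\pi_0(G)\}$ has a unique prime ideal. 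Since $k\{\pi_0(G)\}$ is $\s$-\'{e}tale it is reduced, so a unique prime forces it to be a field $L\supseteq k$. Finally the counit $\varepsilon\colon L\to k$, being a ring homomorphism out of a field, is injective, and as it is $k$-linear this forces $L=k$, whence $\pi_0(G)=1$. The one point needing care is precisely this last deduction: extracting $\pi_0(G)=1$ (a statement about the Hopf algebra being $k$) from the purely topological fact that $\spec(k\{\pi_0(G)\})$ is a single point, which is why I route it through reducedness of $\s$-\'{e}tale algebras and the counit rather than topology alone.
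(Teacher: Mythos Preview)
Your proof is correct and follows essentially the same route as the paper's own proof: the paper invokes Lemma~\ref{lemma: summarize connected}~(iii) and~(iv) for (ii)$\Leftrightarrow$(iii), Lemma~\ref{lemma: summarize connected}~(i) for (iii)$\Leftrightarrow$(iv), and treats (i)$\Leftrightarrow$(ii) as immediate. The only difference is that you spell out in full the step the paper leaves implicit---namely, that a single-point $\spec(k\{\pi_0(G)\})$ together with reducedness of $\s$-\'{e}tale algebras and the existence of the counit forces $k\{\pi_0(G)\}=k$---which is a welcome clarification rather than a departure.
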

\begin{proof}
	Clearly, (i)$\Leftrightarrow$(ii). We have (ii)$\Leftrightarrow$(iii) by Lemma \ref{lemma: summarize connected} (iii) and (iv). 
	Point (iv) is equivalent to $\spec(k\{G\})$ being irreducible. Thus (iii)$\Leftrightarrow$(iv) by Lemma \ref{lemma: summarize connected} (i).
%
%
\end{proof}

\begin{defi}
	A $\s$-algebraic group is \emph{connected} if is satisfies the equivalent conditions of Lemma \ref{lemma: characterize connected}.
\end{defi}

\begin{ex}
	Let $G$ be the $\s$-closed subgroup of $\Ga$ defined the linear difference equation $\s^n(y)+\lambda_{n-1}\s^{n-1}(y)+\cdots+\lambda_0y=0$. Then
	$k\{G\}=k[y,\s(y),\ldots,\s^{n-1}(y)]$ is an integral domain. Therefore $G$ is connected.
\end{ex}

\begin{ex}
	Let $G$ be the unitary $\s$-algebraic group, i.e.,
	$$G(R)=\{g\in\Gl_n(R)|\ g\s(g)^{\operatorname{T}}=\s(g)^{\operatorname{T}}g=I_n\}\leq\Gl_n(R)$$
	for any \ks-algebra $R$. The defining equations may be written as $\s(g)=(g^{-1})^{\operatorname{T}}$. So the coordinate ring $k\{G\}=k\big[x_{ij},\frac{1}{\det(x)}\big]$ is an integral domain. Therefore $G$ is connected. 
\end{ex}

%

It is not obvious from the definition that the identity component $G^o$ of a $\s$-algebraic group is connected. The following lemma closes this gap. The proof also shows that $\spec(k\{G^o\})$ is homeomorphic to the connected component of $\spec(k\{G\})$ containing the ``identity'' $\m_G$. (Recall that $\m_G$ is the kernel of the counit $k\{G\}\to k$.)

\begin{lemma} \label{lemma: Go connected}
	Let $G$ be a $\s$-algebraic group. Then $G^o$ is connected. 
\end{lemma}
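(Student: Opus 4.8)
The plan is to show that $\spec(k\{G^o\})$ is connected, which by Lemma~\ref{lemma: characterize connected} is exactly what it means for $G^o$ to be connected. Concretely, I would identify the defining ideal $\I(G^o)\subseteq k\{G\}$ with the ideal $\ida$ attached to the prime $\m_G$ in Lemma~\ref{lemma: summarize connected}~(ii), namely the ideal generated by all idempotents of $k\{G\}$ lying in $\m_G$. Once this identification is in place, $\spec(k\{G^o\})$ is homeomorphic to the closed subset $\vv(\ida)$ of $\spec(k\{G\})$, which by Lemma~\ref{lemma: summarize connected}~(ii) is precisely the connected component of $\spec(k\{G\})$ containing $\m_G$; being a connected component it is connected, and the lemma follows. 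This simultaneously yields the homeomorphism asserted in the paragraph preceding the statement.

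The central computation is the description of $\I(G^o)$. Since $G^o=\ker(G\twoheadrightarrow\pi_0(G))$, we have $\I(G^o)=(\m_{\pi_0(G)})$, the ideal of $k\{G\}$ generated by $\m_{\pi_0(G)}$; here $\m_{\pi_0(G)}=\m_G\cap k\{\pi_0(G)\}$ because the counit of $\pi_0(G)$ is the restriction of that of $G$ along the inclusion $k\{\pi_0(G)\}\hookrightarrow k\{G\}$. Now $k\{\pi_0(G)\}=\pi_0(k\{G\})$ is a filtered union of \'{e}tale $k$-algebras, hence a filtered union of von Neumann regular rings, hence itself von Neumann regular. In a von Neumann regular ring every ideal is generated by the idempotents it contains, so $\m_{\pi_0(G)}$ is generated by the idempotents of $k\{\pi_0(G)\}$ lying in it.

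It then remains to match these idempotents with the generators of $\ida$. Every idempotent $e$ of $k\{G\}$ annuls the separable polynomial $y^2-y$, so $e\in\pi_0(k\{G\})=k\{\pi_0(G)\}$; combined with $\m_{\pi_0(G)}=\m_G\cap k\{\pi_0(G)\}$, this shows that the idempotents of $k\{\pi_0(G)\}$ contained in $\m_{\pi_0(G)}$ are exactly the idempotents of $k\{G\}$ contained in $\m_G$. Hence the ideal $\I(G^o)=(\m_{\pi_0(G)})$ and the ideal $\ida$ are generated by the same set of idempotents and therefore coincide, completing the argument.

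I expect the main obstacle to be the step asserting that $\I(G^o)$ is generated by idempotents. A more naive route would try to produce a single idempotent $e_0\in k\{\pi_0(G)\}$ with $k\{G^o\}\cong e_0k\{G\}$, by arguing that the augmentation point is a clopen connected component of $\spec(k\{\pi_0(G)\})$; but in a profinite spectrum a one-point component need not be open, so isolating such an $e_0$ directly is delicate. Passing through the von Neumann regularity of $\pi_0(k\{G\})$ sidesteps this issue, and the remainder is bookkeeping with the already-established properties of $\pi_0$ and with Lemma~\ref{lemma: summarize connected}.
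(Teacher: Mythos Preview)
Your proof is correct and follows essentially the same route as the paper: both identify $\I(G^o)=(\m_{\pi_0(G)})$ with the ideal of $k\{G\}$ generated by the idempotents lying in $\m_G$, then invoke Lemma~\ref{lemma: summarize connected}~(ii) to conclude that $\spec(k\{G^o\})$ is homeomorphic to the connected component of $\spec(k\{G\})$ through $\m_G$. The only cosmetic difference is that the paper argues directly that every ideal of $k\{\pi_0(G)\}$ is generated by idempotents (via \'etale algebras being finite products of fields), whereas you phrase the same step through von Neumann regularity.
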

\begin{proof}
An \'{e}tale $k$-algebra is a finite direct product of finite separable field extensions of $k$. Thus every ideal in an \'{e}tale $k$-algebra is generated by idempotent elements. Thus, also every ideal of $k\{\pi_0(G)\}$ is generated by idempotent elements. As every idempotent element of $k\{G\}$ lies in $k\{\pi_0(G)\}$, it follows that $\m_{\pi_0(G)}=\m_{G}\cap k\{\pi_0(G)\}$ is generated by all idempotent elements contained in $\m_{G}$. Therefore, $\I(G^o)=(\m_{\pi_0(G)})$ is the ideal of $k\{G\}$ generated by all idempotent elements of $k\{G\}$ contained in $\m_{G}$. In other words, by Lemma \ref{lemma: summarize connected} (ii), $\vv(\I(G^o))$ is the connected component of $\spec(k\{G\})$ that contains $\m_G$. As $\VV(\I(G^o))$ and $\spec(k\{G^o\})=\spec(k\{G\}/\I(G^o))$ are homeomorphic, this implies that $G^o$ is connected.
\end{proof}

The formation of Zariski closures is compatible with taking the identity component.
\begin{lemma} \label{lemma: connected component and zariski closure}
	Let $G$ be a $\s$-closed subgroup of an algebraic group $\G$ and for $i\geq 0$ let $G[i]$ and $G^o[i]$ denote the $i$-th order Zariski closure of $G$ and $G^o$ in $\G$ respectively. Then
	$$G^o[i]=G[i]^o.$$
	In particular, $G$ is connected if and only if all its Zariski closures are connected.
\end{lemma}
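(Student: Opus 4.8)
The plan is to compare the defining ideals of the two closed subgroups $G^o[i]$ and $G[i]^o$ inside the coordinate ring $k[G[i]]$ of the $i$-th order Zariski closure, since both are closed subgroups of the algebraic group $G[i]$. Write $\bar I\subseteq k\{G\}$ for the defining ideal of $G^o$ in $G$. Since $k\{G\}=\bigcup_j k[G[j]]$ and $G^o\leq G$ forces $G^o[i]\leq G[i]$, the defining ideal of $G^o[i]$, viewed inside $k[G[i]]$, is the kernel of $k[G[i]]\hookrightarrow k\{G\}\twoheadrightarrow k\{G^o\}$, which is exactly $\bar I\cap k[G[i]]$. Thus the lemma reduces to the identity $\bar I\cap k[G[i]]=\I(G[i]^o)$ of ideals of $k[G[i]]$.

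First I would describe $\bar I$ explicitly. By (the proof of) Lemma \ref{lemma: Go connected}, $\bar I=\mathfrak n\, k\{G\}$, where $\mathfrak n=\m_G\cap\pi_0(k\{G\})$. Since $\pi_0$ of a $k$-algebra consists of the elements annihilating a separable polynomial over $k$, it commutes with directed unions, so $\pi_0(k\{G\})=\bigcup_j\pi_0(k[G[j]])$ and hence $\mathfrak n=\bigcup_j\mathfrak n_j$ with $\mathfrak n_j=\m_{G[j]}\cap\pi_0(k[G[j]])$ and $\mathfrak n_j\subseteq\mathfrak n_{j+1}$. The same construction applied to the algebraic group $G[j]$ gives $\I(G[j]^o)=\mathfrak n_j\,k[G[j]]$, so $\bar I=\bigcup_j \I(G[j]^o)\,k\{G\}$ and therefore
$$\bar I\cap k[G[i]]=\bigcup_j\big(\I(G[j]^o)\,k\{G\}\cap k[G[i]]\big).$$

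The heart of the argument is to evaluate each term with $j\geq i$. Every transition map $k[G[j]]\hookrightarrow k[G[j']]$ is faithfully flat, being dual to the quotient map $G[j']\twoheadrightarrow G[j]$, and $k\{G\}=\bigcup_{j'}k[G[j']]$; handling this level by level, extension followed by contraction of the ideal $\I(G[j]^o)$ is the identity, giving $\I(G[j]^o)\,k\{G\}\cap k[G[j]]=\I(G[j]^o)$, and intersecting further with $k[G[i]]\subseteq k[G[j]]$ yields $\I(G[j]^o)\cap k[G[i]]$. This last ideal is the defining ideal of the scheme-theoretic image of $G[j]^o$ under the quotient map $\pi_{j,i}\colon G[j]\twoheadrightarrow G[i]$ (a composite of the projections $\pi_k$, which are quotient maps), and the classical fact that a quotient map of algebraic groups carries the identity component onto the identity component gives $\pi_{j,i}(G[j]^o)=G[i]^o$. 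Hence every term with $j\geq i$ equals $\I(G[i]^o)$, while the terms with $j<i$ are contained in it by the monotonicity $\mathfrak n_j\subseteq\mathfrak n_i$; the union is $\I(G[i]^o)$, proving $G^o[i]=G[i]^o$. The final assertion follows at once: $G$ is connected iff $G^o=G$ iff $\bar I=0$ iff $\bar I\cap k[G[i]]=0$ for all $i$ (as $k\{G\}=\bigcup_i k[G[i]]$), which by the established equality means $G[i]^o=G[i]$, i.e.\ $G[i]$ is connected, for every $i$.

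I expect the main obstacle to be the bookkeeping around the two identities $\I(G[j]^o)\,k\{G\}\cap k[G[j]]=\I(G[j]^o)$ and $\pi_{j,i}(G[j]^o)=G[i]^o$. The first requires care that faithfully flat extension--contraction survives the passage to the directed-union coordinate ring, which I would handle level by level rather than by invoking faithful flatness of $k\{G\}$ itself; the second is the only genuinely group-theoretic input, which I would justify by noting that the scheme-theoretic image of the connected group $G[j]^o$ is connected while $G[i]/\pi_{j,i}(G[j]^o)$, being a quotient of the \'{e}tale group $\pi_0(G[j])$, is \'{e}tale, forcing $\pi_{j,i}(G[j]^o)=G[i]^o$.
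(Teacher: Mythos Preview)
Your proof is correct and follows essentially the same route as the paper, though the paper compresses everything into a single sentence: both $G^o[i]$ and $G[i]^o$ are defined by the ideal of $k[G[i]]$ generated by the idempotents of $k[G[i]]$ lying in the kernel of the counit. For $G[i]^o$ this is the standard description of the identity component of an algebraic group; for $G^o[i]$ the paper leaves the verification implicit, and what you have written is precisely the argument needed to justify it --- passing to a large enough $j$, using faithful flatness for extension--contraction, and then invoking the algebraic-group fact that the quotient map $G[j]\twoheadrightarrow G[i]$ carries $G[j]^o$ onto $G[i]^o$. One minor simplification: since $k[G[j]]\hookrightarrow k\{G\}$ is an inclusion of Hopf algebras over a field, it is already faithfully flat, so you can do the extension--contraction in one step rather than level by level; also, once you observe $\I(G[j]^o)\cdot k[G[j']]\subseteq \I(G[j']^o)$ for $j\le j'$, you get directly $\I(G^o)=\bigcup_j \I(G[j]^o)$ as subsets of $k\{G\}$, which lets you bypass the extension--contraction altogether.
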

\begin{proof} Both groups are defined by the ideal of $k[G[i]]\subseteq k\{G\}$ that is generated by all idempotent elements of $k[G[i]]$ contained in the kernel of the counit $k[G[i]]\to k$.
\end{proof}


\begin{cor} \label{cor: sdim of Go}
	Let $G$ be a $\s$-algebraic group. Then $\sdim(G^o)=\sdim(G)$ and $\ord(G^o)=\ord(G)$.
\end{cor}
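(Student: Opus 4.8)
The plan is to read off both equalities from the short exact sequence $1\to G^o\to G\to \pi_0(G)\to 1$ together with the additivity of the $\s$-dimension and the order along quotients. By definition $G/G^o=\pi_0(G)$, so Corollary~\ref{cor: sdim and ord for quotients} gives at once
$$\sdim(G)=\sdim(G^o)+\sdim(\pi_0(G))\quad\text{and}\quad \ord(G)=\ord(G^o)+\ord(\pi_0(G)).$$
Hence everything reduces to the single claim that the group of connected components $\pi_0(G)$ has $\s$-dimension zero and order zero.

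To establish this claim I would use that $\pi_0(G)$ is $\s$-\'etale, so that every element of $k\{\pi_0(G)\}$ is algebraic (indeed separable) over $k$. Realizing $\pi_0(G)$ as a $\s$-closed subgroup of some $[\s]_k\G'$ via Proposition~\ref{prop: exists sclosed embedding}, the coordinate ring $k[\pi_0(G)[i]]$ of each Zariski closure is a finitely generated $k$-subalgebra of $k\{\pi_0(G)\}$; being generated by finitely many elements algebraic over $k$, it is finite-dimensional over $k$. Consequently $\dim(\pi_0(G)[i])=0$ for every $i$, and in the eventual formula $\dim(\pi_0(G)[i])=d(i+1)+e$ of Theorem~\ref{theo: sdimension} both $d$ and $e$ must vanish. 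Thus $\sdim(\pi_0(G))=0$ and $\ord(\pi_0(G))=0$.

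Combining the two displays then yields $\sdim(G^o)=\sdim(G)$ and, since $\ord(\pi_0(G))=0$ and the additivity formula is valid with the convention $\ord=\infty$ when the $\s$-dimension is positive, also $\ord(G^o)=\ord(G)$; the cases $\sdim(G)=0$ and $\sdim(G)>0$ are handled uniformly by this convention.

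The only genuine step is the vanishing $\sdim(\pi_0(G))=\ord(\pi_0(G))=0$; everything else is a formal manipulation of Corollary~\ref{cor: sdim and ord for quotients}. I do not expect a real obstacle here, as the $\s$-\'etale hypothesis forces the Zariski closures to be zero-dimensional essentially by inspection. The one subtlety worth checking carefully is that a finitely generated $k$-subalgebra of $k\{\pi_0(G)\}$ is finite-dimensional over $k$, which rests on the characterization of $\s$-\'etale algebras as unions of \'etale $k$-algebras.
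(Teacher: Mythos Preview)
Your argument is correct, but it follows a different route from the paper. The paper argues directly on Zariski closures: choosing an embedding $G\hookrightarrow[\s]_k\G$, Lemma~\ref{lemma: connected component and zariski closure} gives $G^o[i]=G[i]^o$, and since $\dim(\H^o)=\dim(\H)$ for any algebraic group $\H$, one has $\dim(G^o[i])=\dim(G[i])$ for all $i$; both equalities then drop out of Theorem~\ref{theo: sdimension}. Your approach instead passes through the short exact sequence $1\to G^o\to G\to\pi_0(G)\to 1$ and the additivity of $\sdim$ and $\ord$ from Corollary~\ref{cor: sdim and ord for quotients}, reducing to the vanishing $\sdim(\pi_0(G))=\ord(\pi_0(G))=0$, which you extract from the $\s$-\'etale property.

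Both proofs are short; the trade-off is that the paper's version avoids the quotient machinery of Section~\ref{sec: Quotients} entirely (only Theorem~\ref{theo: sdimension} and the elementary compatibility lemma are used), while yours is more structural and makes the reason transparent: the ``defect'' sits in $\pi_0(G)$, which is zero-dimensional. The author is certainly aware of your viewpoint---immediately after the corollary the paper invokes exactly this exact sequence to compute $\ld(G)=\ld(\pi_0(G))\cdot\ld(G^o)$ via Corollary~\ref{cor: multiplicativity of limit degree}---but for $\sdim$ and $\ord$ the direct Zariski-closure argument was preferred, presumably to keep the dependency graph lighter.
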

\begin{proof}
	Let $\G$ be a $\s$-algebraic group containing $G$ as a $\s$-closed subgroup. Then for $i\geq 0$ we have
	$\dim(G[i])=\dim(G[i]^o)=\dim(G^o[i])$ by Lemma \ref{lemma: connected component and zariski closure}. Thus the claim follows from Theorem~\ref{theo: sdimension}.
\end{proof}

The limit degree of $G$ and $G^o$ are in general distinct. Indeed $\ld(G)=\ld(\pi_0(G))\cdot\ld(G^o)$ by Corollary \ref{cor: multiplicativity of limit degree}.

As announced in Section \ref{sec: Subgroups defined by ideal closures}, we can now show that $G_\red$, $G_\sred$, $G_\wm$ and $G_\per$ all have the same $\s$-dimension as $G$.
\begin{lemma} \label{lemma: sdim of sssubgroups}
	Let $G$ be a $\s$-algebraic group. Assume that $k$ has the relevant properties as stated in Corollary~\ref{cor: reduced ssubgroups} (so that we are dealing with $\s$-closed subgroups).
	Then $\sdim(G_\red)$, $\sdim(G_\sred)$, $\sdim(G_\wm)$ and $\sdim(G_\per)$ are all equal to $\sdim(G)$.
\end{lemma}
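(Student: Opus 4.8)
The plan is to prove the four equalities by a squeeze. From the inclusion diagram of Section~\ref{sec: Subgroups defined by ideal closures} we have
$$G_\per\subseteq G_\wm\subseteq G_\red\subseteq G\quad\text{and}\quad G_\per\subseteq G_\sred\subseteq G,$$
so, once $k$ is inversive and algebraically closed (the conjunction of all the hypotheses of Corollary~\ref{cor: reduced ssubgroups}, guaranteeing that all four are genuine $\s$-closed subgroups), each of $\sdim(G_\red),\sdim(G_\sred),\sdim(G_\wm),\sdim(G_\per)$ lies between $\sdim(G_\per)$ and $\sdim(G)$. Here I use that $\sdim$ is monotone for $\s$-closed subgroups: embedding $H\le G$ jointly into an algebraic group $\G$, one has $H[i]\subseteq G[i]$, hence $\dim(H[i])\le\dim(G[i])$, and Theorem~\ref{theo: sdimension} upgrades this to $\sdim(H)\le\sdim(G)$. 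Thus the whole statement reduces to the single inequality $\sdim(G_\per)\ge\sdim(G)$. (If one only assumes $k$ perfect and wants $G_\red$ alone, there is a direct route: one checks $\rad(\I(G))\cap k[G[i]]=\rad(\I(G[i]))$, so $G_\red[i]=G[i]_\red$ and $\dim(G_\red[i])=\dim(G[i])$ for every $i$.)

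The reverse inequality rests on the following observation about $\s$-prime $\s$-ideals. Such an ideal is simultaneously radical, reflexive, mixed and perfect (by the implication diagram for $\s$-ideals), while the defining ideals of $G_\red$, $G_\sred$, $G_\wm$ and $G_\per$ are precisely the smallest radical / reflexive / radical mixed / perfect $\s$-ideal containing $(0)$. Hence every $\s$-prime $\s$-ideal $\p$ of $k\{G\}$ contains all four closures; in particular $\p\supseteq\I(G_\per)$, so $\p$ descends to a $\s$-prime $\s$-ideal of $k\{G_\per\}$ with $k\{G\}/\p=k\{G_\per\}/\overline{\p}$. Geometrically, $G$ and $G_\per$ have the same points in $\s$-field extensions of $k$ (cf.\ Remark~\ref{rem: no field points}), and the $\s$-integral $\s$-closed subvariety $V$ cut out by any such $\p$ is contained in $G_\per$. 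Consequently, to prove $\sdim(G_\per)\ge\sdim(G)$ it suffices to produce one $\s$-prime $\s$-ideal $\p$ of $k\{G\}$ whose variety $V$ attains the full $\s$-dimension, i.e.\ $\dim(V[i])$ grows with slope $\sdim(G)$: then $V\subseteq G_\per$ gives $\dim(V[i])\le\dim(G_\per[i])\le\dim(G[i])$, squeezing $\dim(G_\per[i])$ to slope $\sdim(G)$.

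To construct such a $\p$ I would exploit the identity component. For large $i$ the closures $G[i]^o=G^o[i]$ are irreducible connected algebraic groups (Lemma~\ref{lemma: connected component and zariski closure}, Lemma~\ref{lemma: Go connected}), with $\dim(G^o[i])=\dim(G[i])$ (Corollary~\ref{cor: sdim of Go}). These identity-component pieces are compatible under the projections $\pi_i\colon G[i]\to G[i-1]$ and under the shift maps $\s_{i+1}\colon G[i+1]\to{\hs(G[i])}$, both of which are dominant. Threading their (unique) generic points should produce a $\s$-prime $\s$-ideal $\p$ of $k\{G\}$ whose $i$-th order Zariski closure is $G^o[i]_\red$, so that $\dim(V[i])=\dim(G^o[i])=\dim(G[i])$ indeed grows with slope $\sdim(G)$, and the squeeze above closes the argument.

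The hard part will be exactly this last construction: manufacturing a $\s$-prime $\s$-ideal (equivalently, a point in a $\s$-field) that realizes the $\s$-dimension. A naive inverse-limit choice of top-dimensional components (König's lemma) only yields a prime ideal compatible with the projections; turning it into a $\s$-ideal requires $\s$-stability, and making it \emph{reflexive} (so that $\p$ is genuinely $\s$-prime) requires compatibility of the thread under $\s^{-1}$ as well. This is where I expect the real work to lie, and where Proposition~\ref{prop: limit of Gi}---specifically the stabilization of the kernels $\G_i$ and the eventual isomorphisms $\G_i\xrightarrow{\ \sim\ }{\hs(\G_{i-1})}$ for large $i$---becomes decisive, since it forces the shift maps to be eventually invertible on the relevant graded pieces and lets the component thread be closed up under $\s^{-1}$. (Alternatively, if the companion work provides the characterization of $\sdim(G)$ as the maximal $\s$-transcendence degree of a $\s$-field quotient of $k\{G\}$, this existence is immediate and the construction above can be bypassed.)
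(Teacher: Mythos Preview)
Your squeeze reduction and the observation that every $\s$-prime $\s$-ideal contains $\I(G_\per)$ are correct, so the problem does reduce to exhibiting a $\s$-prime $\s$-ideal of full $\s$-dimension. But this is exactly where the content lies, and you do not complete it. Threading the generic points of the $G^o[i]$ produces the nilradical of $k\{G^o\}$, which is a prime $\s$-ideal; making it reflexive means passing to its reflexive closure, and that is precisely $\I((G^o)_\sred)$. So your construction, once carried out, delivers the variety $(G^o)_\sred$ and leaves you with the question $\sdim((G^o)_\sred)=\sdim(G^o)$---which Proposition~\ref{prop: limit of Gi} does not settle on its own (the stabilization of the $\G_i$ governs the growth of $\dim(G[i])$, not the size of the reflexive closure of zero). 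Your alternative appeal to a $\s$-transcendence-degree characterization of $\sdim$ would indeed close the gap, but that characterization is not established in this paper.

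The paper's route is shorter and avoids constructing any $\s$-prime directly. For $G_\red$ the equality is immediate (essentially your parenthetical). For the remaining three, pass to $G^o$ using $(G^o)_\bullet\le G_\bullet$ and Corollary~\ref{cor: sdim of Go}. In the connected case the nilradical of $k\{G\}$ is a prime $\s$-ideal by Lemma~\ref{lemma: characterize connected}, hence already radical and mixed, so $G_\wm=G_\red$ and that case is done. Reducing further to the reduced case (legitimate since $(G_\red)_\per=G_\per$), the zero ideal is prime, its reflexive closure $\bigcup_n\s^{-n}(0)$ is then $\s$-prime and hence perfect, so $G_\per=G_\sred$. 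Thus everything comes down to $\sdim(G_\sred)=\sdim(G)$, and here the paper exploits the explicit description $\I(G_\sred)=\{f\in k\{G\}:\s^n(f)=0\text{ for some }n\}$: applying $\s$ eventually annihilates every element. Combined with Theorem~\ref{theo: first finiteness}, which gives $\I(G_\sred)[i]=\big(\I(G_\sred)[i-1],\s(\I(G_\sred)[i-1])\big)$ for large $i$, one finds that $\I(G_\sred)[i]$ is generated by one \emph{fixed} finite list $f_1,\ldots,f_m$ for all large $i$. Hence $\dim(G[i])-\dim(G_\sred[i])\le m$ uniformly in $i$, and $\sdim(G_\sred)=\sdim(G)$ follows from Theorem~\ref{theo: sdimension}.
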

\begin{proof}
	As the dimension of a finitely generated $k$-algebra remains invariant if we factor by the nilradical, it follows that $\sdim(G_\red)=\sdim(G)$.
	
	Since $(G^o)_\sred\leq G_\sred$, $(G^o)_\wm\leq G_\wm$ and $(G^o)_\per\leq G_\per$ we may assume that $G$ is connected by Lemma \ref{lemma: Go connected} and Corollary \ref{cor: sdim of Go}.
	But then the nilradical of $k\{G\}$ is a prime $\s$\=/ideal (Lemma \ref{lemma: characterize connected}) and therefore $G_\wm=G_\red$ and thus $\sdim(G_\wm)=\sdim(G)$ also in this case.
	
	To prove $\sdim(G_\per)=\sdim(G)$ we may assume that $G$ is reduced. Then the zero ideal of $k\{G\}$ is prime and therefore its reflexive closure $\cup_{i\geq 1}\s^{-i}(0)$ is a $\s$-prime $\s$-ideal. This shows that $G_\per=G_\sred$.
	
	It thus suffices to show that $\sdim(G_\sred)=\sdim(G)$. Let $\G$ be an algebraic group containing $G$ as a $\s$-closed subgroup and for $i\geq 0$ let $G[i]$ and $G_\sred[i]$ denote the $i$-th order Zariski closure of $G$ and $G_\sred$ in $\G$ respectively. By Theorem \ref{theo: first finiteness} we have $$\I(G_\sred[i])=\big(\I(G_\sred[i-1]), \s(\I(G_\sred[i-1]))\big)\subseteq k[G[i]]\subseteq k\{G\}$$ for all sufficiently large $i\in\nn$. But $\I(G_\sred)=\{f\in k\{G\}|\ \exists\ n\geq 1 : \s^n(f)=0\}$.
	This shows that there exist $f_1,\ldots,f_m$ in $k\{G\}$ such that $\I(G_\sred[i])=(f_1,\ldots,f_m)\subseteq k[G[i]]$ for all sufficiently large $i$. Therefore $\dim(G[i])-\dim(G_\sred[i])\leq m$ for all sufficiently large $i\in\nn$ and consequently $\sdim(G)=\sdim(G_\sred)$.
\end{proof}

A normal subgroup of a normal subgroup of an (abstract) group $G$ need not be a normal subgroup of $G$. However, a characteristic subgroup of a normal subgroup of a group $G$ is a normal subgroup of $G$. The following definition, analogous to \cite[Def. 1.51]{Milne:AlgebraicGroupsTheTheoryOfGroupSchemesOfFiniteTypeOverAField}, allows us to transfer this kind of reasoning to $\s$-algebraic groups.

\begin{defi} \label{defi: characteristic subgroup}
	A $\s$-closed subgroup $H$ of a $\s$-algebraic group $G$ is a \emph{characteristic subgroup} of $G$ if for every \ks-algebra $R$, every automorphism of $G_R$ induces an automorphism of $H_R$.
\end{defi}
To be precise, here an automorphism $\f$ of $G_R$ is an isomorphism $\f\colon G_R\to G_R$ of functors form the category of $R$-$\s$-algebras to the category of groups. In particular, $\f_{R'}\colon G(R')\to G(R')$ is an isomorphism of groups for every $R$\=/$\s$\=/algebra $R'$, and the requirement is that $\f_{R'}(H(R'))=H(R')$. Since conjugation with $g\in G(R)$ induces an automorphism of $G_R$, we see that a characteristic subgroup is normal.

Our next goal is to show that $G^o$ is a characteristic subgroup of $G$. To this end we record a practical criterion to test if a normal $\s$-closed subgroup is characteristic.

\begin{lemma} \label{lemma: criterion to be characteristic}
	Let $G$ be a $\s$-algebraic group and $N\unlhd G$ a normal $\s$-closed subgroup. If for every \ks-algebra $R$, every automorphism of the $R$-$\s$-Hopf algebra $k\{G\}\otimes_k R$ maps $k\{G/N\}\otimes_k R$ into $k\{G/N\}\otimes_k R$, then $N$ is a characteristic subgroup of $G$.
\end{lemma}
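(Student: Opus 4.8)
The plan is to pass to the dual, Hopf-algebraic side and to show that the hypothesis forces every automorphism of $G_R$ to preserve the defining ideal $\I(N_R)$ of $N_R$ inside $k\{G\}\otimes_k R$.

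First I would fix a \ks-algebra $R$ and an automorphism $\f$ of $G_R$. By the Yoneda Lemma and the anti-equivalence between $\s$-algebraic groups and their coordinate Hopf algebras---which applies verbatim over the base $R$, since $G_R$ is represented by the $R$-$\s$-Hopf algebra $k\{G\}\otimes_k R$---the automorphism $\f$ corresponds to an automorphism $\f^*$ of the $R$-$\s$-Hopf algebra $k\{G\}\otimes_k R$. Applying the hypothesis to $\f$ gives $\f^*(k\{G/N\}\otimes_k R)\subseteq k\{G/N\}\otimes_k R$, and applying it to $\f^{-1}$ gives the analogous inclusion for $(\f^*)^{-1}$. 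Hence $\f^*$ restricts to an \emph{automorphism} of the $R$-$\s$-Hopf subalgebra $k\{G/N\}\otimes_k R$.

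Next I would identify $\I(N_R)\subseteq k\{G\}\otimes_k R$. Since $\ker(\pi)=N$ by Theorem~\ref{theo: existence of quotient} and kernels commute with base change (evaluate on $R$-$\s$-algebras), we have $N_R=\ker(\pi_R)$ for the base change $\pi_R\colon G_R\to (G/N)_R$ of $\pi\colon G\to G/N$; therefore $\I(N_R)=(\pi_R^*(\m_{(G/N)_R}))$ is the ideal of $k\{G\}\otimes_k R$ generated by the augmentation ideal $\m_{(G/N)_R}=\m_{G/N}\otimes_k R$ of $k\{G/N\}\otimes_k R$, where flatness of $R$ over the $\s$-field $k$ is used to make this last identification. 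Because $\f$ is a group automorphism, its dual preserves the counit, and since $\f^*$ preserves the subalgebra $k\{G/N\}\otimes_k R$ it therefore carries the augmentation ideal $\m_{(G/N)_R}$ of that subalgebra onto itself. As $\f^*$ is a ring automorphism sending the generating set $\m_{(G/N)_R}$ of $\I(N_R)$ onto itself, it follows that $\f^*(\I(N_R))=\I(N_R)$.

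Finally, preservation of $\I(N_R)$ means that $\f^*$ descends to an automorphism of $k\{N_R\}=(k\{G\}\otimes_k R)/\I(N_R)$ compatible with the quotient map, and dualizing the resulting commutative square produces an automorphism $\overline{\f}$ of $N_R$ with $\f\circ\iota=\iota\circ\overline{\f}$ for the inclusion $\iota\colon N_R\hookrightarrow G_R$. Evaluating at any $R$-$\s$-algebra $R'$ then yields $\f_{R'}(N(R'))=N(R')$, and since $R$ and $\f$ were arbitrary, $N$ is a characteristic subgroup of $G$. The step I expect to be the main obstacle is the first one: upgrading ``maps into itself'' (for $\f$ and for $\f^{-1}$) to an honest automorphism of the Hopf subalgebra, and, more fundamentally, verifying that the group/Hopf anti-equivalence together with the compatibility of kernels and augmentation ideals with base change remain valid over an arbitrary \ks-algebra $R$ rather than merely over the $\s$-field $k$---the flatness of $R$ over $k$ being precisely what makes the ideal-theoretic identifications go through.
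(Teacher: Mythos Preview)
Your proposal is correct and takes essentially the same approach as the paper: dualize to an automorphism $\f^*$ of $k\{G\}\otimes_k R$, apply the hypothesis to both $\f^*$ and its inverse to obtain an automorphism of the sub-Hopf-algebra $k\{G/N\}\otimes_k R$, and conclude that $N_R$ is preserved. The paper's only shortcut is in the final step: rather than showing $\f^*(\I(N_R))=\I(N_R)$ ideal-theoretically, it notes that the induced automorphism of $(G/N)_R$ fits into a commutative square with $\pi_R$, so on each $R'$ the map $\f_{R'}$ preserves $N(R')=\ker\big(G(R')\to(G/N)(R')\big)$ directly, sidestepping the (automatic, since $k$ is a field) flatness considerations you flagged.
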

\begin{proof}
	Let $\f$ be an automorphism of $G_R$. We have to show that $\f$ induces an automorphism of $N_R$, i.e., for every $R$-$\s$-algebra $R'$ the map $\f_{R'}$ maps $N_R(R')=N(R')$ bijectively onto $N(R')$. The automorphism $\f$ of $G_R$ corresponds to an automorphism $\f^*$ of the $R$-$\s$-Hopf algebra $k\{G\}\otimes_k R$. By assumption $\f^*$ and the inverse of $\f^*$ map $k\{G/N\}\otimes_k R$ into itself. Thus, $\f^*$ induces an automorphism of $k\{G/N\}\otimes_k R$. This yields a commutative diagram
	\[\xymatrix{
		G(R') \ar[r] \ar[d]_-{\f_{R'}} &  (G/N)(R') \ar[d] \\
		G(R') \ar[r]  & (G/N)(R')
	}
	\]
	where the vertical arrows are isomorphisms. The claim now follows from the fact that the kernel of $G(R')\to (G/N)(R')$ is $N(R')$ (Theorem \ref{theo: existence of quotient}).
\end{proof}

\begin{prop} \label{prop: Go is characteristic}
	Let $G$ be a $\s$-algebraic group. Then $G^o$ is a characteristic subgroup of $G$.
\end{prop}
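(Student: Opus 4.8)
The plan is to verify the hypothesis of Lemma~\ref{lemma: criterion to be characteristic} for the normal $\s$-closed subgroup $N=G^o$. Since $G/G^o=\pi_0(G)$ and hence $k\{G/G^o\}=\pi_0(k\{G\})$, this amounts to showing that for every \ks-algebra $R$, every automorphism $\psi$ of the $R$-$\s$-Hopf algebra $k\{G\}\otimes_k R$ satisfies $\psi\big(\pi_0(k\{G\})\otimes_k R\big)\subseteq\pi_0(k\{G\})\otimes_k R$. The guiding idea is that $\pi_0(k\{G\})\otimes_k R$ admits a description intrinsic to the $R$-algebra $k\{G\}\otimes_k R$, so that its preservation becomes automatic. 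Concretely, I claim that $\pi_0(k\{G\})\otimes_k R$ equals $\pi_0^R(k\{G\}\otimes_k R)$, the union of all finite \'etale $R$-subalgebras of $k\{G\}\otimes_k R$. Granting this, the conclusion is immediate: an $R$-algebra automorphism $\psi$ carries any finite \'etale $R$-subalgebra $C$ to the isomorphic, hence again finite \'etale, $R$-subalgebra $\psi(C)$, so $\psi$ preserves $\pi_0^R(k\{G\}\otimes_k R)$. Note that only the $R$-algebra structure, not the Hopf or $\s$-structure, enters here.

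It then remains to prove the claim. Fix a $\s$-closed embedding $G\hookrightarrow[\s]_k\G$ and write $k\{G\}=\bigcup_i k[G[i]]$ for the $i$-th order Zariski closures. Because $k$ is a field, $R$ is flat over $k$, so the transition maps $k[G[i]]\otimes_k R\hookrightarrow k[G[i+1]]\otimes_k R$ are injective and $k\{G\}\otimes_k R=\bigcup_i k[G[i]]\otimes_k R$. Every finite \'etale $R$-subalgebra is finitely generated, hence contained in some $k[G[i]]\otimes_k R$; conversely $\pi_0(k\{G\})=\bigcup_i\pi_0(k[G[i]])$, since any \'etale $k$-subalgebra, being finite over $k$, lands in some $k[G[i]]$. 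Thus the claim reduces to the finite-type statement
\[
\pi_0^R\big(k[G[i]]\otimes_k R\big)=\pi_0(k[G[i]])\otimes_k R \qquad\text{for each } i .
\]
The inclusion ``$\supseteq$'' is clear, as the base change to $R$ of an \'etale $k$-algebra is \'etale over $R$.

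The inclusion ``$\subseteq$'' is the crux and constitutes the main obstacle: it is exactly the assertion that the component group of the finite-type group scheme $G[i]$ commutes with the base change $k\to R$. This is where the group structure is essential. The identity component $G[i]^o$ is a connected group scheme possessing the rational point $e$, hence is geometrically connected, and therefore the formation of $\pi_0$ for $G[i]$ commutes with arbitrary base change (see \cite[Ch.~6]{Waterhouse:IntroductiontoAffineGroupSchemes} and \cite[Ch.~5]{Milne:AlgebraicGroupsTheTheoryOfGroupSchemesOfFiniteTypeOverAField}); concretely, decomposing $\pi_0(k[G[i]])=\prod_j L_j$ into separable field extensions and $G[i]$ accordingly into its geometrically connected pieces reduces this to the fact that a geometrically connected finite-type group scheme over a field acquires no finite \'etale $R$-subalgebra beyond those coming from $R$ after base change. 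Combining this displayed equality with the two filtered-union identities yields $\pi_0^R(k\{G\}\otimes_k R)=\pi_0(k\{G\})\otimes_k R$, which establishes the claim and, via Lemma~\ref{lemma: criterion to be characteristic}, the proposition.
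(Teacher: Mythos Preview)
Your overall strategy matches the paper's: both invoke Lemma~\ref{lemma: criterion to be characteristic} and must show that any $R$-$\s$-Hopf-algebra automorphism $\psi$ of $k\{G\}\otimes_k R$ preserves $\pi_0(k\{G\})\otimes_k R$. The divergence is in how this preservation is established, and your route introduces a step that is not adequately justified.

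You work with $\pi_0^R(-)$, the union of finite \'etale $R$-subalgebras, and need the identity $\pi_0^R(k[G[i]]\otimes_k R)=\pi_0(k[G[i]])\otimes_k R$. The direction ``$\subseteq$'' is the substantive one, and your justification for it is incomplete. The references you cite (Waterhouse, Chapter~6 and Milne, Chapter~5) establish that $\pi_0$ of a finite-type group scheme commutes with \emph{field} extensions; they do not treat arbitrary base rings $R$, nor do they address the specific assertion that every finite \'etale $R$-subalgebra of $k[G[i]]\otimes_k R$ lies in $\pi_0(k[G[i]])\otimes_k R$. Your reduction ``decompose into geometrically connected pieces and use that such a piece acquires no finite \'etale $R$-subalgebra beyond $R$'' is itself a nontrivial claim left unproved: the non-identity components are torsors rather than group schemes, the product decomposition of $C$ along the idempotents of $\pi_0(k[G[i]])$ is not automatic (those idempotents need not lie in $C$), and the final assertion about geometrically connected schemes over arbitrary $R$ would require a real argument (there is no Stein-factorization statement available here since the morphism is affine, not proper).

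The paper avoids all of this by never leaving $\pi_0$ over $k$. Since $\psi$ is in particular a $k$-algebra automorphism, it preserves the set of elements of $k\{G\}\otimes_k R$ that satisfy a separable polynomial over $k$, i.e.\ it preserves $\pi_0(k\{G\}\otimes_k R)$. Then Lemma~\ref{lemma: pi0 for tensor} gives
\[
\psi\big(\pi_0(k\{G\})\otimes 1\big)\subseteq \psi\big(\pi_0(k\{G\}\otimes_k R)\big)\subseteq \pi_0(k\{G\}\otimes_k R)=\pi_0(k\{G\})\otimes_k\pi_0(R)\subseteq \pi_0(k\{G\})\otimes_k R,
\]
and $R$-linearity of $\psi$ finishes the argument. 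This is both shorter and free of the base-change subtleties your approach runs into; I recommend replacing the $\pi_0^R$ argument with this one-line computation.
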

\begin{proof}
	By Lemma \ref{lemma: criterion to be characteristic} it suffices to show that for every $k$-$\s$-algebra $R$, every automorphism $\psi$ of the $R$-$\s$-Hopf algebra $k\{G\}\otimes_k R$ maps $\pi_0(k\{G\})\otimes_k R$ into $\pi_0(k\{G\})\otimes_k R$. Using Lemma~\ref{lemma: pi0 for tensor}, we have
	\begin{align*}
	\psi(\pi_0(k\{G\})\otimes 1) & \subseteq \psi(\pi_0(k\{G\}\otimes_k R))\subseteq \pi_0(k\{G\}\otimes_k R)= \\
	& =\pi_0(k\{G\})\otimes_k\pi_0(R)\subseteq \pi_0(k\{G\})\otimes_k R.
	\end{align*}
	Thus  $\psi(\pi_0(k\{G\})\otimes_k R)\subseteq\pi_0(k\{G\})\otimes_k R$ as required.
\end{proof}

\subsection{The strong identity component}

The following lemma facilitates the definition of the strong identity component.

\begin{lemma}
	Let $G$ be a $\s$-algebraic group with $\sdim(G)>0$. Among the $\s$-closed subgroups $H$ of $G$ with $\sdim(H)=\sdim(G)$, there exists a unique smallest one.
\end{lemma}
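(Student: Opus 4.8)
The plan is to show that the family $\mathcal{S}$ of $\s$-closed subgroups $H\leq G$ with $\sdim(H)=\sdim(G)$ is nonempty and closed under intersection, and then to extract a smallest element using the descending chain condition.

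First I would note that $\mathcal{S}$ is nonempty, since $G\in\mathcal{S}$. The crucial structural point is that $\mathcal{S}$ is closed under binary intersection. Given $H_1,H_2\in\mathcal{S}$, the intersection $H_1\cap H_2$ is again a $\s$-closed subgroup of $G$, as $(H_1\cap H_2)(R)=H_1(R)\cap H_2(R)$ is a subgroup of $G(R)$ for every \ks-algebra $R$ (its defining ideal being the sum of the $\s$-Hopf ideals $\I(H_1)$ and $\I(H_2)$). By the dimension theorem (Theorem \ref{theo: dimension theore}),
$$\sdim(H_1\cap H_2)\geq \sdim(H_1)+\sdim(H_2)-\sdim(G)=\sdim(G).$$
On the other hand, $H_1\cap H_2\subseteq G$ forces $\sdim(H_1\cap H_2)\leq\sdim(G)$: this monotonicity follows from Theorem \ref{theo: sdimension}, since after embedding $G\hookrightarrow[\s]_k\G$ the Zariski closures satisfy $(H_1\cap H_2)[i]\subseteq G[i]$, so $\dim((H_1\cap H_2)[i])\leq\dim(G[i])$ for all $i$. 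Hence $\sdim(H_1\cap H_2)=\sdim(G)$ and $H_1\cap H_2\in\mathcal{S}$.

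Next, by Theorem \ref{theo: first finiteness} every descending chain of $\s$-closed subgroups of $G$ is finite, so the nonempty family $\mathcal{S}$ possesses a minimal element $H_0$ with respect to inclusion. I claim $H_0$ is contained in every member of $\mathcal{S}$. Indeed, for any $H\in\mathcal{S}$ the intersection $H_0\cap H$ lies in $\mathcal{S}$ by the previous paragraph and satisfies $H_0\cap H\subseteq H_0$; minimality of $H_0$ then gives $H_0\cap H=H_0$, that is, $H_0\subseteq H$. Therefore $H_0$ is contained in every element of $\mathcal{S}$, which makes it the unique smallest such $\s$-closed subgroup.

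The argument is short, and its only genuine ingredients are the dimension theorem and the descending chain condition. The step I would be most careful about — and the conceptual heart of the proof — is the closure of $\mathcal{S}$ under intersection: it is precisely the lower bound in Theorem \ref{theo: dimension theore} that prevents the $\s$-dimension from dropping when two full-dimensional subgroups are intersected. Once this is in place, the existence and uniqueness of a smallest element is a formal consequence of the Noetherian-type finiteness in Theorem \ref{theo: first finiteness}.
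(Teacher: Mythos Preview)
Your proof is correct and follows essentially the same approach as the paper: use the dimension theorem (Theorem~\ref{theo: dimension theore}) to show that the family of full $\s$-dimensional $\s$-closed subgroups is closed under intersection, and then invoke the descending chain condition (Theorem~\ref{theo: first finiteness}) to obtain the unique smallest element. The paper's proof is simply a two-line compression of your argument.
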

\begin{proof}
	Let $H_1$ and $H_2$ be $\s$-closed subgroups of $G$ with $\sdim(H_1)=\sdim(H_2)=\sdim(G)$. By Theorem \ref{theo: dimension theore} we have $\sdim(H_1\cap H_2)=\sdim(G)$. Thus the claim follows from Theorem~\ref{theo: first finiteness}.
\end{proof}

\begin{defi}
	Let $G$ be a $\s$-algebraic group with $\sdim(G)>0$. The \emph{strong identity component} $G^{so}$ of $G$ is the  smallest $\s$-closed subgroup of $G$ with $\s$-dimension equal to the $\s$\=/dimension of $G$. A $\s$-algebraic group is \emph{strongly connected} if it has positive $\s$-dimension and equals its strong identity component.
\end{defi}
Thus a $\s$-algebraic group $G$ with $\sdim(G)>0$ is strongly connected if and only if it has no proper $\s$-closed subgroup of the same $\s$-dimension. The strong identity component of a $\s$\=/algebraic group is strongly connected.
 As $G^o$ is a $\s$-closed subgroup with $\sdim(G^o)=\sdim(G)$ (Corollary \ref{cor: sdim of Go}), we see that a strongly connected $\s$-algebraic group is connected.

 \begin{lemma} \label{lemma: strongly connected is sintegral}
 	Assume that $k$ is perfect and inversive. Then a strongly connected \mbox{$\s$-algebraic} group is $\s$-integral (in particular, perfectly $\s$-reduced).
 \end{lemma}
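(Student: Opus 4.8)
The plan is to show that the zero ideal of $k\{G\}$ is $\s$-prime, i.e., both prime and reflexive. This suffices: a $\s$-ring whose zero ideal is $\s$-prime is exactly a $\s$-domain, so $G$ is $\s$-integral, and since $\s$-prime implies perfect (see the implication diagram in Section~\ref{sec: Subgroups defined by ideal closures}), $G$ is automatically perfectly $\s$-reduced as well. So the whole statement reduces to verifying primeness and reflexivity of $(0)\subseteq k\{G\}$.

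First I would use connectedness. A strongly connected $\s$-algebraic group is connected (as observed just before the lemma), so by Lemma~\ref{lemma: characterize connected} the nilradical of $k\{G\}$ is a prime ideal. Next I would collapse $G_\red$ onto $G$: since $k$ is perfect, Corollary~\ref{cor: reduced ssubgroups}(i) guarantees that $G_\red$ is a genuine $\s$-closed subgroup of $G$, while Lemma~\ref{lemma: sdim of sssubgroups} gives $\sdim(G_\red)=\sdim(G)$. Strong connectedness then forces $G_\red=G$, so $G$ is reduced and its nilradical is the zero ideal. Combined with the previous sentence, this shows that $(0)\subseteq k\{G\}$ is prime, i.e., $k\{G\}$ is an integral domain.

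For reflexivity I would run the same minimality argument with $G_\sred$ in place of $G_\red$: since $k$ is inversive, Corollary~\ref{cor: reduced ssubgroups}(ii) makes $G_\sred$ a $\s$-closed subgroup, and Lemma~\ref{lemma: sdim of sssubgroups} gives $\sdim(G_\sred)=\sdim(G)$, so strong connectedness yields $G_\sred=G$. By definition this means $k\{G\}$ is $\s$-reduced, i.e., $(0)$ is reflexive. Putting the two halves together, the zero ideal of $k\{G\}$ is prime and reflexive, hence $\s$-prime, so $k\{G\}$ is a $\s$-domain and $G$ is $\s$-integral.

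I do not expect a serious obstacle; the argument is a clean combination of already-established facts. The one point requiring care is precisely the role of the hypotheses on $k$: perfectness is what is needed for $G_\red\leq G$, and inversivity is what is needed for $G_\sred\leq G$, and only once these are bona fide $\s$-closed subgroups can the strong-connectedness minimality be invoked to identify them with $G$. Keeping track of which property of $k$ licenses which subgroup is the only place where the bookkeeping matters.
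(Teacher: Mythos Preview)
Your proof is correct and follows essentially the same route as the paper's own proof: use connectedness to get that the nilradical is prime, then invoke $\sdim(G_\red)=\sdim(G)$ and $\sdim(G_\sred)=\sdim(G)$ together with strong connectedness to force $G=G_\red$ and $G=G_\sred$, yielding that $k\{G\}$ is an integral domain and $\s$-reduced, hence a $\s$-domain. Your write-up is slightly more explicit than the paper's about which hypothesis on $k$ is used where (perfect for $G_\red\leq G$, inversive for $G_\sred\leq G$), but the argument is the same.
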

 \begin{proof}
 	Let $G$ be a strongly connected $\s$-algebraic group. Then $G$ is connected and because $\sdim(G)=\sdim(G_\red)$ by Lemma \ref{lemma: sdim of sssubgroups}, we must have $G=G_\red$. So $G$ is reduced and hence integral by Lemma \ref{lemma: characterize connected}. Similarly, $G=G_\sred$ by Lemma \ref{lemma: sdim of sssubgroups}. Thus $G$ is $\s$-integral.
 \end{proof}

\begin{ex} \label{ex: strong component}
	If $\G$ is a smooth, connected algebraic group with $\dim(\G)>0$, then $G=[\s]_k\G$ is strongly connected. Indeed, as $\G$ is smooth and connected, the same holds for $G[i]=\G\times\ldots\times {\hsi \G}$ for every $i\geq 0$. So if $H$ is a proper $\s$-closed subgroup of $G$, then $\dim(H[i])<\dim(G[i])$ for all sufficiently large $i\in\nn$. But $\dim(G[i])=\dim(\G)(i+1)$ and so it follows from Theorem \ref{theo: sdimension} that $\sdim(H)<\sdim(G)$.
	
	If $\G$ is not smooth or not connected, then $[\s]_k\G$ need not be strongly connected.
\end{ex}

We next give an example of a $\s$-integral $\s$-algebraic group that is not strongly connected.

\begin{ex}
	Let $G$ be the $\s$-closed subgroup of $\Ga^2$ given by
	$$G(R)=\{(g_1, g_2)\in R^2|\ \s(g_1)=g_1\}\leq \Ga^2(R)$$ for any $k$-$\s$-algebra $R$. As $k\{G\}=k[y_1]\{y_2\}$ with $\s(y_1)=y_1$ we see that $G$ is $\s$-integral. We have $\sdim(G)=1$. The $\s$-closed subgroup $H$ of $G$ given by
	$H(R)=\{(0,g)\in R^2\}$ is isomorphic to $\Ga$ and therefore also has $\s$-dimension one. Using Example \ref{ex: strong component} we see that $G^{so}=H$.
\end{ex}

The following example shows that Lemma \ref{lemma: strongly connected is sintegral} fails over an arbitrary base $\s$-field. There exists a strongly connected $\s$-algebraic group that is not $\s$-reduced.

\begin{ex} \label{ex: strongly connected but not sreduced}
	Let $k$ be a non-inversive $\s$-field of characteristic zero. So there exists $\lambda\in k$ with $\lambda\notin \s(k)$. Let $G$ be the $\s$-closed subgroup of $\Ga^2$ given by
	$$G(R)=\left\{(g_1,g_2)\in R^2|\ \s(g_1)=\lambda\s(g_2)\right\}$$
	for any $k$-$\s$-algebra $R$.
	Then $k\{G\}=k[y_1,y_2,\s(y_2),\ldots]$ with $\s(y_1)=\lambda \s(y_2)$. For $i\geq 0$ let $G[i]$ denote the $i$-th order Zariski closure of $G$ in $\Ga^2$. Then $k[G[i]]=k[y_1,y_2,\ldots,\s^i(y_2)]$ and therefore $\dim(G[i])=1\cdot(i+1)+1$, in particular, $\sdim(G)=1$.
	
	We claim that $G$ is strongly connected. Suppose that $H\leq G$ is a proper $\s$-closed subgroup with $\sdim(H)=\sdim(G)$.
	Let $a_1$ and $a_2$ denote the image of $y_1$ and $y_2$ in $k\{H\}$ respectively.
	By \cite[Corollary A.3]{DiVizioHardouinWibmer:DifferenceAlgebraicRel} the $\s$-ideal $\I(H)\subseteq k\{\Ga^2\}$ is $\s$-generated by homogenous linear $\s$-polynomials. Thus there exists a non-trivial $k$-linear relation between $a_1, a_2, \s(a_2),\ldots$. If that relation would properly involve $\s^i(a_2)$ for $i\geq 1$, then $\sdim(H)=0$. Thus there exists a non-trivial $k$-linear relation between $a_1$ and $a_2$. We have $a_1\neq 0$ and $a_2\neq 0$ because otherwise $\sdim(H)=0$. So there exists $\mu\in k$ with $a_1-\mu a_2=0$. Consequently $$0=\s(a_1)-\s(\mu)\s(a_2)=\lambda\s(a_2)-\s(\mu)\s(a_2)=(\lambda-\s(\mu))\s(a_2).$$ Since $\lambda\notin\s(k)$ this implies $\s(a_2)=0$. But then $\sdim(H)=0$; a contradiction.
	
	Now assume that $\lambda^2\in \s(k)$. (For example, we can choose
	$k=\mathbb{C}(\sqrt{x},\sqrt{x+1},\ldots)$ with action of $\s$ determined by $\s(x)=x+1$ and $\lambda=\sqrt{x}$.) If $\mu\in k$ with $\s(\mu)=\lambda^2$ then $\s(y_1^2-\mu y_2^2)=0$. Thus $G$ is not $\s$-reduced.

	%
	%
	%
	%
\end{ex}

The strong identity component is essential for the proof of our Jordan-H\"{o}lder type theorem (Theorem~A from the introduction). The idea for the proof of the existence part of this theorem is easy to explain: Starting with a strongly connected $\s$-algebraic group $G$, we can a choose among all proper normal $\s$-closed subgroups of positive $\s$-dimension one, say $G_1$, of maximal $\s$-dimension. Since $G$ is strongly connected, $\sdim(G_1)<\sdim(G)$. Moreover, $G/G_1$ is almost-simple by choice of $G_1$. To conclude the proof by induction on the $\s$-dimension, one would like to replace $G_1$ by its strong identity component $G_1^{so}$. However, for this to work one needs to know that $G_1^{so}$ is normal in $G$. The latter would be true if we knew that $G_1^{so}$ is a characteristic subgroup of $G_1$.

It is clear that every automorphism of a $\s$-algebraic group $G$ of positive $\s$-dimension, induces an automorphism of $G^{so}$. However, this is weaker than Definition \ref{defi: characteristic subgroup} and indeed, in general, $G^{so}$ need not be a characteristic subgroup of $G$. In fact, the following example illustrates that $G^{so}$ need not even be normal in $G$. (This is similar to the situation with algebraic groups. Cf. \cite[6.11]{Milne:AlgebraicGroupsTheTheoryOfGroupSchemesOfFiniteTypeOverAField}.)

\begin{ex} \label{ex: Gso not normal}
	Let $G=N\rtimes H$ be the $\s$-algebraic group from Example \ref{ex: Gsred not normal}. Then $\sdim(G)=1$.
	The $\s$-closed subgroup $H=\Gm$ of $G$ has $\s$-dimension one. Since $H$ is strongly connected (Example \ref{ex: strong component}) we see that $H=G^{so}$.
	We already noted in Example \ref{ex: Gsred not normal} that $H$ is not normal in $G$.
\end{ex}

The following proposition salvages the above plan to establish the existence part of our Jordan-H\"{o}lder type theorem.

\begin{prop} \label{prop: normality for strong comp}
	Assume that $k$ is algebraically closed and inversive. Let $G$ be a perfectly $\s$\=/reduced $\s$-algebraic group and $H$ a normal $\s$-closed subgroup of $G$ with $\sdim(H)>0$. Then $H^{so}$ is normal in $G$. (In particular, $H^{so}$ is normal in $H$.)
\end{prop}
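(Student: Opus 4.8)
The plan is to reduce the assertion to a statement about points in $\s$-fields, exploiting that $G$ is perfectly $\s$-reduced, and then to settle that statement using the dimension theorem together with the base-change invariance of strong connectedness. The hypotheses that $k$ be algebraically closed and inversive will be used exactly to keep the relevant groups perfectly $\s$-reduced and $\s$-integral after base change.

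\emph{Step 1 (reduction to $\s$-field points).} Since $H\unlhd G$, conjugation defines a morphism of $\s$-varieties $c\colon G\times H^{so}\to H$, $(g,x)\mapsto gxg^{-1}$, and to prove that $H^{so}$ is normal in $G$ it suffices to show that $c$ factors through the inclusion $H^{so}\hookrightarrow H$. Now $H^{so}$ is strongly connected, hence $\s$-integral and in particular perfectly $\s$-reduced (Lemma \ref{lemma: strongly connected is sintegral}, using that $k$ is perfect and inversive). As $G$ is perfectly $\s$-reduced by hypothesis and $k$ is inversive and algebraically closed, Corollary \ref{cor: reduced products} shows that $G\times H^{so}$ is perfectly $\s$-reduced. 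Therefore Lemma \ref{lemma: morphism with perfectly sreduced} applies, and it suffices to prove that $c_K\big(G(K)\times H^{so}(K)\big)\subseteq H^{so}(K)$ for every $\s$-field extension $K$ of $k$; that is, $gxg^{-1}\in H^{so}(K)$ whenever $g\in G(K)$ and $x\in H^{so}(K)$. Enlarging $K$ to an algebraically closed inversive $\s$-field $K'$ and using Lemma \ref{lemma: faithfullyflat intersection} (with $H^{so}\leq H$ along the injection $K\hookrightarrow K'$) to descend the resulting membership back to $K$, we may and do assume that $K$ is algebraically closed and inversive.

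\emph{Step 2 (the field-point argument).} The crucial claim is that $(H^{so})_K$ is strongly connected over $K$, equivalently $(H^{so})_K=(H_K)^{so}$. Granting this, fix $g\in G(K)$ and put $A=(H^{so})_K$. By Lemma \ref{lemma: sdim and base change} we have $\sdim(A)=\sdim(H^{so})=\sdim(H)=\sdim(H_K)$. Conjugation by $g$ is an automorphism of $H_K$, so $B:=gAg^{-1}$ is a $\s$-closed subgroup of $H_K$ isomorphic to $A$; in particular $B$ is again strongly connected with $\sdim(B)=\sdim(A)=\sdim(H_K)$. Applying the dimension theorem (Theorem \ref{theo: dimension theore}) inside $H_K$ gives $\sdim(A\cap B)\geq \sdim(A)+\sdim(B)-\sdim(H_K)=\sdim(A)$, so $A\cap B$ has full $\s$-dimension. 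Since $A\cap B$ is a $\s$-closed subgroup of the strongly connected group $A$ of the same $\s$-dimension, minimality forces $A\cap B=A$; the same reasoning applied to $B$ gives $A\cap B=B$. Hence $A=B$, i.e.\ $g(H^{so})_Kg^{-1}=(H^{so})_K$, and therefore $gxg^{-1}\in (H^{so})_K(K)=H^{so}(K)$, as required.

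\emph{Step 3 (the main obstacle: base change of strong connectedness).} It remains to prove the claim of Step 2. Since $H^{so}$ is $\s$-integral and $k$ is algebraically closed, the coordinate ring $k\{H^{so}\}\otimes_kK$ remains a $\s$-domain (Lemma \ref{lemma: tensor stays}), so $(H^{so})_K$ is $\s$-integral; thus $(H_K)^{so}\subseteq (H^{so})_K$, and it remains to rule out a \emph{proper} $\s$-closed subgroup $T\subsetneq (H^{so})_K$ with $\sdim(T)=\sdim(H^{so})$. Suppose one existed. As $\I(T)$ is finitely $\s$-generated (Theorem \ref{theo: first finiteness}), $T$ is defined over a finitely $\s$-generated sub-$\s$-extension of $K$, so we may assume $K$ is finitely $\s$-generated over $k$. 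The contraction $\I(T)\cap k\{H^{so}\}$ is a $\s$-Hopf ideal defining a $\s$-closed subgroup $\widetilde T\leq H^{so}$ over $k$ with $(\widetilde T)_K\supseteq T$, whence $\sdim(\widetilde T)=\sdim(H^{so})$; strong connectedness of $H^{so}$ forces $\widetilde T=H^{so}$, i.e.\ $\I(T)\cap k\{H^{so}\}=0$ and $k\{H^{so}\}\hookrightarrow K\{T\}$. The remaining, technically most delicate point is to upgrade this injectivity to $\I(T)=0$; the plan is to carry this out by a dimension count on the $i$-th order Zariski closures $T[i]\subseteq (H^{so}[i])_K$, using that each $H^{so}[i]$ is geometrically integral over the algebraically closed field $k$, so that the induced dominant maps $T[i]\to H^{so}[i]$ constrain the dimensions of the $T[i]$. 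I expect this last upgrade to be the principal difficulty, since it is exactly the point at which an analogue of the equidimensionality of finitely generated domains over a field must be recovered in the difference setting, and it is where the failure of the statement without the perfectly $\s$-reduced hypothesis (cf.\ Example \ref{ex: Gso not normal}) is ultimately circumvented.
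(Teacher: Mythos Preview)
Your Steps~1 and~2 are correct and mirror the paper's argument (the paper's Step~2 is marginally shorter: rather than using the dimension theorem to prove $gAg^{-1}=A$, one simply observes that the strong identity component of $H_K$ is unique, hence fixed by every automorphism of $H_K$). Either way, everything hinges on the claim $(H^{so})_K=(H_K)^{so}$, and this is where your Step~3 has a genuine gap.

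The specific plan you sketch---deducing $\I(T)=0$ from $\I(T)\cap k\{H^{so}\}=0$ via the ``dominant'' maps $T[i]\to H^{so}[i]$ and a dimension count---does not work. Those maps lie over $\spec K\to\spec k$, which is not of finite type, so dominance imposes no lower bound on $\dim T[i]$. Concretely, take $H^{so}[0]=\Ga^2$ and $K=k(\alpha)$ with $\alpha$ transcendental over $k$; the one-dimensional subgroup $\{(a,\alpha a)\}\leq(\Ga^2)_K$ has defining ideal $(y-\alpha x)$ with zero contraction to $k[x,y]$, yet has strictly smaller dimension. So you cannot read off $\dim T[i]=\dim H^{so}[i]$, let alone $\I(T[i])=0$, from the contraction being zero. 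Nothing in your outline uses the algebraically closed and inversive hypotheses on $k$ in a way that would block this phenomenon.

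The paper handles this step by a completely different mechanism (Lemma~\ref{lemma: strong component stable under baseext}): enlarge $K$ to a $\s$-field $L$ with $L^{\Aut(L/k)}=k$ (Lemma~\ref{lemma: exists good extension}), note that the defining ideal of $(H_L)^{so}$ is $\Aut(L/k)$-stable by uniqueness of the strong identity component, and use Galois-type descent to conclude that $(H_L)^{so}$ is already defined over $k$; comparing $\s$-dimensions then forces $(H^{so})_L=(H_L)^{so}$, and the statement over $K$ follows.
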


As an immediate corollary to Proposition \ref{prop: normality for strong comp} we obtain:

\begin{cor} \label{cor: perfectly sreduced implies GSo normal}
	Let $G$ be a perfectly $\s$-reduced $\s$-algebraic group over an inversive algebraically closed $\s$-field with $\sdim(G)>0$. Then $G^{so}$ is a normal $\s$-closed subgroup of $G$.
\end{cor}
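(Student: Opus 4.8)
The plan is to show that the conjugation morphism factors through $H^{so}$, and to read off normality from that factorization. Since $H\unlhd G$, the map $c\colon G\times H^{so}\to H$, $(g,n)\mapsto gng^{-1}$, is a well-defined morphism of $\s$-varieties, and $H^{so}$ is normal in $G$ precisely when $c$ factors through the inclusion $H^{so}\iar H$ (the inclusion $g H^{so}(R)g^{-1}\subseteq H^{so}(R)$ for all $R$ and all $g$ yields normality upon also applying it to $g^{-1}$). To reduce this to a statement about points in $\s$-fields I first observe that $G\times H^{so}$ is perfectly $\s$-reduced: $G$ is perfectly $\s$-reduced by hypothesis, $H^{so}$ is strongly connected and hence $\s$-integral, in particular perfectly $\s$-reduced, by Lemma~\ref{lemma: strongly connected is sintegral} (using that $k$ is perfect and inversive), and the product of two perfectly $\s$-reduced $\s$-varieties over an inversive algebraically closed $k$ is again perfectly $\s$-reduced by Corollary~\ref{cor: reduced products}~(iv). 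Thus Lemma~\ref{lemma: morphism with perfectly sreduced} applies, and it suffices to prove that $gng^{-1}\in H^{so}(K)$ whenever $K$ is a $\s$-field extension of $k$, $g\in G(K)$ and $n\in H^{so}(K)$.

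Next I would reduce to the case that $K$ is itself inversive and algebraically closed. Embedding $K$ into an inversive algebraically closed $\s$-field extension $\wtilde{K}$, the inclusion $K\hookrightarrow\wtilde{K}$ is injective, so by Lemma~\ref{lemma: faithfullyflat intersection} one has $H^{so}(K)=H(K)\cap H^{so}(\wtilde{K})$; since $gng^{-1}\in H(K)$ automatically, it is enough to check $gng^{-1}\in H^{so}(\wtilde{K})$, so we may assume $K$ inversive and algebraically closed. Now fix $g\in G(K)$. As $H\unlhd G$, conjugation by $g$ is an automorphism $c_g$ of the $\s$-algebraic group $H_K$ over $K$, and $c_g$ carries $\s$-closed subgroups of $H_K$ to $\s$-closed subgroups of the same $\s$-dimension. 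Hence $c_g$ fixes the unique smallest $\s$-closed subgroup of $H_K$ of full $\s$-dimension, namely $(H_K)^{so}$. Therefore, once we know that $(H^{so})_K=(H_K)^{so}$, we obtain $c_g\big((H^{so})_K\big)=(H^{so})_K$, and in particular $gng^{-1}=c_g(n)\in (H^{so})_K(K)=H^{so}(K)$, as required.

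The heart of the matter is thus the base-change invariance of the strong identity component over an inversive algebraically closed base, i.e. $(H^{so})_K=(H_K)^{so}$ for $K$ inversive and algebraically closed. The inclusion $(H_K)^{so}\subseteq(H^{so})_K$ is automatic, because $\sdim$ is invariant under base change (Lemma~\ref{lemma: sdim and base change}), so that $(H^{so})_K$ is a $\s$-closed subgroup of $H_K$ of $\s$-dimension $\sdim(H)$ and hence contains the smallest such subgroup. The reverse inclusion is equivalent to the statement that the base change $F_K$ of the strongly connected group $F:=H^{so}$ is again strongly connected. Here the hypotheses on $k$ are indispensable: $F_K$ remains $\s$-integral by Lemma~\ref{lemma: tensor stays}~(iv) (tensoring the $\s$-domain $k\{F\}$ with $K$ over the inversive algebraically closed field $k$), whereas over a \emph{non-inversive} base the assertion genuinely fails, as Example~\ref{ex: strongly connected but not sreduced} shows. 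I expect this preservation of strong connectedness to be the main obstacle. My approach would be a descent argument: a hypothetical proper $\s$-closed subgroup $L\lneq F_K$ with $\sdim(L)=\sdim(F)$ is, by the finiteness theorem (Theorem~\ref{theo: first finiteness}), cut out by finitely many difference equations, hence already defined over a finitely $\s$-generated $\s$-subdomain $R_0\subseteq K$ containing $k$; the task is then to descend $L$ to a proper $\s$-closed subgroup of $F$ over $k$ of the same $\s$-dimension, contradicting the strong connectedness of $H^{so}$. Controlling $\s$-dimension and properness during this descent — where the inversivity of $k$ is exactly what rules out the behaviour of Example~\ref{ex: strongly connected but not sreduced} — is the delicate point. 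Finally, Corollary~\ref{cor: perfectly sreduced implies GSo normal} follows immediately by taking $H=G$.
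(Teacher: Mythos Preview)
Your overall architecture matches the paper's proof of Proposition~\ref{prop: normality for strong comp} almost exactly: set up the conjugation morphism, observe that $G\times H^{so}$ is perfectly $\s$-reduced (via Lemma~\ref{lemma: strongly connected is sintegral} and Lemma~\ref{lemma: tensor stays}~(iv)), apply Lemma~\ref{lemma: morphism with perfectly sreduced} to reduce to $\s$-field points, and then use that conjugation by $g\in G(K)$ is an automorphism of $H_K$ fixing $(H_K)^{so}$. You have also correctly isolated the crux of the argument as the base-change identity $(H^{so})_K=(H_K)^{so}$.

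The gap is precisely there. Your proposed descent---pass to a finitely $\s$-generated $\s$-subdomain $R_0\subseteq K$ and then somehow produce a proper subgroup of $H^{so}$ over $k$ of the same $\s$-dimension---is not a proof, and you yourself flag that controlling $\s$-dimension and properness under such a specialization is ``the delicate point''. There is no evident mechanism for specializing from $R_0$ down to $k$ while preserving both properties; this is exactly where inversivity of $k$ must enter, and your sketch does not explain how. (Your preliminary reduction to $K$ inversive and algebraically closed is harmless but does not help here: the descent is to $k$, not to $K$.)

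The paper closes this gap by a quite different idea, recorded as Lemma~\ref{lemma: strong component stable under baseext}, which in turn rests on Lemma~\ref{lemma: exists good extension}. One constructs a $\s$-field extension $L$ of $K$ with $L^{\Aut(L/k)}=k$; this is done by iterating the tensor-product construction $K\rightsquigarrow\operatorname{Frac}(K\otimes_k K)$, where the swap automorphism moves every element outside $k$ (here inversivity and algebraic closedness of $k$ are used to make $K\otimes_k K$ a $\s$-domain). Over $L$, the ideal $\I((H_L)^{so})\subseteq k\{H\}\otimes_k L$ is stable under $\Aut(L/k)$ because the strong identity component is unique and automorphisms preserve $\s$-dimension; taking $\Aut(L/k)$-invariants yields a $\s$-Hopf ideal of $k\{H\}$ defining some $H'\leq H$ with $H'_L=(H_L)^{so}$. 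Then $\sdim(H')=\sdim(H)$ forces $H^{so}\leq H'$, hence $(H^{so})_L\leq (H_L)^{so}\leq((H_K)^{so})_L$, giving the missing inclusion $(H^{so})_K\leq (H_K)^{so}$. This Galois-style descent via a large ambient extension with small fixed field is the idea your proposal is missing.
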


For the proof of Proposition \ref{prop: normality for strong comp} we need two preparatory lemmas.

\begin{lemma} \label{lemma: exists good extension}
	Assume that $k$ is algebraically closed and inversive. Let $K$ be a $\s$-field extension of $k$. Then there exists a $\s$-field extension $L$ of $K$ such that only the elements of $k$ are fixed by all $\s$-field automorphisms of $L/k$, i.e., $L^{\Aut(L/k)}=k$.
\end{lemma}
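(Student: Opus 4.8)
The plan is to realize $L$ as a highly homogeneous model of $\operatorname{ACFA}$, the model companion of the theory of difference fields, and to read off the desired equality from the computation of the model-theoretic algebraic closure of $k$. The containment $k\subseteq L^{\Aut(L/k)}$ will be automatic, so the content is to move every element outside $k$.

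First I would reduce to the case that $K$ is inversive by replacing $K$ with its inversive closure; this is harmless since we only seek \emph{some} extension $L$. Since $K$, being a field, has injective $\s$, it embeds into a model of $\operatorname{ACFA}$, and by standard model theory (universality and homogeneity of saturated-enough models) one may choose a model $L\supseteq K$ of $\operatorname{ACFA}$ that is $\kappa$-saturated and strongly $\kappa$-homogeneous for $\kappa=|K|^{+}$; note $|k|\le|K|<\kappa$. As $L$ is a model of $\operatorname{ACFA}$ (so $\s$ is an automorphism of $L$), a $\s$-field automorphism of $L$ fixing $k$ pointwise is exactly an automorphism of $L$ as a structure in the language of rings with $\s$ over $k$; hence $\Aut(L/k)$ coincides with the group of model-theoretic automorphisms of $L$ over $k$.

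The heart of the argument is the identity $\operatorname{acl}(k)=k$: in a model of $\operatorname{ACFA}$ the model-theoretic algebraic closure of a set $A$ is the field-theoretic algebraic closure of the inversive difference subfield generated by $A$ (Chatzidakis--Hrushovski), and for $A=k$ this is $k$ itself, because $k$ is algebraically closed and inversive. In particular $\operatorname{dcl}(k)\subseteq\operatorname{acl}(k)=k$, so $\operatorname{dcl}(k)=k$. Now fix $a\in L\setminus k=L\setminus\operatorname{dcl}(k)$. Then $\operatorname{tp}(a/k)$ has more than one realization, and by $\kappa$-saturation (using $|k|<\kappa$) a second realization $a'\neq a$ already lies in $L$; by strong $\kappa$-homogeneity the elementary map $\operatorname{id}_k\cup\{a\mapsto a'\}$ extends to some $\f\in\Aut(L/k)$, which moves $a$. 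Thus no element of $L\setminus k$ is fixed by all of $\Aut(L/k)$, and therefore $L^{\Aut(L/k)}=k$.

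The main obstacle is genuinely the input from the model theory of difference fields: chiefly the description of $\operatorname{acl}$ in $\operatorname{ACFA}$ yielding $\operatorname{acl}(k)=k$, together with the existence of a $\kappa$-saturated, strongly $\kappa$-homogeneous model and the routine identification of $\s$-field automorphisms with model-theoretic ones. If one prefers to avoid invoking $\operatorname{ACFA}$ by name, the same $L$ can be constructed directly as a homogeneous-universal $k$-$\s$-field, using that difference fields have the amalgamation property (realized via $\s$-prime $\s$-ideals of tensor products $F_1\otimes_k F_2$); the only nontrivial point that then remains is again that every $a\notin k$ admits a conjugate $a'\neq a$ over $k$, which is precisely the statement $\operatorname{acl}(k)=k$.
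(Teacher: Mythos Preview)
Your argument is correct but takes a genuinely different route from the paper. You invoke the model theory of $\operatorname{ACFA}$: embed $K$ in a sufficiently saturated and homogeneous model $L$, use the Chatzidakis--Hrushovski description of algebraic closure to get $\operatorname{dcl}(k)\subseteq\operatorname{acl}(k)=k$, and then use homogeneity to move any $a\notin k$. The paper instead gives a direct, elementary construction: set $L_1=\operatorname{Frac}(K\otimes_k K)$, which is a $\s$-field because $k$ algebraically closed makes the tensor product a domain and $k$ inversive makes it $\s$-reduced (Lemma~\ref{lemma: tensor stays}~(ii)); the swap $a\otimes b\mapsto b\otimes a$ is a $\s$-automorphism over $k$ moving every element of $K\setminus k$, and automorphisms of $K/k$ extend via $\tau'(a)\otimes b$; one then iterates and takes the union $L=\bigcup L_i$. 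Your approach is conceptually illuminating---it identifies the hypotheses on $k$ as precisely the condition $\operatorname{acl}(k)=k$ in $\operatorname{ACFA}$---but imports substantial external machinery (existence of the model companion, the description of $\operatorname{acl}$, and enough saturation/homogeneity, the last of which you should perhaps justify a bit more carefully since $\operatorname{ACFA}$ is not complete). The paper's construction is entirely self-contained within the article and in fact realizes concretely the amalgamation-and-homogeneity idea you sketch in your final sentence.
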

\begin{proof}
	Let us start with proving the following claim: There exists a $\s$-field extension $L$ of $K$ such that for all $a\in K\smallsetminus k$ there exists a $\s$-field automorphism $\tau$ of $L/k$ with $\tau(a)\neq a$ and such that every $\s$-field automorphism of $K/k$ extends to a $\s$-field automorphism of $L/k$.
	
	Since $k$ is algebraically closed, $K\otimes_k K$ is an integral domain. Since $k$ is inversive, $K\otimes_k K$ is $\s$-reduced (Lemma \ref{lemma: tensor stays} (ii)). Therefore the field of fractions $L$ of $K\otimes_k K$ is naturally a $\s$-field. Consider $L$ as a $\s$-field extension of $K$ via the embedding $a\mapsto a\otimes 1$. The $\s$-field automorphism $\tau$ of $L/k$ determined by $\tau(a\otimes b)=b\otimes a$ moves every element of $K\smallsetminus k$. Moreover, every $\s$-field automorphism $\tau'$ of $K/k$ extends to $L/k$, for example, by $\tau'(a\otimes b)=\tau'(a)\otimes b$.
	
	Now let us prove the lemma. By the above claim, there exists a $\s$-field extension $L_1/K$ such that every element of $K\smallsetminus k$ can be moved by a $\s$-field automorphism of $L_1/K$ and every $\s$-field automorphism of $K/k$ extends to a $\s$-field automorphism of $L_1/k$. Now apply the claim again to $L_1/k$ to find a $\s$-field extension $L_2/L_1$ such that every element of $L_1\smallsetminus k$ can be moved by a $\s$-field automorphism of $L_2/k$ and every $\s$-field automorphism of $L_1/k$ extends to a $\s$-field automorphism of $L_2/k$. Continuing like this we obtain a chain of $\s$-field extensions $k\subseteq K\subseteq L_1\subseteq L_2\subseteq\ldots$. The union $L=\cup L_i$ has the required property.
\end{proof}

The formation of the strong identity component is compatible with base change under certain assumptions:
\begin{lemma} \label{lemma: strong component stable under baseext}
	Assume that $k$ is algebraically closed and inversive. Let $G$ be a \mbox{$\s$-algebraic} group with $\sdim(G)>0$ and let $K$ be a $\s$-field extension of $k$.
	Then $(G_K)^{so}=(G^{so})_K.$
\end{lemma}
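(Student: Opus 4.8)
The plan is to establish the two inclusions $(G_K)^{so}\subseteq (G^{so})_K$ and $(G^{so})_K\subseteq (G_K)^{so}$ separately. The first is immediate: by Lemma \ref{lemma: sdim and base change} we have $\sdim((G^{so})_K)=\sdim(G^{so})=\sdim(G)=\sdim(G_K)$, so $(G^{so})_K$ is a $\s$-closed subgroup of $G_K$ of full $\s$-dimension, and the minimality characterizing the strong identity component forces $(G_K)^{so}\subseteq (G^{so})_K$ (note $\sdim(G_K)>0$, so $(G_K)^{so}$ exists). The real content is the reverse inclusion, and the idea is to descend the strong identity component along a sufficiently large $\s$-field extension.

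First I would treat the special case of an extension $L/k$ with $L^{\Aut(L/k)}=k$, as supplied by Lemma \ref{lemma: exists good extension}, and show $(G_L)^{so}=(G^{so})_L$. Write $H=(G_L)^{so}$ and $\Gamma=\Aut(L/k)$. Each $\tau\in\Gamma$ acts on $k\{G\}\otimes_k L=L\{G_L\}$ by the $\tau$-semilinear Hopf-algebra automorphism $f\otimes c\mapsto f\otimes\tau(c)$; this preserves the filtration by the subalgebras $k[\G[i]]\otimes_k L$ and, being a ring automorphism, preserves the dimensions of all the Zariski closures. Hence the image $\tau(H)$ is again a $\s$-closed subgroup of $G_L$ with $\sdim(\tau(H))=\sdim(H)=\sdim(G_L)$, and uniqueness of the smallest subgroup of full $\s$-dimension gives $\tau(H)=H$. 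Consequently the defining ideal $\I(H)\subseteq k\{G\}\otimes_k L$ is a $\Gamma$-stable $L$-subspace.

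The key step is then a Galois-descent argument: since $L^\Gamma=k$, any $\Gamma$-stable $L$-subspace $W$ of $V\otimes_k L$ (for a $k$-vector space $V$, with $\Gamma$ acting through the second factor) satisfies $W=(W\cap V)\otimes_k L$. I would prove this by the standard minimal-support argument: a nonzero element of $W$ of minimal support, normalized so that one coordinate equals $1$, is fixed by every $\tau\in\Gamma$ and therefore lies in $V$, and passing to $(V/(W\cap V))\otimes_k L$ shows that $W$ has no further elements. Applying this to $\I(H)$ produces an ideal $\ida=\I(H)\cap k\{G\}$ of $k\{G\}$ with $\ida\otimes_k L=\I(H)$; faithful flatness of $k\to L$ guarantees that $\ida$ inherits the full $\s$-Hopf structure, so it defines a $\s$-closed subgroup $H_0\le G$ with $(H_0)_L=H$. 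Then $\sdim(H_0)=\sdim(H)=\sdim(G)$ by Lemma \ref{lemma: sdim and base change}, whence $G^{so}\subseteq H_0$ and $(G^{so})_L\subseteq (H_0)_L=(G_L)^{so}$, giving $(G_L)^{so}=(G^{so})_L$.

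Finally I would deduce the general case. Choosing $L$ as above with $K\subseteq L$, base change of the easy inclusion over $L/K$ gives $(G_L)^{so}\subseteq ((G_K)^{so})_L$, while the special case gives $(G_L)^{so}=(G^{so})_L=((G^{so})_K)_L$. Hence $((G^{so})_K)_L\subseteq ((G_K)^{so})_L$, and since base change along the faithfully flat extension $K\to L$ reflects inclusions of $\s$-closed subgroups, we obtain $(G^{so})_K\subseteq (G_K)^{so}$, which combined with the first inclusion proves the equality. I expect the descent step to be the main obstacle: one must verify that the canonical nature of the strong identity component really makes $\I(H)$ a $\Gamma$-stable $L$-subspace (this is where invariance of $\s$-dimension under the semilinear action is needed) and that the descended ideal retains its $\s$-Hopf structure, both of which rest on having the extension $L$ with $L^{\Aut(L/k)}=k$ from Lemma \ref{lemma: exists good extension}.
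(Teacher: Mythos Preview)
Your proposal is correct and follows essentially the same route as the paper: the easy inclusion via invariance of $\sdim$ under base change, then passage to an extension $L$ with $L^{\Aut(L/k)}=k$ (Lemma~\ref{lemma: exists good extension}), $\Gamma$-invariance of $\I((G_L)^{so})$ by uniqueness of the strong identity component, descent of this ideal to $k$, and finally the comparison over $L$ to conclude for the intermediate $K$. The only cosmetic difference is that you sketch the descent step (that a $\Gamma$-stable $L$-subspace of $V\otimes_k L$ equals $W^\Gamma\otimes_k L$) via a minimal-support argument, whereas the paper invokes a Bourbaki reference for the same fact.
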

\begin{proof}
	As the $\s$-dimension is invariant under base extension (Lemma \ref{lemma: sdim and base change}),
	$$\sdim((G^{so})_K)=\sdim(G^{so})=\sdim(G)=\sdim(G_K).$$ Therefore $(G_K)^{so}\leq (G^{so})_K$.
	
	Let us now next show that $(G_K)^{so}$ descends to $k$, i.e., there exists a $\s$-closed subgroup $H$ of $G$ with $(G_K)^{so}=H_K$. By Lemma \ref{lemma: exists good extension} there exists a $\s$-field extension $L$ of $K$ such that $L^{\Aut(L|k)}=k$, where $\Aut(L|k)$ is the group of all $\s$-field automorphisms of $L|k$. The group $\Aut(L|k)$ acts on $L\{G_L\}=k\{G\}\otimes_k L$ by $k$-$\s$-algebra automorphisms via the right factor. Let $H'$ be a $\s$-closed subgroup of $G_L$. Since the Hopf algebra structure maps commute with the $\Aut(L|k)$-action, $\tau(\I(H'))$ is a $\s$-Hopf ideal of $k\{G\}\otimes_k L$ for every $\tau\in \Aut(L|k)$. Moreover, the $\s$-dimension of the $\s$-closed subgroup of $G_L$ defined by $\tau(\I(H'))$ is equal to the $\s$-dimension of $H'$. Since $\I((G_L)^{so})$ is the unique maximal $\s$-Hopf ideal of $k\{G\}\otimes_k L$ such that $\sdim((G_L)^{so})=\sdim(G_L)$, we see that $\tau(\I((G_L)^{so}))=\I((G_L)^{so})$ for every $\tau\in \Aut(L|k)$. Let
	$$\ida=\{f\in \I((G_L)^{so})|\ \tau(f)=f\ \forall\ \tau\in \Aut(L|k)\}=\I((G_L)^{so})\cap k\{G\}.$$
	Since the action of $\Aut(L|k)$ commutes with the Hopf algebra structure maps, $\ida$ is \mbox{$\s$-Hopf} ideal of $k\{G\}$ and therefore corresponds to a $\s$-closed subgroup $H$ of $k\{G\}$. We have $\ida\otimes_k L=\I((G_L)^{so})$. (See \cite[Corollary to Proposition 6, Chapter V, \S 10.4, A.V.63]{Bourbaki:Algebra2}.) So $H_L=(G_L)^{so}$.
	As $\sdim(H)=\sdim((G_L)^{so})=\sdim(G_L)=\sdim(G)$ we see that $G^{so}\leq H$, therefore $(G^{so})_L\leq H_L=(G_L)^{so}$. Hence also
	$$((G^{so})_K)_L=(G^{so})_L\leq (G_L)^{so}\leq ((G_K)^{so})_L.$$
	Thus $(G^{so})_K\leq (G_K)^{so}$.
\end{proof}

The following example shows that the formation of the strong identity component is in general not compatible with base change.

\begin{ex}
	Let $G$ be the strongly connected $\s$-algebraic group from Example \ref{ex: strongly connected but not sreduced}. Let $K=k^*$ be the inversive closure of $k$ (see (\cite[Definition 2.1.6]{Levin:difference})) and let $\mu\in K$ with $\s(\mu)=\lambda$. Then $G_K$ is not strongly connected since it has the $\s$-closed subgroup $H$ of $\sdim(H)=1=\sdim(G)$ given by
	$$H(R)=\{(g_1,g_2)\in R^2|\ g_1=\mu g_2\}$$ for any $k$-$\s$-algebra $R$. So $(G_K)^{so}$ is properly contained in $(G^{so})_K=G_K$.
\end{ex}

We are now prepared to prove Proposition \ref{prop: normality for strong comp}.

\begin{proof}[Proof of Proposition \ref{prop: normality for strong comp}]
	We have to show that the morphism of $\s$-varieties
	$$\f\colon G\times H^{so}\to G,\ (g,h)\mapsto ghg^{-1}$$ maps into $H^{so}$.
	We know from Lemma \ref{lemma: strongly connected is sintegral} that $H^{so}$ is perfectly $\s$-reduced. By assumption also $G$ is perfectly $\s$-reduced. Therefore, by Lemma~\ref{lemma: tensor stays}~(iv), also the product $G\times H^{so}$ is perfectly $\s$-reduced. So, by Lemma \ref{lemma: morphism with perfectly sreduced}, it suffices to show that $\f_K((G\times H^{so})(K))\subseteq H^{so}(K)$ for every $\s$-field extension $K$ of $k$. Let $g\in G(K)$. Then $g$ induces an automorphism of $G_K$ by conjugation. Since $H$ is normal in $G$ we have an induced automorphism on $H_K$. This automorphism maps $(H_K)^{so}$ into $(H_K)^{so}$. But $(H_K)^{so}=(H^{so})_K$ by Lemma \ref{lemma: strong component stable under baseext}. This shows that conjugation by $g$ maps $H^{so}(K)$ into $H^{so}(K)$. Thus $\f_K((G\times H^{so})(K))\subseteq H^{so}(K)$ as required.
\end{proof}

\section{Jordan-H\"{o}lder theorem} \label{sec: JordanHoelder}

In this section we apply the results from the previous sections to establish our Jordan-H\"{o}lder type theorem for $\s$-algebraic groups. 
A Jordan-H\"{o}lder type theorem for algebraic groups can be found in \cite{Rosenlicht:SomeBasicTheoremsOnAlgebraicGroups}, while a Jordan-H\"{o}lder type theorem for differential algebraic groups has been proved in \cite{CassidySinger:AJordanHoelderTheoremForDifferentialAlgebraicGroups}.

As we will show, the Schreier refinement theorem also holds for $\s$-algebraic groups (Theorem~\ref{theo: schreier refinement}). In particular, any two decomposition series of a $\s$-algebraic group are equivalent. Here a decomposition series is a subnormal series such that the quotient groups have no proper non-trivial normal $\s$-closed subgroups.

However, a $\s$-algebraic group rarely has a decomposition series. It is therefore useful to consider more general subnormal series and to relax the condition that the quotient groups should have no proper non-trivial normal $\s$-closed subgroups. This is where the almost-simple $\s$-algebraic groups enter into the picture.

The basic idea is to consider $\s$-algebraic groups up to quotients by zero $\s$-dimensional normal subgroups. Formally this is realized by replacing in the uniqueness statement of the classical Jordan-H\"{o}lder theorem the notion of isomorphism by the notion of isogeny.

\medskip

Our first aim is to prove the analog of the Schreier refinement theorem, which plays a key role in the proof of the uniqueness part of our Jordan-H\"{o}lder type theorem. We follow along the lines of the well-known proof via the Butterfly lemma. (Cf. \cite[Section 1.3]{Lang:Algebra} and \cite[Section~6~a]{Milne:AlgebraicGroupsTheTheoryOfGroupSchemesOfFiniteTypeOverAField}.) We will need two analogs of elementary statements about groups.

\begin{lemma} \label{lemma: Dedekind law}
	Let $N$, $G$ and $H$ be $\s$-closed subgroups of a $\s$-algebraic group $G'$ such that $N\unlhd G$ and $N$ normalizes $H$. Then $G\cap NH=N(G\cap H)$.
\end{lemma}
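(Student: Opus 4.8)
The plan is to prove the two inclusions separately, with the forward inclusion being essentially formal and the reverse inclusion carrying all the difficulty coming from the failure of pointwise surjectivity of quotient maps. First I would record that $N(G\cap H)$ genuinely makes sense: since $N\unlhd G$ we have $N\leq G$, so conjugation by an element of $N(R')$ preserves $G(R')$, and by hypothesis it also preserves $H(R')$; hence $N$ normalizes $G\cap H$ and the product subgroup $N(G\cap H)$ is defined.

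For the inclusion $N(G\cap H)\subseteq G\cap NH$ I would argue via the minimality of the product. From $N\leq G$ we get $N\subseteq G$ and $N\subseteq NH$, so $N\subseteq G\cap NH$; likewise $G\cap H\subseteq G$ and $G\cap H\subseteq H\subseteq NH$, so $G\cap H\subseteq G\cap NH$. As $G\cap NH$ is a $\s$-closed subgroup of $G'$ and $N(G\cap H)$ is the smallest $\s$-closed subgroup containing both $N$ and $G\cap H$, the inclusion follows at once.

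The reverse inclusion $G\cap NH\subseteq N(G\cap H)$ is the heart of the matter, and I would check it on $R$-points for an arbitrary $k$-$\s$-algebra $R$. Fix $x\in(G\cap NH)(R)=G(R)\cap (NH)(R)$. By the explicit description of the points of a product (Lemma~\ref{lemma: f(G) faithfully flat} applied to the multiplication morphism), there is a faithfully flat $R$-$\s$-algebra $S$ and elements $n\in N(S)$, $h\in H(S)$ with $x=nh$ in $G'(S)$. Now the classical Dedekind argument runs over $S$: since $N\subseteq G$ we have $n\in G(S)$, and $x\in G(R)\subseteq G(S)$, so $h=n^{-1}x\in G(S)\cap H(S)=(G\cap H)(S)$; therefore $x=nh\in N(S)(G\cap H)(S)\subseteq \big(N(G\cap H)\big)(S)$.

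The main obstacle, and the final step, is to descend this membership from $S$ back to $R$, since the decomposition $x=nh$ was only available after the faithfully flat extension. This is exactly what Lemma~\ref{lemma: faithfullyflat intersection} supplies: the map $R\to S$ is faithfully flat, hence injective, and $N(G\cap H)$ is a $\s$-closed $\s$-subvariety of $G'$, so $\big(N(G\cap H)\big)(S)\cap G'(R)=\big(N(G\cap H)\big)(R)$. Since $x\in G'(R)$, we conclude $x\in \big(N(G\cap H)\big)(R)$. As $R$ and $x$ were arbitrary, this yields $G\cap NH\subseteq N(G\cap H)$, and combined with the previous paragraph gives the desired equality $G\cap NH=N(G\cap H)$.
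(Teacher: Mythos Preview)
Your proof is correct and follows essentially the same approach as the paper: the forward inclusion is dispatched formally from $N\leq G\cap NH$ and $G\cap H\leq G\cap NH$, and the reverse inclusion is proved by lifting to a faithfully flat $R$-$\s$-algebra $S$ where the factorization $x=nh$ becomes available, running the classical Dedekind argument over $S$, and then descending via Lemma~\ref{lemma: faithfullyflat intersection}. Your extra remark that $N$ normalizes $G\cap H$ (so that $N(G\cap H)$ is defined) is a welcome clarification the paper leaves implicit.
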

\begin{proof}
	As $N\leq G\cap NH$ and $G\cap H\leq G\cap NH$ it is clear that $N(G\cap H)\subseteq G\cap NH$.
	
	Conversely, let $R$ be a $k$-$\s$-algebra and
	$g\in (G\cap NH)(R)$. There exists a faithfully flat $R$\=/$\s$\=/algebra $S$ and $n\in N(S)$, $h\in H(S)$ such that $g=nh$ in $G'(S)$. But then $h=n^{-1}g\in G(S)$ and therefore $g=nh\in N(S)(G(S)\cap H(S))\subseteq (N(G\cap H))(S)$. It follows from Lemma \ref{lemma: faithfullyflat intersection} that $g\in (N(G\cap H))(R)$.
\end{proof}

\begin{lemma} \label{lemma: normal normalizes}
	Let $H_1\unlhd H_2$ be $\s$-closed subgroups of a $\s$-algebraic group $G$. Assume that $H_2$ normalizes $N\leq G$. Then $NH_1\unlhd NH_2$.
\end{lemma}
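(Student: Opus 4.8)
The plan is to reduce the statement to an elementary conjugation identity in abstract groups, carried out over a faithfully flat extension on which the points of $NH_1$ and $NH_2$ become honest products. First I would note that, since $H_1\leq H_2$ and $H_2$ normalizes $N$, also $H_1$ normalizes $N$; hence both $NH_1$ and $NH_2$ are well-defined $\s$-closed subgroups of $G$ (Lemma \ref{lemma: f(G) subgroup}) and $NH_1\leq NH_2$. To establish normality I must show that for every $k$-$\s$-algebra $R$ the subgroup $(NH_1)(R)$ is normalized by $(NH_2)(R)$.

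So fix $R$, $g\in(NH_1)(R)$ and $g'\in(NH_2)(R)$. By Lemma \ref{lemma: f(G) faithfully flat}, applied to the multiplication morphisms $m\colon N\rtimes H_1\to G$ and $m\colon N\rtimes H_2\to G$, there are faithfully flat $R$-$\s$-algebras $S_1,S_2$ together with $n_1\in N(S_1)$, $h_1\in H_1(S_1)$ and $n_2\in N(S_2)$, $h_2\in H_2(S_2)$ such that $g=n_1h_1$ and $g'=n_2h_2$. Passing to $S=S_1\otimes_R S_2$, which is again a faithfully flat $R$-$\s$-algebra, all four elements live in $G(S)$ and both factorizations hold there simultaneously.

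The core is then the purely group-theoretic computation in $G(S)$:
\[
g'g(g')^{-1}=n_2\,(h_2n_1h_2^{-1})\,(h_2h_1h_2^{-1})\,n_2^{-1}.
\]
Because $H_2$ normalizes $N$ we have $n_1':=h_2n_1h_2^{-1}\in N(S)$, and because $H_1\unlhd H_2$ we have $h_1':=h_2h_1h_2^{-1}\in H_1(S)$. Rewriting $h_1'n_2^{-1}=\bigl(h_1'n_2^{-1}(h_1')^{-1}\bigr)h_1'$ and using that $H_1$ normalizes $N$ to see $h_1'n_2^{-1}(h_1')^{-1}\in N(S)$, I obtain
\[
g'g(g')^{-1}=n_2\,n_1'\,\bigl(h_1'n_2^{-1}(h_1')^{-1}\bigr)\,h_1'\in N(S)H_1(S)\subseteq(NH_1)(S),
\]
where the last inclusion is the fat-subfunctor description of $NH_1$ recalled before Theorem \ref{theo: isom2}.

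Finally, since $g'g(g')^{-1}\in G(R)$ and $R\to S$ is faithfully flat, hence injective, Lemma \ref{lemma: faithfullyflat intersection} gives $(NH_1)(S)\cap G(R)=(NH_1)(R)$, so $g'g(g')^{-1}\in(NH_1)(R)$, as required. None of the steps is genuinely hard; the only real care lies in the bookkeeping of the faithfully flat extensions so that $g$ and $g'$ factor as products over a common algebra, which is exactly what passing to $S_1\otimes_R S_2$ accomplishes, and in the prime-superscript notation $(h_1')^{-1}$ to avoid double superscripts.
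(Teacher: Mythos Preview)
Your proof is correct. The paper's proof is much shorter: it observes that $N\rtimes H_1$ is a normal $\s$-closed subgroup of $N\rtimes H_2$ (which is immediate on points since normality of $H_1$ in $H_2$ gives normality of $N\rtimes H_1$ in $N\rtimes H_2$ for any semidirect product), and then applies Lemma~\ref{lemma: normal stays normal under surjective map} to the quotient map $m\colon N\rtimes H_2\twoheadrightarrow NH_2$ to conclude that $NH_1=m(N\rtimes H_1)$ is normal in $NH_2$.

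Your approach is a direct pointwise verification using the fat-subfunctor description of $NH_i$ and descent via Lemma~\ref{lemma: faithfullyflat intersection}; in effect you are re-running the proof of Lemma~\ref{lemma: normal stays normal under surjective map} by hand in this particular case. The paper's route is cleaner because it isolates the faithfully-flat bookkeeping once and for all in that lemma and then reduces the present statement to a one-line observation about semidirect products. Your route has the advantage of being self-contained and making the conjugation identity completely explicit, but it duplicates work already packaged in Lemma~\ref{lemma: normal stays normal under surjective map}.
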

\begin{proof}
	Clearly $N\rtimes H_1$ is a normal $\s$-closed subgroup of $N\rtimes H_2$. Therefore $NH_1=m(N\rtimes H_1)$ is a normal $\s$-closed subgroup of $NH_2=m(N\rtimes H_2)$ by Lemma \ref{lemma: normal stays normal under surjective map}.
\end{proof}

The following lemma is the analog of the Butterfly (or Zassenhaus) lemma.

\begin{lemma} \label{lemma: butterfly}
	Let $N_1\unlhd H_1$ and $N_2\unlhd H_2$ be $\s$-closed subgroups of a $\s$-algebraic group $G$. Then $N_1(H_1\cap N_2)\unlhd N_1(H_1\cap H_2)$,
	$N_2(N_1\cap H_2)\unlhd N_2(H_1\cap H_2)$ and
	$$\frac{N_1(H_1\cap H_2)}{N_1(H_1\cap N_2)}\simeq \frac{N_2(H_1\cap H_2)}{N_2(N_1\cap H_2)}.$$
\end{lemma}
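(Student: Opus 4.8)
The plan is to reproduce the classical proof of the Butterfly (Zassenhaus) lemma, organizing everything so that the real work is carried by the second isomorphism theorem (Theorem~\ref{theo: isom2}) together with the difference-algebraic Dedekind law (Lemma~\ref{lemma: Dedekind law}). Concretely, I would introduce the ``middle'' $\s$-closed subgroup
$$D=(N_1\cap H_2)(H_1\cap N_2)$$
of $H_1\cap H_2$ and aim to show that each of the two displayed quotients is canonically isomorphic to $(H_1\cap H_2)/D$. The asserted isomorphism then follows by composition, and the symmetry of $D$ under $1\leftrightarrow 2$ makes the two computations mirror images of each other.

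First I would settle the normality claims and confirm that $D$ is a well-defined normal $\s$-closed subgroup of $H_1\cap H_2$. Since $N_2\unlhd H_2$ and $H_1\cap H_2\subseteq H_2$, the subgroup $H_1\cap N_2=(H_1\cap H_2)\cap N_2$ is normal in $H_1\cap H_2$; symmetrically $N_1\cap H_2$ is normal in $H_1\cap H_2$. As $N_1\unlhd H_1$ and $H_1\cap H_2\subseteq H_1$, the group $H_1\cap H_2$ normalizes $N_1$, so Lemma~\ref{lemma: normal normalizes} (with $H_1\cap N_2\unlhd H_1\cap H_2$ in the role of $H_1\unlhd H_2$ and $N_1$ in the role of $N$) yields $N_1(H_1\cap N_2)\unlhd N_1(H_1\cap H_2)$. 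A second application of the same lemma (with $N_1\cap H_2\unlhd H_1\cap H_2$ in the role of $H_1\unlhd H_2$ and $H_1\cap N_2$ in the role of $N$) gives $(H_1\cap N_2)(N_1\cap H_2)\unlhd (H_1\cap N_2)(H_1\cap H_2)=H_1\cap H_2$, so that $D=(N_1\cap H_2)(H_1\cap N_2)$ is a normal $\s$-closed subgroup of $H_1\cap H_2$, independent of the order of its two factors.

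Next comes the isomorphism, which I would obtain by invoking Theorem~\ref{theo: isom2} with $H=H_1\cap H_2$ and $N=N_1(H_1\cap N_2)$. One checks that $H$ normalizes $N$ (because $H_1\cap H_2\subseteq N_1(H_1\cap H_2)$ and $N\unlhd N_1(H_1\cap H_2)$) and computes the product $HN=N_1(H_1\cap H_2)$ directly from the characterization of $HN=m(N\rtimes H)$ as the smallest $\s$-closed subgroup containing both factors. The second isomorphism theorem then gives
$$\frac{H_1\cap H_2}{(H_1\cap H_2)\cap N_1(H_1\cap N_2)}\ \simeq\ \frac{N_1(H_1\cap H_2)}{N_1(H_1\cap N_2)}.$$
It remains to identify the denominator on the left with $D$, and this is exactly where the Dedekind law enters: applying Lemma~\ref{lemma: Dedekind law} with (in the notation of that lemma) $G=H_1\cap H_2$, the normal subgroup $N=H_1\cap N_2\unlhd H_1\cap H_2$, and $H=N_1$, which $H_1\cap N_2$ normalizes, I get $(H_1\cap H_2)\cap N_1(H_1\cap N_2)=(H_1\cap N_2)\big((H_1\cap H_2)\cap N_1\big)=(H_1\cap N_2)(N_1\cap H_2)=D$, using $N_1\subseteq H_1$ for the middle equality. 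Swapping the roles of the indices $1$ and $2$ produces, with the same $D$, an isomorphism $N_2(H_1\cap H_2)/N_2(N_1\cap H_2)\simeq (H_1\cap H_2)/D$, and composing the two isomorphisms proves the claim.

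The only genuinely delicate point is the manipulation of set-products of $\s$-closed subgroups: since the underlying group functors $G(R)$ are not right exact, the naive ``element-chasing'' arguments of abstract group theory are unavailable. I expect the main obstacle to be verifying the product identity $HN=N_1(H_1\cap H_2)$ and the Dedekind reduction at the level of $\s$-closed subgroups rather than pointwise. Both, however, are already accounted for: the fat-subfunctor formalism is packaged into Lemmas~\ref{lemma: Dedekind law} and~\ref{lemma: normal normalizes} and into the construction of $HN=m(N\rtimes H)$ preceding Theorem~\ref{theo: isom2}, so no new machinery should be required beyond careful bookkeeping of the roles of the various groups.
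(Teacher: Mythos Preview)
Your proposal is correct and follows essentially the same route as the paper's proof: both establish the normality claims via Lemma~\ref{lemma: normal normalizes}, apply Theorem~\ref{theo: isom2} with $H=H_1\cap H_2$ and $N=N_1(H_1\cap N_2)$, simplify the denominator $(H_1\cap H_2)\cap N_1(H_1\cap N_2)$ to $(H_1\cap N_2)(N_1\cap H_2)$ via Lemma~\ref{lemma: Dedekind law} with exactly the same choice of $N,G,H$, and conclude by symmetry. Your write-up is slightly more explicit in a few places (e.g., separately verifying $D\unlhd H_1\cap H_2$), but the argument is the same.
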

\begin{proof}
	Since $H_1\cap N_2$ is normal in $H_1\cap H_2$ it follows from Lemma \ref{lemma: normal normalizes} that $N_1(H_1\cap N_2)$ is normal in $N_1(H_1\cap H_2)$. Similarly, $N_2(N_1\cap H_2)\unlhd N_2(H_1\cap H_2)$. As $H_1\cap H_2$ normalizes $N_1(H_1\cap N_2)$ it follows from Theorem \ref{theo: isom2} that
	\begin{equation} \label{eqn: butterfly}
	\frac{H_1\cap H_2}{(H_1\cap H_2)\cap N_1(H_1\cap N_2)}\simeq\frac{(H_1\cap H_2)N_1(H_1\cap N_2)}{N_1(H_1\cap N_2)}.
	\end{equation}
	Lemma \ref{lemma: Dedekind law} with $N=H_1\cap N_2$, $G=H_1\cap H_2$ and $H=N_1$ shows that
	$$(H_1\cap H_2)\cap N_1(H_1\cap N_2)=(H_1\cap N_2)(H_1\cap H_2\cap N_1)= (H_1\cap N_2)(N_1\cap H_2).$$
	Because $H_1\cap N_2\subseteq H_1\cap H_2$ we find $(H_1\cap H_2)N_1(H_1\cap N_2)=N_1(H_1\cap H_2)$. Therefore (\ref{eqn: butterfly}) becomes
	$$ \frac{H_1\cap H_2}{(H_1\cap N_2)(N_1\cap H_2)}\simeq\frac{N_1(H_1\cap H_2)}{N_1(H_1\cap N_2)}.$$ By symmetry
	$$ \frac{H_1\cap H_2}{(H_1\cap N_2)(N_1\cap H_2)}\simeq\frac{N_2(H_1\cap H_2)}{N_2(N_1\cap H_2)}.$$
\end{proof}

\begin{defi}
	Let $G$ be a $\s$-algebraic group. A \emph{subnormal series}\index{subnormal series} of $G$ is a sequence
	\begin{equation} \label{eqn: subnormal series} G=G_0\supseteq G_1\supseteq\cdots\supseteq G_n=1 \end{equation}
	of $\s$-closed subgroups of $G$ such that $G_{i+1}$ is normal in $G_i$ for $i=0,\ldots,n-1$. Another subnormal series
	\begin{equation} \label{eqn: subnormal series2} G=H_0\supseteq H_1\supseteq\cdots\supseteq H_m=1 \end{equation}
	of $G$ is a \emph{refinement} of (\ref{eqn: subnormal series}) if $\{G_0,\ldots,G_n\}\subseteq\{H_1,\ldots,H_m\}$. The subnormal series (\ref{eqn: subnormal series}) and (\ref{eqn: subnormal series2}) are \emph{equivalent} if $m=n$ and there exists a permutation $\pi$ such that
	the quotient groups $G_i/G_{i+1}$ and $H_{\pi(i)}/H_{\pi(i)+1}$ are isomorphic for $i=0,\ldots,n-1$. 
\end{defi}

The following theorem is the analog of the Schreier refinement theorem.

\begin{theo} \label{theo: schreier refinement}
	Any two subnormal series of a $\s$-algebraic group have equivalent refinements.
\end{theo}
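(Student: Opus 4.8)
The plan is to reproduce the classical Schreier argument, with the $\sigma$-analog of the Butterfly lemma (Lemma~\ref{lemma: butterfly}) serving as the engine that does all the real work. Let
$$G=G_0\supseteq G_1\supseteq\cdots\supseteq G_n=1\qquad\text{and}\qquad G=H_0\supseteq H_1\supseteq\cdots\supseteq H_m=1$$
be the two given subnormal series. For $0\leq i\leq n-1$ and $0\leq j\leq m$ I would set $G_{i,j}=G_{i+1}(G_i\cap H_j)$, observing that $G_{i,0}=G_{i+1}(G_i\cap G)=G_i$ and $G_{i,m}=G_{i+1}(G_i\cap 1)=G_{i+1}$, so that the chain $G_i=G_{i,0}\supseteq G_{i,1}\supseteq\cdots\supseteq G_{i,m}=G_{i+1}$ subdivides the $i$-th step of the first series. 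Symmetrically, for $0\leq j\leq m-1$ and $0\leq i\leq n$ I would set $H_{j,i}=H_{j+1}(H_j\cap G_i)$, which subdivides the $j$-th step of the second series. Concatenating these subdivisions produces a refinement of each original series, each having $nm$ factors indexed by pairs $(i,j)$.

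Next I would verify that these concatenations are honest subnormal series, i.e.\ that each term is normal in its predecessor. Applying Lemma~\ref{lemma: butterfly} with $N_1=G_{i+1}\unlhd H_1=G_i$ and $N_2=H_{j+1}\unlhd H_2=H_j$ furnishes precisely the two required normality assertions, namely $G_{i,j+1}=N_1(H_1\cap N_2)\unlhd N_1(H_1\cap H_2)=G_{i,j}$ and, symmetrically, $H_{j,i+1}\unlhd H_{j,i}$. The degenerate endpoints $G_{i,m}=G_{i+1}=G_{i+1,0}$ (and likewise $H_{j,n}=H_{j+1}=H_{j+1,0}$) glue the subdivided steps together into a single long subnormal series, so that both refinements are genuine refinements in the sense of the definition.

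Finally, the same instance of Lemma~\ref{lemma: butterfly} supplies the isomorphism
$$\frac{G_{i,j}}{G_{i,j+1}}=\frac{N_1(H_1\cap H_2)}{N_1(H_1\cap N_2)}\simeq\frac{N_2(H_1\cap H_2)}{N_2(N_1\cap H_2)}=\frac{H_{j,i}}{H_{j,i+1}},$$
so the $(i,j)$-factor of the refined first series is isomorphic to the $(j,i)$-factor of the refined second series. Since both refinements have the same length $nm$, the transposition bijection $(i,j)\mapsto(j,i)$ is the desired permutation matching isomorphic factors, and the two refinements are therefore equivalent. I do not anticipate any serious obstacle beyond careful bookkeeping: with Lemma~\ref{lemma: butterfly} in hand, the only points demanding attention are checking that the degenerate endpoints telescope the blocks into genuine subnormal series and confirming that the transpose pairing lines up the $nm$ factors on the two sides.
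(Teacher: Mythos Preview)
Your proposal is correct and is essentially identical to the paper's own proof: both set $G_{i,j}=G_{i+1}(G_i\cap H_j)$ and $H_{j,i}=H_{j+1}(H_j\cap G_i)$, invoke Lemma~\ref{lemma: butterfly} for the normality and the isomorphism $G_{i,j}/G_{i,j+1}\simeq H_{j,i}/H_{j,i+1}$, and pair the $nm$ factors by transposition. You are somewhat more explicit about checking the endpoint telescoping and the normality, but there is no substantive difference.
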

\begin{proof}
	Let $$G=G_0\supseteq G_1\supseteq\cdots\supseteq G_n=1$$ and
	$$G=H_0\supseteq H_1\supseteq\cdots\supseteq H_m=1$$ be subnormal series of a $\s$-algebraic group $G$. Set $G_{i,j}=G_{i+1}(H_j\cap G_i)$ for $i=0,\ldots,n-1$ and $j=0,\ldots,m$. Then
	$$G=G_0=G_{0,0}\supseteq G_{0,1}\supseteq G_{0,2}\supseteq\cdots\supseteq G_{0,m}=G_1=G_{1,0}\supseteq G_{1,1}\supseteq\cdots\supseteq G_{n-1,m}=1$$
	is a subnormal series for $G$. Similarly, setting $H_{j,i}=H_{j+1}(G_i\cap H_j)$ for $j=0,\ldots,m-1$ and $i=0,\ldots,n$, defines a subnormal series for $G$.
	By Lemma \ref{lemma: butterfly}
	$$G_{i,j}/G_{i,j+1}\simeq H_{j,i}/H_{j,i+1}.$$
\end{proof}

See Example \ref{ex: Jordan Hoelder} for an example illustrating Theorem \ref{theo: schreier refinement}.
The following definition is crucial for the uniqueness part of our Jordan-H\"{o}lder type theorem.

\begin{defi}
	Let $G$ and $H$ be strongly connected $\s$-algebraic groups. A morphism $\f\colon G\to H$ is an \emph{isogeny} if $\f$ is a quotient map and $\sdim(\ker(\f))=0$. Two strongly connected $\s$-algebraic groups $H_1$ and $H_2$ are \emph{isogenous}\index{isogenous} if there exists a strongly connected $\s$-algebraic group $G$ and isogenies $G\twoheadrightarrow H_1$ and $G\twoheadrightarrow H_2$.
\end{defi}
By Theorem \ref{theo: isom1} and  Corollary \ref{cor: sdim and ord for quotients} a quotient map $\f\colon G\twoheadrightarrow H$ is an isogeny, if and only if $\sdim(G)=\sdim(H)$. In particular, isogenous $\s$-algebraic groups have the same $\s$-dimension.
\begin{lemma} \label{lemma: composition of isogenies}
	The composition of two isogenies is an isogeny.
\end{lemma}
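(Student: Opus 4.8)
The plan is to reduce everything to the characterization of isogenies stated immediately before the lemma: a quotient map between strongly connected $\s$-algebraic groups is an isogeny if and only if its source and target have equal $\s$-dimension. Thus, given isogenies $\f\colon G\twoheadrightarrow H$ and $\psi\colon H\twoheadrightarrow K$, it suffices to verify two things about the composite $\psi\f\colon G\to K$: that it is a quotient map, and that $\sdim(G)=\sdim(K)$. Note that $G$ and $K$ are strongly connected by hypothesis, so $\psi\f$ lies in the scope where the notion of isogeny is defined.

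For the first point I would invoke Proposition~\ref{prop: surjective morphism}, according to which a morphism of $\s$-algebraic groups is a quotient map precisely when its dual map on coordinate rings is injective. Since $(\psi\f)^*=\f^*\circ\psi^*$, and both $\f^*\colon k\{H\}\to k\{G\}$ and $\psi^*\colon k\{K\}\to k\{H\}$ are injective (because $\f$ and $\psi$ are quotient maps), the composite $\f^*\psi^*$ is injective. Hence $\psi\f$ is a quotient map.

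For the second point, since $\f$ and $\psi$ are isogenies the characterization yields $\sdim(G)=\sdim(H)$ and $\sdim(H)=\sdim(K)$, whence $\sdim(G)=\sdim(K)$. As $G$ and $K$ are strongly connected and $\psi\f$ is a quotient map with equal $\s$-dimension of source and target, the characterization shows that $\psi\f$ is an isogeny. Alternatively, one could argue directly: by Theorem~\ref{theo: isom1} we have $G/\ker(\psi\f)\simeq (\psi\f)(G)=K$, so Corollary~\ref{cor: sdim and ord for quotients} gives $\sdim(G)=\sdim(\ker(\psi\f))+\sdim(K)$, forcing $\sdim(\ker(\psi\f))=0$.

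As for the main obstacle: there is essentially none of substance. The statement is a formal consequence of the stability of quotient maps under composition, which is immediate from the injectivity criterion, together with the transitivity of the $\s$-dimension equality along quotients. The only point requiring a moment's care is the bookkeeping that the composite still has strongly connected source and target, which is guaranteed by the hypotheses, so that "isogeny'' is meaningful for $\psi\f$.
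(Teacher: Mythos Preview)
Your proof is correct and follows essentially the same approach as the paper's: the paper simply asserts that the composition of quotient maps is a quotient map and then chains the $\s$-dimension equalities, while you spell out the first step via the injectivity criterion of Proposition~\ref{prop: surjective morphism} and add the bookkeeping about strong connectedness. No substantive difference.
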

\begin{proof}
	Clearly the composition of two quotient maps is a quotient map. If $G_1\to G_2$ and $G_2\to G_3$ are isogenies, then $\sdim(G_1)=\sdim(G_2)$ and $\sdim(G_2)=\sdim(G_3)$. Therefore $\sdim(G_1)=\sdim(G_3)$.
\end{proof}

\begin{lemma} \label{lemma: isogeny is equivalence relation}
	Isogeny is an equivalence relation on the class of strongly connected $\s$-algebraic groups.
\end{lemma}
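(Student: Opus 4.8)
The plan is to verify the three defining properties of an equivalence relation --- reflexivity, symmetry, and transitivity --- on the class of strongly connected $\s$-algebraic groups. Reflexivity and symmetry are essentially immediate from the definition, so the only substantive content is transitivity, which is where I expect the main obstacle to lie.

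First, for \emph{reflexivity}, given a strongly connected $\s$-algebraic group $G$, the identity morphism $\id\colon G\to G$ is a quotient map with trivial kernel, hence an isogeny. Taking $G$ itself with the two isogenies $G\xrightarrow{\id}G$ and $G\xrightarrow{\id}G$ shows $G$ is isogenous to itself. \emph{Symmetry} is built directly into the definition: if $H_1$ and $H_2$ are isogenous via a strongly connected $G$ with isogenies $G\twoheadrightarrow H_1$ and $G\twoheadrightarrow H_2$, then the same data witnesses that $H_2$ is isogenous to $H_1$.

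The crux is \emph{transitivity}. Suppose $H_1$ is isogenous to $H_2$ and $H_2$ is isogenous to $H_3$. Then there are strongly connected groups $G$ and $G'$ with isogenies $G\twoheadrightarrow H_1$, $G\twoheadrightarrow H_2$, $G'\twoheadrightarrow H_2$, $G'\twoheadrightarrow H_3$. The natural candidate for a group connecting $H_1$ and $H_3$ is the fiber product $P=G\times_{H_2}G'$ over $H_2$, represented by $k\{G\}\otimes_{k\{H_2\}}k\{G'\}$. I would take the strong identity component $P^{so}$ of this fiber product (or of a suitable strongly connected $\s$-closed subgroup of it) and produce isogenies $P^{so}\twoheadrightarrow H_1$ and $P^{so}\twoheadrightarrow H_3$ by composing the projections $P\to G\twoheadrightarrow H_1$ and $P\to G'\twoheadrightarrow H_3$ with the inclusion $P^{so}\hookrightarrow P$. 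The key points to check are: (a) $P$ has the expected $\s$-dimension, so that the projections $P\to G$ and $P\to G'$ are quotient maps with zero-$\s$-dimensional kernel; (b) the restrictions of these composite maps to $P^{so}$ remain quotient maps onto $H_1$ and $H_3$; and (c) the resulting maps have zero-$\s$-dimensional kernels, i.e.\ satisfy $\sdim(P^{so})=\sdim(H_1)=\sdim(H_3)$, so that by the criterion just after the definition of isogeny they are isogenies.

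The main obstacle will be the bookkeeping with $\s$-dimensions through the fiber product and the passage to the strong identity component. Concretely, I would use Corollary~\ref{cor: sdim and ord for quotients} (additivity of $\s$-dimension in quotients) together with the dimension theorem (Theorem~\ref{theo: dimension theore}) to control $\sdim(P)$: since the projection $P\to G$ has kernel isomorphic to $\ker(G'\twoheadrightarrow H_2)$, which has $\s$-dimension zero, one gets $\sdim(P)=\sdim(G)=\sdim(H_1)=\sdim(H_2)=\sdim(H_3)$. Passing to $P^{so}$ preserves this $\s$-dimension by definition of the strong identity component, and one must verify that restricting the surjections to $P^{so}$ still yields quotient maps onto the (strongly connected) $H_i$ --- this follows because the image is a $\s$-closed subgroup of full $\s$-dimension in the strongly connected target $H_i$, hence equals $H_i$. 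Once all three $\s$-dimensions are matched, the characterization of isogenies via equality of $\s$-dimension (noted immediately after the definition) finishes the argument. The delicate step is ensuring $P^{so}$ is genuinely strongly connected and that the composite maps out of it are surjective; I expect this to require invoking strong connectedness of the $H_i$ rather than any deep new construction.
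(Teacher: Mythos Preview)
Your proposal is correct and matches the paper's proof essentially line for line: your fiber product $P=G\times_{H_2}G'$ is exactly the paper's $(\f_2\times\f_2')^{-1}(D)$ for the diagonal $D\leq H_2\times H_2$, and both arguments then pass to the strong identity component and verify that the two projections are isogenies onto the strongly connected targets. One small remark: to get $\sdim(P)=\sdim(G)$ from the zero-$\s$-dimensional kernel of $\operatorname{pr}_1\colon P\to G$ you need $\operatorname{pr}_1$ to be a quotient map, which follows from Proposition~\ref{prop: surjective morphism}~(iv) since $\f_2'$ is a quotient map; the dimension theorem (Theorem~\ref{theo: dimension theore}) is not actually needed here.
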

\begin{proof}
	Reflexivity and symmetry are obvious. Let us prove the transitivity. So let \mbox{$\f_1\colon G\twoheadrightarrow H_1$}, $\f_2\colon G\twoheadrightarrow H_2$ and $\f_2'\colon G'\twoheadrightarrow H_2$, $\f_3'\colon G'\twoheadrightarrow H_3$ be isogenies. The morphism $\f_2\times\f_2'\colon G\times G'\to H_2\times H_2$
	is a quotient map with kernel $\ker(\f_2)\times\ker(\f_2')$, which has $\s$-dimension zero by Lemma~\ref{lemma: sdim and ord for product}. The diagonal
	$D\leq H_2\times H_2$ given by $D(R)=\{(h_2,h_2)|\ h_2\in H_2(R)\}$ for any $k$-$\s$-algebra $R$ is a $\s$-closed subgroup of $H_2\times H_2$ isomorphic to $H_2$. Therefore $G''=((\f_2\times\f_2')^{-1}(D))^{so}$ is a $\s$-closed subgroup of $G\times G'$ with $\sdim(G'')=\sdim(H_2)$. Let $\pi\colon G''\to G$ and $\pi'\colon G''\to G'$ denote the projections onto the first and second factor respectively. We have the following diagram:
	$$
	\xymatrix{
		&  & G'' \ar_\pi[ld] \ar^{\pi'}[rd] & & \\
		& G \ar_-{\f_1}[ld] \ar^-{\f_2}[rd] &  & G' \ar_-{\f_2'}[ld] \ar^-{\f_3'}[rd] & \\
		H_1 &  & H_2 & & H_3 \\
	}
	$$

	We claim that $\pi$ and $\pi'$ are isogenies. We have $\ker(\pi)\leq 1\times \ker(\f_2')$. Therefore $\sdim(\ker(\pi))=0$ and consequently $$\sdim(\pi(G''))=\sdim(G'')=\sdim(H_2)=\sdim(G).$$ Since $G$ is strongly connected, this shows that $\pi(G'')=G$, so $\pi$ is a quotient map. Hence $\pi$ is an isogeny. Similarly, it follows that $\pi'$ is an isogeny. The isogenies $\f_1\pi$ and $\f_3'\pi'$ (Lemma~\ref{lemma: composition of isogenies}) show that $H_1$ and $H_3$ are isogenous.
\end{proof}

Difference algebraic groups rarely possess decomposition series, i.e., a subnormal series such that the quotient groups have no proper, non-trivial normal $\s$-closed subgroups. This is illustrated in the following example.  

%

\begin{ex} \label{ex: no decomposition series for Ga} Let $k$ be a $\s$-field of characteristic zero.
	The $\s$-algebraic group $G=\Ga$ does not have a decomposition series. Indeed, by \cite[Corollary A.3]{DiVizioHardouinWibmer:DifferenceAlgebraicRel}, every proper $\s$-closed subgroup $G$ of $\Ga$ is of the form
	$G(R)=\{g\in R|\ f(g)=0\}$ for some non-zero homogeneous linear difference equation $f=\s^n(y)+\lambda_{n-1}\s^{n-1}(y)+\cdots+\lambda_0y$.
	If $h$ is another non-trivial linear homogeneous difference equation, then the product $h*f$ in the sense of linear difference operators (see \cite[Section 3.1]{Levin:difference}) defines a $\s$-closed subgroup $H$ of $\Ga$ with $G\subsetneqq H\subsetneqq\Ga$. For example, for $h=\s(y)$ we have
	$$H(R)=\{g\in R| \ \s^{n+1}(g)+\sigma(\lambda_{n-1})\s^{n}(y)+\cdots+\sigma(\lambda_0)\s(g)=0 \}.$$
	
\end{ex}

To remedy this shortcoming we need to relax the condition that the quotient groups of a decomposition series should have no proper non-trivial normal $\s$-closed subgroups. This leads to the following definition:

\begin{defi} \label{defi: almost-simple salgebraic group}
	A $\s$-algebraic group over an algebraically closed, inversive $\s$-field is \emph{almost-simple} if it is perfectly $\s$-reduced, has positive $\s$-dimension and every normal proper $\s$\=/closed subgroup has $\s$-dimension zero.
\end{defi}

Recall that almost-simple algebraic groups are, by definition, required to be geometrically reduced. As argued in Remark \ref{rem: perfectly sreduced and variety}, the assumption to be perfectly $\s$-reduced over an algebraically closed inversive $\s$-field can be seen as an analog of this requirement. The structure of almost-simple $\s$-algebraic groups is investigated in the next section. In particular, it is shown there that for an almost-simple algebraic group $\G$, the $\s$-algebraic group $[\s]_k\G$ is almost-simple. See Examples \ref{ex: isogenous to Gm} and \ref{ex: almost-simple} for further examples of almost-simple $\s$-algebraic groups.

\begin{lemma} \label{lemma: almost-simple is sintegral}
	Assume that $k$ is algebraically closed and inversive. An almost-simple $\s$\=/algebraic group is strongly connected and $\s$-integral.
\end{lemma}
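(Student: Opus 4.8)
The plan is to derive both assertions directly from results already at our disposal, namely Corollary~\ref{cor: perfectly sreduced implies GSo normal} and Lemma~\ref{lemma: strongly connected is sintegral}. Let $G$ be almost-simple, so by definition $G$ is perfectly $\s$-reduced and $\sdim(G)>0$. First I would establish strong connectedness. Since $G$ is perfectly $\s$-reduced of positive $\s$-dimension over an inversive algebraically closed $\s$-field, Corollary~\ref{cor: perfectly sreduced implies GSo normal} guarantees that the strong identity component $G^{so}$ is a normal $\s$-closed subgroup of $G$. By the very definition of the strong identity component, $\sdim(G^{so})=\sdim(G)>0$, so $G^{so}$ is a normal $\s$-closed subgroup that does \emph{not} have $\s$-dimension zero. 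The defining property of almost-simplicity forces every proper normal $\s$-closed subgroup to have $\s$-dimension zero; hence $G^{so}$ cannot be proper, i.e.\ $G^{so}=G$. As $\sdim(G)>0$, this is exactly the statement that $G$ is strongly connected.

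For $\s$-integrality I would then invoke Lemma~\ref{lemma: strongly connected is sintegral}. Its hypotheses require $k$ to be perfect and inversive. In the present setting $k$ is assumed algebraically closed, hence perfect, and inversive by assumption, so the lemma applies to the strongly connected group $G$ produced in the previous step and yields that $G$ is $\s$-integral (and in particular perfectly $\s$-reduced, in harmony with the standing hypothesis).

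There is no genuine obstacle here: the substantive content has already been shouldered by Corollary~\ref{cor: perfectly sreduced implies GSo normal} (whose proof rests on Proposition~\ref{prop: normality for strong comp}) and by Lemma~\ref{lemma: strongly connected is sintegral}. The only point demanding a moment of care is to confirm that the blanket assumptions on $k$ supply all the conditions needed to invoke both results simultaneously, in particular that algebraic closedness furnishes the perfectness required by Lemma~\ref{lemma: strongly connected is sintegral}. Beyond that verification, the argument is simply a short chaining of these two facts with the definitions of almost-simplicity and of the strong identity component.
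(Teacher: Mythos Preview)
Your proof is correct and follows precisely the route taken in the paper, which simply cites Corollary~\ref{cor: perfectly sreduced implies GSo normal} and Lemma~\ref{lemma: strongly connected is sintegral} without spelling out the details. Your write-up merely makes explicit the two-step chain (normality of $G^{so}$ forces $G^{so}=G$, then strong connectedness gives $\s$-integrality) that the paper leaves to the reader.
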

\begin{proof}
	Clear from Corollary \ref{cor: perfectly sreduced implies GSo normal} and Lemma \ref{lemma: strongly connected is sintegral}.
\end{proof}

Almost-simplicity is preserved under isogeny:

\begin{lemma} \label{lemma: almost-simple and isogeny}
	Assume that $k$ is algebraically closed and inversive. Let $G$ and $H$ be strongly connected isogenous $\s$-algebraic groups. Then $G$ is almost-simple if and only if $H$ is almost-simple. 
\end{lemma}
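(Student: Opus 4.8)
The plan is to prove this via a symmetric unwinding of the definition of isogeny, reducing everything to the transitivity of the isogeny relation (Lemma~\ref{lemma: isogeny is equivalence relation}) and the behaviour of almost-simplicity under the two basic operations appearing in an isogeny: passing to a quotient by a zero $\s$-dimensional normal subgroup, and passing to a strongly connected group that surjects onto an almost-simple one with zero $\s$-dimensional kernel. Since $G$ and $H$ are isogenous, there is a strongly connected $F$ with isogenies $F\twoheadrightarrow G$ and $F\twoheadrightarrow H$. It therefore suffices to prove the single implication that if $F\twoheadrightarrow G$ is an isogeny of strongly connected $\s$-algebraic groups, then $F$ is almost-simple if and only if $G$ is almost-simple; applying this twice (once to each leg of the isogeny) yields the claim.

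First I would set up the correspondence. Let $\f\colon F\twoheadrightarrow G$ be an isogeny, so $\f$ is a quotient map with $K=\ker(\f)$ of $\s$-dimension zero, and by Corollary~\ref{cor: sdim and ord for quotients} we have $\sdim(F)=\sdim(G)>0$. The key tool is the third isomorphism theorem (Theorem~\ref{theo: isom3}): normal $\s$-closed subgroups of $G$ correspond bijectively, via $N'\mapsto \f^{-1}(N')$ and $N\mapsto\f(N)$, to normal $\s$-closed subgroups $N$ of $F$ containing $K$, and under this correspondence $G/N'\simeq F/\f^{-1}(N')$. Because $\sdim(K)=0$, Corollary~\ref{cor: sdim and ord for quotients} gives $\sdim(\f^{-1}(N'))=\sdim(N')$ and $\sdim(\f(N))=\sdim(N)$ for the relevant subgroups, so the correspondence preserves the property of having $\s$-dimension zero. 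This already makes the ``normal proper subgroups have $\s$-dimension zero'' half of almost-simplicity transfer cleanly in both directions: a normal proper $N'\unlhd G$ of positive $\s$-dimension would pull back to a normal proper subgroup of $F$ of positive $\s$-dimension, and conversely a normal proper $N\unlhd F$ of positive $\s$-dimension has $NK/K$ of positive $\s$-dimension (or else $N$ itself contributes), which I would push down to $G$ — here I must be slightly careful to handle subgroups of $F$ not containing $K$, which is where the subtlety lies.

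The main obstacle is precisely this last point: a normal proper subgroup $N\unlhd F$ need not contain $K$, so the clean bijection of Theorem~\ref{theo: isom3} does not directly apply. To handle it I would argue as follows. If $N\unlhd F$ is proper with $\sdim(N)>0$, consider $\f(N)\unlhd G$, which is a normal $\s$-closed subgroup by Lemma~\ref{lemma: normal stays normal under surjective map}. Since $\ker(\f|_N)\leq K$ has $\s$-dimension zero, Corollary~\ref{cor: sdim and ord for quotients} gives $\sdim(\f(N))=\sdim(N)>0$; and if $\f(N)=G$ then $NK=F$, whence $\sdim(N)=\sdim(NK)=\sdim(F)$ (as $\sdim(K)=0$, using Corollary~\ref{cor: multiplicativity of limit degree} or the $\s$-dimension formula for $NK/N\simeq K/(K\cap N)$), contradicting that $F$ is strongly connected with $N$ proper. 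Hence $\f(N)$ is a proper normal subgroup of $G$ of positive $\s$-dimension, showing $G$ is not almost-simple. The reverse direction (pulling back a bad subgroup of $G$) is the easy half handled above. Finally, the ``perfectly $\s$-reduced'' condition of almost-simplicity requires separate attention, but by Lemma~\ref{lemma: almost-simple is sintegral} an almost-simple group is automatically $\s$-integral and strongly connected, and by Lemma~\ref{lemma: strongly connected is sintegral} every strongly connected group over our algebraically closed inversive $k$ is perfectly $\s$-reduced; since $G$ and $H$ are assumed strongly connected, the perfect $\s$-reducedness is free, and the theorem reduces entirely to the $\s$-dimension bookkeeping of normal subgroups carried out above.
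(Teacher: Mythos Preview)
Your proposal is correct and follows essentially the same approach as the paper. Both arguments reduce to a single isogeny $\f\colon F\twoheadrightarrow G$ and use the correspondence of Theorem~\ref{theo: isom3} together with Corollary~\ref{cor: sdim and ord for quotients}; for the delicate direction (a proper normal $N\unlhd F$ with $\sdim(N)>0$ not containing $\ker(\f)$), you argue via $NK=F$ and $NK/N\simeq K/(K\cap N)$, while the paper argues via $G\simeq N/(N\cap\ker(\f))$ --- these are the two sides of the same application of Theorem~\ref{theo: isom2}, and both yield the contradiction $\sdim(N)=\sdim(F)$. Your observation that perfect $\s$-reducedness is automatic from Lemma~\ref{lemma: strongly connected is sintegral} matches the paper's first sentence of the proof.
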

\begin{proof}
	We may assume, without loss of generality, that there exists an isogeny $\f\colon G\twoheadrightarrow H$. Recall from Lemma \ref{lemma: strongly connected is sintegral} that $G$ and $H$ are perfectly $\s$-reduced.
	If $G$ is almost-simple, then $H$ is almost-simple by Theorem \ref{theo: isom3} and Corollary \ref{cor: sdim and ord for quotients}.
	
	Conversely, assume that $H$ is almost-simple and let $N$ be a proper normal $\s$-closed subgroup of $G$. Then $\f(N)$ is a normal $\s$-closed subgroup of $H$. There are two cases: either $\f(N)=H$ or $\f(N)$ has $\s$-dimension zero. In the latter case it follows that $N$ has $\s$-dimension zero and we are done. So it suffices to show that the case $\f(N)=H$ cannot occur. Suppose, for a contradiction, that $\f(N)=H$. Then $N\ker(\f)=G$ by Theorem \ref{theo: isom3}. Using Theorem \ref{theo: isom2} we find
	$$H\simeq G/\ker(\f)=N\ker(\f)/\ker(\f)\simeq N/(N\cap\ker(\f)).$$
	This implies $\sdim(N)=\sdim(H)=\sdim(G)$. As $G$ is strongly connected, we arrive at the contradiction $N=G$.
\end{proof}

We are now prepared to prove our Jordan-H\"{o}lder type theorem.

\begin{theo} \label{theo: Jordan Hoelder}
	Assume that $k$ is algebraically closed and inversive. Let $G$ be a strongly connected $\s$-algebraic group. Then there exists a subnormal series \begin{equation} \label{eqn: G subnormal series} G=G_0\supseteq G_1\supseteq\cdots\supseteq G_n=1 \end{equation}
	of strongly connected $\s$-closed subgroups such that $G_i/G_{i+1}$ is almost-simple for $i=0,\ldots,n-1$. If
	\begin{equation} \label{eqn: H subnormal series} G=H_0\supseteq H_1\supseteq\cdots\supseteq H_m=1 \end{equation}
	is another such subnormal series, then $m=n$ and there exists a permutation $\pi$ such that
	$G_{i}/G_{i+1}$ and $H_{\pi(i)}/H_{\pi(i)+1}$ are isogenous for $i=0,\ldots,n-1$.
\end{theo}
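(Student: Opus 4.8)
The plan is to induct on $\sdim(G)$. If $G$ is almost-simple we are done via $G\supseteq 1$, so assume $G$ admits a proper normal $\s$-closed subgroup of positive $\s$-dimension and choose among all such one, say $N$, of maximal $\s$-dimension; strong connectedness of $G$ gives $\sdim(N)<\sdim(G)$. Since $G$ is strongly connected it is perfectly $\s$-reduced (Lemma \ref{lemma: strongly connected is sintegral}), so Proposition \ref{prop: normality for strong comp} shows that $G_1:=N^{so}$ is a strongly connected normal $\s$-closed subgroup of $G$. I would then verify that $G/G_1$ is almost-simple. It is strongly connected: writing $\pi\colon G\twoheadrightarrow G/G_1$, any $\s$-closed subgroup $M\subsetneq G/G_1$ with $\sdim(M)=\sdim(G/G_1)$ would satisfy $\sdim(\pi^{-1}(M))=\sdim(M)+\sdim(G_1)=\sdim(G)$ by Corollary \ref{cor: sdim and ord for quotients}, forcing $\pi^{-1}(M)=G$ and hence $M=G/G_1$; in particular $G/G_1$ is perfectly $\s$-reduced. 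Moreover every proper normal $\s$-closed subgroup of $G/G_1$ is of the form $H/G_1$ with $G_1\subseteq H\subsetneq G$ normal (Theorem \ref{theo: isom3}), and since $\sdim(H)\geq\sdim(G_1)=\sdim(N)>0$ the maximality of $N$ forces $\sdim(H)=\sdim(N)$, whence $\sdim(H/G_1)=0$. Thus $G/G_1$ is almost-simple, and as $\sdim(G_1)<\sdim(G)$ the inductive hypothesis supplies a series for $G_1$ of the desired type, which prepended with $G$ proves existence.

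\textbf{Uniqueness.} Here I would invoke the Schreier refinement theorem (Theorem \ref{theo: schreier refinement}): regard the two given series as subnormal series and pass to equivalent refinements, whose factors therefore coincide as a multiset up to isomorphism. The crucial input is an analysis of an arbitrary subnormal series $A=A_0\supseteq A_1\supseteq\cdots\supseteq A_s=1$ of an almost-simple group $A$. Because $A$ is strongly connected (Lemma \ref{lemma: almost-simple is sintegral}), every $A_j$ with $\sdim(A_j)=\sdim(A)$ equals $A$; hence there is a unique index $j_0$ with $A_{j_0-1}=A$ and $\sdim(A_{j_0})<\sdim(A)$, and then $A_{j_0}\unlhd A$ is proper, so almost-simplicity gives $\sdim(A_{j_0})=0$. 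Consequently exactly one factor of the series, namely $A/A_{j_0}$, has positive $\s$-dimension; it is a quotient of the strongly connected $A$ by the $\s$-dimension-zero normal subgroup $A_{j_0}$, hence strongly connected and isogenous to $A$ (indeed almost-simple by Lemma \ref{lemma: almost-simple and isogeny}), while all remaining factors have $\s$-dimension zero.

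Applying this inside each interval $[G_{i+1},G_i]$ of the refinement of the first series --- which, after dividing by $G_{i+1}$, is a subnormal series of the almost-simple group $G_i/G_{i+1}$ --- shows that the refinement has exactly $n$ factors of positive $\s$-dimension, the one in slot $i$ being isogenous to $G_i/G_{i+1}$; similarly the refined second series has exactly $m$ positive-$\s$-dimensional factors, isogenous to the $H_j/H_{j+1}$. Since isomorphisms preserve $\s$-dimension, the permutation matching the factors of the two equivalent refinements restricts to a bijection between their positive-$\s$-dimensional factors; as isomorphic strongly connected groups are isogenous, this yields $m=n$ and a permutation $\pi$ with $G_i/G_{i+1}$ isogenous to $H_{\pi(i)}/H_{\pi(i)+1}$.

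\textbf{Main obstacle.} I expect the uniqueness part to be the crux, since Schreier refinement only delivers isomorphism of factors whereas the theorem asks for isogeny; the reconciling idea is precisely the observation that a subnormal refinement of an almost-simple group contributes a single factor of positive $\s$-dimension, which is isogenous to the group itself, with all other factors of $\s$-dimension zero. On the existence side the only genuine subtlety is the passage from the maximal normal subgroup $N$ to $N^{so}$ needed to keep the series terms strongly connected, and this is exactly what Proposition \ref{prop: normality for strong comp} --- valid because $k$ is algebraically closed and inversive and $G$ is perfectly $\s$-reduced --- makes legitimate.
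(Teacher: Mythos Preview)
Your proof is correct and follows essentially the same approach as the paper's: for existence you pick a proper normal subgroup of maximal $\sigma$-dimension, pass to its strong identity component via Proposition~\ref{prop: normality for strong comp}, and induct; for uniqueness you apply Schreier refinement and observe that within each almost-simple interval exactly one refined factor has positive $\sigma$-dimension and is isogenous to the original quotient. Your treatment is in fact slightly more explicit than the paper's in verifying that $G/G_1$ is strongly connected (hence perfectly $\sigma$-reduced), and your reformulation of the uniqueness analysis---passing to the quotient $G_i/G_{i+1}$ rather than working directly with the $G_{i,j}$---is a cosmetic difference only.
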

\begin{proof}
	Let us first prove the existence statement.
	Among all normal proper $\s$-closed subgroups of $G$ choose one, say $H$, with maximal $\s$-dimension. If $\sdim(H)=0$, then $G$ is almost-simple and we are done. So let us assume that $\sdim(H)>0$. Since $G$ is strongly connected, $\sdim(H)<\sdim(G)$. By construction $G/H$ is almost-simple (Theorem~\ref{theo: isom3}). Let $G_1=H^{so}$. Then $G_1$ is normal in $G$ by Proposition \ref{prop: normality for strong comp} and \mbox{$\sdim(G/G_1)>0$}. Let us show that $G/G_1$ is almost-simple. Let $N$ be a proper normal $\s$-closed subgroup of $G$ containing $G_1$. By choice of $H$, $\sdim(N)\leq\sdim(H)$, but since $\sdim(H)=\sdim(G_1)\leq\sdim(N)$ we have $\sdim(N)=\sdim(G_1)$. So $G/G_1$ is almost-simple (Theorem \ref{theo: isom3}). As $\sdim(G_1)<\sdim(G)$ the claim follows by induction on $\sdim(G)$.
	
	Now let us prove the uniqueness statement. It follows from Theorem \ref{theo: schreier refinement} that (\ref{eqn: G subnormal series}) and (\ref{eqn: H subnormal series}) have equivalent refinements. Let
	\begin{equation} \label{eqn: G refinement} G=G_0\supsetneqq G_{0,1}\supsetneqq G_{0,2}\supsetneqq\cdots\supsetneqq G_{0,n_0}\supsetneqq G_1 \supsetneqq G_{1,1}\supsetneqq\cdots\supsetneqq G_{1,n_1}\supsetneqq G_2\supsetneqq\cdots\supsetneqq G_n=1 \end{equation} be such a refinement of (\ref{eqn: G subnormal series}). For $i=0,\ldots,n-1$, as $G_i$ is strongly connected and $G_i/G_{i+1}$ is almost-simple, $\sdim(G_i/G_{i,1})=\sdim(G_i/G_{i+1})>0$ and
	$\sdim(G_{i,j}/G_{i,j+1})=0$ for $j=1,\ldots,n_i-1$, also $\sdim(G_{i,n_i}/G_{i+1})=0$.
	The kernel of $G_i/G_{i+1}\twoheadrightarrow G_i/G_{i,1}$ has $\s$\=/dimension zero, so $G_i/G_{i+1}$ and $G_i/G_{i,1}$ are isogenous. In summary, we find that among the quotient groups of the subnormal series (\ref{eqn: G refinement}), there are precisely $n$ of positive $\s$-dimension, (namely $G_i/G_{i,1}$, $i=0,\ldots,n-1$). A similar statement applies to the equivalent refinement of (\ref{eqn: H subnormal series}). Therefore $n=m$ and, using Lemma \ref{lemma: isogeny is equivalence relation}, we see that there exists a permutation $\pi$ such that
	$G_{i}/G_{i+1}$ and $H_{\pi(i)}/H_{\pi(i)+1}$ are isogenous for $i=0,\ldots,n-1$.
\end{proof}

\begin{rem}
	It is clear from the proof that the uniqueness statement in Theorem~\ref{theo: Jordan Hoelder} is valid without any restriction on the base $\s$-field $k$.
\end{rem}

See Examples  \ref{ex: Jordan Hoelder algebraic} and \ref{ex: Jordan Hoelder} for examples illustrating Theorem \ref{theo: Jordan Hoelder}.

\section{Almost-simple difference algebraic groups}

Roughly speaking, Theorem \ref{theo: Jordan Hoelder} shows that any difference algebraic group of positive $\s$\=/dimension can be decomposed into almost-simple $\s$-algebraic groups. This begs the question that we address in this final section: what are the almost-simple $\s$-algebraic groups?

We first show that for an almost-simple algebraic group $\G$, the difference algebraic group $[\s]_k\G$ is almost-simple (Proposition \ref{prop: almost simple algebrais is almost simple salgebraic}). Then, we show that, up to $\s$-dimension zero, every almost-simple $\s$-algebraic group is of this form. More precisely, if $G$ is an almost-simple $\s$\=/algebraic group, then there exists a normal $\s$-closed subgroup $N$ of $G$ with $\sdim(N)=0$ such that $G/N$ is isomorphic to $[\s]_k\G$, for some almost-simple $\s$-algebraic group $\G$ (Corollary~\ref{cor: almost simple is isogenous to almost simple}). It follows that a strongly connected $\s$-algebraic group is almost-simple if and only if it is isogenous to $[\s]_k\G$ for some almost-simple algebraic group $\G$. These results, at least to some extend, parallel results for differential algebraic groups. See \cite{CassidySinger:AJordanHoelderTheoremForDifferentialAlgebraicGroups}, \cite{Minchenko:OnCentralExtensionsOfSimpleDifferentialAlgebraicGroups}, \cite{Freitag:IndecomposabilityForDifferentialAlgebraicGroups}. However, the ideas and structure of the proofs are quite different.

\subsection{Almost-simple algebraic groups are almost-simple $\s$-algebraic groups}

We begin by recalling some definitions from the theory of algebraic groups. See, e.g., \cite{Milne:AlgebraicGroupsTheTheoryOfGroupSchemesOfFiniteTypeOverAField}.
A \emph{semisimple} algebraic group over an algebraically closed field $k$ is a smooth connected algebraic group whose radical (i.e., the maximal smooth, connected normal solvable closed subgroup) is trivial. Almost simple algebraic groups play a central role in the structure theory of semisimple algebraic groups. They are commonly defined as follows:

\begin{defi} \label{defi: almost simple}
	An algebraic group over an algebraically closed field is \emph{almost simple} if it is non-trivial, semisimple and every proper normal closed subgroup is finite, i.e., has dimension zero.
\end{defi}

Alternatively, an algebraic group is almost simple if and only if it is smooth, connected, non-commutative and every proper normal closed subgroup is finite.
 In particular, the algebraic groups $\Ga$ and $\Gm$ are not considered to be almost simple, even though they are smooth, connected and every proper normal closed subgroup is finite.  
%
%
%
For our purposes it will be convenient to have a more inclusive definition that also encompasses the multiplicative and the additive group:

\begin{defi} \label{defi: almost-simple}
	An algebraic group over an algebraically closed field is \emph{almost-simple} if it is non-trivial, smooth, connected and  every proper normal closed subgroup is finite, i.e., has dimension zero.
\end{defi}

Alternatively, an algebraic group is almost-simple if and only if it is smooth, has positive dimension and every proper normal closed subgroup has dimension zero. Arguably, Definition~\ref{defi: almost-simple salgebraic group} is the exact analog of the latter characterization.

\medskip

\noindent {\large \bf Caution:} There is a difference between ``almost simple'' and ``almost-simple''. See Definitions~\ref{defi: almost simple} and \ref{defi: almost-simple} above. As detailed in the following remark, the almost-simple algebraic groups are exactly the almost simple algebraic groups plus $\Ga$ and $\Gm$.

\begin{rem} \label{rem: classification of almost-simple algebraic groups}
	The almost-simple algebraic groups over an algebraically closed field are classified: A commutative almost-simple algebraic group is isomorphic to either the additive group $\Ga$ or the multiplicative group $\Gm$. A non-commutative almost-simple algebraic group is an almost simple algebraic group and these are classified by their root data (see, e.g., \cite[Section~24]{Milne:AlgebraicGroupsTheTheoryOfGroupSchemesOfFiniteTypeOverAField}).
\end{rem}
\begin{proof}
	A smooth connected commutative algebraic group $\G$ over an algebraically closed field is a direct product of a torus with a smooth connected unipotent algebraic group (\cite[Cor.~16.15]{Milne:AlgebraicGroupsTheTheoryOfGroupSchemesOfFiniteTypeOverAField}). Thus if $\G$ is almost-simple, then $\G$ must be isomorphic to a torus, and in this case $\G\simeq \Gm$, or $\G$ is a smooth connected unipotent group. In the later case $\G\simeq \Ga$, because a non-trivial smooth connected unipotent algebraic group contains a copy of $\Ga$ (\cite[Cor.~14.55]{Milne:AlgebraicGroupsTheTheoryOfGroupSchemesOfFiniteTypeOverAField}).
\end{proof}
We will need the following lemma.
\begin{lemma} \label{lemma: Nred normal in G}
	Assume that $k$ is perfect. Let $G$ be a reduced $\s$-algebraic group and let $N$ be a normal $\s$-closed subgroup of $G$. Then $N_\red$ is normal in $G$.
\end{lemma}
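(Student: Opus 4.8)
The plan is to mimic the proof of Proposition \ref{prop: normality for strong comp}, but with the pair (reduced, $k$ perfect) playing the role that (perfectly $\s$-reduced, $k$ algebraically closed and inversive) played there. First note that since $k$ is perfect, $N_\red$ is a $\s$-closed subgroup of $N$, and hence of $G$, by Corollary \ref{cor: reduced ssubgroups}~(i). To prove normality it suffices, by definition, to show that for every \ks-algebra $R$ and every $g\in G(R)$, conjugation by $g$ maps $N_\red(R)$ into itself.

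Since $N\unlhd G$, the conjugation morphism $G\times G\to G$, $(g,x)\mapsto gxg^{-1}$, restricts to a morphism of $\s$-varieties
$$c\colon G\times N\to N,\qquad (g,n)\mapsto gng^{-1}.$$
The corestriction to $N$ is legitimate because $c_R$ maps $G(R)\times N(R)$ into $N(R)$ for every $R$ (this is exactly normality of $N$), so $\I(N)$ lies in the kernel of the dual map $k\{G\}\to k\{G\times N\}$, i.e. $c$ factors through the inclusion $N\hookrightarrow G$.

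Next I would invoke the functoriality of the reduced-subvariety construction: a morphism of $\s$-varieties induces a morphism on the associated $(-)_\red$, so $c$ yields $c_\red\colon (G\times N)_\red\to N_\red$ sitting in a commutative square with the two inclusions $(G\times N)_\red\hookrightarrow G\times N$ and $N_\red\hookrightarrow N$. The decisive computation is the identification of the source: because $k$ is perfect, Corollary \ref{cor: reduced products}~(i) gives $(G\times N)_\red\simeq G_\red\times N_\red$, and since $G$ is reduced we have $G_\red=G$, whence $(G\times N)_\red=G\times N_\red$. Consequently the composite $G\times N_\red\hookrightarrow G\times N\xrightarrow{\,c\,}N$ factors through $N_\red\hookrightarrow N$.

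Reading this off on $R$-points shows $gng^{-1}\in N_\red(R)$ whenever $g\in G(R)$ and $n\in N_\red(R)$; applying the same statement to $g^{-1}$ yields $gN_\red(R)g^{-1}=N_\red(R)$, so $N_\red(R)$ is normal in $G(R)$ for every \ks-algebra $R$, i.e. $N_\red\unlhd G$. The only place the hypotheses genuinely enter — and hence the main point to get right — is the identification $(G\times N)_\red=G\times N_\red$, which uses both that $k$ is perfect (so that $(-)_\red$ commutes with products, Corollary \ref{cor: reduced products}) and that $G$ is reduced; without these the image of $c$ restricted to $G\times N_\red$ need not land in $N_\red$, and the argument would break down.
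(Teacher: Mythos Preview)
Your proof is correct, and it takes a genuinely different route from the paper's. The paper argues via Zariski closures: embedding $G$ in an algebraic group $\G$, it observes that $G[i]$ is reduced (hence smooth) for each $i$, that $N[i]\unlhd G[i]$ by Lemma~\ref{lemma: Zariski closure and normal subgroup}, and then invokes a result from the theory of algebraic groups (Lemma~\ref{lemma: Nred normal}, citing Snowden--Wiles) to conclude $N[i]_\red\unlhd G[i]$; since $N[i]_\red=N_\red[i]$, another application of Lemma~\ref{lemma: Zariski closure and normal subgroup} finishes. Your argument instead stays entirely within the $\s$-variety formalism, exploiting the functoriality of $(-)_\red$ together with the product formula $(G\times N)_\red=G_\red\times N_\red$ from Corollary~\ref{cor: reduced products}~(i), exactly parallel to the proof of Proposition~\ref{prop: normality for strong comp}. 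This is more self-contained (no external input on algebraic groups is needed) and arguably cleaner; the paper's approach, on the other hand, illustrates the recurring technique of reducing questions about $\s$-algebraic groups to questions about their Zariski closures, and reuses Lemma~\ref{lemma: Nred normal}, which is needed anyway in the proof of Proposition~\ref{prop: if no normal then benign}.
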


For the proof of Lemma \ref{lemma: Nred normal in G} we will use the following lemma on algebraic groups that is also used in the proof of Proposition \ref{prop: if no normal then benign}. Note that in general, even over an algebraically closed field, $\G_\red$ need not be a normal closed subgroup of $\G$. However: 
\begin{lemma} \label{lemma: Nred normal}
	Assume that $k$ is perfect.
	Let $\G$ be a smooth algebraic group and $\N$ a normal closed subgroup of $\G$. Then $\N_\red$ is normal in $\G$.
\end{lemma}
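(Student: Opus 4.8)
The plan is to prove normality via the conjugation action together with the universal property of the reduced subscheme. First I would note that since $k$ is perfect, $\N_\red$ is a closed subgroup of $\N$ by \cite[Cor.~1.39]{Milne:AlgebraicGroupsTheTheoryOfGroupSchemesOfFiniteTypeOverAField}, and hence a closed subgroup of $\G$. To establish normality it suffices to show that conjugation by any point of $\G$ stabilizes $\N_\red$. Since $\N$ is normal in $\G$, the conjugation action gives a morphism of schemes $c\colon \G\times\N\to\N$, $(g,n)\mapsto gng^{-1}$, and the task is to show that its restriction to $\G\times\N_\red$ factors through $\N_\red\hookrightarrow\N$.

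The key point is that the source $\G\times\N_\red$ is reduced. Indeed, $\G$ is smooth and therefore reduced, so $k[\G]$ is reduced; $k[\N_\red]$ is reduced by definition of $\N_\red$; and because $k$ is perfect, the tensor product $k[\G]\otimes_k k[\N_\red]$ is reduced by Lemma \ref{lemma: tensor stays}~(i). Thus $k[\G\times\N_\red]$ is reduced. Consequently the composite $\G\times\N_\red\hookrightarrow\G\times\N\xrightarrow{c}\N$ is a morphism from a reduced scheme into $\N$.

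I would then invoke the universal property of the reduced closed subscheme: every morphism from a reduced scheme into $\N$ factors uniquely through $\N_\red\hookrightarrow\N$ (on the affine level, a homomorphism from $k[\N]$ into a reduced ring annihilates the nilradical). Applying this to $c$ yields a factorization $\G\times\N_\red\to\N_\red$. Concretely, this says that for every $k$-algebra $R$, every $g\in\G(R)$ and every $n\in\N_\red(R)$ one has $gng^{-1}\in\N_\red(R)$; that is, $\N_\red(R)$ is a normal subgroup of $\G(R)$ for every $R$. Hence $\N_\red$ is normal in $\G$.

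I expect the only delicate point to be the reducedness of the product $\G\times\N_\red$, which is precisely where both hypotheses are used: that $k$ is perfect (so that Lemma \ref{lemma: tensor stays}~(i) applies) and that $\G$ is smooth, hence reduced. Without perfectness a tensor product of reduced $k$-algebras can fail to be reduced, and then $c$ need not factor through $\N_\red$, so the argument would break down exactly at this step.
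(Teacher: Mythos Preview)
Your argument is correct. The universal property of the reduced subscheme together with the reducedness of $\G\times\N_\red$ (which is exactly where perfectness of $k$ and smoothness of $\G$ enter, via Lemma~\ref{lemma: tensor stays}~(i)) gives the factorization of the conjugation map through $\N_\red$, and this is precisely what normality means.

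The paper does not give this argument: its proof of Lemma~\ref{lemma: Nred normal} consists of a single citation to \cite[Lemma 3.2]{SnowdenWiles:BignessInCompatibleSystems}. Your proof is therefore a genuinely different, self-contained route. What you gain is that the argument stays within the paper (it uses only Lemma~\ref{lemma: tensor stays}~(i) and the standard universal property of $\N_\red$), so the reader does not have to consult an external reference; it also makes transparent exactly how the two hypotheses are used. What the paper's approach buys is brevity. Your proof is in fact the standard textbook argument (cf.\ \cite[Prop.~1.85]{Milne:AlgebraicGroupsTheTheoryOfGroupSchemesOfFiniteTypeOverAField}), and nothing is lost by presenting it in place of the citation.
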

\begin{proof}
This follows from \cite[Lemma 3.2]{SnowdenWiles:BignessInCompatibleSystems}.
\end{proof}

\begin{proof}[Proof of Lemma \ref{lemma: Nred normal in G}] Let $\G$ be an algebraic group containing $G$ as a $\s$-closed subgroup. For $i\in\nn$ let $N[i]$ and $G[i]$ be the $i$-th order Zariski closure of $N$ and $G$ in $\G$ respectively. Then $N[i]$ is a normal closed subgroup of $G[i]$ (Lemma \ref{lemma: Zariski closure and normal subgroup}). As $G$ is reduced, also $G[i]$ is reduced and therefore smooth. So it follows from Lemma \ref{lemma: Nred normal} that $N[i]_\red$ is normal in $G[i]$. As $N[i]_\red=N_\red[i]$, the $i$-th order Zariski closure of $N_\red$ in $\G$, we deduce that $N_\red$ is normal in $G$ by Lemma \ref{lemma: Zariski closure and normal subgroup}.	
\end{proof}

\begin{prop} \label{prop: almost simple algebrais is almost simple salgebraic}
	Assume that $k$ is algebraically closed and inversive. Let $\G$ be an almost-simple algebraic group. Then $G=[\s]_k\G$ is an almost-simple $\s$-algebraic group. 
\end{prop}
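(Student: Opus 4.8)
The plan is to verify the three defining properties of an almost-simple $\s$-algebraic group (Definition~\ref{defi: almost-simple salgebraic group}) for $G=[\s]_k\G$: that $G$ is perfectly $\s$-reduced, that $\sdim(G)>0$, and that every proper normal $\s$-closed subgroup $N\unlhd G$ has $\sdim(N)=0$. The first two are immediate. Since an almost-simple algebraic group is smooth and connected, Example~\ref{ex: smooth connected implies sintegral} shows that $G$ is $\s$-integral, hence perfectly $\s$-reduced; and $\sdim(G)=\dim(\G)>0$ by Example~\ref{ex: sdim=dim} together with the fact that $\G$ has positive dimension. All the work is in the third property.

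So let $N\unlhd G$ be a proper normal $\s$-closed subgroup; the goal is $\sdim(N)=0$. First I would reduce to the case where $N$ is reduced: since $G$ is reduced, Lemma~\ref{lemma: Nred normal in G} gives $N_\red\unlhd G$, while $\sdim(N_\red)=\sdim(N)$ by Lemma~\ref{lemma: sdim of sssubgroups}, and $N_\red$ is again proper; replacing $N$ by $N_\red$ I may assume all Zariski closures $N[i]$ are smooth. Writing $\G[i]=\G\times{\hs\G}\times\cdots\times{}^{\s^i}\!\G$, each factor ${}^{\s^j}\!\G$ is the base change of $\G$ along $\s^j\colon k\to k$ and is therefore again almost-simple; by Lemma~\ref{lemma: Zariski closure and normal subgroup} the $i$-th order Zariski closure $N[i]$ is a smooth normal closed subgroup of $\G[i]$, and $\pi_i(N[i])=N[i-1]$. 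The crucial extra input is the difference relation of Theorem~\ref{theo: first finiteness}, which (as in the proof of Proposition~\ref{prop: Zariskiclosures and quotients}) says that for all sufficiently large $i$
\[
N[i]=\big(N[i-1]\times{}^{\s^i}\!\G\big)\cap\big(\G\times{\hs(N[i-1])}\big).
\]

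The heart of the argument is to feed the structure of normal subgroups into this recursion. When $\G$ is non-commutative it is almost simple (Remark~\ref{rem: classification of almost-simple algebraic groups}), so $\G[i]$ is semisimple and, by the structure theory of semisimple groups, every connected normal subgroup of $\G[i]$ is the product of a subset of the simple factors ${}^{\s^j}\!\G$. Thus $N[i]^o=\prod_{j\in S_i}{}^{\s^j}\!\G$ for a support $S_i\subseteq\{0,\ldots,i\}$, and $\dim(N[i])=|S_i|\cdot\dim(\G)$. Taking identity components in the displayed relation (using that an intersection of products of factors is the product over the intersection of index sets) yields $S_i=(S_{i-1}\cup\{i\})\cap(\{0\}\cup(S_{i-1}+1))$, while $\pi_i(N[i])=N[i-1]$ gives $S_i\cap\{0,\ldots,i-1\}=S_{i-1}$. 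Comparing these forces $i\in S_i$ if and only if $i-1\in S_{i-1}$, so the set $S=\bigcup_i S_i$ satisfies $i\in S\Leftrightarrow i-1\in S$; hence $S=\varnothing$ or $S=\nn$. In the first case $\dim(N[i])$ is eventually $0$, so $\sdim(N)=0$ by Theorem~\ref{theo: sdimension}; in the second $N[i]=\G[i]$ for all $i$, forcing $N=G$ and contradicting properness.

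Finally, the commutative cases $\G=\Ga$ and $\G=\Gm$ (the only commutative almost-simple groups, by Remark~\ref{rem: classification of almost-simple algebraic groups}) have to be treated separately, since there the connected normal subgroups of $\G[i]$ are no longer products of factors. Here I would instead use that any proper $\s$-closed subgroup is cut out by a nontrivial $\s$-equation: every proper $N$ lies in the kernel of a nonzero $\s$-character (for $\Gm$) or of a nonzero homogeneous linear $\s$-operator (for $\Ga$), and such a kernel has $\s$-dimension zero, because a single nonzero relation of $\s$-order $d$ bounds $\dim(N[i])\le d$ for all $i$; monotonicity of $\sdim$ under inclusion then gives $\sdim(N)=0$. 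I expect the main obstacle to be exactly this third property, and within it the bookkeeping that converts the difference recursion for $N[i]$ into the all-or-nothing dichotomy for the support $S$; this step relies essentially on the classification of normal subgroups of products of almost-simple groups, which is precisely why the commutative groups $\Ga$ and $\Gm$ must be handled by the separate, more elementary argument above.
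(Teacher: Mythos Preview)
Your approach is essentially the paper's, but the paper's execution is cleaner and your version has two small gaps worth flagging.

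\textbf{Non-commutative case.} The paper first reduces to $N$ \emph{connected} (not just reduced): since $N^o$ is characteristic in $N$ (Proposition~\ref{prop: Go is characteristic}), it is normal in $G$, and $\sdim(N^o)=\sdim(N)$ by Corollary~\ref{cor: sdim of Go}. After this reduction every $N[i]$ is itself a product of simple factors, and the paper argues directly from the projections $\pi_i$ and $\s_i$: if $i_0$ is minimal with $N[i_0]\subsetneq G[i_0]$, then $N[i_0]=\G\times\cdots\times{}^{\s^{i_0-1}}\!\G\times 1$, and the constraint $\s_i(N[i])\subseteq{}^{\s}(N[i-1])$ pins $N[i]$ to that same initial block for all $i\ge i_0$. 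No appeal to Theorem~\ref{theo: first finiteness} is needed. By contrast, you invoke the recursion $N[i]=(N[i-1]\times{}^{\s^i}\!\G)\cap(\G\times{}^{\s}(N[i-1]))$, but that equality only holds for $i>m$; so your conclusion ``$S=\varnothing$ or $S=\nn$'' is overstated --- you only get that $S$ is finite or cofinite. In the cofinite case your claim ``$N[i]=\G[i]$ for all $i$'' does not follow (you only control $N[i]^o$ and only for large $i$); what you actually get is $\sdim(N)=\dim(\G)=\sdim(G)$, and then you must invoke that $[\s]_k\G$ is strongly connected (Example~\ref{ex: strong component}) to force $N=G$. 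This is easy to fix, but as written it is a gap. (Incidentally, the mere inclusion $N[i]\subseteq(N[i-1]\times{}^{\s^i}\!\G)\cap(\G\times{}^{\s}(N[i-1]))$, valid for all $i$, already suffices for the dichotomy, so the finiteness theorem is not the ``crucial extra input'' here.)

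\textbf{Commutative case.} The paper's argument is a one-line dimension count: $\dim(\G[i])=i+1$, so $\sdim(N)\le 1$, and $\sdim(N)=1$ together with integrality of $\G[i]$ would force $N=G$. Your detour through $\s$-characters and linear $\s$-operators is correct in spirit but unnecessary, and the bound ``$\dim(N[i])\le d$'' from a single relation of $\s$-order $d$ is not literally what you get; one has to feed in $\s(f),\s^2(f),\ldots$ as well to bound the dimension, which is again just the paper's count in disguise.
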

\begin{proof}
	We know from Example \ref{ex: smooth connected implies sintegral} that $G=[\s]_k\G$ is perfectly $\s$-reduced. Moreover $\sdim(G)=\dim(\G)>0$ by Example \ref{ex: sdim=dim}. So it remains to show that for a proper normal $\s$-closed subgroup $N$ of $G$ one has $\sdim(N)=0$. For $i\in\nn$ let $N[i]$ denote the $i$-th order Zariski closure of $N$ in $\G$.
%
%

Let us first get the commutative case out of the way. So we assume $\G=\Ga$ or $\G=\Gm$ (Remark~\ref{rem: classification of almost-simple algebraic groups}).
By Theorem \ref{theo: sdimension} there exists an $e\in\nn$ such that  $\dim(N[i])=\sdim(N)(i+1)+e$ for all sufficiently large $i\in\nn$. Since $N[i]\leq \G\times\ldots\times{\hsi\G}$ and $\dim(\G\times\ldots\times{\hsi\G})=i+1$, the assumption $\sdim(N)=1$ would imply $N=G$. Thus $\sdim(N)=0$ as desired.

We now assume that $\G$ is non-commutative, i.e., almost simple.
By Lemmas \ref{lemma: Nred normal} and \ref{lemma: sdim of sssubgroups} we may assume that $N$ is reduced. It follows from Proposition \ref{prop: Go is characteristic} that $N^o$ is a normal $\s$-closed subgroup of $G$. By Corollary \ref{cor: sdim of Go} we may assume that $N$ is connected. 
Thus $N[i]$ is a smooth connected normal closed subgroup of $G[i]=\G\times\ldots\times{\hsi\G}$ for all $i\in\nn$ (Lemmas~\ref{lemma: Zariski closure and normal subgroup}  and \ref{lemma: connected component and zariski closure}). Note that $G[i]$ is semisimple and that $\G,{\hs\G},\ldots,{\hsi\G}$ can be identified with the almost simple factors of $G[i]$. As $N[i]$ is smooth and connected, it follows from \cite[Theorem 21.51]{Milne:AlgebraicGroupsTheTheoryOfGroupSchemesOfFiniteTypeOverAField} that $N[i]$ is a product of some of the almost simple factors. Let $i_0\in\nn$ be minimal with the property that $N[i_0]$ is properly contained in $G[i]$. As $N[i_0-1]=\G\times\ldots\times{^{\s^{i_0-1}}}\!\G$ and $N[i_0]$ is a product of some almost simple factors, we must have $N[i_0]=\G\times\ldots\times{^{\s^{i_0-1}}}\!\G\times 1\leq G[i_0]$. But then
 $N[i]=\G\times\ldots\times{^{\s^{i_0-1}}}\!\G\times 1\times\ldots\times 1\leq \G\times\ldots\times{\hsi\G}$ for $i\geq i_0$. Consequently $\sdim(N)=0$ as desired.
\end{proof}

\subsection{Almost-simple $\s$-algebraic groups are isogenous to almost-simple algebraic groups}

The idea of the following definition is fundamental for proving the claim made in the above headline.

\begin{defi} \label{def: emb}
Let $G$ be a $\s$-algebraic group. We denote with $\operatorname{Emb}(G)$ the collection of all morphisms $\f\colon G\to [\s]_k\G$ of $\s$-algebraic groups such that
\begin{itemize}
	\item $\G$ is an algebraic group,
	\item $\ker(\f)$ has $\s$-dimension zero,
	\item $\f(G)$ is Zariski dense in $\G$ and
	\item the kernel of $\pi_1\colon \f(G)[1]\to \f(G)[0],\ (g_0,g_1)\mapsto g_0$ has dimension $\sdim(G)$, where $\f(G)[1]$ and $\f(G)[0]$ denote the first order Zariski closure and the Zariski closure of $\f(G)$ in $\G$ respectively.
\end{itemize} 
\end{defi}

Note that if $\G$ is an algebraic group and $\f\colon G\to[\s]_k\G$ is a morphism with $\sdim(\ker(\f))=0$, then $\sdim(\f(G))=\sdim(G)$. By Proposition \ref{prop: if no normal then benign} the sequence $(\dim(\ker(\pi_i))_{i\in\nn}$, where $\pi_i\colon \f(G)[i]\to\f(G)[i-1]$, is non-increasing and stabilizes with value $\sdim(G)$. The last condition of Definition \ref{def: emb} thus signifies that the sequence $(\dim(\ker(\pi_i))_{i\in\nn}$ already stabilizes at $i=1$.

The following lemma shows that $\Emb(G)$ is non-empty. The key idea to showing that every almost-simple $\s$-algebraic group $G$ is isogenous to $[\s]_k\G$ for some almost-simple algebraic group $\G$, is to consider an element $\f\colon G\to[\s]_k\G$ of $\Emb(G)$ with $\dim(\G)$ minimal. We will eventually show that any such $\f$ is an isogeny.



\begin{lemma}
	Let $G$ be a $\s$-algebraic group. Then there exists an algebraic group $\G$ and a $\s$\=/closed embedding $\f\colon G\hookrightarrow \G$ such that $\f(G)$ is Zariski dense in $\G$ and the kernel of $\pi_1\colon \f(G)[1]\to \f(G)[0]$ has dimension $\sdim(G)$. In particular, $\Emb(G)$ is non-empty.
\end{lemma}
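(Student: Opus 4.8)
The plan is to start from an arbitrary $\s$-closed embedding $G\hookrightarrow[\s]_k\G$ furnished by Proposition~\ref{prop: exists sclosed embedding} and to ``shift'' it to a sufficiently high order. Writing $G[i]$ for the $i$-th order Zariski closure of $G$ in $\G$ and $\G_i=\ker(\pi_i\colon G[i]\to G[i-1])$, Proposition~\ref{prop: limit of Gi}~(i) tells us that $(\dim(\G_i))_{i\in\nn}$ is non-increasing with eventual value $\sdim(G)$; I would fix $m\in\nn$ with $\dim(\G_i)=\sdim(G)$ for all $i\geq m+1$. The algebraic group exhibited in the statement will be $G[m]$. The projection of $([\s]_k\G)^\sharp$ onto its first $m+1$ factors restricts to a morphism of group schemes $G^\sharp\to G[m]$ whose scheme-theoretic image is $G[m]$ (this is precisely the definition of the Zariski closure), and by Lemma~\ref{lemma: universal property with Hopf} this extends to a morphism $\f\colon G\to[\s]_k(G[m])$ of $\s$-algebraic groups, so that $G[m]$ plays the role of $\G$ in the statement.

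Next I would check that $\f$ is a $\s$-closed embedding with Zariski dense image. On coordinate rings, $\f^*\colon k\{G[m]\}\to k\{G\}$ is the unique $\s$-algebra map restricting to the inclusion $k[G[m]]\hookrightarrow k\{G\}$ on the first factor. The crucial algebraic point is that $k\{G\}$ is $\s$-generated over $k$ by $k[G[m]]$: unwinding the construction of $[\s]_k$ one has $k[G[i]]=k\big[k[G[0]],\,\s(k[G[i-1]])\big]$ for every $i\geq 1$ (the possible non-surjectivity of $\s$ on $k$ is harmless here, since it only affects the scalars, which are freely available), and iterating shows that every $k[G[i]]$ lies in the $\s$-subalgebra $\s$-generated by $k[G[m]]$. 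Hence $\f^*$ is surjective, i.e.\ $\f$ is a $\s$-closed embedding. Zariski density, namely $\f(G)[0]=G[m]$, is then immediate: $\f(G)[0]$ is cut out by $\I(\f(G))\cap k[G[m]]$, and this intersection vanishes because $k[G[m]]\hookrightarrow k\{G\}=k\{\f(G)\}$ is injective.

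The heart of the argument, and the step I expect to be the main obstacle, is the last requirement: the kernel of $\pi_1\colon\f(G)[1]\to\f(G)[0]$ must have dimension exactly $\sdim(G)$. I would sandwich this dimension. For the lower bound, Proposition~\ref{prop: limit of Gi}~(i) applied to the embedding $\f(G)\hookrightarrow[\s]_k(G[m])$ shows that the kernel dimensions of its own Zariski tower are non-increasing with eventual value $\sdim(\f(G))=\sdim(G)$, whence $\dim(\ker\pi_1)\geq\sdim(G)$. For the upper bound I would compare $\f(G)[1]$ with $G[m+1]$: the coordinate ring $k[\f(G)[1]]$ is the $k$-subalgebra of $k\{G\}$ generated by $k[G[m]]$ and $\s(k[G[m]])$, and both of these sit inside $k[G[m+1]]$, so $\dim(\f(G)[1])\leq\dim(G[m+1])$. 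Since $\f(G)[0]=G[m]$ and $\pi_1$ is a quotient map of algebraic groups, dimensions add along it, giving
$$\dim(\ker\pi_1)=\dim(\f(G)[1])-\dim(G[m])\leq\dim(G[m+1])-\dim(G[m])=\dim(\G_{m+1})=\sdim(G)$$
by the choice of $m$. The two bounds force equality, which is exactly the fourth condition of Definition~\ref{def: emb}.

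Finally I would record that $\f$ meets all four requirements of Definition~\ref{def: emb}: $G[m]$ is an algebraic group, $\ker(\f)=1$ has $\s$-dimension zero because $\f$ is a $\s$-closed embedding, $\f(G)$ is Zariski dense in $G[m]$, and the kernel of $\pi_1$ has dimension $\sdim(G)$; hence $\Emb(G)\neq\emptyset$. The only genuinely delicate point is the dimension count above, where one must ensure that passing to a high enough order $m$ makes the kernel-dimension tower of the shifted embedding stabilize immediately; everything else is bookkeeping with coordinate rings and the universal property of $[\s]_k$.
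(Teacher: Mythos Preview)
Your proof is correct and follows essentially the same strategy as the paper's: start from an arbitrary $\s$-closed embedding into some $\G'$, replace $\G'$ by a high-order Zariski closure $G[m]$, and use the universal property of $[\s]_k$ to get the new embedding. The only cosmetic difference is in the dimension count: the paper observes directly that $k[\f(G)[1]]=k\big[k[G[m]],\s(k[G[m]])\big]=k[G[m+1]]$ (an equality, not just an inclusion), so $\f(G)[1]\simeq G[m+1]$ and the kernel of $\pi_1$ is literally $\G_{m+1}$; you instead prove one containment and close with a sandwich via Proposition~\ref{prop: limit of Gi}~(i), which is fine but slightly more roundabout.
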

\begin{proof}
	By Proposition \ref{prop: exists sclosed embedding} there exists an algebraic group $\G'$ and a $\s$-closed embedding $\f'\colon G\hookrightarrow\G'$. For $i\in\nn$ let $\f'(G)[i]$ denote the $i$-th order Zariski closure of $\f'(G)$ in $\G'$ and let $\G_i'$ denote the kernel of $\pi_i\colon \f'(G)[i]\to \f'(G)[i-1]$. (By definition $\G_0=G[0]$.) By Proposition \ref{prop: limit of Gi} (i) the sequence $(\dim(\G'_i))_{i\in\nn}$ is non-increasing and stabilizes with value $\sdim(\f(G))=\sdim(G)$. Let $n\in\nn$ be minimal with the property that $\dim(\G_n')=\sdim(G)$ and set $\G=\f'(G)[n]$. By Lemma \ref{lemma: universal property with Hopf} the morphism $\f'(G)^\sharp\to \f'(G)[n]=\G$ of group schemes, corresponding to the inclusion $k[\f'(G)[n]]\subseteq k\{\f'(G)\}$, induces a morphism $\f''\colon\f'(G)\to [\s]_k\G$ of $\s$-algebraic groups. We claim that $\f\colon G\xrightarrow{\f'}\f'(G)\xrightarrow{\f''}[\s]_k\G$ has the required properties.
	Note that the dual $(\f'')^*\colon k\{\G\}\to k\{\f'(G)\}$ of $\f''$ is surjective because $k\{\f'(G)\}$ is $\s$-generated by $k[\f'(G)[0]]\subseteq k[\f'(G)[n]]=k[\G]\subseteq k\{\f'(G)\}$. Thus $\f''$ is a $\s$-closed embedding and so also $\f$ is a $\s$-closed embedding. As $k[\G]\subseteq k\{\f'(G)\}\simeq k\{G\}$ we see that $\f(G)$ is Zariski dense in $\G$. 
	
	Note that $k[\f(G)[1]]=k[k[\G],\s(k[\G])]=k[\f'(G)[n+1]]\subseteq k\{\f'(G)\}$. So we have a commutative diagram 
	$$
	\xymatrix{
	\f'(G)[n+1] \ar^-{\pi_n'}[r] \ar_\simeq[d] & \f'(G)[n] \ar^\simeq[d]\\
	\f(G)[1] \ar^{\pi_1}[r] & \f(G)[0]	
	}
	$$ 
	As the kernel of $\pi_n'$ has dimension $\sdim(G)$, we see that the kernel of $\pi_1$ has dimension $\sdim(G)$ as desired.
\end{proof}

We need a few preparatory results.

\begin{lemma} \label{lemma: embed sreduced}
	Let $G$ be a $\s$-algebraic group and let $\f\colon G\to [\s]_k\G$ be an element of $\operatorname{Emb}(G)$ such that $\dim(\G)$ is minimal. Let $\f(G)[1]\leq \G\times{\hs\G}$ denote the first order Zariski closure of $\f(G)$ in $\G$. Then the image of the projection $\s_1\colon \f(G)[1]\to {\hs\G},\ (g_0,g_1)\mapsto g_1$ has dimension $\dim(\G)$.
\end{lemma}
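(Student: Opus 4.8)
The plan is to argue by contradiction, exploiting the minimality of $\dim(\G)$ in the definition of $\Emb(G)$. Suppose the image $\s_1(\f(G)[1])\subseteq{\hs\G}$ has dimension strictly less than $\dim(\G)=\dim({\hs\G})$. Write $\H=\s_1(\f(G)[1])$ for the (scheme-theoretic) image of the projection $\s_1\colon \f(G)[1]\to{\hs\G}$; this is a closed subgroup of ${\hs\G}$. The idea is that $\H$, being of the form ${\hs\HH}$ for some algebraic group $\HH$ (since ${\hs\G}$ is the base change of $\G$ along $\s\colon k\to k$, and $k$ is inversive so this base change is an equivalence), will provide a smaller target for a new embedding of $G$, contradicting minimality.

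First I would set up the descent: because $k$ is inversive, the functor $X\mapsto{\hs X}$ on algebraic groups is an equivalence of categories, so there is an algebraic group $\HH$ with ${\hs\HH}=\H$ and $\dim(\HH)=\dim(\H)<\dim(\G)$. The plan is then to produce an element $\f'\colon G\to[\s]_k\HH$ of $\Emb(G)$, which would contradict the minimality of $\dim(\G)$. The natural candidate arises from the projection $\s_1$: composing the embedding $\f$ (viewed through its first-order data) with the projection onto ${\hs\G}$ should, after undoing the $\s$-twist, give a morphism $G\to[\s]_k\HH$. Concretely, I would use Lemma~\ref{lemma: universal property with Hopf} to realize this: the morphism of group schemes $\f(G)^\sharp\to\H={\hs\HH}$ coming from $\s_1$ restricts to data that, via the universal property, yields a morphism $G\to[\s]_k\HH$ of $\s$-algebraic groups. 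Here I must be careful that the relevant map is a morphism of group schemes landing in $\HH$ rather than ${\hs\HH}$, which is where the inversiveness is used to untwist.

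Next I would verify that this $\f'$ actually lies in $\Emb(G)$, i.e. that all four conditions of Definition~\ref{def: emb} hold. Zariski density of $\f'(G)$ in $\HH$ should follow because $\H$ was defined as the image, so $\f'(G)[0]=\HH$. The condition $\sdim(\ker(\f'))=0$ requires that passing to the $\s_1$-image does not increase the $\s$-dimension of the kernel; the key input is the last bullet of Definition~\ref{def: emb} for $\f$, namely that $\ker(\pi_1)$ already has dimension $\sdim(G)$, so the sequence of kernel dimensions of the Zariski closures of $\f(G)$ has stabilized at the first step. This stabilization, combined with Proposition~\ref{prop: limit of Gi} and the relation $\G_i\hookrightarrow{\hs(\G_{i-1})}$, should let me compute $\dim(\f'(G)[i])$ and confirm both that $\sdim(\f'(G))=\sdim(G)$ (so $\sdim(\ker\f')=0$) and that the stabilization-at-$i=1$ condition persists for $\f'$.

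The main obstacle I expect is the bookkeeping of Zariski closures under the projection $\s_1$: I must relate the higher-order closures $\f'(G)[i]$ of the new embedding to the closures $\f(G)[i+1]$ of the old one, showing that passing from $\G$ to its ``shifted'' image $\HH$ merely shifts the tower of closures by one. This is precisely where the hypothesis that $\ker(\pi_1)$ has dimension exactly $\sdim(G)$ is indispensable, because it guarantees the tower has already reached its stable regime, so the shift does not lose $\s$-dimension. Once the dimensions of all $\f'(G)[i]$ are pinned down, the contradiction $\dim(\HH)<\dim(\G)$ against minimality is immediate, forcing $\dim(\H)=\dim(\G)$ and completing the proof.
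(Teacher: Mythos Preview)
Your overall strategy matches the paper's: construct from $\s_1$ a new element of $\Emb(G)$ with target of dimension $\dim(\H)$, then invoke minimality of $\dim(\G)$. Your sketch of the verification (relating the Zariski closures of the new image to the shifted closures of $\f(G)$) is precisely what the paper does, via the identification $k[\f''(G)[i]]=k[\s(F),\ldots,\s^{i+1}(F)]$ for a finite generating set $F$ of $k[\f(G)[0]]$.

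However, your detour through an ``untwisted'' $\HH$ with ${\hs\HH}=\H$ is unnecessary and creates the very complication you flag. The image $\H\leq{\hs\G}$ is already an algebraic group over $k$: the group ${\hs\G}$ is obtained from $\G$ by base change along $\s\colon k\to k$ and is therefore itself a $k$-scheme, so any closed subgroup of it is an algebraic group over $k$. Hence $[\s]_k\H$ is a perfectly valid target for an element of $\Emb(G)$, and the morphism of group schemes $\f(G)^\sharp\to\f(G)[1]\xrightarrow{\s_1}\H$ yields via Lemma~\ref{lemma: universal property with Hopf} a morphism $\f(G)\to[\s]_k\H$ directly---no untwisting required. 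This is exactly what the paper does, and it dispenses with your appeal to inversiveness of $k$, which the lemma does not assume.
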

\begin{proof}
	Let $\H\leq {\hs\G}$ denote the image of $\s_1$.
	According to Lemma \ref{lemma: universal property with Hopf} the morphism $\f(G)^\sharp\to\f(G)[1]\xrightarrow{\s_1}\H$  of group schemes induces a morphism $\f'\colon \f(G)\to [\s]_k\H$ of $\s$-algebraic groups. We will show that $\f''=\f'\f\in\operatorname{Emb}(G)$.
	
	Let $F$ be a finite set such that $k[\f(G)[0]]=k[F]\subseteq k\{\f(G)\}$. Then $k\{\f(G)\}=k\{F\}$, $k[\H]=k[\s(F)]$ and $\f'$ corresponds to the inclusion $k\{\f''(G)\}=k\{\f'(\f(G))\}=k\{\s(F)\}\subseteq k\{\f(G)\}$. Moreover, $k[\f(G)[i]]=k[F,\ldots,\s^i(F)]$ and $k[\f''(G)[i]]=k[\f'(\f(G))[i]]=k[\s(F),\ldots,\s^{i+1}(F)]$ for $i\in\nn$.
	
	We have a surjective map $k[F]\otimes_k k[\s(F),\ldots,\s^{i+1}(F)]\to k[F,\ldots,\s^{i+1}(F)]$. Therefore $\dim(\f(G)[i+1])\leq \dim(\f(G)[0])+\dim(\f''(G)[i])$. By Theorem \ref{theo: sdimension} we have
	$\dim(\f(G)[i])=\sdim(\f'(G))(i+1)+e=\sdim(G)(i+1)+e$ for some $e\in\nn$ for all sufficiently large $i\in\nn$.
	
	Therefore
	\begin{align*}
	\dim(\f''(G)[i]) & \geq \dim(\f(G)[i+1])-\dim(\f(G)[0])=\sdim(G)(i+2)+e-\dim(\f(G)[0])= \\
	&=\sdim(G)(i+1)+e'
\end{align*}
	for all sufficiently large $i$. It follows from Theorem \ref{theo: sdimension} that $\sdim(\f''(G))\geq \sdim(G)$. So $\sdim(\f''(G))=\sdim(G)$ and $\sdim(\ker(\f''))=0$.
	
	As the map $k[\H]\to k\{\f''(G)\}$ is injective, we see that $\f''(G)$ is Zariski dense in $\H$.	
	We have a commutative diagram of $k$-Hopf algebras
	$$
	\xymatrix{
	{\hs(k[F])} \ar@{->>}[d] \ar@{^(->}[r] & {\hs(k[F,\s(F)])} \ar@{->>}[d] \\
		k[\s(F)] \ar@{^(->}[r]  & k[\s(F),\s^2(F)]	
	}
	$$
	where the vertical maps are given by applying $\s$, corresponding to the commutative diagram
	$$
	\xymatrix{
	\f''(G)[1] \ar@{->>}[r] \ar@{^(->}[d] & \f''(G)[0] \ar@{^(->}[d]  \\
	{\hs (\f(G)[1])} \ar@{->>}[r] & {\hs(\f(G)[0])}
	}
	$$
	of algebraic groups.
	As $\f''(G)[1]\hookrightarrow {\hs(\f(G)[1])}$ maps the kernel of $\f''(G)[1]\to\f''(G)[0]$ injectively into the kernel of ${\hs (\f(G)[1])}\to {\hs(\f(G)[0])}$, which has dimension $\sdim(G)$ since $\f\in\Emb(G)$, we see that the dimension of the kernel of $\f''(G)[1]\to\f''(G)[0]$ is at most $\sdim(G)$.
	By Proposition~\ref{prop: limit of Gi} the sequence $\dim(\ker(\f''(G)[i] \to \f''(G)[i-1]))_{i\geq 1}$ is non-increasing and stabilizes with value $\sdim(\f''(G))=\sdim(G)$. 
	Therefore, the dimension of $\ker(\f''(G)[1] \to \f''(G)[0])$ equals $\sdim(G)$.
	
	In summary, we find that $\f''\in\operatorname{Emb}(G)$. Thus the minimality of $\dim(\G)$ implies $\dim(\H)\geq\dim(\G)$.
\end{proof}

\begin{lemma} \label{lemma: fix with finite}
Let $G$ be a $\s$-closed subgroup of an algebraic group $\G$ such that the dimension of the kernel of $\pi_1\colon G[1]\to G[0]$ equals $\sdim(G)$. Furthermore, let $\H\leq \G$ be a smooth, connected, closed subgroup such that $\H\times{\hs \H}\subseteq G[1]$. Then $[\s]_k\H\subseteq G$. 
\end{lemma}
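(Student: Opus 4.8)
The plan is to translate everything into Zariski closures. Throughout write $G[i]$ for the $i$-th order Zariski closure of $G$ in $\G$, and set $\H[i]=\H\times{}^{\sigma}\!\H\times\cdots\times{}^{\sigma^{i}}\!\H$, which is the $i$-th order Zariski closure of $[\s]_k\H$ in $\G$. Since $\I(G)=\bigcup_i\I(G[i])$ in $k\{\G\}$ and the analogous identity holds for $[\s]_k\H$, the inclusion $[\s]_k\H\subseteq G$ is equivalent to $\I(G)\subseteq\I([\s]_k\H)$, which in turn holds if and only if $\H[i]\subseteq G[i]$ for every $i\in\nn$. Thus the statement reduces to proving $\H[i]\subseteq G[i]$ for all $i$. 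The numerical input I would extract first: writing $\G_i=\ker(\pi_i\colon G[i]\to G[i-1])$, the hypothesis $\dim(\G_1)=\sdim(G)$, combined with the fact that $(\dim(\G_i))_{i\geq 1}$ is non-increasing with eventual value $\sdim(G)$ (Proposition \ref{prop: limit of Gi} (i)), forces $\dim(\G_i)=\sdim(G)$ for every $i\geq 1$.

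I would prove $\H[i]\subseteq G[i]$ by induction on $i$. The cases $i=0,1$ are immediate: $\H[1]=\H\times{}^{\sigma}\!\H\subseteq G[1]$ is the hypothesis, and applying $\pi_1$ gives $\H\subseteq G[0]$. For the inductive step, assume $\H[i]\subseteq G[i]$ with $i\geq 1$ and introduce the closed subgroup $F=\big(G[i]\times{}^{\sigma^{i+1}}\!\G\big)\cap\big(\G\times{}^{\sigma}\!(G[i])\big)$ of $\G\times{}^{\sigma}\!\G\times\cdots\times{}^{\sigma^{i+1}}\!\G$. Using the product structure together with the induction hypothesis, $\H[i+1]\subseteq\H[i]\times{}^{\sigma^{i+1}}\!\H\subseteq G[i]\times{}^{\sigma^{i+1}}\!\G$ and $\H[i+1]\subseteq\G\times{}^{\sigma}\!(\H[i])\subseteq\G\times{}^{\sigma}\!(G[i])$, so that $\H[i+1]\subseteq F$. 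On the other hand $\pi_{i+1}$ maps $G[i+1]$ onto $G[i]$ and $\sigma_{i+1}$ maps $G[i+1]$ into ${}^{\sigma}\!(G[i])$ (the latter because $\I(G)$ is a $\sigma$-ideal), whence $G[i+1]\subseteq F$ as well.

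The crux is to show $\dim F=\dim G[i+1]$. From $G[i+1]\subseteq F$ one gets $\dim F\geq\dim G[i+1]=\dim G[i]+\dim(\G_{i+1})=\dim G[i]+\sdim(G)$. For the reverse inequality I would project $F$ onto its first $i+1$ coordinates, i.e. onto $G[i]$; the fibre over a point $(g_0,\dots,g_i)$ is the set of $g_{i+1}$ with $(g_1,\dots,g_{i+1})\in{}^{\sigma}\!(G[i])$, which is either empty or a coset of $\ker({}^{\sigma}\!\pi_i)={}^{\sigma}\!\G_i$, of dimension $\dim(\G_i)=\sdim(G)$. Hence $\dim F\leq\dim G[i]+\sdim(G)$ and equality follows. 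It is essential here to project onto $G[i]$ and not onto ${}^{\sigma}\!(G[i])$: the shift $\sigma_i\colon G[i]\to{}^{\sigma}\!(G[i-1])$ need not be surjective when $G$ fails to be $\sigma$-reduced, so the fibres of the other projection are not controlled by $\sdim(G)$.

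Finally I would upgrade the dimension equality to the desired containment. Since $G[i+1]\subseteq F$ and $\dim G[i+1]=\dim F$, the identity component $G[i+1]^o$ is a closed subgroup of the irreducible group $F^o$ of the same dimension, so $G[i+1]^o=F^o$ as topological spaces. Because $\H$ is smooth and connected, so is $\H[i+1]$; being connected and containing the identity, $\H[i+1]\subseteq F^o$, and being reduced with support inside $|F^o|=|G[i+1]^o|\subseteq|G[i+1]|$, it satisfies $\H[i+1]\subseteq G[i+1]$. This closes the induction and yields $[\s]_k\H\subseteq G$. I expect the dimension estimate of the third paragraph to be the main obstacle — both choosing the projection of $F$ that sidesteps the possible non-surjectivity of $\sigma_i$ and feeding in the hypothesis $\dim(\G_1)=\sdim(G)$ through the stabilization in Proposition \ref{prop: limit of Gi} — while the passage from equal dimensions to an actual containment crucially uses the reducedness coming from the smoothness of $\H$.
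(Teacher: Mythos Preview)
Your proof is correct and follows essentially the same route as the paper: introduce an auxiliary group containing both $G[i+1]$ and $\H[i+1]$, bound its dimension via a projection whose fibres are controlled by $\sdim(G)$, and then exploit that $\H[i+1]$ is smooth and connected to pass from equal dimensions to an actual containment. The only cosmetic difference is the choice of auxiliary group: the paper sets $\H_{i+1}=\bigcap_{j=0}^{i}\big(\G\times\cdots\times{}^{\sigma^{j}}(G[1])\times\cdots\times{}^{\sigma^{i+1}}\!\G\big)$ directly from $G[1]$ (so $\H[i+1]\subseteq\H_{i+1}$ is immediate without induction), whereas you take $F=(G[i]\times{}^{\sigma^{i+1}}\!\G)\cap(\G\times{}^{\sigma}\!(G[i]))$ and use the inductive hypothesis $\H[i]\subseteq G[i]$ to get $\H[i+1]\subseteq F$; the dimension bound and the concluding smoothness/reducedness argument are the same in both.
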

\begin{proof}
	Let us abbreviate $d=\sdim(G)$. It follows from the assumption that $\dim(G[1])=\dim(G[0])+d$. Using the assumption together with Proposition \ref{prop: limit of Gi} (i), we see that $\dim(G[i])=\dim(G[0])+id$ for all $i\in\nn$.

	For $i\geq 1$ let $\H_i$ denote the closed subgroup of $\G\times{\hs\G}\times\ldots\times{\hsi\G}$ given as
	$$\left(G[1]\times {^{\s^2}\!}\G\times\ldots\times {\hsi\G}\right)\cap\left(\G\times{\hs( G[1])}\times{^{\s^3}\!}\G\times\ldots\times{\hsi\G}\right)\cap\ldots\cap\left(\G\times\ldots\times{^{\s^{i-2}\!}}\G\times{^{\s^{i-1}}\!}(G[1])\right). $$ 
	Note that $\I(\H_i)=\big(\I(G[1]),\s(\I(G[1])),\ldots,\s^{i-1}(\I(G[1]))\big)\subseteq k[\G\times{\hs\G}\times\ldots\times{\hsi\G}]$. Thus $\I(\H_i)\subseteq \I(G)$ and $G[i]\subseteq \H_i$.
	
	 We will show by induction on $i$ that $\dim(\H_i)\leq\dim(G[0])+id$. As $\H_1=G[1]$ the statement is true for $i=1$. So we assume $i>1$. Note that the projection $$\pi_i\colon \G\times{\hs\G}\times\ldots\times{\hsi\G}\to \G\times{\hs\G}\times\ldots\times{^{\s^{i-1}}\G}$$ maps $\H_i$ into $\H_{i-1}$. The kernel of $\pi_i$ on $\H_i$ has dimension at most $d$ because, if $(h_0,\ldots,h_i)\in \H_i$ lies in the kernel of $\pi_i$, i.e., $(h_0,\ldots,h_i)=(1,\ldots,1,h_i)$, then $(1,h_i)$ lies in the kernel of ${^{\s^{i-1}}\!(\pi_1)}\colon{^{\s^{i-1}}\!(G[1])}\to {^{\s^{i-1}}\!(G[0])}$, which has dimension $d$. It follows that $\dim(\H_i)\leq \dim(\H_{i-1})+d\leq \dim(G[0])+id$ by the induction hypotheses. As $G[i]\subseteq \H_i$ we have indeed $\dim(\H_i)=\dim(G[0])+id$.

	So $G[i]\subseteq \H_i$ and $\dim(G[i])=\dim(\H_i)$ for all $i\geq 1$. This implies that $(\H_i^o)_\red\subseteq G[i].$ Since $\H\times {\hs\H}\subseteq G[1]$ it is clear that $\H\times{\hs\H}\times\ldots\times{\hsi\H}\subseteq \H_i$. As $\H$ is smooth and connected, the same holds for $\H\times{\hs\H}\times\ldots\times{\hsi\H}$. Thus  $\H\times{\hs\H}\times\ldots\times{\hsi\H}\subseteq (\H_i^o)_\red\subseteq G[i]$ for all $i\geq 1$. Therefore $[\s]_k\H\subseteq G$.
\end{proof}

The following proposition is the main step towards showing that every almost-simple $\s$\=/algebraic group is isogenous to an almost-simple algebraic group.

\begin{prop} \label{prop: if no normal then benign}
	Assume that $k$ is algebraically closed and inversive.
	Let $G$ be an integral $\s$-algebraic group with $\sdim(G)>0$ and let $\f\colon G\to [\s]_k\G$ be an element of $\operatorname{Emb}(G)$ such that $\dim(\G)$ is minimal. Then there exists a normal closed subgroup $\mathcal{N}$ of $\G$ with $\dim(\mathcal{N})>0$ such that $[\s]_k\mathcal{N}\subseteq\f(G)$.
	%
\end{prop}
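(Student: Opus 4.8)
The plan is to reduce everything to Lemma \ref{lemma: fix with finite} and then to extract the required subgroup from the first order Zariski closure of $\f(G)$, using the minimality of $\dim(\G)$ twice. Write $H=\f(G)$ and $d=\sdim(G)>0$. Since $G$ is integral, $k\{H\}\subseteq k\{G\}$ is a domain, so $\G=H[0]$ is integral, hence smooth and connected as $k$ is algebraically closed. I would study $H[1]\leq\G\times\hs\G$ through its two projections $\pi_1\colon H[1]\to\G$ and $\s_1\colon H[1]\to\hs\G$. The map $\pi_1$ is a quotient map onto $\G$, and by Lemma \ref{lemma: embed sreduced} (the first use of minimality) $\s_1$ is a quotient map onto $\hs\G$. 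Let $N''\unlhd\G$ be the image under $\pi_1$ of $\ker(\s_1)$ and $N'\unlhd\hs\G$ the image under $\s_1$ of $\ker(\pi_1)$; both are normal (images of normal subgroups under surjective homomorphisms) and both have dimension $d$, since $\dim\ker(\pi_1)=d$ by the definition of $\Emb(G)$ and $\dim\ker(\s_1)=\dim H[1]-\dim\hs\G=d$. Because $N''\times 1=\ker(\s_1)$ and $1\times N'=\ker(\pi_1)$ lie in $H[1]$, I get $N''\times N'\subseteq H[1]$.

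By Lemma \ref{lemma: fix with finite} it suffices to produce a smooth, connected, positive-dimensional normal closed subgroup $\mathcal{N}$ of $\G$ with $\mathcal{N}\times\hs{\mathcal{N}}\subseteq H[1]$; and for this it is enough to have $\mathcal{N}\subseteq N''$ and $\hs{\mathcal{N}}\subseteq N'$, i.e. $\mathcal{N}\subseteq\mathcal{M}:=N''\cap{}^{\sigma^{-1}}\!N'$. Here $\mathcal{M}$ is a normal closed subgroup of $\G$ (as $k$ is inversive, ${}^{\sigma^{-1}}\!N'$ makes sense and is normal), so once $\dim(\mathcal{M})>0$ is established I would set $\mathcal{N}=(\mathcal{M}_\red)^o$. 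This is smooth, connected, of the same positive dimension as $\mathcal{M}$, and normal in $\G$ by Lemma \ref{lemma: Nred normal} together with the fact that the identity component is characteristic. Then $\mathcal{N}\times\hs{\mathcal{N}}\subseteq N''\times N'\subseteq H[1]$, and Lemma \ref{lemma: fix with finite} gives $[\s]_k\mathcal{N}\subseteq H=\f(G)$, which is the assertion.

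The hard part will be showing $\dim(\mathcal{M})>0$; a dimension count alone only gives $\dim(\mathcal{M})\geq 2d-\dim(\G)$, so the minimality of $\dim(\G)$ must enter a second time. I would argue by contradiction: assume $\mathcal{M}$, equivalently $N'\cap\hs{N''}$, is finite. Let $q\colon\G\to\overline{\G}:=\G/N''$ and set $\overline{\f}=[\s]_kq\circ\f$. The image of $\overline{\f}$ is Zariski dense in $\overline{\G}$ since $q$ is surjective and $H$ is dense in $\G$. A computation with $H[1]$ shows that the kernel of $\overline{\f}(G)[1]\to\overline{\f}(G)[0]$ equals $(q\times{}^\sigma q)(\pi_1^{-1}(N''))$ and therefore has dimension $2d-\dim\!\big(H[1]\cap(N''\times\hs{N''})\big)=2d-d=d=\sdim(G)$, where the middle equality again uses that $N'\cap\hs{N''}$ is finite. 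Finally, $\ker(\overline{\f})=\f^{-1}([\s]_kN'')$ has the same $\s$-dimension as $H\cap[\s]_kN''$ (the difference is $\ker(\f)$, of $\s$-dimension zero, by Corollary \ref{cor: sdim and ord for quotients}); and the $i$-th order closure of $H\cap[\s]_kN''$ lies in $A_i:=H[i]\cap\big(N''\times\hs{N''}\times\cdots\times{}^{\sigma^i}\!N''\big)$, where the kernel of $A_i\to A_{i-1}$ embeds into ${}^{\sigma^{i-1}}\!(N'\cap\hs{N''})$ and is thus finite, so $\dim(A_i)\leq\dim(A_0)=d$ remains bounded and $\sdim(H\cap[\s]_kN'')=0$. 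Hence $\overline{\f}\in\Emb(G)$. Since $\dim(\mathcal{M})=0$ forces $d<\dim(\G)$ (otherwise $N''=\G$, $N'=\hs\G$, and $\mathcal{M}=\G$), we have $\dim(\overline{\G})=\dim(\G)-d<\dim(\G)$, contradicting minimality. Therefore $\dim(\mathcal{M})>0$, and the proof concludes as in the previous paragraph.
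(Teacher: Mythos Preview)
Your proof is correct and follows the same overall architecture as the paper's: the same subgroup $\mathcal{M}=N''\cap{}^{\sigma^{-1}}N'$ (in the paper's notation $\G_0\cap\G_1'$), the same reduction to Lemma~\ref{lemma: fix with finite}, and a contradiction via producing a smaller element of $\Emb(G)$ when $\dim(\mathcal{M})=0$.

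The one genuine difference is in how the contradiction morphism is built and checked. The paper forms $\H=H[1]/(\G_0\times\G_1)$, observes $\H\simeq\G/\G_0$ via an isomorphism $\eta$, and constructs $\f''\colon G\to[\s]_k\H$ from $\f(G)^\sharp\to\f(G)[1]\to\H$; the verification that $\f''\in\Emb(G)$ then uses an auxiliary map $\xi\colon\f(G)[1]\to\H\times{\hs\H}$. You instead take $\overline\G=\G/N''$ and $\overline\f=[\s]_kq\circ\f$ directly, identify $\overline\f(G)[1]$ with $(q\times{\hs q})(H[1])$, and compute the relevant kernels by hand. These are the same morphism (via $[\s]_k\eta$), but your bookkeeping is more transparent and avoids the somewhat opaque map $\xi$. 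Your final contradiction is also slightly sharper: you get $\dim(\overline\G)=\dim(\G)-d<\dim(\G)$ immediately from $d>0$, whereas the paper first deduces $\dim(\G_0)=0$ from minimality and then reaches $\sdim(G)=0$. Both are valid; yours is shorter. The parenthetical about $d<\dim(\G)$ is correct (since $\G$ is integral) but, as you implicitly use only $d>0$, it is not actually needed.
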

\begin{proof}
	%
	%
	%
	As $G$ is integral also $\f(G)$ is integral. Since $\f(G)$ is Zariski dense in $\G$ (i.e., $k[\G]\to k\{\f(G)\}$ is injective) it follows that $\G$ is integral (i.e., connected and smooth).
	
	We consider the first order Zariski closure $\f(G)[1]\leq \G\times{\hs\G}$ of $\f(G)$ in $\G$ and $\pi_1\colon \f(G)[1]\to\f(G)[0]=\G,\ (g_0,g_1)\mapsto g_0$. We also have a morphism $\s_1\colon \f(G)[1]\to {\hs\G},\ (g_0,g_1)\mapsto g_1$ of algebraic groups. From Lemma \ref{lemma: embed sreduced} we know that the image of $\s_1$ has dimension $\dim(\G)$. As $\hs\G$ is integral we can conclude that $\s_1$ is a quotient map.
	Let $\G_1\leq {\hs\G}$ be such that $\ker(\pi_1)=1\times\G_1$. Similarly, let $\G_0\leq \G$ be such that $\ker(\s_1)=\G_0\times 1$. Then $\G_0\times \G_1$ is a normal subgroup of $\f(G)[1]$.
	Moreover, as $\pi_1$ is a quotient map, $\G_0$ is normal in $\G$ and because $\s_1$ is a quotient map, $\G_1$ is normal in ${\hs\G}$.
	Because $k$ is assumed to be inversive, there exists a normal closed subgroup $\G_1'$ of $\G$ with ${\hs (\G_1')}=\G_1$.  
	 Then $\mathcal{M}=\G_0\cap\G_1'$ is normal in $\G$. 
	
	Suppose $\dim(\mathcal{M})=0$, i.e., $\mathcal{M}$ is finite. Define
	$\H=\f(G)[1]/(\G_0\times \G_1)$ and 
	%
	%
	consider the morphism $\f'\colon \f(G)\to [\s]_k\H$ of $\s$-algebraic groups induced from the morphism $$\f(G)^\sharp\to \f(G)[1]\to \H$$ of group schemes as in Lemma \ref{lemma: universal property with Hopf}.
	
	We will show that $\f''=\f'\f\in\operatorname{Emb}(G)$. 
	The maps in the sequence $G^\sharp\to\f(G)^\sharp\to \f(G)[1]\to \H$ are all quotient maps. So $\f''(G)$ is Zariski dense in $\H$.
		Note that $\f'^\sharp$ is given by 
 	$$\f(G)^\sharp\to ([\s]_k\H)^\sharp,\ (g_0,g_1,\ldots)\mapsto \left(\overline{(g_i,g_{i+1})}\right)_{i\in\nn},$$
 	where $\overline{(g_i,g_{i+1})}$ denotes the image of $(g_i,g_{i+1})$ under the quotient map $${\hsi(\f(G)[1])}\to {\hsi(\f(G)[1])}/{\hsi(\G_0\times\G_1)}.$$ 
 	So if $(g_0,g_1,\ldots)\in\f(G)^\sharp\leq \G\times{\hs\G}\times\ldots$ lies in the kernel of $\f'^\sharp$, then $g_0\in \G_0,\ g_1\in\G_1,\ g_1\in {\hs\G_0},\ g_2\in {\hs\G_1},\ g_2\in{^{\s^2}\!\G_0}$ and so on.
 	As $\G_1\cap{\hs\G_0}={\hs\G_1'}\cap{\hs\G_0}={\hs(\G_1'\cap\G_0)}={\hs\mathcal{M}}$, it follows that the kernel of $\f'^\sharp$ is contained in $\G_0\times {\hs\mathcal{M}}\times {^{\s^2}\!\mathcal{M}}\times\ldots.$ Since $\mathcal{M}$ is finite, this implies that $\sdim(\ker(\f'))=0$. Using Corollary \ref{cor: sdim and ord for quotients} it follows that also $\sdim(\ker(\f''))=0$.
 		
	The quotient map $\f(G)[1]\to \G/\G_0,\ (g_0,g_1)\mapsto \overline{g_0}$ has kernel $\G_0\times \G_1$ and therefore induces an isomorphism $\eta\colon \H\to \G/\G_0$ of algebraic groups. The isomorphism ${\hs\eta}\colon{\hs\H}\to{\hs\G}/{\hs\G_0}$ has an inverse $({\hs\eta})^{-1}\colon {\hs\G}/{\hs\G_0}\to {\hs\H}$. We claim that the image of the morphism
	$$\xi\colon \f(G)[1]\to \H\times{\hs\H},\ (g_0,g_1)\mapsto \left(\overline{(g_0,g_1)}, ({\hs\eta})^{-1}(\overline{g_1})\right)$$ of algebraic groups, contains $\f''(G)[1]=\f'(\f(G))[1]$. An element of $\f'(\f(G))[1]$ is of the form $\left(\overline{(g_0,g_1)},\overline{(g_1,g_2)}\right)$ with $(g_0,g_1,g_2)\in\f(G)[2]\leq \G\times{\hs\G}\times{^{\s^2}\!}\G$, so in particular $(g_0,g_1)\in\f(G)[1]$ and $(g_1,g_2)\in{\hs(\f(G)[1])}$. As ${\hs\eta}\left(\overline{(g_1,g_2)}\right)=\overline{g_1}$, we see that $\overline{(g_1,g_2)}= ({\hs\eta})^{-1}(\overline{g_1})$. Thus $\f''(G)[1]\subseteq\xi(\f(G)[1])$ as claimed.	
	The kernel of $\xi$ is $$\G_0\times({\hs\G_0}\cap\G_1)=\G_0\times({\hs\G_0}\cap{\hs(\G'_1)})=\G_0\times{\hs\mathcal{M}}.$$ It follows that $$\dim(\f''(G)[1])\leq\dim(\xi(\f(G)[1]))=\dim(\f(G)[1])-\dim(\ker(\xi))=\dim(\f(G)[1])-\dim(\G_0).$$ Therefore
	\begin{align*}
	 \dim(\ker(\f''(G)[1]\to\f''(G)[0])) & =\dim(\f''(G)[1])-\dim(\f''(G)[0])\leq \\
	 &\leq\dim(\f(G)[1])-\dim(\G_0)-\dim(\H)= \\
	 &= \dim(\f(G)[1])-\dim(\G_0)-(\dim(\f(G)[1])-(\dim(\G_0)+\dim(\G_1))=\\ 
	 &=\dim(\G_1)=\sdim(G),
	 \end{align*}
	where the last equality above holds because $\f\in\operatorname{Emb}(G)$.
	We already know that $\sdim(\f''(G))=\sdim(G)$ (because $\sdim(\ker(\f''))=0$). It thus follows from Proposition \ref{prop: limit of Gi} (i) that the kernel of $\f''(G)[1]\to \f''(G)[0]$ has dimension $\sdim(G)$. In summary, we conclude that $\f''\in\operatorname{Emb}(G)$.

	By the minimality of $\dim(\G)$, we have $\dim(\H)\geq\dim(\G)$. But $\H\simeq \G/\G_0$ and so we must have $\dim(\G_0)=0$. As $\s_1\colon \f(G)[1]\to {\hs\G}$ is a quotient map with kernel $\G_0\times 1$, it follows that
	$\dim(\f(G)[1])=\dim(\G)$. As $\f(G)[0]|=\G$, we find $$\sdim(G)=\dim(\G_1)=\dim(\f(G)[1])-\dim(\f(G)[0])=0.$$
	This contradicts our assumption that $\sdim(G)>0$. Thus $\dim(\mathcal{M)}\geq1$.

	By construction $\mathcal{M}\times{\hs \mathcal{M}}\subseteq\f(G)[1]$.
	The identity component $\mathcal{M}^o$ of $\mathcal{M}$ is a characteristic subgroup of $\mathcal{M}$ (\cite[Prop. 1.52]{Milne:AlgebraicGroupsTheTheoryOfGroupSchemesOfFiniteTypeOverAField}). Therefore $\mathcal{M}^o$ is a normal closed subgroup of $\G$. Moreover, as $\G$ is smooth, it follows from Lemma \ref{lemma: Nred normal} that $(\mathcal{M}^o)_\red$ is normal in $\G$. Clearly $\dim((\mathcal{M}^o)_\red)=\dim(\mathcal{M})$ and so $\mathcal{N}=(\mathcal{M}^o)_\red$ is a connected, smooth, normal subgroup of $\G$ with $\dim(\N)>0$ and $\mathcal{N}\times{\hs \mathcal{N}}\subseteq \mathcal{M}\times{\hs\mathcal{M}}\subseteq \f(G)[1]$. So it follows from Lemma \ref{lemma: fix with finite} that $[\s]_k\N\subseteq \f(G)$.	
\end{proof}

%

\begin{cor} \label{cor: almost simple is isogenous to almost simple}
	Assume that $k$ is algebraically closed and inversive. Let $G$ be an almost-simple $\s$-algebraic group. Then there exists an almost-simple algebraic group $\G$ and an isogeny $G\twoheadrightarrow[\s]_k\G$.
\end{cor}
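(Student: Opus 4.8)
The plan is to run an almost-simple $G$ through Proposition \ref{prop: if no normal then benign} and then bootstrap the normal subgroup it produces up to the whole group. First I would record that, by Lemma \ref{lemma: almost-simple is sintegral}, an almost-simple $\s$-algebraic group $G$ is strongly connected and $\s$-integral, in particular integral, and of course $\sdim(G)>0$; so $G$ meets the hypotheses of Proposition \ref{prop: if no normal then benign}. Choosing $\f\colon G\to[\s]_k\G$ in $\operatorname{Emb}(G)$ (non-empty by the preceding lemma) with $\dim(\G)$ minimal, the proposition supplies a normal closed subgroup $\mathcal{N}\unlhd\G$ with $\dim(\mathcal{N})>0$ and $[\s]_k\mathcal{N}\subseteq\f(G)$.

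The heart of the argument is to upgrade this to $\f(G)=[\s]_k\G$. Set $M=\f^{-1}([\s]_k\mathcal{N})$. Since $\mathcal{N}\unlhd\G$ gives $[\s]_k\mathcal{N}\unlhd[\s]_k\G$ (Example \ref{ex: quotients and algebraic groups}), $M$ is a normal $\s$-closed subgroup of $G$ containing $\ker(\f)$. Viewing $\f$ as the quotient map $G\twoheadrightarrow\f(G)$ and using the correspondence of Theorem \ref{theo: isom3} together with $[\s]_k\mathcal{N}\subseteq\f(G)$, I get $\f(M)=[\s]_k\mathcal{N}$; then Theorem \ref{theo: isom1} and Corollary \ref{cor: sdim and ord for quotients} yield $\sdim(M)=\sdim([\s]_k\mathcal{N})=\dim(\mathcal{N})>0$ (Example \ref{ex: sdim=dim}), because $\ker(\f|_M)\subseteq\ker(\f)$ has $\s$-dimension zero. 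As $G$ is almost-simple, its only normal $\s$-closed subgroup of positive $\s$-dimension is $G$ itself, so $M=G$ and hence $\f(G)=[\s]_k\mathcal{N}$. Comparing zeroth Zariski closures---$\f(G)$ is Zariski dense in $\G$ by the definition of $\operatorname{Emb}(G)$, while $([\s]_k\mathcal{N})[0]=\mathcal{N}$---forces $\mathcal{N}=\G$, whence $\f(G)=[\s]_k\G$.

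It then remains to see that $\f$ is an isogeny and that $\G$ is almost-simple. The previous paragraph shows $\f$ is a quotient map onto $[\s]_k\G$, and $\sdim(\ker(\f))=0$ is built into the definition of $\operatorname{Emb}(G)$, so $\f\colon G\twoheadrightarrow[\s]_k\G$ is an isogeny once we note both groups are strongly connected ($G$ by Lemma \ref{lemma: almost-simple is sintegral}). For almost-simplicity of $\G$, note that $\G$ is integral, hence smooth, connected and of positive dimension, exactly as in the proof of Proposition \ref{prop: if no normal then benign}. If $\mathcal{N}'\unlhd\G$ were a proper normal closed subgroup with $\dim(\mathcal{N}')>0$, then $[\s]_k\mathcal{N}'$ would be a proper normal $\s$-closed subgroup of $[\s]_k\G$ of positive $\s$-dimension, and its preimage $\f^{-1}([\s]_k\mathcal{N}')$ would be a proper normal $\s$-closed subgroup of $G$ of positive $\s$-dimension (again by surjectivity of $\f$, Theorem \ref{theo: isom3} and Corollary \ref{cor: sdim and ord for quotients}), contradicting almost-simplicity of $G$. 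Hence every proper normal closed subgroup of $\G$ has dimension zero, so $\G$ is almost-simple in the sense of Definition \ref{defi: almost-simple}.

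The hard part will not be any single estimate---Proposition \ref{prop: if no normal then benign} has already done the heavy lifting---but rather the careful bookkeeping of images, preimages and Zariski closures: in particular verifying $\f(\f^{-1}([\s]_k\mathcal{N}))=[\s]_k\mathcal{N}$ and $([\s]_k\mathcal{N})[0]=\mathcal{N}$, and making sure each appeal to the isomorphism theorems is legitimate given that $\f$ is only a quotient onto its image until the final step where $\f(G)=[\s]_k\G$ is established.
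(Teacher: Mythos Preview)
Your proof is correct and follows essentially the same approach as the paper's: apply Proposition \ref{prop: if no normal then benign}, pull back $[\s]_k\mathcal{N}$ to a normal subgroup of positive $\s$-dimension, use almost-simplicity of $G$ to force $\f(G)=[\s]_k\mathcal{N}$, and then verify the target is almost-simple. The only cosmetic difference is that you go on to observe $\mathcal{N}=\G$ via Zariski density, whereas the paper simply records the isogeny as $G\twoheadrightarrow[\s]_k\mathcal{N}$ without bothering to note that $\mathcal{N}$ coincides with $\G$.
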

\begin{proof}
	We know from Lemma \ref{lemma: almost-simple is sintegral} that $G$ is integral. By Proposition \ref{prop: if no normal then benign} there exists an algebraic group $\G$, a normal closed subgroup $\N$ of $\G$ with $\dim(\N)>0$ and a morphism $\f\colon G\to [\s]_k\G$ such that $\sdim(\ker(\f))=0$ and $[\s]_k\N\subseteq \f(G)$. As $\N$ is normal in $\G$, $[\s]_k\N$ is normal in $[\s]_k\G$ and therefore $[\s]_k\N$ is also normal in $\f(G)$. Thus $N=\f^{-1}([\s]_k\N)$ is a normal $\s$\=/closed subgroup of $G$. As $N/\ker(\f)\simeq [\s]_k\N$ and $\sdim(\ker(\f))=0$, we see that $\sdim(N)=\sdim([\s]_k\N)=\dim(\N)>0$. So $N=G$.
	
	This implies that $\f(G)=[\s]_k\N$. So $\f\colon G\to [\s]_k\N$ is an isogeny. It remains to see that $\N$ is almost-simple. As $G$ is integral it follows that also $\mathcal{N}$ is integral, i.e., smooth and connected. Assume $\N'$ is a normal closed subgroup of $\N$ with $\dim(\N')>0$. Then $N'=\f^{-1}([\s]_k\N')$ is a normal $\s$-closed subgroup of $G$ with $\sdim(N')=\sdim(N'/\ker(\f))=\sdim([\s]_k\N')=\dim(\N')>0$. Thus $N'=G$. This implies $[\s]_k\N'=[\s]_k\N$ and so $\N=\N'$. Therefore $\N$ is almost-simple.
\end{proof}

Combining the above corollary with Proposition \ref{prop: almost simple algebrais is almost simple salgebraic}, we obtain a characterization of almost-simple $\s$-algebraic groups:

\begin{theo} \label{theo: main almost-simple}
	Assume that $k$ is algebraically closed and inversive. Let $G$ be a strongly connected $\s$-algebraic group.
	Then $G$ is almost-simple if and only if $G$ is isogenous to $[\s]_k\G$ for some almost-simple algebraic group $\G$.
\end{theo}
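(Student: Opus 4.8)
The plan is to deduce the theorem directly from the two principal results already established in this section, namely Proposition \ref{prop: almost simple algebrais is almost simple salgebraic} and Corollary \ref{cor: almost simple is isogenous to almost simple}, together with the fact that almost-simplicity is invariant under isogeny (Lemma \ref{lemma: almost-simple and isogeny}). No new geometric input is needed; the proof is a bookkeeping argument that assembles these pieces and checks that their hypotheses are met.

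For the forward implication I would assume that $G$ is almost-simple. Corollary \ref{cor: almost simple is isogenous to almost simple} then furnishes an almost-simple algebraic group $\G$ together with an isogeny $G\twoheadrightarrow[\s]_k\G$. To conclude that $G$ and $[\s]_k\G$ are isogenous in the sense of the relevant definition, I note that $G$ is strongly connected by hypothesis and that $[\s]_k\G$ is strongly connected as well, since it is almost-simple and hence strongly connected by Lemma \ref{lemma: almost-simple is sintegral}. One may then take $G$ itself as the common apex, equipped with the identity isogeny $G\twoheadrightarrow G$ and the isogeny $G\twoheadrightarrow[\s]_k\G$ just produced; this exhibits $G$ and $[\s]_k\G$ as isogenous.

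For the converse I would assume that $G$ is isogenous to $[\s]_k\G$ for some almost-simple algebraic group $\G$. By Proposition \ref{prop: almost simple algebrais is almost simple salgebraic}, $[\s]_k\G$ is itself an almost-simple $\s$-algebraic group, and therefore strongly connected by Lemma \ref{lemma: almost-simple is sintegral}. Since $G$ is strongly connected by hypothesis and is isogenous to the strongly connected almost-simple group $[\s]_k\G$, Lemma \ref{lemma: almost-simple and isogeny} applies and yields at once that $G$ is almost-simple.

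As both results invoked have been proved in full, there is no substantive obstacle left in this final step. The only point deserving a moment of attention is verifying that the strong-connectedness hypotheses of Lemma \ref{lemma: almost-simple and isogeny} are satisfied on both sides, which is exactly what Lemma \ref{lemma: almost-simple is sintegral} guarantees for any almost-simple $\s$-algebraic group.
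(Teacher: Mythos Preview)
Your proposal is correct and follows essentially the same approach as the paper: both directions are obtained by combining Corollary~\ref{cor: almost simple is isogenous to almost simple}, Proposition~\ref{prop: almost simple algebrais is almost simple salgebraic}, and Lemma~\ref{lemma: almost-simple and isogeny}. Your argument is slightly more explicit in verifying the strong-connectedness hypotheses via Lemma~\ref{lemma: almost-simple is sintegral}, but this is the only difference.
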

\begin{proof}
	An almost-simple $\s$-algebraic group is isogenous to $[\s]_k\G$ for some almost-simple algebraic group $\G$ by Corollary \ref{cor: almost simple is isogenous to almost simple}.
	If $\G$ is an almost-simple $\s$-algebraic group, then $[\s]_k\G$ is an almost-simple $\s$-algebraic group by Proposition \ref{prop: almost simple algebrais is almost simple salgebraic}. Thus the claim follows from Lemma~\ref{lemma: almost-simple and isogeny}.	
\end{proof}

While Theorem \ref{theo: main almost-simple} and Corollary \ref{cor: almost simple is isogenous to almost simple} elucidate the structure of the almost simple $\s$\=/algebraic groups, a full classification of the almost-simple $\s$-algebraic groups up to isomorphism remains a topic for future research. A natural approach to this question is to investigate how the isogeny class of an almost-simple $\s$-algebraic group splits into isomorphism classes. The following example shows that the isogeny class of an almost-simple $\s$-algebraic group may contain infinitely many isomorphism classes.

\begin{ex} \label{ex: isogenous to Gm}
		We give an example of an infinite family of pairwise non-isomorphic almost-simple $\s$-algebraic groups isogenous to $[\s]_k\Gm$. Assume that $k$ is algebraically closed and inversive.
	For $n\geq 1$ let
	$$G_n=\{(h,g)\in\Gm^2|\ \s(h)=g^n\}.$$
	Then $k\{G_n\}=k[x,x^{-1},y,y^{-1},\s(y),\s(y)^{-1},\ldots]$, with $\s(x)=y^n$, which is a $\s$-domain. So $G_n$ is $\s$-integral. In particular, $G_n$ is perfectly $\s$-reduced.

 Let $H$ be a $\s$-closed subgroup of $G_n$. The $\s$-closed subgroups of $\Gm^2$ are defined by multiplicative functions (\cite[Lemma A.40]{DiVizioHardouinWibmer:DifferenceGaloisTheoryOfLinearDifferentialEquations}). So there exist $\alpha_0,\ldots,\alpha_r,\beta_0,\ldots,\beta_s\in\mathbb{Z}$ such that
	\begin{equation} \label{eqn: multiplicative function}
	h^{\alpha_0}\s(h)^{\alpha_1}\ldots\s^r(h)^{\alpha_r}g^{\beta_0}\ldots\s^s(g)^{\beta_s}=1
	\end{equation}
	 for all $(h,g)\in H(R)$ for all \ks-algebras $R$. Using the defining equation $\s(h)=g^n$ of $H$, equation (\ref{eqn: multiplicative function}) can be transformed to an equation of the form
	 \begin{equation} \label{eqn: ex simplified}
	 h^{\alpha_0}g^{\gamma_1}\s(g)^{\gamma_2}\ldots \s^t(g)^{\gamma_t}=1.
	 \end{equation}
	If $H$ is properly contained in $G_n$, then there exists a non-trivial such relation, i.e., not all of $\alpha_0,\gamma_1,\ldots,\gamma_t$ are zero. Applying $\s$ to equation (\ref{eqn: ex simplified}) yields $g^{\alpha_0n}\s(g)^{\gamma_1}\ldots\s^{t+1}(g)^{\gamma_t}=1$. So we have a non-trivial relation $g^{\delta_0}\s(g)^{\delta_1}\ldots\s^m(g)^{\delta_m}=1$ satisfied by all $(h,g)\in G(R)$ for all \ks\=/algebras $R$. Raising this equation to the $n$-th power and replacing $g^n$ with $\s^n(h)$, we see that $\s(h)^{\delta_0}\s^2(h)^{\delta_1}\ldots\s^{m+1}(h)^{\delta_m}=1$. So, with 
	\begin{align*}
		H_1 & =\big\{h\in\Gm|\ \s(h)^{\delta_0}\s^2(h)^{\delta_1}\ldots\s^{m+1}(h)^{\delta_m}=1 \big\},\\
	H_2 & =\big\{g\in\Gm|\ g^{\delta_0}\s(g)^{\delta_1}\ldots\s^m(g)^{\delta_m}=1\big\},
	\end{align*}
	we have $H\subseteq H_1\times H_2$. As every proper $\s$-closed subgroup of $\Gm$ has $\s$-dimension zero (Proposition \ref{prop: almost simple algebrais is almost simple salgebraic}), we see, using Lemma \ref{lemma: sdim and ord for product} that
	$$\sdim(H)\leq \sdim(H_1\times H_2)=\sdim(H_1)+\sdim(H_2)=0.$$
	This shows that $G_n$ is almost-simple. The morphism $G_n\to [\s]_k\Gm,\ (h,g)\mapsto h$ is an isogeny. (The projection onto the second component is also an isogeny.)
	
	It remain to see that $G_n$ and $G_m$ are not isomorphic for $n\neq m$. To this end we consider the kernel $N_n$ of the morphism $\f\colon G_n\to {\hs G_n},\ (h,g)\mapsto (\s(h),\s(g))$. Here ${\hs G_n}$ is the $\s$\=/algebraic group over $k$ obtained from $G_n$ by base change via $\s\colon k\to k$. (In this example, in fact, ${\hs G_n}\simeq G_n$ as $G_n$ is defined over the prime field.) 
	The dual map of $\f$ is ${\hs(k\{G_n\})}=k\{G_n\}\otimes_k k\to k\{G_n\},\ f\otimes\lambda\mapsto \s(f)\lambda$, which is an invariant under isomorphism. It follows that also the kernel $N_n$ of $\f$ is an invariant under isomorphism. 
	
	So, assuming that $G_n$ and $G_m$ are isomorphic, it follows that also $N_n$ and $N_m$ are isomorphic. We have $N_n=\{(h,g)\in\Gm^2|\ \s(h)=1,\ \s(g)=1,\  g^n=1\}$ and consequently $(N_n)^\sharp=\Gm\times\mu_n$, where $\mu_n$ denotes the group of $n$-th roots of unity. As $(N_n)^\sharp/((N_n)^\sharp)^o\simeq \mu_n$, we find $\mu_n\simeq\mu_m$. Thus $m=n$.
%
	
\end{ex}

The following example exhibits a fairly general construction of almost-simple $\s$-algebraic groups that are not isomorphic to almost-simple algebraic groups, considered as $\s$-algebraic groups. We note that Example \ref{ex: isogenous to Gm} can be seen as a special case of Example \ref{ex: almost-simple} (choose $\G=\H=\Gm$ and $\pi\colon\Gm\to \Gm,\ g\mapsto g^n$).

\begin{ex} \label{ex: almost-simple}
	Assume that $k$ is algebraically closed and inversive.
	Let $\G$ be an almost-simple algebraic group, $\H$ an algebraic group, and $\pi\colon \G\to {\hs\H}$ a quotient map.
	Set $$G=\{(h,g)\in\H\times\G|\ \s(h)=\pi(g)\}.$$ Since $\s\colon [\s]_k\H\to{\hs}([\s]_k\H)$ and $[\s]_k\pi\colon [\s]_k\G\to {\hs([\s]_k\H)}$ are morphisms of $\s$-algebraic groups, we see that $G$ is a $\s$-closed subgroup of $\H\times\G$. We will show that $G$ is almost-simple and isogenous to $[\s]_k\G$. Moreover, if $\ker(\pi)$ is non-trivial, then $G$ is not isomorphic to an almost-simple algebraic group (considered as a $\s$-algebraic group).
	
	The morphism $\pi\colon \G\to {\hs\H}$ corresponds to a morphism $\pi^*\colon {\hs(k[\H])}=k[\H]\otimes_k k\to k[\G]$. The coordinate ring of $G$ is $k\{G\}=k[\H]\otimes_k k\{\G\}$ with $\s\colon k\{G\}\to k\{G\}$ given by $\s(f_1\otimes f_2)=1\otimes \pi^*(f_1\otimes 1)\s(f_2)\in k[\H]\otimes_k k\{\G\}$ for $f_1\otimes f_2\in k[\H]\otimes_k k\{\G\}$. (In particular, $G$ is $\s$-integral.)

	The projection $\f\colon G\to [\s]_k\G$ onto the second factor is a quotient map (corresponding to the inclusion $k\{\G\}\hookrightarrow k[\H]\otimes_k k\{\G\} $) and $\ker(\f)=\{(h,1)\in\H\times \G|\ \s(h)=1\}$, which has $\s$-dimension zero. To show that $G$ is almost-simple it thus suffices, by Lemma \ref{lemma: almost-simple and isogeny}, to show that $G$ is strongly connected.
	
	So let $H$ be a $\s$-closed subgroup of $G$ with $\sdim(H)=\sdim(G)$. We have to show that $H=G$. As $\f(H)$ is a $\s$-closed subgroup of $[\s]_k\G$ with $\s$-dimension $\sdim(H)=\sdim(G)=\sdim([\s]_k\G)$ and $[\s]_k\G$ is strongly connected, we see that $\f(H)=[\s]_k\G$. This signifies that the defining ideal $\I(H)\subseteq k[\H]\otimes_k k\{\G\}$ of $H$ has zero intersection with $k\{\G\}$. 
	
	Suppose $H$ is properly contained in $G$, i.e., $\I(H)\subseteq k\{G\}=k[\H]\otimes_k k\{\G\}$ contains a non-zero element $f$. As $\I(H)$ is a $\s$-ideal, it follows that $\s(f)$ is a non-zero element of $\I(H)\cap k\{\G\}$; a contradiction. Thus $G$ is almost-simple and isogenous to $[\s]_k\G$.
	
	To show that $G$ is not isomorphic to an almost-simple algebraic group, consider the kernel $N=\{(h,g)|\  \s(h)=1,\ \s(g)=1,\ \pi(g)=1 \}$ of $\s\colon G\to {\hs G}$. Then $N^\sharp=\H\times\ker(\pi)$.
	
	For an almost-simple algebraic group $\G'$, the kernel $N'$ of $\s\colon [\s]_k\G'\to [\s]_k\G'$ satisfies $(N')^\sharp=\G'$. Thus, if $\ker(\pi)$ is non-trivial, $G$ cannot be isomorphic to $[\s]_k\G'$, because otherwise $\H\times\ker(\pi)$ would be isomorphic to $\G'$ (which is impossible since $\G'$ is connected by $\H\times\ker(\pi)$ is not).
\end{ex}

We conclude with two examples illustrating Theorem \ref{theo: Jordan Hoelder}. The following example shows that in a certain sense Theorem \ref{theo: Jordan Hoelder} generalizes the Jordan-H\"{o}lder theorem for algebraic groups.

\begin{ex} \label{ex: Jordan Hoelder algebraic}
	Let $k$ be algebraically closed and inversive.
	Let $\G$ be a smooth connected algebraic group of positive dimension. Then there exists a subnormal series $\G=\G_0\supseteq\ldots\supseteq \G_n=1$ of smooth connected closed subgroups of $\G$ such that $\G_i/\G_{i+1}$ is almost-simple. By Example \ref{ex: strong component} the $\s$-algebraic groups $[\s]_k\G_i$ are strongly connected. Moreover, $[\s]_k\G_i/[\s]_k\G_{i+1}=[\s]_k(\G_i/\G_{i+1})$ is almost-simple (Example \ref{ex: quotients and algebraic groups} and Proposition \ref{prop: almost simple algebrais is almost simple salgebraic}). Thus 
	$$[\s]_k\G=[\s]_k\G_0\supseteq [\s]_k\G_1\supseteq \ldots\supseteq [\s]_k\G_n=1$$
	is a subnormal series of $G=[\s]_k\G$ as in Theorem \ref{theo: Jordan Hoelder}. 
\end{ex}

The following example shows that in Theorem \ref{theo: Jordan Hoelder} the word ``isogenous'' cannot be replaced with the word ``isomorphic''.

\begin{ex} \label{ex: Jordan Hoelder}
		Let $k$ be algebraically closed and inversive. Let $G$ be the $\s$-closed subgroup of $\Gm^3$ given by $G=\left\{(a,b,c)\in\Gm^3|\ \s(a)=bc^2 \right\}$.
		
		 Let us first show that $G$ is strongly connected. We have $$k\{G\}=k[x,x^{-1},y,y^{-1},z,z^{-1},\s(y),\s(y)^{-1},\s(z),\s(z)^{-1},\ldots]$$ with $\s(x)=yz^2$, which is a $\s$-domain. The morphism $\f\colon G\to [\s]_k\Gm^2,\ (a,b,c)\mapsto (b,c)$ is a quotient map, corresponding to the inclusion $k\{\Gm^2\}=k\{y,y^{-1},z,\s(z)^{-1},\ldots\}\subseteq k\{G\}$. The kernel $\ker(\f)=\left\{(a,1,1)\in\Gm^3|\ \s(a)=1\right\}$ has $\s$-dimension $0$. Let $H$ be a $\s$-closed subgroup of $G$ with $\sdim(H)=\sdim(G)=2$. Then $\f(H)\leq [\s]_k\Gm^2$ also has $\s$-dimension $2$. Because $[\s]_k\Gm^2$ is strongly $\s$-connected, it follows that $\f(H)=[\s]_k\Gm^2$. This signifies that the intersection $\I(H)\cap k\{y,y^{-1},z,z^{-1},\ldots\}$ is zero. However, if $f\in\I(H)$ is non-zero, then $\s(f)$ is a non-zero element of $\I(H)\cap k\{y,y^{-1},z,z^{-1},\ldots\}$. This shows that $\I(H)=0$, i.e., $H=G$ and $G$ is strongly connected.  
		 
		 Set $G_1=\left\{(1,b,c)\in\Gm^3|\ bc^2=1\right\}\leq G$. As $G\simeq[\s]_k\Gm$ (via $(1,b,c)\mapsto c$), we see that $G_1$ is strongly connected and in fact almost-simple. The quotient $G/G_1$ is isomorphic to $[\s]_k\Gm$, indeed, $G\to [\s]_k\Gm,\ (a,b,c)\mapsto a$ is a quotient map with kernel $G_1$. Thus
		 \begin{equation} \label{eq: first subnormal series}
		 G\supseteq G_1\supseteq 1
		 \end{equation}
		 is a subnormal series as in Theorem \ref{theo: Jordan Hoelder}. The quotient groups are $G/G_1\simeq [\s]_k\Gm$ and $G_1\simeq[\s]_k\Gm$.
		 
		 Set $H_1=\{(a,1,c)|\ \s(a)=c^2\}\leq G$. Then $H_1$ is isomorphic to the group labeled $G_2$ in Example \ref{ex: isogenous to Gm}. So $H_1$ is strongly connected and almost-simple. The quotient $G/H_1$ is isomorphic to $[\s]_k\Gm$. Indeed $G\to [\s]_k\Gm,\ (a,b,c)\mapsto b$ is a quotient map with kernel $H_1$.
		 Thus 
		  \begin{equation} \label{eq: second subnormal series}
		 G\supseteq H_1\supseteq 1
		 \end{equation}
		 is a subnormal series as in Theorem \ref{theo: Jordan Hoelder}. The quotient groups are $G/H_1\simeq [\s]_k\Gm$ and $H_1$. As predicted by Theorem \ref{theo: Jordan Hoelder} and verified through Example \ref{ex: isogenous to Gm}, the quotient groups $[\s]_k\Gm$, $[\s]_k\Gm$ of (\ref{eq: first subnormal series}) are isogenous with the quotient groups $[\s]_k\Gm$ and $H_1$ of (\ref{eq: second subnormal series}). Note, however, that $H_1$ is not isomorphic to $[\s]_k\Gm$ (Example \ref{ex: isogenous to Gm}).
		 
		 Theorem \ref{theo: schreier refinement} predicts that the subnormal series (\ref{eq: first subnormal series}) and (\ref{eq: second subnormal series}) have equivalent refinements. So let us find such refinements. We set $G_2=H_2=\{(1,1,c)\in\Gm^3|\ c^2=1\}\simeq [\s]_k\mu_2$. Then
		 \begin{equation} \label{eq: subnormal refined 1}
		 G\supseteq G_1\supseteq G_2\supseteq 1
		 \end{equation}
		 and 
		 	 \begin{equation} \label{eq: subnormal refined 2}
		 G\supseteq H_1\supseteq H_2\supseteq 1
		 \end{equation}
are equivalent refinements of (\ref{eq: first subnormal series}) and (\ref{eq: second subnormal series}) respectively. Indeed, the quotient groups for both subnormal series are $[\s]_k\Gm$, $[\s]_k\Gm$ and $[\s]_k\mu_2$. Note that $G_1\to[\s]_k\Gm,\ (1,b,c)\mapsto c^2$ is a quotient map with kernel $G_2$ and that $H_1\to [\s]_k\Gm,\ (a,1,c)\mapsto a$ is a quotient map with kernel $H_2$.
\end{ex}

\bibliographystyle{alpha}
\bibliography{bibdata}
\end{document}